\theoremstyle{plain}
\newtheorem{theor}{Theorem}[section]
\newtheorem{lem}[theor]{Lemma}
\newtheorem{prop}[theor]{Proposition}
\newtheorem{cor}[theor]{Corollary}
\theoremstyle{definition}
\newtheorem{rem}[theor]{Remark}
\newtheorem{defin0}[theor]{Definition}
\mathchardef\emptyset="001F
\numberwithin{equation}{section}
\newcommand{\Dd}{\mathbb D}
\newcommand{\Xd}{\mathbb X}
\newcommand{\N}{\mathbb N}
\newcommand{\R}{\mathbb R}
\newcommand{\Z}{\mathbb Z}
\newcommand{\T}{\mathbb T}
\newcommand{\e}{\varepsilon}
\newcommand{\Pc}{\mathcal{P}}
\newcommand{\Lc}{\mathcal{L}}
\newcommand{\Dm}{\mathbb{D}}
\newcommand{\Sc}{\mathcal S}
\newcommand{\calL}{\mathcal L}
\newcommand{\calP}{\mathcal P}
\newcommand{\calW}{\mathcal W}
\newcommand{\calA}{\mathcal{A}}
\newcommand{\Nc}{\mathcal N}
\newcommand{\loc}{{\operatorname{loc}}}
\newcommand{\Id}{\operatorname{Id}}
\newcommand{\E}{\mathbb{E}}
\newcommand{\Var}{\operatorname{Var}}
\newcommand{\Cov}{\operatorname{Cov}}
\newcommand{\Tr}{\operatorname{tr}}
\newcommand{\supess}{\operatorname{sup\,ess}}
\newcommand{\Ld}{\operatorname{L}}
\newcommand{\Div}{{\operatorname{div}}}
\newcommand{\Sym}{{\operatorname{sym}}}
\newcommand{\3}{\operatorname{|\hspace{-0.4mm}|\hspace{-0.4mm}|}}
\newcommand{\step}[1]{\noindent \textit{Step} #1.}
\newcommand{\Pm}{\mathbb{P}}
\newcommand{\pr}[1]{\mathbb{P}\left[ #1 \right]}
\newcommand{\expec}[1]{\mathbb{E}\left[ #1 \right]}
\newcommand{\expecm}[1]{\mathbb{E}\big[ #1 \big]}
\newcommand{\expecM}[1]{\mathbb{E}\bigg[ #1 \bigg]}
\newcommand{\var}[1]{\mathrm{Var}\left[#1\right]}
\newcommand{\dW}[2]{\operatorname{d}_{\operatorname{W}}\left({#1};{#2}\right)}
\newcommand{\dK}[2]{\operatorname{d}_{\operatorname{K}}\left({#1};{#2}\right)}
\newcommand{\Uc}{\mathcal{U}}
\newcommand{\Vc}{\mathcal{V}}
\newcommand{\Lind}[1]{\frac{\delta {#1}}{\delta \mu}}
\newcommand{\Lindk}[2]{\frac{\delta^{#2} {#1}}{\delta \mu^{#2}}}
\newcommand{\om}{\omega}
\newcommand{\ddr}{\mathrm{d}}
\newcommand{\dd}{\color{black}}
\newcommand{\vertiii}[1]{{\left\vert\kern-0.25ex\left\vert\kern-0.25ex\left\vert #1 
    \right\vert\kern-0.25ex\right\vert\kern-0.25ex\right\vert}}
\newcommand{\Vertiii}[1]{{\Big\vert\kern-0.25ex\Big\vert\kern-0.25ex\Big\vert #1 
    \Big\vert\kern-0.25ex\Big\vert\kern-0.25ex\Big\vert}}
\newcommand{\Intb}[2]{\hyperref[hyp:int_b]{Int-b-$({#1},{#2})$}}
\newcommand{\Lipb}[2]{\hyperref[hyp:lip_b]{Lip-b-$({#1},{#2})$}}
\newcommand{\TLip}[1]{\hyperref[hyp:tlip]{TLip-$\Phi$-$({#1})$}}
\newcommand{\TReg}[2]{\hyperref[hyp:treg]{TReg-$\Phi$-$({#1},{#2})$}}
\newcommand{\TInt}[2]{\hyperref[hyp:tint]{TInt-$\Phi$-$({#1},{#2})$}}
\newcommand{\Ergp}[4]{\hyperref[hyp:erg0]{Erg-$({#1},{#2},{#4})$}}
\newcommand{\Erg}[1]{\hyperref[hyp:erg]{(Erg-${#1}$)}}
\newcommand{\@fatnormbar}{\vrule\@width 1.8\p@}
\newcommand{\fatnorm}{\@ifstar\@xfatnorm\@fatnorm}
\newcommand{\@xfatnorm}[1]{%
  \left.\kern-\nulldelimiterspace
  \@fatnormbar
  #1
  \@fatnormbar
  \right.\kern-\nulldelimiterspace
}
\newcommand\@fatnorm[2][]{%
  \mathopen{\vphantom{#1|}\mkern2mu\@fatnormbar\mkern2mu}
  #2
  \mathclose{\vphantom{#1|}\mkern2mu\@fatnormbar\mkern2mu}
}
\newcommand\RSloop{\@ifnextchar\bgroup\RSloopa\RSloopb}
\newcommand\RSloopa[1]{\bgroup\RSloop#1\relax\egroup\RSloop}
\newcommand\RSloopb[1]%
\newcommand\X{0}
\newcommand\RS[1]%
\newcommand\RSdef[1]{\expandafter\def\csname RS:#1\endcsname}
\newlength\RSu
\title[Uniform-in-time estimates on corrections to mean field]{Uniform-in-time estimates on corrections to mean field\\for interacting Brownian particles}
\author[A. Bernou]{Armand Bernou}
\address[Armand Bernou]{Universit\'e de Lyon, Universit\'e Claude Bernard Lyon 1, Laboratoire de Sciences Actuarielle et Financi\`ere, 50 Avenue Tony Garnier, F-69007 Lyon, France}
\email{armand.bernou@univ-lyon1.fr}
\author[M. Duerinckx]{Mitia Duerinckx}
\address[Mitia Duerinckx]{Universit\'e Libre de Bruxelles, D\'epartement de Math\'ematique, 1050~Brussels, Belgium}
\email{mitia.duerinckx@ulb.be}
\begin{document}

\begin{abstract}
We consider a system of $N$ classical Brownian particles interacting via a smooth long-range potential in the mean-field regime, and we analyze the propagation of chaos in form of uniform-in-time estimates with optimal $N$-dependence on many-particle correlation functions. Our results cover both the kinetic Langevin setting and the corresponding overdamped Brownian dynamics. The approach is mainly based on so-called Lions expansions, which we combine with new diagrammatic tools to capture many-particle cancellations, as well as with fine ergodic estimates on the linearized mean-field equation, and with discrete stochastic calculus with respect to initial data. In the process, we derive some new ergodic estimates for the linearized Vlasov--Fokker--Planck kinetic equation that are of independent interest. Our analysis also leads to a uniform-in-time quantitative central limit theorem and to concentration estimates for the empirical measure associated with the particle dynamics.
\end{abstract}

\keywords{Interacting Brownian particles, uniform-in-time propagation of chaos, mean-field limit, many-particle correlations, Vlasov--Fokker--Planck equation, quantitative central limit theorem, concentration estimate, Lions calculus, convergence to equilibrium for kinetic equations} 

\subjclass[2020]{Primary 60K35;
%60K35  	Interacting random processes; statistical mechanics type models; percolation theory
Secondary 35Q84,
%35Q84  	Fokker-Planck equations
35Q70,
%	35Q70  	PDEs in connection with mechanics of particles and systems of particles
60H07,
%	60H07  	Stochastic calculus of variations and the Malliavin calculus
60F05,
%	60F05  	Central limit and other weak theorems
82C22,
%82C22  	Interacting particle systems in time-dependent statistical mechanics 
82C31}
%82C31  	Stochastic methods (Fokker-Planck, Langevin, etc.) applied to problems in time-dependent statistical mechanics

\maketitle

\tableofcontents

\section{Introduction}

\subsection{General overview}
We consider the Langevin dynamics for a system of $N$ Brownian particles with mean-field interactions, moving in a confining potential in $\R^d$, $d \ge 1$, as described by the following system of coupled SDEs: for $1 \le i \le N,$
\begin{align}\label{eq:Langevin}
\left\{\begin{array}{ll}
\ddr X^{i,N}_t=V^{i,N}_t\ddr t,&\\[2mm]
\ddr V^{i,N}_t =-\tfrac\kappa N\sum_{j=1}^N\nabla W(X^{i,N}_t-X_t^{j,N})\,\ddr t-\tfrac\beta2 V^{i,N}_t\ddr t - \nabla A(X^{i,N}_t)\,\ddr t+\ddr B_t^{i}, &\quad t \ge 0, \\[2mm]
(X^{i,N}_t,V^{i,N}_t)|_{t=0} = (X^{i,N}_\circ,V^{i,N}_\circ),
\end{array}\right.
\end{align}
where $\{Z^{i,N}:=(X^{i,N},V^{i,N})\}_{1\le i\le N}$ is the set of particle positions and velocities in the phase space $\Dd^d:=\R^d\times\R^d$, where $W:\R^d\to\R$ is a long-range interaction potential, where $A$ is a uniformly convex confining potential, where~$\{B^i\}_i$ are i.i.d.\@ $d$-dimensional Brownian motions, and where $\kappa,\beta>0$ are given constants.
We assume that the interaction potential $W$ satisfies the action-reaction condition~$W(x)=W(-x)$,
and that it is smooth, $W\in C^\infty_b(\R^d)$.
Regarding the confining potential~$A$, we choose it to be quadratic for simplicity,
\begin{equation}\label{eq:confinement-A}
A(x)\,:=\,\tfrac12a|x|^2,\qquad\text{for some $a>0$},
\end{equation}
although this is not essential for our results; see Section~\ref{rem:gener} below.
Next to this Langevin dynamics, we also consider its overdamped limit, that is, the following inertialess Brownian dynamics: for~$1 \le i \le N$,
\begin{align}\label{eq:Brownian}
\left\{\begin{array}{ll}
\ddr Y^{i,N}_t = -\tfrac\kappa N\sum_{j=1}^N\nabla W(Y^{i,N}_t-Y_t^{j,N})\,\ddr t - \nabla A (Y^{i,N}_t) \, \ddr t + \ddr B_t^{i}, \qquad t \ge 0,\\[2mm]
Y^{i,N}_t|_{t=0} = Y^{i,N}_\circ,
\end{array}\right.
\end{align}
where $\{Y^{i,N}\}_{1\le i\le N}$ is now the corresponding set of particle positions in $\R^d$.
For presentation purposes in this introduction, we restrict to the more delicate setting of the Langevin dynamics~\eqref{eq:Langevin}, but we emphasize that all our results hold in both cases.

In the regime of a large number $N\gg1$ of particles, we turn to a statistical description of the system and consider the evolution of a random ensemble of particles. In terms of a probability density~$F^N$ on the $N$-particle phase space $(\Dd^d)^N$, the Langevin dynamics~\eqref{eq:Langevin} is equivalent to the Liouville equation
\begin{multline}\label{eq:Liouville}
\partial_tF^N+\sum_{i=1}^Nv_i\cdot\nabla_{x_i}F^N \,=\,\tfrac12\sum_{i=1}^N\Div_{v_i}((\nabla_{v_i}+\beta v_i)F^N)\\[-3mm]
+\tfrac\kappa N\sum_{i,j=1}^N\nabla W(x_i-x_j)\cdot\nabla_{v_i}F^N
+\sum_{i=1}^N \nabla_{x_i} A \cdot \nabla_{v_i} F^N.
\end{multline}
Particles are assumed to be exchangeable, which amounts to the symmetry of $F^N$ in its $N$ variables $z_i=(x_i,v_i)\in\Dd^d$, $1\le i\le N$. More precisely, we assume for simplicity that particles are initially chaotic, meaning that the initial data $\{Z_\circ^{i,N}:=(X^{i,N}_\circ,V_\circ^{i,N})\}_{1\le i\le N}$ are i.i.d.\@ with some common phase-space density $\mu_\circ\in\Pc(\Dd^d)$: in other words, $F^N$ is initially tensorized,
\begin{equation}\label{eq:chaos-in}
F_t^N|_{t=0}=\mu_\circ^{\otimes N}.
\end{equation}
Throughout, we assume that the initial law $\mu_\circ$ has some stretched exponential moments:
\begin{equation}\label{eq:mom-mu0}
\int_{\Dd^d}e^{|z|^\theta}\mu_\circ(\ddr z)<\infty,\qquad\text{for some $\theta>0$}.
\end{equation}
In the large-$N$ limit, we aim at an averaged description of the system and we focus on the evolution of a finite number of ``typical'' particles as described by the marginals of $F^N$,
\[F^{m,N}_t(z_1,\ldots,z_n)\,:=\,\int_{(\Dd^d)^{N-m}}F^N_t(z_1,\ldots,z_N)\,\ddr z_{m+1}\ldots \ddr z_N,\qquad 1\le m\le N.\]
In view of Boltzmann's chaos assumption, correlations between particles are expected to be negligible to leading order, hence the chaotic behavior of initial data would remain approximately satisfied: this is the so-called propagation of chaos,
\begin{equation}\label{eq:prop-chaos}
F^{m,N}_t- (F^{1,N}_t)^{\otimes m}\,\to\,0,\qquad\text{as $N\uparrow\infty$},
\end{equation}
for any fixed $m\ge1$ and $t\ge0$.
If this holds, it automatically implies the validity of the mean-field limit
\[F^{m,N}_t\,\to\, \mu_t^{\otimes m},\qquad\text{as~$N\uparrow\infty$},\]
where $\mu_t$ is the solution of the Vlasov--Fokker--Planck mean-field equation
\begin{equation}\label{eq:VFP}
\left\{\begin{array}{l}
\partial_t\mu+v\cdot\nabla_x\mu=\tfrac12\Div_v((\nabla_v+\beta v)\mu)+ (\nabla A + \kappa\nabla W\ast \mu)\cdot \nabla_v\mu,\qquad t\ge0,\\[1mm]
\mu|_{t=0}=\mu_\circ,
\end{array}\right.
\end{equation}
with the short-hand notation $\nabla W\ast \mu(x):=\int_{\Dd^d}\nabla W(x-y)\,\mu(y,v)\, \ddr y \, \ddr v$. This topic has been extensively investigated since the 1990s, starting in particular with~\cite{Gaertner_1988,Sznitman_1991}; see e.g.~\cite{Jabin-Wang-17,Chaintron-Diez-21} for a review.

On the formal level, corrections to the propagation of chaos and to the mean-field limit are naturally unravelled by means of the BBGKY approach, which goes back to the work of Bogolyubov~\cite{Bogolyubov-46}. This starts by noting that the Liouville equation~\eqref{eq:Liouville} is equivalent to the following hierarchy of coupled equations for marginals: for $1\le m\le N$,
\begin{multline}\label{eq:BBGKY0}
\partial_tF^{m,N}+\sum_{i=1}^mv_i\cdot\nabla_{x_i}F^{m,N}\,=\,\tfrac12\sum_{i=1}^m\Div_{v_i}((\nabla_{v_i}+\beta v_i)F^{m,N})\\
+ \sum_{i=1}^m \nabla_{x_i} A \cdot \nabla_{v_i} F^{m,N}
+\kappa\tfrac{N-m}N\sum_{i=1}^m\int_{\Dd^d}\nabla W(x_i-x_*)\cdot\nabla_{v_i}F^{m+1,N}(\cdot,z_*)\,\ddr x_* \, \ddr v_*\\[-3mm]
+\tfrac\kappa N\sum_{i,j=1}^m\nabla W(x_i-x_j)\cdot\nabla_{v_i}F^{m,N},
\end{multline}
with the convention $F^{m,N}\equiv0$ for $m>N$. 
In each of those equations, the last right-hand side term is precisely the one that disrupts the chaotic structure: it creates correlations between initially independent particles, hence leads to deviations from the mean-field approximation. This term is formally of order~$O(m^2N^{-1})$, and we are actually led to conjecture the following error estimate for the propagation of chaos,
\begin{equation}\label{eq:size-chaos}
F^{m,N}_t-(F^{1,N}_t)^{\otimes m}\,=\,O(mN^{-1}).
\end{equation}
This was first established by Lacker~\cite{Lacker-21}, see also~\cite{BJS-22}, based on a rigorous BBGKY analysis, and it is referred to as estimating the {\it size of chaos}, cf.~\cite{BenArous-Zeitouni-99,PPS-19}.
In case of non-Brownian interacting particles, the problem is more difficult as no rigorous BBGKY analysis is available: the rate $O(N^{-1})$ for fixed $m$ was obtained in~\cite{MD-21} by means of different, non-hierarchical techniques. In the sequel, we systematically leave aside all issues related to the sharp dependence on $m$ in such estimates and only focus on optimal convergence rates in $N$ for fixed $m$.

A variant of the above estimates
is given by so-called {\it weak propagation of chaos} estimates: for any sufficiently well-behaved functional $\Phi$ defined on the space  $\Pc(\Dd^d)$ of probability measures on $\Dd^d$, one expects
\begin{equation}\label{eq:size-chaos-bis}
\expec{\Phi(\mu_t^N)}-\Phi(\mu_t)\,=\,O(N^{-1}),
\end{equation}
in terms of the empirical measure
\begin{equation}\label{eq:empirical-meas}
\textstyle\mu_t^N:=\tfrac1N\sum_{i=1}^N\delta_{Z_t^{i,N}}~~\in~\Pc(\Dd^d),
\end{equation}
where we recall that the limit~$\mu_t$ is the solution of the mean-field equation~\eqref{eq:VFP} and where the expectation~$\E$ is taken with respect to both the initial data and the Brownian forces.
Such an estimate is essentially equivalent to~\eqref{eq:size-chaos} (leaving aside the dependence on $m$ and $\Phi$),
and we refer to~\cite{Kolokoltsov-10,Mischler_2012,Mischler_2013,Bencheikh_2019,Chassagneux_2019} for results in that direction.
Note that the rate $O(N^{-1})$ in~\eqref{eq:size-chaos-bis} is only expected for $\Phi$ smooth enough. For the specific choice $\Phi=\calW_2(\cdot,\mu_t)$, for instance, the question amounts to estimating the expectation of the $2$-Wasserstein distance between $\mu_t^N$ and $\mu_t$: this is referred to as {\it strong propagation of chaos} and is known to lead only to a weaker convergence rate $\expec{\calW_2(\mu_t^N,\mu_t)}=O(N^{-1/2})$ in link with random fluctuations of the empirical measure; see~\cite{BRTV-98,Malrieu-03,Bolley-Guillin-Villani-07,Cattiaux-Guillin-Malrieu-08,Fournier_15,Lacker-21}.

In recent years, there has been an increasing interest in {\it uniform-in-time versions} of the above chaos estimates~\eqref{eq:size-chaos} or~\eqref{eq:size-chaos-bis}.
This happens to be an important question both in theory and for practical applications: it amounts to describing the long-time behavior of particle systems uniformly in the limit~$N\uparrow\infty$, thus showing in particular the proximity of corresponding equilibria.
This is naturally related to the long-time behavior of the mean-field equation~\eqref{eq:VFP}, which has itself been 
an intense topic of research for more than two decades. While the mean-field equilibrium is not unique in general, cf.~\cite{Duong_2016}, the long-time convergence of the mean-field density has been established
under several types of assumptions guaranteeing uniqueness~\cite{Villani_2009,Bolley_2010,Herau_2016, Monmarche_2017,Guillin_2022,Bayraktar_2022};
see also~\cite{BRV-98,Carrillo_2003,Bolley_2012} for the Brownian dynamics.
Uniform-in-time propagation of chaos, however, is a more subtle question and is indeed not ensured by the uniqueness of the mean-field equilibrium~\cite{Malrieu-03,BGP-14}.
In case of small interaction $\kappa\ll1$, uniform-in-time weak chaos estimates with optimal rate~$O(N^{-1})$ were first obtained by Delarue and Tse~\cite{Delarue_Tse_21} for the Brownian dynamics, and the uniform-in-time version of~\eqref{eq:size-chaos} was actually obtained recently in~\cite{Lacker_LeFlem_2022} both for the Langevin and Brownian dynamics.
We also refer to~\cite{Malrieu-03,Bolley-Guillin-Villani-07,Cattiaux-Guillin-Malrieu-08,Bolley_2010,DEGZ-20,Salem-20,Guillin_2022,Chen_2024} for corresponding uniform-in-time {\it strong} chaos estimates with weaker convergence rates.
Much recent work has focused on weakening the assumptions for the validity of uniform-in-time chaos estimates: we refer in particular to~\cite[Sections~3.6 and~4]{Delarue_Tse_21} and references therein for the relaxation of the smallness condition $\kappa\ll1$, and we refer to~\cite{Guillin_2021b,Rosenzweig-Serfaty-21,GLBM_2022,Rosenzweig-Serfaty-23,Blaustein-Jabin-Soler-24} for singular interactions.

In the present work, while mainly sticking to the simplest case of smooth interactions under the smallness condition $\kappa\ll1$, we aim to go beyond uniform-in-time chaos estimates by further estimating many-particle correlation functions, which provides finer information on the propagation of chaos in the system. The two-particle correlation function is defined as
\[G^{2,N}\,:=\,F^{2,N}-(F^{1,N})^{\otimes2},\]
which captures the defect to propagation of chaos~\eqref{eq:prop-chaos} at the level of two-particle statistics.
From the BBGKY hierarchy~\eqref{eq:BBGKY0}, we note that proving the mean-field limit $F^{1,N}_t\to\mu_t$ amounts to proving $G^{2,N}_t\to0$, which is precisely ensured by standard chaos estimates, cf.~\eqref{eq:size-chaos}.
Yet, two-particle correlations do not allow to reconstruct the full particle density $F^N$: in particular, understanding corrections to the mean-field limit requires to further estimate higher-order correlation functions $\{G^{k,N}\}_{2\le k\le N}$. Those are defined as suitable polynomial combinations of marginals of~$F^N$ in such a way that the full particle distribution $F^N$ be recovered in form of a cluster expansion,
\begin{equation}\label{eq:cluster-exp0}
F^N(z_1,\ldots,z_N)\,=\,\sum_{\pi\vdash \llbracket N\rrbracket}\prod_{A\in\pi}G^{\sharp A,N}(z_A),
\end{equation}
where $\pi$ runs through the list of all partitions of the index set $\llbracket N\rrbracket:=\{1,\ldots,N\}$, where $A$ runs through the list of blocks of the partition $\pi$, where $\sharp A$ is the cardinality of $A$, and where for \mbox{$A=\{i_1,\ldots, i_l\}\subset\llbracket N\rrbracket$} we write $z_A=(z_{i_1},\ldots,z_{i_l})$. As is easily checked, correlation functions are fully determined by prescribing~\eqref{eq:cluster-exp0} together with the ``maximality'' requirement $\int_\Dm G^{m,N}(z_1,\ldots,z_m)\,\ddr z_l=0$ for $1\le l\le m$. More explicitly, we can write
\begin{eqnarray*}
G^{3,N}&=&\Sym\big(F^{3,N}-3F^{2,N}\otimes F^{1,N}+2(F^{1,N})^{\otimes 3}\big),\\
G^{4,N}&=&\Sym\big(F^{4,N}-4F^{3,N}\otimes F^{1,N}-3F^{2,N}\otimes F^{2,N}+12F^{2,N}\otimes (F^{1,N})^{\otimes 2}-6(F^{1,N})^{\otimes 4} \big),
\end{eqnarray*}
and so on, where the symbol `$\Sym$' stands for the symmetrization of coordinates. More generally, for all $2\le m\le N$,
\begin{equation}\label{eq:def-cumGm}
G^{m,N}(z_1,\ldots,z_m)\,=\,\sum_{\pi\vdash \llbracket m \rrbracket}(\sharp\pi-1)!(-1)^{\sharp\pi-1}\prod_{A\in\pi}F^{\sharp A,N}(z_A),
\end{equation}
where we use a similar notation as in~\eqref{eq:cluster-exp0} and where $\sharp\pi$ stands for the number of blocks in a partition~$\pi$.
While standard propagation of chaos leads to $G^{2,N}_t=O(N^{-1})$, cf.~\eqref{eq:size-chaos}, and in fact \mbox{$G^{m,N}_t=O(N^{-1})$} for all $2\le m\le N$, a formal analysis of the BBGKY hierarchy~\eqref{eq:BBGKY0} further leads to expect
\begin{equation}\label{eq:refined-prop-chaos}
G_t^{m,N}\,=\,O(N^{1-m}),\qquad2\le m\le N.
\end{equation}
We refer to this as {\it higher-order propagation of chaos}:
such estimates provide a much deeper understanding of the structure of propagation of chaos and are key tools to describe deviations from mean-field theory, cf.~\cite{DSR-21,MD-21,Hess_Childs_2023,DJ-24}.

Estimates of the form~\eqref{eq:refined-prop-chaos} have been obtained in several settings up to an exponential time growth: non-Brownian particle systems were covered in~\cite{MD-21}, while in~\cite{Hess_Childs_2023} the Brownian dynamics~\eqref{eq:Brownian} was covered in the more general case of bounded non-smooth interactions.
In the present work, we obtain for the first time corresponding \emph{uniform-in-time} estimates, both for the Langevin and Brownian dynamics.
In fact, after this work was completed, in the simpler case of the Brownian dynamics~\eqref{eq:Brownian}, similar uniform-in-time estimates have been further obtained in~\cite{Xie-24} for bounded non-smooth interactions, thereby improving on our results, but to our knowledge those arguments do not extend easily to the Langevin setting~\eqref{eq:Langevin}.
Along the way, we establish a quantitative uniform-in-time central limit theorem (CLT) for the empirical measure with an optimal convergence rate, which is new even for the Brownian dynamics~\eqref{eq:Brownian}, and we also prove uniform-in-time concentration estimates.

From the technical perspective, {as described further in Section~\ref{sec:strategy} below,} we mainly take inspiration from a recent work by Delarue and Tse~\cite{Delarue_Tse_21},
where the uniform-in-time chaos estimate~\eqref{eq:size-chaos-bis} was established for the Brownian dynamics. The key idea is to consider the mean-field semigroup induced on functionals $\mu\mapsto\Phi(\mu)$ on the space of probability measures, and then to appeal to the so-called Lions calculus on this space to expand the expectation $\E[\Phi(\mu_t^N)]$ of functionals along the particle dynamics; see Lemma~\ref{lem:CST0} below. As noted in~\cite{Delarue_Tse_21}, the resulting Lions expansions can be combined with ergodic properties of the linearized mean-field equation to deduce uniform-in-time estimates.
In fact, this whole strategy is the heir to the functional framework originally developed by Mischler and Mouhot~\cite{Mischler_2012,Mischler_2013} (also in the context of Kac's program), where uniform-in-time propagation of chaos was obtained by comparing the particle dynamics with the mean-field semigroup induced directly on the space of measures --- rather than on {\it (nonlinear) functionals on} the space of measures, which is much more convenient for our analysis.
In order to control correlation functions $\{G^{m,N}\}_{2\le m\le N}$, we reduce the problem to estimating cumulants of functionals of the empirical measure $\{\kappa_m(\Phi(\mu_t^N))\}_{m\ge1}$, we apply (iterated) Lions expansions to cumulants, and we develop suitable diagrammatic tools to efficiently capture cancellations in such expansions and derive the desired estimates~\eqref{eq:refined-prop-chaos}.
To account for the effect of initial correlations, we further combine Lions expansions with the so-called Glauber calculus that we developed in~\cite{MD-21}.
In order to deduce uniform-in-time estimates, while only the case of the Brownian dynamics was considered in~\cite{Delarue_Tse_21}, we need to further appeal to hypocoercivity techniques to establish the relevant ergodic estimates for the linearized mean-field equation in case of the kinetic Langevin dynamics: for that purpose, we mainly draw inspiration from another work of Mischler and Mouhot~\cite{Mischler_2016}, which we are led to revisit in several ways; see Theorem~\ref{thm:ergodic} below.

We anticipate that our approach, based on capturing cancellations in Lions expansions for cumulants, and combining such expansions with Stein's method and Herbst's argument to further deduce a quantitative CLT and concentration estimates, might also be useful in other contexts, in particular for mean-field games.

\subsection{Main results}
We start with the statement of uniform-in-time higher-order propagation of chaos estimates with the optimal $N$-dependence~\eqref{eq:refined-prop-chaos}. Note that the smallness condition $\kappa\ll1$ for the interaction strength ensures the uniqueness of the steady state for the mean-field equation~\eqref{eq:VFP}, which is key to guarantee strong ergodic properties.

\begin{theor}[Uniform-in-time higher-order propagation of chaos]\label{thm:main}
Consider the Langevin dynamics~\eqref{eq:Langevin} and the associated correlation functions $\{G^{m,N}\}_{2\le m\le N}$ as defined in~\eqref{eq:def-cumGm}, and assume that the initial law has stretched exponential moments~\eqref{eq:mom-mu0} for some $\theta>0$.
For all $m\ge2$, there exist $\kappa_m>0$ (only depending on $d,\beta,W,A,\theta,m$), $C_m>0$ (further depending on $\mu_\circ$), and
$\ell_m>0$ (only depending on~$m$),
such that
given~$\kappa \in [0,\kappa_m]$
we have for all $N,t\ge0$,
\begin{equation}\label{eq:main_thm_eq}
\|G^{m,N}_t\|_{W^{-\ell_m,1}(\Dd^d)^{\otimes m}} \,\le\, C_mN^{1-m}.
\end{equation}
\end{theor}

\begin{rem}
While we focus here on the Langevin dynamics~\eqref{eq:Langevin}, we recall that all our results also hold in the simpler case of the Brownian dynamics~\eqref{eq:Brownian}. In that case, the proof is substantially simplified and a slightly stronger version of Theorem~\ref{thm:main} is actually obtained. On the one hand, the initial law~$\mu_\circ$ only needs to have {\it some} bounded moment, that is, $\int_\Xd|z|^{p}\mu_\circ(\ddr z)<\infty$ for some $p>0$. On the other hand, the smallness condition $\kappa\ll1$ can be taken independently of $m$: there exists $\kappa_0>0$ (only depending on $d,\beta,W,A$) such that the same result holds for any $\kappa\in[0,\kappa_0]$. As explained in Section~\ref{rem:gener}, this smallness condition can even be removed in some cases.
\end{rem}

Let us briefly emphasize some applications of the above correlation estimates. The main one is that these can be used to truncate the BBGKY hierarchy~\eqref{eq:BBGKY0} to an arbitrary precision in $N$.
Of course, the uniform-in-time estimate on two-particle correlations, $G^{2,N}=O(N^{-1})$, allows to recover the uniform-in-time chaos estimate~\eqref{eq:size-chaos}, thus recovering the result of~\cite{Lacker_LeFlem_2022} for smooth interactions (leaving aside the dependence on $m$). More interestingly, the estimate on three-particle correlations allows to further capture the so-called Bogolyubov correction to mean field, for which we obtain the first uniform-in-time result. We state it here for shortness at the level of the $1$-particle density, but this is easily extended to all marginals. Higher-order correlation estimates can be used similarly to capture corrections to an arbitrary order in $N$.

\begin{cor}[Bogolyubov correction to mean field]\label{cor:bogo}
\allowdisplaybreaks
Consider the Langevin dynamics~\eqref{eq:Langevin} and assume that the initial law has stretched exponential moments~\eqref{eq:mom-mu0} for some $\theta>0$.
There exist $\kappa_0>0$ (only depending on $d,\beta,W,A,\theta$), $C_0>0$ (further depending on $\mu_\circ$), and a universal constant~$\ell_0>0$, such that given $\kappa \in [0, \kappa_0]$ we have
for all~$N,t \ge 0$,
\begin{eqnarray}
\|F^{1,N}_t-\mu_t\|_{W^{-\ell_0,1}(\Dd^d)}&\le&C_0N^{-1},\label{eq:unif-MFL}\\
\|F^{1,N}_t-\tilde\mu_t^{N}\|_{W^{-\ell_0,1}(\Dd^d)}&\le&C_0N^{-2},\nonumber\\
\|NG^{2,N}_t-g^{(2)}_t\|_{W^{-\ell_0,1}(\Dd^d)^{\otimes2}}&\le&C_0N^{-1},\nonumber
\end{eqnarray}
where we recall that $\mu$ is the solution to the Vlasov--Fokker--Planck equation~\eqref{eq:VFP}, where $\tilde\mu^{N}$ satisfies the following corrected version of~\eqref{eq:VFP},
\begin{equation*}
\left\{\begin{array}{l}
\partial_t\tilde\mu^{N}+v\cdot\nabla_x\tilde\mu^{N}\,=\,\tfrac12\Div_v((\nabla_v+\beta v)\tilde\mu^{N})+(\nabla A+\kappa\nabla W\ast\tilde\mu^{N})\cdot\nabla_v\tilde\mu^{N}\\[1mm]
\hspace{8cm}+\tfrac\kappa N\int_{\Dd^d}\nabla W(x-x_*)\cdot\nabla_v(g^{(2)}-\mu^{\otimes2})(z,z_*)\,\ddr z_*,\\
\tilde\mu^N|_{t=0}=\mu_\circ,
\end{array}\right.
\end{equation*}
and where $g^{(2)}$ is the solution of the Bogolyubov equation
\begin{equation*}
\left\{\begin{array}{l}
\partial_tg^{(2)}\,=\,L_\mu^{(2)}g^{(2)}+\kappa\big(\nabla W(x_1-x_2)-\nabla W\ast\mu(x_1)\big)\cdot\nabla_{v_1}\mu^{\otimes2}\\[1mm]
\hspace{4cm}+\kappa\big(\nabla W(x_2-x_1)-\nabla W\ast\mu(x_2)\big)\cdot\nabla_{v_2}\mu^{\otimes2},\\
g^{(2)}|_{t=0}=0,
\end{array}\right.
\end{equation*}
where $L_\mu^{(2)}:=L_\mu\otimes\Id+\Id\otimes L_\mu$ stands for the $2$-particle linearized Vlasov operator at the mean-field solution $\mu$ with
\begin{gather*}
L_\mu h\,:=\,\tfrac12\Div_v((\nabla_v+\beta v)h) -v\cdot\nabla_xh+(\nabla A+\kappa\nabla W\ast\mu)\cdot\nabla_vh+\kappa(\nabla W\ast h)\cdot\nabla_v\mu.
\end{gather*}
\end{cor}

These higher-order corrections to mean-field are precisely correcting chaotic densities by taking into account weak particle correlations that build up over time due to interactions.
On long times, they actually allow to connect the mean-field approximation to Gibbs relaxation. More precisely, it is possible to show that there is $\lambda_0>0$ (only depending on $d,\beta,W,A$) such that given $\kappa\in[0,\kappa_0]$ we have for all~$N,t\ge0$,
\begin{equation}\label{eq:unif-MFL-refined}
\|F_t^{1,N}-\mu_t-R^{1,N}\|_{W^{-\ell_0,1}(\Dd^d)}\,\le\,C_0 e^{-\lambda_0 t}N^{-1},
\end{equation}
where $R^{1,N}:=M^{1,N}-M$ is the difference between the first marginal of the $N$-particle Gibbs ensemble and the mean-field equilibrium. Note that this equilibrium correction $R^{1,N}$ vanishes in the spatially-homogeneous setting on the torus.
This result~\eqref{eq:unif-MFL-refined} is new in the field and can be viewed as a ``cross estimate'' combining the mean-field approximation~\eqref{eq:unif-MFL} quantitatively with the convergence of the particle system to Gibbs equilibrium.
Its proof as an application of Theorem~\ref{thm:main} requires detailed computations of corrections to mean field and is postponed to a forthcoming work.

\medskip
As the proof of Theorem~\ref{thm:main} relies on a fine description of cumulants of the empirical measure along the particle dynamics, it can unsurprisingly be pushed further to derive other types of stochastic information on the empirical measure, such as a uniform-in-time quantitative CLT and concentration estimates.
We start with the former.
As expected from formal computations, leading fluctuations of the empirical measure are described by the Gaussian linearized Dean--Kawasaki SPDE, cf.~\eqref{eq:DK-Lang} below.
A qualitative CLT has actually been known to hold since the early days of mean-field theory~\cite{Tanaka-Hitsuda-81,Sznitman-84,Fernandez-Meleard-97}, and it has recently been extended to some singular interaction potentials as well~\cite{Wang-Zhao-Zhu-23,Jungel-24}.
In case of smooth interactions as considered in the present work, an optimal quantitative estimate for fluctuations already follows from~\cite{MD-21}, but we provide here the first uniform-in-time result. To our knowledge, this is new both in the Langevin and Brownian cases.

\begin{theor}[Uniform-in-time CLT]\label{th:CLT}
Consider the Langevin dynamics~\eqref{eq:Langevin} and the associated empirical measure~$\mu_t^N$, cf.~\eqref{eq:empirical-meas}, and assume that the initial law has stretched exponential moments~\eqref{eq:mom-mu0} for some $\theta>0$.
There exist $\lambda_0, \kappa_0 >0$ (only depending on $d,\beta,W,A,\theta$) such that the following holds for any $\kappa\in[0,\kappa_0]$:
for all $\phi\in C^\infty_c(\Dd^d)$, there exists $C_0>0$ (only depending on $d,\beta,W,A,\mu_\circ$) such that we have for all $N,t\ge0$,
\begin{equation*}
\ddr_2\bigg( N^\frac12\int_\Xd\phi\,(\mu_t^N-\mu_t)\,\,;\,\int_\Xd\phi\,\nu_t\bigg)
\,\le\,C_0 \Big(N^{-\frac12}+e^{-\lambda_0 t}N^{-\frac13}\Big)\|\phi\|_{W^{4,\infty}(\Xd)}^2,
\end{equation*}
where:
\begin{enumerate}[---]
\item $\ddr_2$ stands for the second-order Zolotarev distance between random variables,
\begin{equation}\label{eq:Zolo}
\quad\ddr_2(X;Y)\,:=\,\sup\Big\{\E[g(X)]-\E[g(Y)]~:~g\in C^2_b(\R),~g'(0)=0,~\|g''\|_{\Ld^\infty(\R)}=1\Big\};
\end{equation}
\item the limit fluctuation $\nu_t$ is the centered Gaussian process that is the unique almost sure distributional solution of the Gaussian linearized Dean--Kawasaki SPDE (see Section~\ref{sec:DK-def} for details),
\begin{align}\label{eq:DK-Lang}
\left\{\begin{array}{l}
\partial_t \nu_t+ v \cdot \nabla_x\nu_t\,=\, \Div_v(\sqrt{\mu_t} \xi_t) + \Div_v((\nabla_v+\beta v)\nu_t)\\[1mm]
\hspace{3.5cm}+\nabla A \cdot \nabla_v\nu_t + \kappa(\nabla W *\nu_t)\cdot\nabla_v\mu_t + \kappa(\nabla W *\mu_t)\cdot\nabla_v\nu_t,\\
\nu_t|_{t = 0}=\nu_\circ,
\end{array}\right.
\end{align}
where $\xi$ is a vector-valued space-time white noise on $\R^+\times\Dd^d$,
and where $\nu_\circ$ is the Gaussian field describing the fluctuations of the initial empirical measure in the sense that~$N^{1/2}\int_{\Dd^d}\phi\,(\mu^N_\circ-\mu_\circ)$ converges in law to $\int_{\Dd^d}\phi\,\nu_\circ$ for all $\phi\in C^\infty_c(\Dd^d)$.\footnote{In other words, this means that~$\nu_\circ$ is the random tempered distribution on $\Dd^d$ characterized by having Gaussian law with $\E[\int_{\Dd^d}\phi\nu_\circ]=0$ and $\Var[\int_{\Dd^d}\phi\nu_\circ]=\int_{\Dd^d}(\phi-\int_{\Dd^d}\phi \mu_\circ)^2\mu_\circ$ for all $\phi\in C^\infty_c(\Dd^d)$.}
\end{enumerate}
\end{theor}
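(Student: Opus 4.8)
The plan is to reduce the Zolotarev estimate to a comparison of a few cumulants of the rescaled fluctuation $X^t_N:=N^{1/2}\int_\Xd\phi\,(\mu^N_t-\mu_t)$ with those of the Gaussian target $Y_t:=\int_\Xd\phi\,\nu_t$, and then to control those cumulants by the Lions-expansion and diagrammatic machinery already developed for Theorem~\ref{thm:main}, made uniform in time through the hypocoercive ergodic estimates of Theorem~\ref{thm:ergodic}. As a first step I would remove the low regularity of the test functions: given $g\in C^2_b(\R)$ with $g'(0)=0$ and $\|g''\|_{\Ld^\infty}=1$, convolve it with a Gaussian kernel $\gamma_\delta$ of width $\delta$, so that $\|g-g_\delta\|_{\Ld^\infty}=O(\delta^2)$ while $\|g_\delta^{(k)}\|_{\Ld^\infty}=O(\delta^{2-k})$ for $k\ge2$; it then suffices to estimate $\E[g_\delta(X^t_N)]-\E[g_\delta(Y_t)]$ for smooth $g_\delta$, up to an additive $O(\delta^2)$ to be optimized at the end.

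For smooth test functions, I would run the Lions expansion of $\E[g_\delta(X^t_N)]$ along the particle dynamics, exactly as in the proof of Theorem~\ref{thm:main}: propagating the functional $\mu\mapsto g_\delta(N^{1/2}\int_\Xd\phi\,(\mu-\mu_t))$ by the mean-field semigroup and comparing with the $N$-particle evolution produces (i)~the Gaussian prediction $\E[g_\delta(Y_t)]$, reconstructed from the first two cumulants of $X^t_N$ --- its mean, which is $O(N^{-1/2})$ by uniform-in-time weak propagation of chaos, and its variance, which equals $\Var[Y_t]+O(N^{-1})$ with $\Var[Y_t]$ read off from the linear equation for the leading two-particle correlation associated with~\eqref{eq:DK-Lang}; (ii)~a third-order term, controlled by $\|g_\delta'''\|_{\Ld^\infty}=O(\delta^{-1})$ times the relevant third-order correlation amplitude; and (iii)~higher-order remainders involving second- and higher-order Lions derivatives of $g_\delta$ tested against the correlation functions $G^{m,N}_t$, which Theorem~\ref{thm:main} bounds by $O(N^{1-m})$, hence by $O(N^{-1/2})$ after the $N^{m/2}$ rescaling. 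The uniformity in time of all these contributions rests on the exponential contraction of the linearized mean-field operator; in the Langevin case this is precisely the hypocoercive decay of Theorem~\ref{thm:ergodic}, which takes the place of the plain dissipation available in the Brownian setting.

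The transient contribution $e^{-p_0\lambda_0 t}N^{-1/3}$ should come from the non-Gaussian, low-moment initial fluctuation $N^{1/2}(\mu^N_\circ-\mu_\circ)$ propagated forward by the dynamics: under the sole assumption $\int|z|^{p_0}\mu_\circ<\infty$ with $0<p_0\le1$, its third-order content can only be estimated through a Jensen bound $\E[|\cdot|^{p_0}]\le(\E[|\cdot|])^{p_0}$, while the $\Ld^1$-size of the relevant particle-versus-mean-field discrepancy contracts at the hypocoercive rate $\lambda_0$, yielding an amplitude of order $e^{-p_0\lambda_0 t}N^{-1/2}$; balancing the mollification cost $\delta^2$ against $\delta^{-1}e^{-p_0\lambda_0 t}N^{-1/2}$ gives $\delta\sim(e^{-p_0\lambda_0 t}N^{-1/2})^{1/3}$ and hence the stated rate, up to absorbing a constant into $\lambda_0$, while the genuinely time-uniform remainders produce the standalone $N^{-1/2}$. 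To incorporate the initial correlations cleanly I would, as for Theorem~\ref{thm:main}, combine the Lions expansion with the Glauber calculus of~\cite{MD-21} and with discrete stochastic calculus with respect to the initial data. Identifying the limit is not an issue here: the well-posedness of the Gaussian SPDE~\eqref{eq:DK-Lang} and the fact that $\int_\Xd\phi\,\nu_t$ is the Gaussian whose first two cumulants match the limits of those of $X^t_N$ are established in Section~\ref{sec:DK-def}, which I would simply invoke.

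I expect the main obstacle to be the coexistence of the two requirements of \emph{uniformity in time} and of the \emph{weak moment assumption} $p_0\le1$: every remainder of the Lions/Glauber expansion must be propagated with an exponential-in-time weight all the way back to $t=0$, which forces a delicate interplay between the diagrammatic many-particle cancellations --- needed to gain the powers of $N$ --- and the hypocoercive estimates of Theorem~\ref{thm:ergodic} --- needed to gain the decay --- with no slack left to trade regularity of $\mu_\circ$ for integrability, and it is precisely this Jensen loss on the initial data that degrades the transient rate from $N^{-1/2}$ to $N^{-1/3}$. A subsidiary difficulty is that the hypocoercive ergodic estimates themselves must be established in the quantitative, weighted form demanded by the expansion, which is one of the reasons the kinetic linearized Vlasov--Fokker--Planck equation had to be revisited in Theorem~\ref{thm:ergodic}.
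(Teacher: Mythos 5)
The central reduction you propose --- mollify $g$ and read the CLT off a truncated Lions/cumulant expansion of $\E[g_\delta(N^{1/2}\int_\Xd\phi\,(\mu_t^N-\mu_t))]$, with the Gaussian ``reconstructed from the first two cumulants'', a third-order term paying $\delta^{-1}$, and all remainders absorbed via Theorem~\ref{thm:main} --- does not go through as stated. The composite functional $\mu\mapsto g_\delta\big(N^{1/2}\int_\Xd\phi\,(\mu-\mu_t)\big)$ has $k$-th linear derivatives of size $N^{k/2}\delta^{2-k}$, so in the Lions expansion each round edge (a factor $N^{-1}$ at the cost of two derivatives) produces terms of order \emph{one} whenever both derivatives fall on $g_\delta$: the expansion is not a power series in $N^{-1}$, and the Gaussian limit is precisely the resummation of this infinite family of $O(1)$ contributions, not a finite collection of low-order terms plus a small remainder. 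Equivalently, if you instead control cumulants of the linear statistic (third of order $N^{-1/2}$, $m$-th of order $N^{1-m/2}$ by Theorem~\ref{thm:main}), you still need a device converting cumulant smallness into a bound on $\ddr_2$ for merely $C^2_b$ test functions: mollification buys one extra derivative at cost $\delta^{-1}$, finitely many cumulants do not control a distributional distance, and the constants $C_m$ in Theorem~\ref{thm:main} are not tracked in $m$, so no Edgeworth-type summation is available. This missing resummation is exactly what the paper supplies by other means: after a single application of the Lions expansion (Lemma~\ref{lem:CST0}) to split the fluctuation into an initial part $C_t^N$ and a Brownian martingale $D_t^N$ up to an $O(N^{-1/2})$ error (Lemma~\ref{lem:fluct-split}), the Gaussian behavior of $C_t^N$ is obtained by Stein's method in the form of the Glauber second-order Poincar\'e inequality (Proposition~\ref{prop:2ndP}, Lemma~\ref{lem:init-fluct}), and that of $D_t^N$, conditionally on the initial data, by an It\^o/Feynman--Kac heat-kernel argument that resums the quadratic-variation contributions (Lemma~\ref{prop:CLT-0}), with the variances identified through Helffer--Sj\"ostrand and Lemma~\ref{lem:chassagneux}. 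Your proposal never identifies a substitute for this step, and it is the heart of the proof.

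Your explanation of the rate $e^{-p_0\lambda_0 t}N^{-1/3}$ is also off. The moment exponent $p_0$ enters only through the exponential rate, via the weighted ergodic estimates of Theorem~\ref{thm:ergodic} whose decay is $e^{-p\lambda t}$ with weight exponent $p$ constrained by the available moments; it does not degrade the power of $N$. In the paper the $1/3$ comes from the possible degeneracy in time of the limit variance $\sigma_t^C(\phi,\mu_\circ)$ of the initial-data contribution: Stein's method requires normalizing by this variance, and one optimizes the normalized bound of order $N^{-1/2}e^{-\lambda t}\,\sigma_t^C(\phi,\mu_\circ)^{-1}$ against the trivial bound $\ddr_2\le \sigma_t^C(\phi,\mu_\circ)^2+\Var_\circ[C_t^N(\phi)]$ (Step~3 of Lemma~\ref{lem:init-fluct}), which is where the cube root arises. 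Your balancing of $\delta^2$ against $\delta^{-1}N^{-1/2}e^{-p_0\lambda_0 t}$ reproduces the same arithmetic, but even within your scheme the degradation stems from the mollification cost, not from the Jensen bound on the $p_0$-moment as you claim; in any case this part cannot be assessed until the resummation gap above is filled.
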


\begin{rem}[Convergence rate]
The convergence rate in the above CLT can be improved to $O(N^{-1/2})$ under some non-degeneracy condition. More precisely, the proof shows
\begin{align*}
\ddr_2\bigg( N^\frac12\int_\Xd\phi\,(\mu_t^N-\mu_t)\,\,;\,\int_\Xd\phi\,\nu_t\bigg)
\,\le\,C_0N^{-\frac12}\bigg(1+\frac{e^{-3\lambda_0 t}}{c_t(\phi)+ N^{-\frac13}e^{-\lambda_0 t}}\bigg)^{\frac12}\,\|\phi\|_{W^{4,\infty}(\Xd)}^2,
\end{align*}
in terms of $c_t(\phi):=\|\phi\|_{W^{2,\infty}(\Xd)}^{-2}\Var[\int_\Xd\phi\tilde\nu_t]$, where $\tilde\nu$ is the solution of the linearized mean-field equation with initial condition $\nu_\circ$, that is, the solution of the corresponding equation~\eqref{eq:DK-Lang}  with the noise $\xi$ replaced by $0$. Under the non-degeneracy condition $c_t(\phi)\gtrsim1$, the rate thus becomes $O(N^{-1/2})$.
\end{rem}

\begin{rem}[Higher-order fluctuations]\label{rem:TCL}
In recent years, much work has been devoted to the justification of the non-Gaussian nonlinear Dean--Kawasaki equation, which is a highly singular SPDE formally expected to capture higher-order fluctuations; see in particular~\cite{KLvR-20,Cornalba-Fischer-23,Perkowski-22,Cornalba-Fischer-Raithel-2023}.
In contrast, the above result only focuses on {\it Gaussian leading fluctuations}, but it provides the first uniform-in-time justification.
Extensions to non-Gaussian corrections and the uniform-in-time justification of the nonlinear Dean--Kawasaki equation is postponed to a forthcoming work.
\end{rem}

We turn to concentration estimates for the empirical measure, which are easily recovered by combining ideas of the proof of Theorem~\ref{thm:main} together with Herbst's argument.
For the Langevin dynamics~\eqref{eq:Langevin}, the following result complements the concentration estimates obtained in~\cite[Theorem 5]{Bolley_2010} (removing the restriction to $r\gg N^{-1/2}+e^{-Ct}$, but requiring more smoothness of the test function~$\phi$).
In the simpler case of the Brownian dynamics~\eqref{eq:Brownian}, on the other hand, corresponding uniform-in-time concentration estimates are already well known: a uniform-in-time concentration estimate was first deduced in~\cite{Malrieu_2001} from a logarithmic Sobolev inequality in the case when $W$ is convex, which was then largely extended more recently in~\cite{Liu_2021}. We can also refer to the very recent work of Jackson and Zitridis~\cite{Jackson_Zitridis_2024} where concentration bounds for the Wasserstein distance $\mathcal{W}_p(\mu^N_t, \mu_t)$ are obtained from relative entropy estimates.

\begin{theor}[Uniform-in-time concentration]\label{th:concentr}
Consider the Langevin dynamics~\eqref{eq:Langevin} and the associated empirical measure~$\mu_t^N$, cf.~\eqref{eq:empirical-meas}, and assume for simplicity that the initial law~$\mu_\circ\in\calP(\Dd^d)$ has compact support.
There exists $\kappa_0>0$ (only depending on $d,\beta,W,A$) such that the following holds for any~$\kappa\in[0,\kappa_0]$:
there exists $C_0>0$ (only depending on $d,\beta,W,A,\mu_\circ$) such that we have for all $\phi\in C^\infty_c(\Dd^d)$ and $N,t,r\ge0$,
\begin{equation}
\label{eq:concentration_final}
\pr{\int_{\Dd^d} \phi\mu^N_t - \E \Big[ \int_{\Dd^d} \phi\mu^N_t\Big] \ge r} \,\le\, \exp\bigg(-\frac{N r^2}{2C_0\|\phi\|_{W^{3,\infty}(\Dd^d)}^2}\bigg).
\end{equation}
\end{theor}

Finally, we emphasize that, along our way in this work, we prove a general expansion result for the expectation of smooth functionals of the empirical measure along the particle dynamics, which is of independent interest and extends in particular the work of Chassagneux, Szpruch, and Tse~\cite{Chassagneux_2019} to the case of the kinetic Langevin dynamics.
More precisely, in the setting of Theorem~\ref{thm:main}, for all $m\ge1$ and $\kappa\in[0,\kappa_m]$, for all smooth functionals $\Phi$,
we obtain a truncated expansion of the following form, for all $N,t \ge 0$,
\[ \expec{\Phi(\mu^N_t)} - \Phi(\mu_t) \,=\, \sum_{j=1}^{m} \frac{C_{j,\Phi}(t,\mu_\circ)}{N^j} + O(N^{-m-1}), \]
with exact expressions for the coefficients $\{C_{j,\Phi}(t,\mu_\circ)\}_{j}$ independent of $N$.
As explained in~\cite[Section~1.1]{Chassagneux_2019}, by means of Romberg extrapolation, such an expansion can be used to accelerate the convergence of numerical schemes to estimate $\Phi(\mu_t)$ through the particle method.

\subsection{Extensions}
\label{rem:gener}
We focus for shortness on the well-traveled setting of interacting Brownian particles with small pairwise interactions derived from a smooth potential in the ambient space $\R^d$ with quadratic confinement. Yet, our analysis is pretty robust and can be adapted as long as suitable ergodic estimates are available for the linearized mean-field equation. We briefly describe several examples.

\subsubsection{Periodic setting}
\label{subsubsec:periodic}
Our analysis is easily adapted to particle systems on the torus $\T^d$, for instance in the spatially-homogeneous setting $A \equiv 0$, in which case our results still hold in the same form. In the proof, a minor difference appears in the Langevin setting, see Remark~\ref{rem:toroidal-erg}.
For the Brownian dynamics~\eqref{eq:Brownian}, thanks to the rich literature on ergodic properties of the linearized mean-field equation on the torus, see e.g.~\cite{Carrillo_2019,Delarue_Tse_21}, the smallness condition $\kappa\ll1$ can be lifted at least in two cases:
\begin{enumerate}[---]
\item \emph{Conservative interaction:}
If the interaction kernel $\nabla W$ is replaced by a smooth force kernel $K\in C^\infty_b(\T^d)^d$ with $K(-x)=-K(x)$, and if the force is conservative in the sense of $\Div K=0$, then we can rely on the ergodic estimates in~\cite[Section 3.6.2]{Delarue_Tse_21} and our results hold for the Brownian dynamics~\eqref{eq:Brownian} without the restriction $\kappa\ll1$.
\smallskip\item \emph{`H-stable' potential:} If the interaction potential $W\in C^\infty_b(\T^d)$ has nonnegative Fourier coefficients $\hat W(n)\ge0$ for all $n\in\Z^d$, then we can rely on the ergodic estimates in~\cite[Section 3.6.3]{Delarue_Tse_21} and our results hold for the Brownian dynamics~\eqref{eq:Brownian} without the restriction $\kappa\ll1$. In fact, the nonnegativity condition can even be relaxed to $1 +2 \kappa \inf_{n \in \Z^d} \hat W(n) > 0$.
\end{enumerate} 

\subsubsection{Non-quadratic confinement} 
Our results also hold in the same form if instead of the quadratic confinement~\eqref{eq:confinement-A} we choose $A(x)=a|x|^2+A'(x)$ for some $a>0$ and some smooth potential $A'\in C^\infty_b(\R^d)$ provided that $\|\nabla^2A'\|_{\Ld^\infty(\R^d)}$ is small enough (depending on $\beta,W,a$). In that case, we can still appeal to~\cite{Bolley_2010} to ensure the validity of Theorem~\ref{thm:ergodic}(i) below, while the rest of our approach can be adapted directly without major difficulties.

\subsubsection{More general interactions}
Although we focus for shortness on pairwise interactions between the particles, the term $\tfrac1N\sum_{j=1}^N\nabla W(X^{i,N}-X^{j,N})$ in~\eqref{eq:Langevin} (or respectively in~\eqref{eq:Brownian}) could be replaced by a more general interaction force of the form $b(X^{i,N},\mu^N)$ for some functional $b:\R^d\times\Pc(\Dd^d)\to\R^d$, where we recall that $\mu^N$ stands for the empirical measure~\eqref{eq:empirical-meas}. While the choice in~\eqref{eq:Langevin} amounts to $b(x,\mu)=\nabla W\ast\mu(x)$, our analysis is easily adapted to more general nonlinear functionals $b$ under suitable regularity assumptions both on $b$ and on its derivatives with respect to $\mu$, together with a corresponding smallness condition on $\kappa$. We skip the details and refer to~\cite{Delarue_Tse_21} for the needed adaptations (see in particular the regularity assumptions~(Reg) and~(Lip) in~\cite[Section~2.3.2]{Delarue_Tse_21}).

\subsection{Strategy and plan of the paper}\label{sec:strategy}
Let us briefly describe the strategy of the proof of Theorem~\ref{thm:main}.
It is well known that the estimation of correlation functions~$\{G^{m,N}\}_{2\le m\le N}$ can be reduced to the estimation of cumulants $\{\kappa^n(\int_{\Dd^d}\phi \mu_t^N)\}_{n\ge1}$ of linear functionals of the empirical measure $\mu_t^N$; see Lemma~\ref{lem:cumtocorrel}.
As the probability space is a product space accounting both for initial data and for Brownian forces, cumulants can be split through the law of total cumulance: we are thus led to consider separately ``initial'' and ``Brownian'' cumulants.
To estimate initial cumulants, we appeal to the machinery that we developed in~\cite{MD-21} based on so-called Glauber calculus; see Section~\ref{subsec:glauber}.
In order to estimate Brownian cumulants, on the other hand, we might try to appeal similarly to Malliavin calculus in the form of~\cite{Nourdin_2010}, but this would not easily combine with ergodic properties of the linearized mean-field equation to deduce uniform-in-time estimates.
Instead, we draw inspiration from the recent literature on mean-field games using the master equation formalism and the so-called Lions calculus on the space of probability measures, cf.~\cite{Carmona_2018,Cardaliaguet_2019,Chassagneux_2019,Chassagneux_2022}. In a nutshell, the idea is to consider the mean-field semigroup induced on functionals $\mu\mapsto\Phi(\mu)$ on the space of probability measures, and then use Lions calculus on that space to expand the Brownian expectation $\E_B[\Phi(\mu_t^N)]$ of functionals along the particle dynamics; see Lemma~\ref{lem:CST0}.
{Such expansions can be derived explicitly and be pursued to arbitrary order; see also~\cite{Tse_2021}. In addition, as noted by Delarue and Tse~\cite{Delarue_Tse_21}, they}
can be combined with ergodic properties of the linearized mean-field equation to obtain uniform-in-time estimates.
Yet, this does not immediately lead to the desired cumulant estimates $G_t^{m,N}=O(N^{1-m})$: for that purpose, we further need to capture underlying cancellations, which we achieve by developing new diagrammatic techniques in form of so-called Lions graphs; see Section~\ref{sec:brownian}.

While we appeal to recent developments on mean-field games for the expansion of $\E[\Phi(\mu^N_t)]$ along the particle dynamics,
let us emphasize that these techniques are in the same spirit as the functional framework originally developed by Mischler and Mouhot in the context of Kac's program, interacting diffusions, and jump processes with thermal bath~\cite{Mischler_2012,Mischler_2013}, where propagation of chaos was established by comparing the particle dynamics to the mean-field semigroup induced on the space of measures, and where (for the Kac particle system) uniform-in-time estimates were similarly deduced from ergodic properties of the linearized mean-field semigroup.
In the context of mean-field games~\cite{Carmona_2018,Cardaliaguet_2019}, it has appeared crucial to consider instead the corresponding mean-field semigroup induced on {\it functionals on} the space of measures, and this more nonlinear setting was conveniently exploited by Delarue and Tse~\cite{Delarue_Tse_21} to prove uniform-in-time chaos estimates~\eqref{eq:size-chaos-bis} for the Brownian dynamics~\eqref{eq:Brownian}. In our setting, even though we only consider cumulants of {\it linear} functionals of the empirical measure, higher-order expansions along the particle dynamics lead us naturally to turn to the same convenient nonlinear framework; see in particular Proposition~\ref{prop:decomp-exp-B}.

When we appeal to the ergodic properties of the linearized mean-field equation to deduce uniform-in-time estimates, some special care is actually needed.
Indeed, while ergodic estimates follow from the standard parabolic theory in the case of the Brownian dynamics, cf.~\cite{Delarue_Tse_21}, we have to further appeal to hypocoercivity techniques in the kinetic Langevin setting. For ergodic estimates on the weighted space $\Ld^2(M^{-1/2})$, where the weight is given by the mean-field steady state $M$, we can simply appeal to hypocoercivity in form of the theory of Dolbeault, Mouhot, and Schmeiser~\cite{Dolbeault_Hypocoercivity_2015}. For our estimations of cumulants, we rather need ergodic estimates on negative Sobolev spaces, and we easily check that the estimates on $\Ld^2(M^{-1/2})$ can be upgraded on $H^{-k}(M^{-1/2})$ for all $k\ge0$.
Yet, we would ideally rather need ergodic estimates on the larger space $W^{-k,1}(\Dd^d)$. Unfortunately, even the enlargement theory of Gualdani, Mischler, and Mouhot~\cite{Gualdani_2017, Mischler_2016} does not allow to reach such spaces. In Section~\ref{sec:ergodic}, we revisit enlargement techniques and show that we can actually reach $W^{-k,q}(\langle z\rangle^p)$ with arbitrarily small $q>1$ and $p>0$ provided $pq'\gg1$, which is of independent interest and is just enough for our purposes.

Finally, in order to deduce the quantitative CLT and the concentration estimates stated in Theorems~\ref{th:CLT} and~\ref{th:concentr}, we combine the same Lions expansions with Stein's method and with Herbst's argument, respectively.
{We believe this combination of techniques to be of independent interest for applications to other settings, and possibly for mean-field games.}
Note that the proof of Theorems~\ref{th:CLT} and~\ref{th:concentr} is actually much simpler than the proof of cumulant estimates in Theorem~\ref{thm:main} as it does not require to capture arbitrarily fine cancellations. For the quantitative CLT, for instance, the proof essentially boils down to the convergence of the variance and to the smallness of the third cumulant of the empirical measure, thus requiring no fine information on higher cumulants.

\subsubsection*{Plan of the paper}
We start in Section~\ref{sec:prelim} with the presentation of the main technical tools that are used to prove our main results, namely Lions and Glauber calculus, as well as ergodic estimates for the linearized mean-field equation.
In Section~\ref{sec:brownian}, we develop suitable diagrammatic representations for iterated Lions expansions of Brownian cumulants of the empirical measure, which allows us to systematically capture the needed cancellations.
The correlation estimates of Theorem~\ref{thm:main} are concluded in Section~\ref{sec:refined_chaos}, as well as Corollary~\ref{cor:bogo}. The quantitative CLT of Theorem~\ref{th:CLT} is then proved in Section~\ref{sec:clt} and the concentration estimates of Theorem~\ref{th:concentr} are established in Section~\ref{sec:concentration}. 
Finally, Section~\ref{sec:ergodic} is devoted to the proof of our new ergodic estimates stated in Section~\ref{sec:prelim}. In Appendix~\ref{sec:app-pr}, we include proof details for other preliminary results from Section~\ref{sec:prelim} when necessary.

\subsection{Notation}\label{sec:notation}
For notational convenience, we consider a general framework that covers both the Langevin and the Brownian dynamics~\eqref{eq:Langevin} and~\eqref{eq:Brownian} as special cases.
More precisely, we denote by $\{Z_t^{i,N}\}_{1\le i\le N}$ the set of particle trajectories in the space $\Xd:=\Dd^d$ or $\R^d$, as given by the following system of coupled SDEs: for $1\le i\le N$,
\begin{equation}\label{eq:system-abs}
\left\{\begin{array}{ll}
\ddr Z^{i,N}_t =b(Z^{i,N}_t, \mu^N_t)\ddr t + \sigma_0 \ddr B^{i}_t, \qquad t \ge 0, \\[1mm]
Z^{i,N}_t|_{t=0}=Z_\circ^{i,N},
\end{array}\right.
\end{equation}
where $\mu_t^N$ stands for the empirical measure
\[\textstyle\mu_t^N \,:=\, \frac1{N} \sum_{i=1}^N \delta_{Z_t^{i,N}}~~\in~\Pc(\Xd),\]
where $b:\Xd\times\Pc(\Xd)\to\R^d$ is a smooth functional (in a sense that will be made clear later on), where $\{B^i\}_i$ are i.i.d.\@ Brownian motions in $\Xd$, and where $\sigma_0$ is a constant matrix.
We assume that initial data $\{Z_\circ^{i,N}\}_{1\le i\le N}$ are i.i.d.\@ with law $\mu_\circ\in\Pc(\Xd)$, and that the latter has streched exponential moments~\eqref{eq:mom-mu0}.
The associated mean-field equation takes form of the following McKean--Vlasov equation,
\begin{equation}\label{eq:MFL-McKean}
\left\{\begin{array}{ll}
\partial_t\mu+\Div(b(\cdot,\mu)\,\mu)=\tfrac12\Div(a_0\nabla \mu),&\text{in $\R^+\times\Xd$},\\
\mu|_{t=0}=\mu_\circ,&\text{in $\Xd$},
\end{array}\right.
\end{equation}
with $a_0:=\sigma_0\sigma_0^T$, and we denote the well-posed solution operator on $\Pc(\Xd)$ by
\begin{equation}\label{eq:MF-solop}
\mu_t\,:=\,m(t;\mu_\circ).
\end{equation}
This general framework allows us to consider both systems of interest~\eqref{eq:Langevin} and~\eqref{eq:Brownian} at once:
the Langevin dynamics~\eqref{eq:Langevin} is given by
\begin{equation}\label{eq:Langevin-par}
\Xd=\Dd^d,\quad b((x,v),\mu)=\big(v,-\tfrac\beta2v- (\nabla A+\kappa\nabla W\ast\mu)(x)\big),\quad\sigma_0=\begin{pmatrix}0_{\R^d}&0_{\R^d}\\0_{\R^d}&\Id_{\R^d}\end{pmatrix},
\end{equation}
and the Brownian dynamics~\eqref{eq:Brownian} by
\begin{equation}\label{eq:Brownian-par}
\Xd=\R^d,\quad b(x,\mu)=- (\nabla A+\kappa\nabla W\ast\mu)(x) ,\quad\sigma_0=\Id_{\R^d}.
\end{equation}
Note that the diffusion matrix $a_0=\sigma_0\sigma_0^T$ is degenerate in the Langevin case, which is why specific hypocoercivity techniques are then needed.
Most of our work can actually be performed in the general framework~\eqref{eq:system-abs} without any structural assumption on $\Xd,b,\sigma_0$, except when establishing ergodic estimates in Section~\ref{sec:ergodic}. More precisely, our different main results hold for any system of the form~\eqref{eq:system-abs}, under suitable smoothness assumptions for $b$, provided that the ergodic estimates of Theorem~\ref{thm:ergodic} are available. For the latter, we restrict to the setting of the Langevin or Brownian dynamics in the weak coupling regime $\kappa\ll1$. Under mere smoothness assumptions on $b$, if ergodic estimates are not available, we note that our analysis can at least be repeated to obtain non-uniform estimates with exponential time growth.

\medskip\noindent
Finally, let us briefly list the main notation used throughout this work:
\begin{enumerate}[---]
\item We denote by $C\ge1$ any constant that only depends on the space dimension $d$. We use the notation~$\lesssim$ (resp.~$\gtrsim$) for~$\le C\times$ (resp.~$\ge\frac1C\times$) up to such a multiplicative constant~$C$. We write~$\simeq$ when both~$\lesssim$ and~$\gtrsim$ hold. We add subscripts to $C,\lesssim,\gtrsim,\simeq$ to indicate dependency on other parameters.
\smallskip\item The underlying probability space $(\Omega, \mathbb{P})$ splits as a product $(\Omega,\Pm)=(\Omega_\circ,\Pm_\circ)\times(\Omega_B,\Pm_B)$, where the first factor accounts for random initial data and where the second factor accounts for Brownian forces. The space $\Omega^\circ$ is endowed with the $\sigma$-algebra $\mathcal{F}_\circ = \sigma(Z^{1,N}_\circ, \dots, Z^{N,N}_\circ)$ generated by initial data, while $\Omega_B$ is endowed with the $\sigma$-algebra $\sigma((B^1_t, \dots, B^N_t)_{t\ge0})$. We also denote by $\mathcal{F}^B_t := \sigma((B^1_s, \dots, B^N_s)_{0 \le s \le t})$ the Brownian filtration.
We use $\E[\cdot]$ and $\kappa^m[\cdot]$ to denote the expectation and the cumulant of order $m$ with respect to $\Pm$, and we similarly denote by $\E_\circ[\cdot],\kappa^m_\circ[\cdot]$ and by $\E_B[\cdot],\kappa^m_B[\cdot]$ the expectation and cumulants with respect to $\Pm_\circ$ and $\Pm_B$, respectively.
\smallskip\item For any two integers $b\ge a\ge0$, we use the short-hand notation $\llbracket a,b \rrbracket:=\{a, a+1,\dots, b\}$, and in addition for any integer $a\ge1$ we set $\llbracket a \rrbracket := \llbracket 1, a \rrbracket$.
\smallskip\item For all $z \in \R^d$, we use the notation $\langle z \rangle := (1 + |z|^2)^{\frac12}$. 
\smallskip\item For a measure $\mu \in \calP(\Xd)$, we use the following short-hand notation for its second moment,
\begin{equation}\label{eq:QE-not}
Q(\mu)\,:=\,\Big(\int\langle z\rangle^2\mu(\ddr z)\Big)^{\frac12}.
\end{equation}
\end{enumerate}

%%%%%%%%%%%%%%%%%%%%%%%%%%%%%%%%%%%%%%%%%%%%
%%%%%%%%%%%%%%%%%%%%%%%%%%%%%%%%%%%%%%%%%%%%
%%%%%%%%%%%%%%%%%%%%%%%%%%%%%%%%%%%%%%%%%%%%

\section{Preliminary}
\label{sec:prelim}
This section is devoted to the presentation and development of the main technical tools used in this work. First, we include several probabilistic tools: we start with an account of the master equation formalism and of Lions calculus for functionals on the space of probability measures, then we turn to the relation between correlation functions and cumulants of the empirical measure, and we also recall useful tools from Glauber calculus.
Next, we present two ingredients from kinetic theory: moment estimates for the particle dynamics and, most importantly, new ergodic estimates for the linearized mean-field equation in appropriate weighted Sobolev spaces. The latter is 
the key ingredient for our uniform-in-time estimates; its proof
requires quite some developments and is the topic of Section~\ref{sec:ergodic}.

\subsection{Lions calculus}
\label{subsec:linear_derivatives}
We recall several notions of derivatives for continuous functionals on the space $\Pc(\Xd)$ of probability measures (endowed with weak convergence), and how they can be used to expand functionals along the particle dynamics.

\subsubsection{Linear derivative}
We start with the notion of \emph{linear derivative}, as used for instance by Lions in his course at Coll\`ege de France~\cite{Cardaliaguet_2013}; see also~\cite[Chapter 5]{Carmona_2018} for a slightly different exposition.
A functional $\Vc : \Pc(\Xd) \to \R$ is said to be continuously differentiable if there exists a continuous map $\Lind{\Vc}:\Pc(\Xd) \times \Xd\to\R$ such that, for all $\mu_0, \mu_1 \in \Pc(\Xd)$, 
\begin{align}
	\label{eq:def_lind}
	\Vc(\mu_0) - \Vc(\mu_1) \,=\, \int_0^1 \int_{\Xd} \Lind{\Vc}\big(s\mu_0 + (1-s)\mu_1, y\big) \,(\mu_0 - \mu_1)(\ddr y) \,\ddr s,
\end{align}
and we then call $\Lind{\Vc}$ the linear functional derivative of~$\Vc$. This definition holds up to a constant, which we fix by setting
\[ \int_{\Xd} \Lind{\Vc}(\mu,y) \,\mu(\ddr y) \,=\, 0,\qquad\text{for all $\mu\in\Pc(\Xd)$.}\]
The denomination ``linear derivative'' is understood as it is precisely defined to satisfy for all $\mu\in\Pc(\Xd)$ and~$y\in\Xd$,
\begin{equation}\label{eq:infinitesimal_linear_der}
\lim \limits_{h \to 0} \frac{\Vc((1-h)\mu + h \delta_y) - \Vc(\mu) }{h} \,=\, \Lind{\Vc}(\mu,y).
\end{equation}
Higher-order linear derivatives are defined by induction: for all integers $p \ge 1$, if the functional~$\Vc$ is $p$-times continuously differentiable, we say that it is $(p+1)$-times continuously differentiable if there exists a continuous map $\Lindk{\Vc}{p+1}:\Pc(\Xd)\times\Xd^{p+1}\to\R$ such that for all
$\mu, \mu'$ in $\Pc(\Xd)$ and~$y \in\Xd^{p}$,
\begin{align*}
\Lindk{\Vc}{p}(\mu,y) - \Lindk{\Vc}{p}(\mu',y) \,=\, \int_0^1 \int_{\Xd} \Lindk{\Vc}{p+1}\big((s\mu + (1-s)\mu', y, y'\big) \,(\mu - \mu')(\ddr y')\, \ddr s.
\end{align*}
Once again, to ensure the uniqueness of the $(p+1)$th linear functional derivative $\Lindk{\Vc}{p+1}$, we choose the convention
\begin{align*}
\int_{\Xd} \Lindk{\Vc}{p+1}(\mu,y_1,\dots,y_{p+1}) \,\mu(\ddr y_{p+1}) \,=\, 0, \qquad \text{for all $\mu\in\Pc(\Xd)$ and $y_1,\dots,y_{p} \in \Xd$.} 
\end{align*}

\subsubsection{L-derivative}
We further recall the notion of so-called \emph{L-derivatives} (or Lions derivatives, or intrinsic derivatives), as developed in~\cite{Lions_2014}. We refer e.g.\@ to~\cite[Section~2.2]{Cardaliaguet_2019} for the link to the Otto calculus on Wasserstein space~\cite{Otto-01,Ambrosio-Gigli-Savare-05}. For a continuously differentiable functional $\Vc:\calP(\Xd)\to\R$, if the map $y \mapsto \Lind{\Vc}(\mu,y)$ is of class $C^1$ on $\Xd$, the L-derivative of $\Vc$ is defined as 
\begin{align}\label{eq:def_L_derivative}
\partial_\mu\Vc(\mu,y) \,:=\, \nabla_y \Lind{\Vc}(\mu,y). 
\end{align}
We also define corresponding higher-order derivatives: for all $\mu \in \calP(\Xd)$ and $y_1, \ldots, y_p \in \Xd$, we define, provided that it makes sense,
\begin{align*}
\partial^p_\mu \Vc(\mu,y_1,\dots,y_p)
\,:=\, \nabla_{y_1} \ldots \nabla_{y_p} \Lindk{\Vc}{p}\big(\mu, y_1,\dots, y_p\big). 
\end{align*}

\subsubsection{Master equation formalism}
In terms of the above calculus on the space $\Pc(\Xd)$ of probability measures, we now introduce the so-called {master equation formalism} to describe the evolution of functionals on $\Pc(\Xd)$ along the mean-field flow.
For a smooth functional $\Phi:\Pc(\Xd) \to \R$, we define
\begin{align}\label{eq:def-Utmu-0intr}
U(t,\mu) \,:=\, P_t \Phi(\mu)\,:=\, \Phi(m(t,\mu)), \qquad t \ge 0, \quad \mu \in \Pc(\Xd), 
\end{align}
where we recall the notation $m(t,\mu)$ for the mean-field solution operator~\eqref{eq:MF-solop}.
This defines a semigroup $(P_t)_{t \ge 0}$ acting on bounded measurable functionals on $\Pc(\Xd)$.
From~\cite[Theorem 7.2]{Buckdahn_2017}, using the regularity of $b$, and assuming corresponding regularity of $\Phi$, we find that $U(t,\mu)$ satisfies the following master equation, which is viewed as an evolution equation for functionals on $\Pc(\Xd)$,
\begin{equation}\label{eq:master_eq}
\left\{
\begin{array}{l}
\partial_t U(t,\mu) = \int_{\Xd} \Big[b(x,\mu) \cdot \nabla_x \Lind{U}(t,\mu,x) + \frac12 a_0 : \nabla^2_x \Lind{U}(t,\mu,x)\Big] \mu(\ddr x),\\[2mm]
U(0,\mu) = \Phi(\mu),
\end{array}\right.
\end{equation}
where we recall $a_0=\sigma_0\sigma_0^T$.

\subsubsection{Expansions along the particle dynamics}
We recall the following useful result that allows to expand functionals along the particle dynamics in terms of the corresponding mean-field flow, cf.~\cite[(5.131)]{Carmona_2018} or~\cite[Lemma~2.8]{Chassagneux_2019}. (Note that the proof in~\cite{Chassagneux_2019} only relies on the master equation~\eqref{eq:master_eq} and on~\cite[Proposition 3.1]{Chassagneux_2022}, so that in particular there is no uniform ellipticity requirement for the diffusivity $a_0=\sigma_0\sigma_0^T$.)

\begin{lem}[see~\cite{Carmona_2018,Chassagneux_2019}]
\label{lem:CST0}
Let $\Phi: \calP(\Xd) \to \R$ be a smooth functional and let $U(t,\mu)$ be defined in~\eqref{eq:def-Utmu-0intr}.
Then for all $0\le s\le t$ we have
\begin{equation}\label{eq:Lions-expand-00}
U(t-s,\mu^N_s) \,=\, U(t,\mu^N_0) \,+\, \frac{1}{2N} \int_0^s \int_{\Xd} \Tr \Big[ a_0\, \partial^2_\mu U(t-u,\mu^N_u) (z,z) \Big] \, \mu^N_u(\ddr z) \, \ddr u \,+\, M^N_{t,s},
\end{equation}
where $(M^N_{t,s})_{s\ge0}$ is a {square-integrable} $(\mathcal{F}^B_s)_{s \ge 0}$-martingale with $M^N_{t,0} = 0$, which is explicitly given by
\begin{equation*}
M^N_{t,s}\,:=\,\frac1N\sum_{i=1}^N\int_0^{s \wedge t} \partial_\mu U(t-u,\mu_u^N)(Z^{i,N}_u)\cdot\sigma_0\,\ddr B^i_u.
\end{equation*}
\end{lem}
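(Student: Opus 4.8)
The plan is to derive \eqref{eq:Lions-expand-00} by applying It\^o's formula to the process $u\mapsto U(t-u,\mu^N_u)$ along the particle dynamics \eqref{eq:system-abs}, and to use the master equation \eqref{eq:master_eq} to identify and cancel the leading part of the resulting drift; this is the argument of \cite[Lemma~2.8]{Chassagneux_2019}, which we recall. The only nontrivial ingredient is that, for $\Phi$ smooth, the functional $\mu\mapsto U(\tau,\mu)$ is twice continuously $L$-differentiable with $\partial_\mu U=\nabla_x\Lind{U}$, $\nabla_x^2\Lind{U}$ and $\partial^2_\mu U$ jointly continuous in all variables and of at most polynomial growth, locally uniformly in $\tau$; this is precisely what the mean-field regularity theory of \cite[Theorem~7.2]{Buckdahn_2017} and \cite[Proposition~3.1]{Chassagneux_2022} provides, and it requires no ellipticity of $a_0=\sigma_0\sigma_0^T$.

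First I would record the chain rule for a smooth functional evaluated at an empirical measure: setting $v^N(x_1,\dots,x_N):=U\big(\tau,\tfrac1N\sum_k\delta_{x_k}\big)$ for fixed $\tau\ge0$, one has for each $i$
\[
\nabla_{x_i}v^N \,=\, \tfrac1N\,\partial_\mu U(\tau,\mu^N,x_i),\qquad
\nabla^2_{x_i}v^N \,=\, \tfrac1N\,\nabla^2_x\Lind{U}(\tau,\mu^N,x_i) \,+\, \tfrac1{N^2}\,\partial^2_\mu U(\tau,\mu^N,x_i,x_i),
\]
where $\mu^N=\tfrac1N\sum_k\delta_{x_k}$, the extra $N^{-2}$ term coming from differentiating $\mu^N$ itself in the measure slot of $\partial_\mu U$. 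Since the Brownian motions $\{B^i\}_i$ are independent, the only nonvanishing (co)variations in \eqref{eq:system-abs} are $\ddr\langle Z^{i,N},Z^{i,N}\rangle_u=a_0\,\ddr u$, so the generator $\calL^N$ of \eqref{eq:system-abs} applied to $v^N$ involves only these derivatives; rewriting the sums over particles as integrals against $\mu^N$ and using $\partial_\mu U=\nabla_x\Lind{U}$, this yields
\[
\calL^N v^N \,=\, \int_{\Xd}\Big[b(x,\mu^N)\cdot\nabla_x\Lind{U}(\tau,\mu^N,x)+\tfrac12\,a_0:\nabla^2_x\Lind{U}(\tau,\mu^N,x)\Big]\mu^N(\ddr x) \,+\, \tfrac1{2N}\int_{\Xd}\Tr\big[a_0\,\partial^2_\mu U(\tau,\mu^N,z,z)\big]\mu^N(\ddr z).
\]
The key remark is that, by the master equation \eqref{eq:master_eq}, the first integral on the right-hand side is exactly $(\partial_tU)(\tau,\mu^N)$.

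Next I would apply It\^o's formula to $f(u):=U(t-u,\mu^N_u)$ for $u\in[0,s]\subseteq[0,t]$: with $v^N_u(x_1,\dots,x_N):=U\big(t-u,\tfrac1N\sum_k\delta_{x_k}\big)$, the drift of $f$ equals $-(\partial_tU)(t-u,\mu^N_u)+(\calL^N v^N_u)(Z^{1,N}_u,\dots,Z^{N,N}_u)$, and by the previous step the two copies of $(\partial_tU)(t-u,\mu^N_u)$ cancel, leaving
\[
\ddr f(u) \,=\, \tfrac1{2N}\int_{\Xd}\Tr\big[a_0\,\partial^2_\mu U(t-u,\mu^N_u,z,z)\big]\mu^N_u(\ddr z)\,\ddr u \,+\, \tfrac1N\sum_{i=1}^N\partial_\mu U(t-u,\mu^N_u,Z^{i,N}_u)\cdot\sigma_0\,\ddr B^i_u.
\]
Integrating from $0$ to $s$ and using $f(0)=U(t,\mu^N_0)$ yields \eqref{eq:Lions-expand-00} with the stated expression for $M^N_{t,s}$.

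It then remains to verify that $(M^N_{t,s})_{s\ge0}$ is a square-integrable $(\mathcal{F}^B_s)_{s\ge0}$-martingale with $M^N_{t,0}=0$: the last property is immediate, and square-integrability follows from the at-most-polynomial growth of $\partial_\mu U(\tau,\cdot,\cdot)$ (uniformly for $\tau\in[0,t]$) together with the finiteness of the moments of $Z^{i,N}_u$ over $u\in[0,t]$ (a consequence of the regularity of $b$), which give $\E\int_0^t|\partial_\mu U(t-u,\mu^N_u,Z^{i,N}_u)|^2\,\ddr u<\infty$, i.e.\ exactly the It\^o integrability condition. The only genuinely delicate point is the differentiability-and-growth statement for $U(\tau,\cdot)$ invoked at the outset --- everything downstream being a direct It\^o computation --- and this is supplied by \cite{Buckdahn_2017,Chassagneux_2022}, which is why the lemma is credited to \cite{Carmona_2018,Chassagneux_2019}.
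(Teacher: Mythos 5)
Your proof is correct and is essentially the argument the paper relies on: the paper itself gives no proof of this lemma but defers to \cite{Carmona_2018,Chassagneux_2019}, noting only that the proof rests on the master equation~\eqref{eq:master_eq} and the regularity input of \cite[Proposition~3.1]{Chassagneux_2022} with no ellipticity of $a_0$ required, which is exactly the structure of your argument (empirical-projection chain rule, It\^o's formula, cancellation of $\partial_t U$ via the master equation, and the $O(N^{-2})$ correction producing the $\frac{1}{2N}\partial_\mu^2 U$ drift). The only cosmetic remark is that your It\^o computation is carried out on $[0,t]$, which suffices since~\eqref{eq:Lions-expand-00} is stated for $0\le s\le t$ and the cap $s\wedge t$ in $M^N_{t,s}$ merely freezes the martingale afterwards.
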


This expansion will be used throughout this work to compare the empirical measure to the corresponding mean-field semigroup. More precisely, we shall abundantly use the following immediate consequences.

\begin{cor}\label{cor:CST}
Let $\Phi: \calP(\Xd) \to \R$ be a smooth functional and let $U(t,\mu)$ be defined in~\eqref{eq:def-Utmu-0intr}.
\begin{enumerate}[(i)]
\item For all $t\ge0$, we have
\begin{align*}
\qquad\big|\E[\Phi(\mu_t^N)]-\E_\circ[\Phi(m(t,\mu^N_0))]\big|\,\lesssim\,N^{-1}\,\E\bigg[\int_0^t \int_{\Xd}\big|\partial^2_\mu U(t-u,\mu_u^N) (z,z)\big|\, \mu^N_u(\ddr z) \, \ddr u\bigg].
\end{align*}
\item For all $t\ge0$, we have
\begin{multline*}
\qquad\|\Phi(\mu_t^N)-\Phi(m(t,\mu_0^N))\|_{\Ld^2(\Omega_B)}\,\lesssim\,
N^{-\frac12}\,\E_B \bigg[  \int_0^t\int_\Xd\big|\partial_\mu U(t-u,\mu^N_u)(z) \big|^2 \, \mu_u^N(\ddr z)\,\ddr u \bigg]^\frac12\\
+N^{-1}\,\E_B \bigg[ \Big( \int_0^t\int_\Xd \big|\partial_\mu^2U(t-u,\mu^N_u)(z,z)\big| \, \mu_u^N(\ddr z) \, \ddr u \Big)^2 \bigg].
\end{multline*}
\end{enumerate}
\end{cor}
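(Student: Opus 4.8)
The plan is to specialize the identity of Lemma~\ref{lem:CST0} to $s=t$ and then read off the two bounds by taking, respectively, the full expectation and the $\Ld^2(\Omega_B)$-norm of the resulting identity. Setting $s=t$ in~\eqref{eq:Lions-expand-00} and using $U(0,\cdot)=\Phi$ together with $U(t,\mu_0^N)=\Phi(m(t,\mu_0^N))$ (both immediate from the definition~\eqref{eq:def-Utmu-0intr} of $U$), we obtain the exact decomposition
\[
\Phi(\mu_t^N)-\Phi(m(t,\mu_0^N))\,=\,\frac1{2N}\int_0^t\int_\Xd\Tr\big[a_0\,\partial^2_\mu U(t-u,\mu_u^N)(z,z)\big]\,\mu_u^N(\ddr z)\,\ddr u\,+\,M^N_{t,t},
\]
with $M^N_{t,t}$ the terminal value of the square-integrable $(\F_s^B)_{s\ge0}$-martingale of Lemma~\ref{lem:CST0}. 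All that remains is to estimate the two right-hand side contributions.

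For item~(i), I would take the expectation $\E=\E_\circ\E_B$ of the above identity. The martingale contribution then vanishes, and since $m(t,\mu_0^N)$ is $\F_\circ$-measurable we have $\E[\Phi(m(t,\mu_0^N))]=\E_\circ[\Phi(m(t,\mu_0^N))]$; using $|\Tr[a_0\,\partial^2_\mu U]|\lesssim|\partial^2_\mu U|$ (as $a_0=\sigma_0\sigma_0^T$ is a fixed matrix) and moving the absolute value inside the integrals then gives the stated bound.

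For item~(ii), I would instead take the $\Ld^2(\Omega_B)$-norm of the identity and split, via the triangle inequality, the drift term from the martingale $M^N_{t,t}$. The drift term is bounded, by Jensen's inequality (or Minkowski's integral inequality) together with $|\Tr[a_0\,\partial^2_\mu U]|\lesssim|\partial^2_\mu U|$, by $N^{-1}$ times the $\Ld^2(\Omega_B)$-norm of $\int_0^t\int_\Xd|\partial^2_\mu U(t-u,\mu_u^N)(z,z)|\,\mu_u^N(\ddr z)\,\ddr u$, which is the second term in the claim. For the martingale, the point is the It\^o isometry: the $B^i$ being independent, the cross terms vanish and
\[
\E_B\big[(M^N_{t,t})^2\big]\,=\,\frac1{N^2}\sum_{i=1}^N\E_B\Big[\int_0^t\big|\sigma_0^{T}\partial_\mu U(t-u,\mu_u^N)(Z^{i,N}_u)\big|^2\,\ddr u\Big]\,\lesssim\,\frac1N\,\E_B\Big[\int_0^t\int_\Xd\big|\partial_\mu U(t-u,\mu_u^N)(z)\big|^2\,\mu_u^N(\ddr z)\,\ddr u\Big],
\]
where in the last step I used $\tfrac1N\sum_{i=1}^N g(Z^{i,N}_u)=\int_\Xd g\,\ddr\mu_u^N$ to turn the particle sum into an integral against the empirical measure; taking square roots gives the first term. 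There is no genuine obstacle here — the corollary is an essentially formal consequence of Lemma~\ref{lem:CST0} — and the only point requiring care is the bookkeeping of the powers of $N$, whereby the $N^{-2}$ prefactor of $(M^N_{t,t})^2$, the $N$ surviving diagonal terms in the It\^o isometry, and the $N^{-1}$ weight carried by $\mu^N_u$ combine to yield exactly the $N^{-1/2}$ (resp.\ $N^{-1}$) scaling.
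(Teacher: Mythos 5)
Your proof is correct and follows essentially the same route as the paper: setting $s=t$ in Lemma~\ref{lem:CST0}, taking the full expectation (so the martingale vanishes) for item~(i), and taking the $\Ld^2(\Omega_B)$-norm with the martingale controlled by its quadratic variation (It\^o isometry and $\frac1N\sum_i g(Z^{i,N}_u)=\int_\Xd g\,\ddr\mu^N_u$) for item~(ii). The only remark is that your drift estimate naturally produces $N^{-1}\,\E_B[(\cdots)^2]^{1/2}$ rather than the $N^{-1}\,\E_B[(\cdots)^2]$ printed in the corollary; the paper's own proof yields exactly the same square-root version, so this is a cosmetic discrepancy in the stated bound, not a gap in your argument.
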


\begin{proof}
Taking the expectation $\E=\E_\circ\E_B$ in~\eqref{eq:Lions-expand-00}, using $\E_B[M_{t,s}^N]=0$, and setting $s=t$, we are led in particular to the following expansion for the expectation of a functional of the empirical measure,
\begin{align*}
\E[\Phi(\mu_t^N)]\,=\,\E_\circ\big[\Phi(m(t,\mu^N_0))\big] \,+\,\frac{1}{2N} \int_0^t \E\bigg[\int_{\Xd}\Tr \Big[ a_0\, \partial^2_\mu U(t-u,\mu^N_u) (z,z) \Big] \, \mu^N_u(\ddr z) \bigg] \ddr u,
\end{align*}
and item~(i) immediately follows.
Next, taking the $\Ld^2(\Omega_B)$ norm in~\eqref{eq:Lions-expand-00}, noting that Jensen's inequality yields
\[\qquad\E_B[(M_{t,s}^N)^2]\,\le\,N^{-1} \E_B \bigg[\int_0^{s\wedge t}\int_\Xd\big|\sigma_0^T(\partial_\mu U)(t-u,\mu_u^N)(z)\big|^2\,\mu_u^N(\ddr z)\,\ddr u \bigg],\]
and setting $s=t$, we similarly obtain item~(ii).
\end{proof}

Due to the above result, as emphasized in~\cite{Carmona_2018,Chassagneux_2019,Delarue_Tse_21}, Lions calculus provides a natural starting point to study propagation of chaos, which was indeed successfully used in particular in~\cite{Delarue_Tse_21} to establish uniform-in-time weak chaos estimates for the Brownian dynamics.
More precisely, in order to obtain weak chaos estimates of the form~\eqref{eq:size-chaos-bis}, that is,
\[\E[\Phi(\mu_t^N)]-\Phi(m(t,\mu_\circ))\,=\,O(N^{-1}),\]
we can appeal to item~(i) above and it remains to compare $\E_\circ[\Phi(m(t,\mu^N_0))]$ to $\Phi(m(t,\mu_\circ))$. The missing estimate is provided by the following general result due to~\cite[Theorem~2.11]{Chassagneux_2019}.
We emphasize that it provides some $O(N^{-1})$ convergence rate in the law of large numbers for the empirical measure associated with i.i.d.\@ data in some weak topology:
this contrasts with the much weaker rates that are sharply obtained for the expectation of Wasserstein distances e.g.\@ in~\cite{Fournier_15, Fournier_2023}. 

\begin{lem}[see~\cite{Chassagneux_2019}]
\label{lem:chassagneux}
For any smooth functional $\Phi:\Pc(\Xd)\to\R$, we have
\[\E[\Phi(\mu_0^N)]-\Phi(\mu_\circ)\,=\,\frac1N\int_0^1\int_0^1\int_\Xd\E\bigg[\frac{\delta^2\Phi}{\delta\mu^2}(\tilde\mu^N_{s,u,z},z,z)-\frac{\delta^2\Phi}{\delta\mu^2}(\tilde\mu^N_{s,u,z},z,Z_\circ^{1,N})\bigg]\,\mu_\circ(\ddr z)\,\ddr u\, s\,\ddr s,\]
in terms of
$\tilde\mu_{s,u,z}^N:=\frac{su}N(\delta_{z}-\delta_{Z_\circ^{1,N}})+\mu_\circ+s(\mu_0^N-\mu_\circ)$.
In particular, this implies
\[|\E[\Phi(\mu_0^N)]-\Phi(\mu_\circ)|\,\le\, 2N^{-1}\sup_{\mu\in\Pc(\Xd)}\Big\|\frac{\delta^2\Phi}{\delta\mu^2}\Big\|_{\Ld^\infty(\Xd^2)}.\]
\end{lem}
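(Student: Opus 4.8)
The plan is to compute $\E[\Phi(\mu_0^N)] - \Phi(\mu_\circ)$ by a second-order Taylor expansion of $\Phi$ around $\mu_\circ$ along the segment joining $\mu_\circ$ to the random empirical measure $\mu_0^N = \frac1N\sum_{i=1}^N\delta_{Z_\circ^{i,N}}$, and then to exploit the i.i.d.\@ structure of the initial data to kill the first-order term. First I would write, using the definition \eqref{eq:def_lind} of the linear derivative twice (a Taylor expansion with integral remainder in the measure variable), the identity
\[
\Phi(\mu_0^N) - \Phi(\mu_\circ) \,=\, \int_\Xd \Lind{\Phi}(\mu_\circ, y)\,(\mu_0^N-\mu_\circ)(\ddr y) \,+\, \int_0^1 \int_{\Xd^2} \Lindk{\Phi}{2}\big(\mu_\circ + s(\mu_0^N-\mu_\circ), y, y'\big)\,(\mu_0^N-\mu_\circ)(\ddr y)\,(\mu_0^N-\mu_\circ)(\ddr y')\,s\,\ddr s.
\]
Taking $\E=\E_\circ$ of the first-order term, since $\E[\mu_0^N] = \mu_\circ$ and $\Lind{\Phi}(\mu_\circ,\cdot)$ is deterministic, it vanishes exactly. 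This is the structural mechanism behind the $\frac1N$ gain.

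Next I would analyze the second-order remainder term. Expanding the two factors $(\mu_0^N-\mu_\circ)(\ddr y)$ and $(\mu_0^N-\mu_\circ)(\ddr y')$ into double sums $\frac1N\sum_i(\delta_{Z_\circ^{i,N}} - \mu_\circ)$ produces $N^2$ terms; the key point is exchangeability: the diagonal terms $i=j$ contribute $N$ copies of one expectation, the off-diagonal terms $i\neq j$ contribute $N(N-1)$ copies of another, and the two are arranged so that a cancellation of the leading $O(1)$ piece occurs, leaving an $O(\frac1N)$ remainder. Concretely, after using exchangeability to replace all indices by the index $1$ (and $2$ for the off-diagonal case), the natural move is to further expand the inner argument $\mu_\circ + s(\mu_0^N-\mu_\circ)$ so as to compare $\Lindk{\Phi}{2}$ evaluated at $(y,y') = (z,z)$ (diagonal) against $(y,y')=(z,Z_\circ^{1,N})$ (off-diagonal). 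Writing $\tilde\mu_{s,u,z}^N := \frac{su}N(\delta_z - \delta_{Z_\circ^{1,N}}) + \mu_\circ + s(\mu_0^N - \mu_\circ)$ interpolates between the relevant base measures as $u$ runs over $[0,1]$, and a final first-order Taylor expansion in the variable $u$ (again via \eqref{eq:def_lind}, now applied to $\mu\mapsto\Lindk{\Phi}{2}(\mu,z,z)$ and $\mu\mapsto\Lindk{\Phi}{2}(\mu,z,Z_\circ^{1,N})$, noting $\frac{\ddr}{\ddr u}\tilde\mu^N_{s,u,z} = \frac sN(\delta_z-\delta_{Z_\circ^{1,N}})$) turns the difference of the two base points into an explicit $\frac1N$ factor times $\frac{\delta^2\Phi}{\delta\mu^2}(\tilde\mu^N_{s,u,z},z,z) - \frac{\delta^2\Phi}{\delta\mu^2}(\tilde\mu^N_{s,u,z},z,Z_\circ^{1,N})$, which is exactly the integrand in the statement. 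One must also track the contribution where one of the two measure increments is replaced by $-\mu_\circ$ times a mass; these "boundary" pieces are handled by the normalization convention $\int_\Xd \Lindk{\Phi}{2}(\mu,\cdot,y_2)\,\mu(\ddr y_2)=0$, which makes them vanish, and by bookkeeping the combinatorial factor $\frac1N$ coming from $\frac1{N^2}\cdot N$ on the diagonal.

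The main obstacle is the combinatorics of the double sum together with keeping careful track of which normalization conventions cause which terms to disappear: the bookkeeping that converts the $N^2$-term expansion into the single clean integral over $(s,u,z)$ with the precise interpolating measure $\tilde\mu^N_{s,u,z}$ is where all the work lies, whereas the two Taylor expansions and the use of $\E[\mu_0^N]=\mu_\circ$ are routine. Regularity is not an issue here since $\Phi$ is assumed smooth, so all linear derivatives up to second order are continuous and bounded on the relevant (weakly compact, since we stay within convex combinations of $\mu_\circ$ and empirical measures) set of probability measures, legitimizing the differentiations under the integral and the exchange of $\E$ with the $s$- and $u$-integrals by Fubini.
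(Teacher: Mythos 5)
Your route does not produce the stated identity, and several of its key steps fail as described. First, your Taylor formula is internally inconsistent: if the first-order term is $\int_\Xd\Lind{\Phi}(\mu_\circ,y)\,(\mu_0^N-\mu_\circ)(\ddr y)$ (the derivative at the left endpoint $\mu_\circ$), the integral remainder must carry the weight $(1-s)$, not $s$; the weight $s\,\ddr s$ in the statement is not a Taylor weight at all (see below), so matching it this way is already off. Second, the ``boundary'' pieces in your $N^2$-term expansion of $(\mu_0^N-\mu_\circ)^{\otimes 2}$ do not vanish by the normalization convention: that convention reads $\int_\Xd\Lindk{\Phi}{2}(\mu,y,y')\,\mu(\ddr y')=0$ with $\mu$ the measure at which the derivative is evaluated, here $\mu_\circ+s(\mu_0^N-\mu_\circ)$, whereas your cross terms are integrated against $\mu_\circ$. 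Third, and most importantly, the kernel $\Lindk{\Phi}{2}\big(\mu_\circ+s(\mu_0^N-\mu_\circ),\cdot,\cdot\big)$ is random and depends on all particles, so the off-diagonal terms $i\ne j$ do not vanish in expectation, and nothing in your plan explains how the $N(N-1)$ of them together with the $N$ diagonal ones reassemble \emph{exactly} into a single integral featuring the interpolating measure $\tilde\mu^N_{s,u,z}$ with its correction $\frac{su}{N}(\delta_z-\delta_{Z_\circ^{1,N}})$; the ``cancellation of the leading $O(1)$ piece'' you invoke is not the operative mechanism here.

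The actual argument (this is~\cite[Theorem~2.11]{Chassagneux_2019}, which the paper cites rather than reproves) uses only \emph{first-order} expansions. Write $\Phi(\mu_0^N)-\Phi(\mu_\circ)=\int_0^1\int_\Xd\Lind{\Phi}\big(\mu_\circ+s(\mu_0^N-\mu_\circ),y\big)\,(\mu_0^N-\mu_\circ)(\ddr y)\,\ddr s$, use exchangeability to reduce $\frac1N\sum_i$ to the term $i=1$, and then observe, by relabeling the variable $Z^{1,N}_\circ\sim\mu_\circ$ as a fresh $z$ integrated against $\mu_\circ$, that $\E\big[\Lind{\Phi}\big(\mu_\circ+s(\mu_0^N-\mu_\circ),Z_\circ^{1,N}\big)\big]=\int_\Xd\E\big[\Lind{\Phi}\big(\mu_\circ+s(\mu_0^N-\mu_\circ)+\tfrac sN(\delta_z-\delta_{Z^{1,N}_\circ}),z\big)\big]\,\mu_\circ(\ddr z)$. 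Subtracting the term $\int_\Xd\E\big[\Lind{\Phi}\big(\mu_\circ+s(\mu_0^N-\mu_\circ),z\big)\big]\,\mu_\circ(\ddr z)$ coming from the $-\mu_\circ$ part, the two arguments differ by $\frac sN(\delta_z-\delta_{Z^{1,N}_\circ})$, and a second application of the definition of the linear derivative, now to $\mu\mapsto\Lind{\Phi}(\mu,z)$ along the $u$-interpolation $\tilde\mu^N_{s,u,z}$, produces the explicit factor $\frac sN$ together with the difference $\Lindk{\Phi}{2}(\tilde\mu^N_{s,u,z},z,z)-\Lindk{\Phi}{2}(\tilde\mu^N_{s,u,z},z,Z_\circ^{1,N})$ --- this is where both the $1/N$ and the weight $s\,\ddr s$ come from. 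The resampling/relabeling step is the crux and is missing from your plan; without it the expansion does not close into the claimed exact identity.
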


\subsection{Correlation functions and cumulants}
In order to estimate the many-particle correlation functions $\{G^{k,N}\}_{1\le k\le N}$ defined in~\eqref{eq:def-cumGm}, we shall proceed by estimating the cumulants of the empirical measure, which have a more exploitable probabilistic content.
We recall that the $m$th cumulant of a bounded random variable $X$ is defined by
\[ \kappa^m[X] := \Big( (\tfrac{\ddr}{\ddr t})^m \log \E \big[ e^{tX} \big] \Big)\Big|_{t = 0}, \]
that is,
\begin{align}
	\kappa^1[X] &= \E[X], \nonumber\\
	\kappa^2[X] &= \E[X^2] - \E[X]^2 = \Var[X], \nonumber\\
	\kappa^3[X] &= \E[X^3] - 3\E[X^2]\E[X] + 2 \E[X]^3, \nonumber\\
	\kappa^4[X] &= \E[X^4] - 4\E[X^3]\E[X] - 3\E[X^2]^2 + 12 \E[X^2]\E[X]^2 - 6 \E[X]^4,\label{eq:cumfrommom-inv0}
\end{align}
and so on {(see~\eqref{eq:cumfrommom-inv} for a general formula).}
We also define the joint cumulant of a family of bounded random variables $X_1,\ldots,X_m$ as
\begin{align*}
\kappa^m[X_1, \ldots, X_m]\, :=\,\Big( \tfrac{\ddr^m}{\ddr t_1 \dots \ddr t_m} \log\expecm{e^{\sum_{j=1}^m t_j X_j}}\Big)\Big|_{t_1 = \ldots = t_m = 0}.
\end{align*}
Since we consider in this work a product probability space $(\Omega,\Pm)=(\Omega_\circ,\Pm_\circ)\times(\Omega_B,\Pm_B)$, where the first factor accounts for random initial data and the second for Brownian forces,
we appeal to the following law of total cumulance in order to split cumulants accordingly.

\begin{lem}[see~\cite{Brillinger_1969}]\label{lem:total_cumulance}
For all $m \ge 2$ and all bounded random variables $X$, we have
\begin{equation*}
\kappa^m[X]\, =\, \sum_{\pi\vdash\llbracket m \rrbracket} \kappa_\circ^{\sharp\pi} \Big[ \big(\kappa^{\sharp A}_B[X]\big)_{A\in\pi}\Big],
\end{equation*}
where we recall that $\kappa_\circ$ and $\kappa_B$ stand for cumulants with respect to $\Pm_\circ$ and $\Pm_B$, respectively, and where we use similar notation as in~\eqref{eq:cluster-exp0}.
\end{lem}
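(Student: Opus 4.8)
The plan is to obtain the formula from the combinatorial structure of joint cumulants, namely M\"obius inversion on the lattice of set partitions, together with the tower property $\E=\E_\circ\E_B$. It is cleanest to prove the more general statement for joint cumulants and then specialise: for bounded random variables $X_1,\dots,X_m$ on the product space $(\Omega,\Pm)$,
\[\kappa[X_1,\dots,X_m]\,=\,\sum_{\pi\vdash\llbracket m\rrbracket}\kappa_\circ\Big[\big(\kappa_B[(X_j)_{j\in A}]\big)_{A\in\pi}\Big],\]
where $\kappa_B[(X_j)_{j\in A}]$ denotes the conditional joint cumulant of $(X_j)_{j\in A}$ given $\mathcal F_\circ$ -- a bounded $\mathcal F_\circ$-measurable random variable since $X$ is bounded -- and where the outer $\kappa_\circ$ is the joint cumulant with respect to $\Pm_\circ$ of the displayed family indexed by the blocks of $\pi$. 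The lemma then follows by taking all $X_j$ equal to $X$, since joint cumulants of repeated arguments depend only on the number of arguments.

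First I would record the two standard moment--cumulant identities. For a finite set $S$ and bounded $(X_j)_{j\in S}$, writing for brevity $M[S]:=\E[\prod_{j\in S}X_j]$ and $\kappa[S]:=\kappa[(X_j)_{j\in S}]$, one has
\[M[S]\,=\,\sum_{\sigma\vdash S}\prod_{B\in\sigma}\kappa[B],\qquad \kappa[S]\,=\,\sum_{\sigma\vdash S}(-1)^{\sharp\sigma-1}(\sharp\sigma-1)!\prod_{B\in\sigma}M[B],\]
and, by M\"obius inversion on the partition lattice, the first identity characterises the functional $S\mapsto\kappa[S]$ uniquely. The same pair of identities holds at the conditional level, relating $M_B[S]:=\E_B[\prod_{j\in S}X_j]$ to the conditional cumulants $\kappa_B[S]:=\kappa_B[(X_j)_{j\in S}]$, and at the ``initial'' level on $(\Omega_\circ,\Pm_\circ)$, relating $M_\circ$ to $\kappa_\circ$ for families of $\mathcal F_\circ$-measurable bounded random variables.

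Next I would set $\tilde\kappa[S]:=\sum_{\pi\vdash S}\kappa_\circ\big[(\kappa_B[A])_{A\in\pi}\big]$ (the right-hand side above, with $\kappa_B[A]$ short for $\kappa_B[(X_j)_{j\in A}]$) and verify that it satisfies the characterising relation $\sum_{\sigma\vdash S}\prod_{B\in\sigma}\tilde\kappa[B]=M[S]$, which by uniqueness forces $\tilde\kappa=\kappa$. Expanding
\[\sum_{\sigma\vdash S}\prod_{B\in\sigma}\tilde\kappa[B]\,=\,\sum_{\sigma\vdash S}\ \prod_{B\in\sigma}\ \sum_{\pi_B\vdash B}\kappa_\circ\big[(\kappa_B[A])_{A\in\pi_B}\big],\]
I would reorganise the sum: prescribing $\sigma\vdash S$ and a partition $\pi_B$ of each block $B\in\sigma$ is the same as prescribing a pair $\rho,\sigma$ of partitions of $S$ with $\rho$ (the common refinement $\bigcup_{B\in\sigma}\pi_B$) refining $\sigma$. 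For fixed $\rho$, summing over coarsenings $\sigma\ge\rho$ amounts to summing over all partitions of the finite set of blocks of $\rho$, and the corresponding products of $\kappa_\circ$-factors form exactly the moment--cumulant expansion on $(\Omega_\circ,\Pm_\circ)$ of $\prod_{A\in\rho}\kappa_B[A]$; hence $\sum_{\sigma\ge\rho}\prod_{B\in\sigma}\kappa_\circ[(\kappa_B[A])_{A\in\rho,\,A\subseteq B}]=\E_\circ[\prod_{A\in\rho}\kappa_B[A]]$. Summing over $\rho$ and using first the conditional moment--cumulant identity and then the tower property,
\[\sum_{\sigma\vdash S}\prod_{B\in\sigma}\tilde\kappa[B]\,=\,\E_\circ\Big[\sum_{\rho\vdash S}\prod_{A\in\rho}\kappa_B[A]\Big]\,=\,\E_\circ\big[M_B[S]\big]\,=\,\E_\circ\E_B\Big[\prod_{j\in S}X_j\Big]\,=\,M[S],\]
which is the required identity.

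The genuinely non-routine point is the combinatorial reindexing in the previous paragraph: one must check that $(\sigma,(\pi_B)_{B\in\sigma})\mapsto(\rho,\sigma)$ is a bijection onto pairs with $\rho$ refining $\sigma$, and that, with $\rho$ fixed, coarsening $\rho$ corresponds bijectively to partitioning its block set, so that the residual sum over $\sigma\ge\rho$ reproduces the moment expansion on $\Omega_\circ$ of the family $(\kappa_B[A])_{A\in\rho}$. Everything else -- the M\"obius duality of $M$ and $\kappa$ on the partition lattice, and the fact that boundedness of $X$ makes all the conditional cumulants bounded and $\mathcal F_\circ$-measurable so that the outer cumulants $\kappa_\circ$ are well defined -- is standard bookkeeping that I would dispatch briefly.
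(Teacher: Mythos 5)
Your argument is correct: the Möbius-inversion/partition-lattice computation (reindexing the pair $(\sigma,(\pi_B)_{B\in\sigma})$ as a refinement pair $\rho\le\sigma$, recognizing the residual sum over coarsenings of $\rho$ as the $\Pm_\circ$-moment expansion of $\prod_{A\in\rho}\kappa_B[A]$, and concluding by uniqueness of the moment--cumulant correspondence) is exactly the classical proof of the law of total cumulance. The paper itself gives no proof of this lemma and simply cites Brillinger, so your write-up supplies the standard argument behind that citation; nothing is missing, since boundedness of $X$ and the product structure $\E=\E_\circ\E_B$ take care of all measurability and integrability issues as you note.
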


\subsubsection{Moments and cumulants}
While cumulants are defined as polynomial expressions involving moments, cf.~\eqref{eq:cumfrommom-inv0}, those relations are easily inverted: similarly as in~\eqref{eq:cluster-exp0}, moments can be recovered from cumulants in form of a cluster expansion,
\begin{equation}\label{eq:cluster-exp}
\E[X^m]\,=\,\sum_{\pi\vdash\llbracket m \rrbracket}\prod_{A\in\pi}\kappa^{\sharp A}[X].
\end{equation}
For later purposes, we state the following recurrence relation between moments and cumulants: it immediately implies the above cluster expansion by induction, and it will be useful in this form in the sequel. A short proof is included in Appendix~\ref{sec:app-pr} for convenience.

\begin{lem}\label{lem:mom-cum}
For all $m \ge 2$ and all bounded random variables $X_1, \dots, X_m$, we have
\begin{equation}
\label{eq:esp_product}
\E[X_1 \dots X_m] \,=\, \sum_{J \subset \llbracket 2,m \rrbracket} \kappa[X_1,X_J] \,\E \bigg[ ~\prod_{j \in \llbracket 2,m \rrbracket \setminus J}~ X_j \bigg],
\end{equation}
where we use the standard convention $\prod_{j\in\varnothing}X_j=1$ for the empty product.
In particular, for all $m\ge1$ and all bounded random variables $X$, we have
\[\E[X^m]\,=\,\sum_{j=1}^{m}\binom{m-1}{j-1}\kappa^{j}[X]\,\E[X^{m-j}].\]
\end{lem}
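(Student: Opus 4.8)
The plan is to argue directly from the joint cumulant generating function, without invoking the cluster expansion~\eqref{eq:cluster-exp} (which in fact follows from the lemma). Since the $X_j$ are bounded, the map $(t_1,\dots,t_m)\mapsto\E\big[e^{\sum_{j=1}^m t_j X_j}\big]$ is entire, so $\psi(t_1,\dots,t_m):=\log\E\big[e^{\sum_{j=1}^m t_j X_j}\big]$ is smooth near the origin and we may freely interchange differentiation in the $t_j$ with the expectation. By definition $\E\big[e^{\sum_j t_j X_j}\big]=e^{\psi}$, and differentiating once in $t_1$ gives the identity $\E\big[X_1\,e^{\sum_j t_j X_j}\big]=(\partial_{t_1}\psi)\,e^{\psi}$.

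First I would apply the generalized Leibniz rule in the remaining variables $t_2,\dots,t_m$ to the right-hand side: for smooth $f,g$,
\[
\Big(\prod_{j=2}^m\partial_{t_j}\Big)(fg)\,=\,\sum_{J\subset\llbracket 2,m\rrbracket}\Big(\prod_{j\in J}\partial_{t_j}f\Big)\Big(\prod_{j\in\llbracket 2,m\rrbracket\setminus J}\partial_{t_j}g\Big),
\]
taking $f=\partial_{t_1}\psi$ and $g=e^{\psi}$. Evaluating at $t_1=\dots=t_m=0$, the left-hand side of the resulting identity is $\E[X_1\cdots X_m]$.

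Next I would identify the two factors appearing on the right. The factor coming from $f$ is $\big(\prod_{j\in J}\partial_{t_j}\big)(\partial_{t_1}\psi)\big|_0=\big(\partial_{t_1}\prod_{j\in J}\partial_{t_j}\psi\big)\big|_0$, which is precisely the joint cumulant $\kappa[X_1,(X_j)_{j\in J}]=\kappa[X_1,X_J]$ by definition. The factor coming from $g$ is $\big(\prod_{j\in\llbracket 2,m\rrbracket\setminus J}\partial_{t_j}\big)(e^{\psi})\big|_0$, and since $e^{\psi}=\E\big[e^{\sum_j t_j X_j}\big]$ with $e^{\psi}|_0=1$, this equals $\E\big[\prod_{j\in\llbracket 2,m\rrbracket\setminus J}X_j\big]$ (with the empty product equal to $1$). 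Assembling these gives exactly~\eqref{eq:esp_product}. Specializing to $X_1=\dots=X_m=X$, both $\kappa[X_1,X_J]=\kappa^{1+\sharp J}[X]$ and $\E\big[\prod_{j\notin J}X_j\big]=\E[X^{m-1-\sharp J}]$ depend on $J$ only through $\sharp J$; grouping the $\binom{m-1}{k}$ subsets of cardinality $k$ and reindexing by $j=k+1$ yields the stated formula $\E[X^m]=\sum_{j=1}^m\binom{m-1}{j-1}\kappa^j[X]\,\E[X^{m-j}]$, and feeding~\eqref{eq:esp_product} into an induction on $m$ (expanding each inner expectation and recognising the block containing the index $1$) recovers the cluster expansion~\eqref{eq:cluster-exp}.

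There is no substantive obstacle here; the only thing requiring attention is the bookkeeping in the Leibniz step — keeping the distinguished derivative $\partial_{t_1}$ attached to the cumulant factor, indexing the subsets $J$ by $\llbracket 2,m\rrbracket$ rather than $\llbracket m\rrbracket$, and justifying the interchange of $\partial_{t_j}$ and $\E$, which is immediate from boundedness of the $X_j$.
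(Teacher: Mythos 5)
Your argument is correct and is essentially the paper's own proof: writing $\E[e^{\sum_j t_jX_j}]=e^{\psi}$ and differentiating once in $t_1$ to get $(\partial_{t_1}\psi)\,e^{\psi}$ is exactly the paper's step $\partial_{t_1}M=(\partial_{t_1}\log M)\,M$, and the Leibniz rule over $t_2,\dots,t_m$ with the identification of the two factors as $\kappa[X_1,X_J]$ and $\E\big[\prod_{j\in\llbracket2,m\rrbracket\setminus J}X_j\big]$ is the same computation. The specialization to $X_1=\dots=X_m=X$ by grouping the $\binom{m-1}{\sharp J}$ subsets of each cardinality matches the stated corollary, so nothing is missing.
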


\subsubsection{From cumulants to correlations}
We work out the standard link between cumulants of the empirical measure and correlation functions.
We state it in form of an inequality that can be directly iterated to bound successive correlation functions in terms of cumulants of the empirical mesure.
This was used for instance in~\cite[Section~4]{MD-21}, but we provide a self-contained statement, as well as a short proof in Appendix~\ref{sec:app-pr} for convenience.

\begin{lem}\label{lem:cumtocorrel}
For all $1\le m\le N$, there exists $C_m > 0$ such that for all $\phi \in C^\infty_c(\Xd)$ and $t \ge 0$, we have
\begin{multline*}
\bigg|\int_{\Xd^{m}}\phi^{\otimes m}G^{m,N}_t \bigg|\,\le\, \bigg|\kappa^m\bigg[\int_\Xd\phi\,\ddr\mu^N_t \bigg]\bigg|\\
+C_m\sum_{\substack {\pi\vdash\llbracket m\rrbracket \\ \sharp\pi<m}} \sum_{\rho\vdash\pi} N^{\sharp\pi-\sharp\rho-m+1}\bigg|\int_{\Xd^{\sharp\pi}}\Big(\bigotimes_{B\in\pi}\phi^{\sharp B}\Big)\Big(\bigotimes_{D\in\rho}G^{\sharp D,N}_t(z_D)\Big)\,\ddr z_\pi\bigg|.
\end{multline*}
\end{lem}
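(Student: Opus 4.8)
The plan is to start from the cluster expansion relating correlation functions $G^{m,N}_t$ and marginals $F^{k,N}_t$, and to recognize that the marginals of the empirical measure tested against $\phi^{\otimes k}$ are essentially the moments $\E[(\int_\Xd\phi\,\ddr\mu^N_t)^k]$, up to the standard ``diagonal'' corrections coming from repeated indices in the sum $\mu^N_t=\frac1N\sum_i\delta_{Z^{i,N}_t}$. Concretely, for any $\phi$ one has
\begin{equation*}
\E\bigg[\Big(\int_\Xd\phi\,\ddr\mu^N_t\Big)^m\bigg]\,=\,\frac1{N^m}\sum_{i_1,\dots,i_m=1}^N\E\big[\phi(Z^{i_1,N}_t)\cdots\phi(Z^{i_m,N}_t)\big],
\end{equation*}
and grouping the indices $(i_1,\dots,i_m)$ according to the partition $\pi\vdash\llbracket m\rrbracket$ recording which are equal, using exchangeability and the definition of marginals, yields $\E[(\int\phi\,\ddr\mu^N_t)^m]=\sum_{\pi\vdash\llbracket m\rrbracket}c_{N,\pi}\int_{\Xd^{\sharp\pi}}(\bigotimes_{B\in\pi}\phi^{\sharp B})F^{\sharp\pi,N}_t$, where $c_{N,\pi}=N^{\sharp\pi-m}\prod(\cdots)$ is a combinatorial coefficient satisfying $|c_{N,\pi}|\lesssim_m N^{\sharp\pi-m}$, with $c_{N,\pi}=N^{1-m}$ exactly when $\pi$ is the full partition into singletons.

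Next I would invert this. Applying the moment-to-cumulant inversion \eqref{eq:cumfrommom-inv} to $X=\int_\Xd\phi\,\ddr\mu^N_t$, and substituting the expression just obtained for each moment $\E[X^{\sharp A}]$, one gets $\kappa^m[\int\phi\,\ddr\mu^N_t]$ as a signed sum over partitions $\pi\vdash\llbracket m\rrbracket$ of terms $c'_{N,\pi}\int_{\Xd^{\sharp\pi}}(\bigotimes_{B\in\pi}\phi^{\sharp B})F^{\sharp\pi,N}_t$; the coefficient of the top term (all-singletons $\pi$, contributing $\int\phi^{\otimes m}F^{m,N}_t$) is exactly $N^{1-m}$, while every other term carries a coefficient bounded by $C_m N^{\sharp\pi-m}$ with $\sharp\pi\le m$. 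Rearranging to isolate $N^{1-m}\int\phi^{\otimes m}F^{m,N}_t$, and then replacing $\int\phi^{\otimes m}F^{m,N}_t$ by $\int\phi^{\otimes m}G^{m,N}_t$ plus lower-order pieces via the definition \eqref{eq:def-cumGm} (equivalently \eqref{eq:cluster-exp0}), one obtains
\begin{equation*}
N^{1-m}\int_{\Xd^m}\phi^{\otimes m}G^{m,N}_t\,=\,\kappa^m\Big[\int_\Xd\phi\,\ddr\mu^N_t\Big]\,+\,(\text{terms with }\sharp\pi<m).
\end{equation*}
To get the stated form of the remainder I would, for each partition $\pi$ with $\sharp\pi<m$, re-express the block-marginal $F^{\sharp\pi,N}_t$ appearing there through its own cluster expansion \eqref{eq:cluster-exp0} over sub-partitions $\rho\vdash\pi$, producing the products $\bigotimes_{D\in\rho}G^{\sharp D,N}_t(z_D)$; the accompanying power of $N$ bookkeeps as $N^{1-m}\cdot N^{\sharp\pi-\sharp\rho}$ after the coefficient estimate $|c'_{N,\pi}|\lesssim_m N^{\sharp\pi-m}$, and dividing through by $N^{1-m}$ gives exactly the exponent $\sharp\pi-\sharp\rho-m+1$ in the statement. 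Absorbing all the purely combinatorial constants into a single $C_m$ finishes the bound.

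The routine parts are the two inversions (moment$\leftrightarrow$cumulant and marginal$\leftrightarrow$correlation), which are both instances of M\"obius inversion on the partition lattice and could alternatively be cited; the one place that needs genuine care is the \textbf{coefficient bookkeeping}: one must check that after all the substitutions the top term survives with the clean coefficient $N^{1-m}$ (no cancellation, no extra $N$-dependence) and that every other term's $N$-power is $\le\sharp\pi-\sharp\rho-m+1$ rather than something larger. This amounts to verifying that the ``diagonal'' combinatorial coefficients $c_{N,\pi}$ are $O(N^{\sharp\pi-m})$ and that the leading behaviour $c_{N,\pi}\sim N^{\sharp\pi-m}$ holds, which I expect to be the main (though still elementary) obstacle; it is handled by the standard falling-factorial count $N(N-1)\cdots(N-\sharp\pi+1)\le N^{\sharp\pi}$ for the number of ways to assign distinct particle labels to the blocks of $\pi$, together with the observation that repeated-label configurations only lower the $N$-power.
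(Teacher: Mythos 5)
Your overall route (moments of the empirical measure over collision partitions, moment-to-cumulant inversion, cluster expansion $F\to G$) is the same as the paper's, but two of your coefficient claims are wrong, and the second one hides the actual crux. First, the coefficient of $\int_{\Xd^m}\phi^{\otimes m}F^{m,N}_t$ (equivalently of $\int\phi^{\otimes m}G^{m,N}_t$) in $\kappa^m[\int_\Xd\phi\,\ddr\mu^N_t]$ is not $N^{1-m}$: it arises only from the single-block term of \eqref{eq:cumfrommom-inv} together with the all-distinct-index configurations in $\E[(\int_\Xd\phi\,\ddr\mu^N_t)^m]$, hence equals $N(N-1)\cdots(N-m+1)/N^m=\prod_{j=1}^{m-1}(1-\tfrac jN)\simeq 1$. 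Your normalization ``$N^{1-m}\int\phi^{\otimes m}G^{m,N}_t=\kappa^m+\dots$'' is therefore off by a factor $N^{m-1}$, and dividing through by $N^{1-m}$ as you propose would leave remainder exponents $\sharp\pi-\sharp\rho$, not the stated $\sharp\pi-\sharp\rho-m+1$. (Also, substituting the moment formula into \eqref{eq:cumfrommom-inv} produces \emph{products} of marginals over the blocks of the outer partition, not single marginals $F^{\sharp\pi,N}_t$ as you write; this is only repaired after expanding each marginal via \eqref{eq:cluster-exp0} and regrouping.)

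Second, and more importantly, the estimate you lean on --- that the diagonal coefficients are $O(N^{\sharp\pi-m})$ by the falling-factorial count, with ``repeated-label configurations only lower[ing] the $N$-power'' --- cannot by itself yield the exponent $\sharp\pi-\sharp\rho-m+1$: it accounts only for the factor $N^{\sharp\pi-m}$. The missing factor $N^{1-\sharp\rho}$ comes from a genuine cancellation between the signed contributions of different outer partitions in \eqref{eq:cumfrommom-inv}: after regrouping in terms of correlation functions, the combined coefficient of $\bigotimes_{D\in\rho}G^{\sharp D,N}_t$ is an alternating sum of products of falling-factorial ratios (the paper's $K_N(\rho)$), which is small only because $\sum_{\sigma\vdash\rho}(-1)^{\sharp\sigma-1}(\sharp\sigma-1)!=0$ for $\sharp\rho\ge2$, and one must check it is in fact $O_m(N^{1-\sharp\rho})$. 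The case $m=2$ already shows your scheme fails without this: the term $G^{1,N}\otimes G^{1,N}$ receives $(1-\tfrac1N)$ from $\E[X^2]$ and $-1$ from $-\E[X]^2$, each of size $O(1)$, and only their sum is $O(1/N)$. So the real obstacle is not the falling-factorial count you single out but the cancellation encoded in $K_N(\rho)$, which your proposal neither states nor proves.
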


\subsection{Glauber calculus}\label{subsec:glauber}
We recall some useful tools from the so-called Glauber calculus on $(\Omega_\circ,\Pm_\circ)$ as developed by the second-named author in~\cite{MD-21} (see also~\cite{Decreusefond-Halconruy-19,DGO}).
Given a random variable $X\in\Ld^2(\Omega_\circ)$,
we then define its Glauber derivative at $j\in\llbracket N \rrbracket$ as
\begin{equation*}
D^\circ_j X \,:=\,X-\E\big[X\big|(Z_\circ^{i,N})_{i:i\ne j}\big].
\end{equation*}
The full gradient $D^\circ X = (D^\circ_j X)_{j\in\llbracket N \rrbracket}$ is viewed as an element of $\ell^2(\llbracket N \rrbracket;\Ld^2(\Omega_\circ))$.
As the initial data $(Z_\circ^{j,N})_{1\le j\le N}$ are i.i.d.,
a straightforward computation shows that $D^\circ_j$ is self-adjoint on $\Ld^2(\Omega_\circ)$ and satisfies
\[ D^\circ_jD^\circ_j = D^\circ_j, \qquad D^\circ_jD^\circ_k = D^\circ_k D^\circ_j, \qquad \text{for all $j,k\in\llbracket N \rrbracket$}.\]
We then define the Glauber Laplacian
\begin{align*}
\mathcal{L}_\circ \,:=\,(D^\circ)^*D^\circ\,=\, \sum_{j=1}^N (D^\circ_j)^*  D^\circ_j  \,=\, \sum_{j=1}^N D^\circ_j,
\end{align*}
which is a nonnegative self-adjoint operator on $\Ld^2(\Omega_\circ)$.
We recall some fundamental properties of this operator; see~\cite[Lemmas~2.5 and~2.6]{MD-21}.

\begin{lem}[see~\cite{MD-21}]\label{lem:L0-prop}\
\begin{enumerate}[(i)]
\item The kernel of $\calL_\circ$ is reduced to constants, $\ker \calL_\circ = \R$.
Moreover, $\calL_\circ$ has a unit spectral gap above~$0$, and its spectrum is the set $\N$. 
\smallskip\item The restriction of $\calL_\circ$ to $(\ker \calL_\circ)^\bot = \{X \in \Ld^2(\Omega_\circ): \E_\circ[X] = 0\}$ admits a well-defined inverse~$\Lc_\circ^{-1}$, which is a nonnegative self-adjoint contraction on $(\ker \calL_\circ)^\bot$. 
Moreover, this inverse operator satisfies for all $1<p<\infty$ and $X \in \Ld^p(\Omega_\circ)$ with $\E_\circ[X] = 0$,
\begin{align}
\label{eq:ineq_T}
\| \Lc_\circ^{-1} X\|_{\Ld^p(\Omega_\circ)} \,\lesssim\, \tfrac{p^2}{p-1} \|X\|_{\Ld^p(\Omega_\circ)}.
\end{align} 
\item The following Helffer-Sj\"ostrand representation holds for covariances: for all $X, Y\in\Ld^2(\Omega_\circ)$,
\begin{equation}\label{eq:HS-rep}
\Cov_\circ[X, Y] \,=\, \sum_{j=1}^N \mathbb{E}\big[ (D^\circ_j X) \Lc_\circ^{-1} (D^\circ_j Y) \big]. 
\end{equation}
\end{enumerate}
\end{lem}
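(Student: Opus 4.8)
The three items all stem from the tensor-product structure of $\Ld^2(\Omega_\circ)$, and only the $\Ld^p$ bound in~(ii) needs a real argument. I would organize the proof as follows.

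\emph{Diagonalization (item (i)).} Since the $Z^{j,N}_\circ$ are i.i.d.\ with common law $\mu_\circ$, write $\Ld^2(\Omega_\circ)=\bigotimes_{j=1}^N\Ld^2(\mu_\circ)$ and split each factor as $\Ld^2(\mu_\circ)=\R\oplus\Ld^2_0(\mu_\circ)$, with $\Ld^2_0(\mu_\circ)$ the mean-zero subspace. This yields the orthogonal (Hoeffding/ANOVA) decomposition $\Ld^2(\Omega_\circ)=\bigoplus_{S\subseteq\llbracket N\rrbracket}H_S$, where $H_S:=\big(\bigotimes_{j\in S}\Ld^2_0(\mu_\circ)\big)\otimes\big(\bigotimes_{j\notin S}\R\big)$. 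On $H_S$ the projection $D^\circ_j=\mathrm{Id}-\E_{\circ,j}$ acts as the identity when $j\in S$ and as $0$ otherwise, so that $\calL_\circ$ acts as multiplication by $\sharp S$. Hence $\calL_\circ$ is diagonal with eigenvalues $0,1,\dots,N$; its kernel is $H_\varnothing=\R$ and it has a unit spectral gap above $0$, which is item~(i).

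\emph{The pseudo-inverse (item (ii)).} On $(\ker\calL_\circ)^\bot=\bigoplus_{S\ne\varnothing}H_S$ the operator $\calL_\circ$ is self-adjoint and bounded below by $1$, so $\Lc_\circ^{-1}:=\sum_{S\ne\varnothing}\tfrac1{\sharp S}\Pi_{H_S}$ is a well-defined nonnegative self-adjoint contraction there; equivalently, $\Lc_\circ^{-1}X=\int_0^\infty\big(e^{-t\calL_\circ}X-\E_\circ[X]\big)\,\ddr t$. The $\Ld^p$ bound rests on the fact that the heat semigroup $e^{-t\calL_\circ}$ is an $\Ld^p(\Omega_\circ)$-contraction for every $1\le p\le\infty$: writing $\calL_\circ=N(\mathrm{Id}-\bar E)$ with $\bar E:=\tfrac1N\sum_{j=1}^N\E_{\circ,j}$ a convex combination of conditional expectations, hence an $\Ld^p$-contraction, one gets $\|e^{-t\calL_\circ}\|_{\Ld^p\to\Ld^p}\le e^{-tN}\sum_{k\ge0}\tfrac{(tN)^k}{k!}=1$. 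Combining this with $\|e^{-t\calL_\circ}-\E_\circ[\cdot]\|_{\Ld^2\to\Ld^2}=e^{-t}$ (from the first step) and the trivial bound $\|e^{-t\calL_\circ}-\E_\circ[\cdot]\|_{\Ld^q\to\Ld^q}\le2$ for $q\in\{1,\infty\}$, Riesz--Thorin interpolation gives $\|e^{-t\calL_\circ}-\E_\circ[\cdot]\|_{\Ld^p\to\Ld^p}\lesssim e^{-2t/\max(p,p')}$ for all $1<p<\infty$ (with $p'=p/(p-1)$); integrating in $t$ yields $\|\Lc_\circ^{-1}\|_{\Ld^p\to\Ld^p}\lesssim\max(p,p')\le\tfrac{p^2}{p-1}$, which is item~(ii).

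\emph{The Helffer--Sj\"ostrand identity (item (iii)).} We may assume $\E_\circ[X]=\E_\circ[Y]=0$, since both sides of the claimed identity depend on $X,Y$ only through their centered versions (each $D^\circ_j$ annihilates constants). Then $X\in(\ker\calL_\circ)^\bot$ and $X=\calL_\circ\Lc_\circ^{-1}X=\sum_{j=1}^N D^\circ_j\Lc_\circ^{-1}X$, so that, using that each $D^\circ_j$ is a self-adjoint idempotent commuting with $\Lc_\circ^{-1}$,
\[\Cov_\circ[X,Y]=\langle X,Y\rangle_{\Ld^2(\Omega_\circ)}=\sum_{j=1}^N\big\langle D^\circ_j\Lc_\circ^{-1}X,\,D^\circ_j Y\big\rangle=\sum_{j=1}^N\E_\circ\big[(D^\circ_j X)\,\Lc_\circ^{-1}(D^\circ_j Y)\big],\]
which is the stated representation.

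\emph{Main obstacle.} Everything except the $\Ld^p$ estimate in the second step is purely structural. The delicate point there is to get the correct polynomial growth in $p$; the route via $\Ld^p$-contractivity of $e^{-t\calL_\circ}$ interpolated against the spectral-gap decay on $\Ld^2$ is the one I would use, an alternative being to invoke hypercontractivity of $e^{-t\calL_\circ}$, which is however not needed here.
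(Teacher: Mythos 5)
Your argument is correct, but note that the paper does not prove this lemma at all: it is quoted verbatim from~\cite[Lemmas~2.5 and~2.6]{MD-21}, so there is no in-paper proof to compare against. Your route — the Hoeffding/ANOVA decomposition $\Ld^2(\Omega_\circ)=\bigoplus_S H_S$ on which $\calL_\circ$ acts as multiplication by $\sharp S$, the rewriting $\calL_\circ=N(\Id-\bar E)$ with $\bar E$ an average of conditional expectations giving $\Ld^p$-contractivity of $e^{-t\calL_\circ}$, and Riesz--Thorin interpolation of the spectral-gap decay on $\Ld^2$ against the trivial $\Ld^1/\Ld^\infty$ bounds, integrated in time — is exactly the kind of semigroup argument behind the cited result, and your constant $\max(p,p')\le p\,p'=\tfrac{p^2}{p-1}$ is consistent with (indeed slightly sharper than) the stated bound~\eqref{eq:ineq_T}. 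The Helffer--Sj\"ostrand identity via $X=\sum_j D^\circ_j\Lc_\circ^{-1}X$, idempotence and self-adjointness of $D^\circ_j$, and the fact that $D^\circ_j$ preserves each $H_S$ (hence commutes with $\Lc_\circ^{-1}$ on the mean-zero subspace) is also sound. Two cosmetic remarks: for finite $N$ your diagonalization gives spectrum $\{0,1,\dots,N\}$, which is what is actually true (the statement's ``$\N$'' should be read with this proviso); and for $1<p<2$ one should add the routine density remark that the integral formula defines a bounded extension of $\Lc_\circ^{-1}$ from $\Ld^p\cap\Ld^2$ to all mean-zero $X\in\Ld^p(\Omega_\circ)$, which your absolute-convergence estimate already provides.
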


Combining the spectral gap for $\Lc_\circ$ and the Helffer--Sj\"ostrand inequality~\eqref{eq:HS-rep}, we recover in particular the following well-known variance inequality due to Efron and Stein~\cite{Efron-Stein-81}: for all $X\in\Ld^2(\Omega_\circ)$,
\begin{equation}\label{eq:Poinc-Glauber}
\Var_\circ[X]\,\le\,\sum_{j=1}^N\E_\circ[|D^\circ_j X|^2].
\end{equation}

\subsubsection{Cumulant estimates via Glauber calculus}
As shown in~\cite[Theorem~2.2]{MD-21},
cumulants can be controlled in terms of higher-order Glauber derivatives, which can be viewed as a higher-order version of Poincar\'e's inequality~\eqref{eq:Poinc-Glauber} on~$\Ld^2(\Omega_\circ)$ with respect to Glauber calculus.
We claim that this can also be extended to the multivariate case, that is, to joint cumulants of families of random variables;
a proof is included in Appendix~\ref{sec:app-pr}.

\begin{prop}\label{prop:control_Glauber}
For all $n\ge0$ and bounded $\sigma((Z_\circ^{j,N})_{1\le j\le N})$-measurable random variables $X_1,\ldots,X_{n+1}$, we have
\begin{align*}
\kappa_\circ^{n+1}[X_1,\ldots,X_{n+1}] \,\lesssim_n\, \sum_{k=0}^{n-1} N^{k+1} \sum_{\substack{a_1, \dots, a_{n+1} \ge 1 \\ \sum_j a_j = n+k+1}} \prod_{j=1}^{n+1} \big\| (D^\circ)^{a_j} X_j \big\|_{\ell^{\infty}_{\ne}\big( \Ld^{\frac1{a_j} (n+k+1)}(\Omega_\circ) \big)},
\end{align*}
where we have set
\[ \big\| (D^\circ)^m Z\|_{\ell^\infty_{\ne}(\Ld^p(\Omega_\circ))} \,:=\, \sup_{\substack{j_1 \dots j_m \\ \text{distinct}}} \big\|D^\circ_{j_1} \dots D^\circ_{j_m} Z \big\|_{\Ld^p(\Omega_\circ)}.\]
\end{prop}

\subsubsection{Asymptotic normality via Glauber calculus}
As the approximate normality of a random variable essentially follows from the smallness of its cumulants of order $\ge3$, there is no surprise that it can be quantified as well by means of Glauber calculus. The following result is typically known in the literature as a ``second-order Poincar\'e inequality'' for approximate normality. It was first established by Chatterjee~\cite[Theorem~2.2]{Chat08} based on Stein's method for the $1$-Wasserstein distance, while the corresponding bound on the Kolmogorov distance is due to~\cite[Theorem~4.2]{LRP-15}. For our purposes in this work, we claim that the same result also holds for the Zolotarev distance; a short proof is included in Appendix~\ref{sec:app-pr} for convenience.

\begin{prop}[Second-order Poincar\'e inequality~\cite{Chat08,LRP-15}]\label{prop:2ndP}
For all bounded $\sigma((Z_\circ^{j,N})_j)$-measurable random variable $Y$, setting $\sigma_Y^2:=\var{Y}$, there holds
\begin{multline*}
\ddr_2\bigg(\frac{Y-\E_\circ[Y]}{\sigma_Y};\Nc\bigg)+\dW{\frac{Y-\E_\circ[Y]}{\sigma_Y}}{\Nc}+\dK{\frac{Y-\E_\circ[Y]}{\sigma_Y}}{\Nc}\\
\,\lesssim\,\frac1{\sigma_Y^{3}}\sum_{j=1}^N \E_\circ[|D_j^\circ Y|^6]^\frac12+\frac1{\sigma_Y^2}\bigg(\sum_{j=1}^N \Big(\sum_{l=1}^N\E_\circ[|D_l^\circ Y|^4]^\frac14\E_\circ[|D_j^\circ D_l^\circ Y|^4]^\frac14\Big)^2\bigg)^\frac12,
\end{multline*}
where $\dW\cdot\Nc$ and $\dK\cdot\Nc$ stand for the $1$-Wasserstein and the Kolmogorov distances\footnote{Recall that for two real-valued random variables $X,Y$ the $1$-Wasserstein and Kolmogorov distances are defined as
\[\dW XY=\sup\big\{\E[g(X)]-\E[g(Y)]:g\in C(\R),\,\operatorname{Lip}(g)\le1\big\},\qquad\dK XY=\sup_{t\in\R}|\Pm[X\le t]-\Pm[Y\le t]|.\]} to a standard Gaussian random variable $\Nc$, respectively, and where we recall that $\ddr_2(\cdot;\Nc)$ stands for the corresponding second-order Zolotarev distance defined in~\eqref{eq:Zolo}.
\end{prop}

\subsubsection{Concentration via Glauber calculus}
We show the following concentration estimate for random variables in $\Ld^2(\Omega_\circ)$. It follows from some degraded version of a log-Sobolev inequality, combined with Herbst's argument.
Note however that we do not have an exact log-Sobolev inequality with respect to Glauber calculus, cf.~\cite{Ledoux-99}, which is why we need to require an \emph{almost sure} a priori bound on the Glauber derivative. The proof is postponed to Appendix~\ref{sec:app-pr}.

\begin{prop}
\label{lem:concentration_Glauber}
Let $X\in\Ld^2(\Omega_\circ)$ be $\sigma((Z^{j,N}_\circ)_{1 \le j \le N})$-measurable with $\E_\circ[X]=0$ and $|D^\circ_j X|\le \frac12L$ almost surely for all $1\le j\le N$, for some constant $L>0$.
Then for all $\lambda>0$ we have
\[\E_\circ[e^{\lambda X}]\,\le\,\exp\Big(\tfrac N2\lambda L(e^{\lambda L}-1)\Big).\]
In particular, this entails
\[\Pm_\circ[X>r]\,\le\,\exp\Big(-\tfrac{r}{4L}\log\big(1+\tfrac{r}{NL}\big)\Big),\]
where the right-hand side is $\le\exp(-\frac{r^2}{8NL^2})$ as long as $r\le NL$.
\end{prop}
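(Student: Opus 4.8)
The plan is to combine a \emph{degraded logarithmic Sobolev inequality} for the Glauber structure with the Herbst argument, in the spirit of~\cite{Ledoux-99}. First I would observe that, although $X$ is only assumed to lie in $\Ld^2(\Omega_\circ)$, the hypotheses force it to be bounded: along the filtration $\mathcal F_k:=\sigma(Z^{1,N}_\circ,\dots,Z^{k,N}_\circ)$ its martingale increments are $\Delta_k=\E_\circ[X\mid\mathcal F_k]-\E_\circ[X\mid\mathcal F_{k-1}]=\E_\circ[D^\circ_kX\mid\mathcal F_k]$ (using that $D^\circ_k$ commutes with $\E_\circ[\cdot\mid\mathcal F_k]$ by independence of the $Z^{j,N}_\circ$), whence $|\Delta_k|\le\tfrac12 L$ and $|X-\E_\circ[X]|\le\tfrac12 NL$. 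In particular $H(\lambda):=\E_\circ[e^{\lambda X}]$ is finite and smooth on $\R$, with $H(0)=1$ and $(\log H)'(0)=\E_\circ[X]=0$. Writing $\operatorname{Ent}_\circ[f]:=\E_\circ[f\log f]-\E_\circ[f]\log\E_\circ[f]$, the elementary identity $\operatorname{Ent}_\circ[e^{\lambda X}]=\lambda H'(\lambda)-H(\lambda)\log H(\lambda)$ yields the Herbst relation
\[ \frac{\ddr}{\ddr\lambda}\Big(\tfrac1\lambda\log H(\lambda)\Big)\,=\,\frac{\operatorname{Ent}_\circ[e^{\lambda X}]}{\lambda^2 H(\lambda)},\qquad\lambda>0, \]
so everything reduces to a good upper bound on $\operatorname{Ent}_\circ[e^{\lambda X}]$.

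For the entropy bound I would invoke subadditivity of entropy over the product structure $\Pm_\circ=\bigotimes_j\Pm_{\circ,j}$, namely $\operatorname{Ent}_\circ[e^{\lambda X}]\le\sum_{j=1}^N\E_\circ\big[\operatorname{Ent}_{\circ,j}[e^{\lambda X}]\big]$ where $\operatorname{Ent}_{\circ,j}$ denotes entropy in the $j$th variable only. Decomposing $X=\E_{\circ,j}[X]+D^\circ_jX$, using that $\E_{\circ,j}[X]$ does not depend on $Z^{j,N}_\circ$ and that entropy is $1$-homogeneous, this reduces to the one-variable entropy of $e^{\lambda D^\circ_jX}$, where $D^\circ_jX$ has zero mean in $Z^{j,N}_\circ$ and $|D^\circ_jX|\le\tfrac12 L$. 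The key elementary estimate I would prove here is: for any random variable $V$ with $\E[V]=0$ and $|V|\le\tfrac12 L$,
\[ \operatorname{Ent}[e^{\lambda V}]\,\le\,\lambda\,\E[Ve^{\lambda V}]\,=\,\lambda\,\Cov[V,e^{\lambda V}]\,\le\,\tfrac12\lambda^2L^2e^{\lambda L}\,\E[e^{\lambda V}], \]
where the first step uses $\log\E[e^{\lambda V}]\ge\lambda\E[V]=0$, the middle equality uses $\E[V]=0$, and the last step follows from the symmetrization $\Cov[V,e^{\lambda V}]=\tfrac12\,\E\big[(V-V')(e^{\lambda V}-e^{\lambda V'})\big]$ (with $V'$ an independent copy, the expectation over both), the pointwise bound $(V-V')(e^{\lambda V}-e^{\lambda V'})\le\lambda(V-V')^2e^{\lambda\max\{V,V'\}}\le\lambda L^2e^{\lambda L}$, and $\E[e^{\lambda V}]\ge1$. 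Feeding this back through the tensorization gives the degraded log-Sobolev inequality $\operatorname{Ent}_\circ[e^{\lambda X}]\le\tfrac12 NL^2\lambda^2e^{\lambda L}\,H(\lambda)$.

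It then remains to integrate: combining the last bound with the Herbst relation gives $\frac{\ddr}{\ddr\lambda}\big(\tfrac1\lambda\log H(\lambda)\big)\le\tfrac12 NL^2e^{\lambda L}$, and since $\tfrac1\lambda\log H(\lambda)\to0$ as $\lambda\downarrow0$, integrating over $[0,\lambda]$ yields $\tfrac1\lambda\log H(\lambda)\le\tfrac12 NL(e^{\lambda L}-1)$, which is precisely the claimed moment generating function bound. For the tail I would use the Chernoff bound $\Pm_\circ[X>r]\le e^{-\lambda r}H(\lambda)$ with the explicit choice $\lambda=\tfrac1L\log(1+\tfrac r{NL})$, for which $e^{\lambda L}-1=\tfrac r{NL}$ and the exponent collapses to $-\lambda r+\tfrac12\lambda r=-\tfrac{r}{2L}\log(1+\tfrac r{NL})\le-\tfrac r{4L}\log(1+\tfrac r{NL})$; and for $r\le NL$ the elementary inequality $\log(1+\tfrac r{NL})\ge\tfrac r{2NL}$ recovers the Gaussian form $-\tfrac{r^2}{8NL^2}$. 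The only genuinely non-routine ingredient is the one-variable entropy estimate (really the pointwise inequality behind its last step, which substitutes for the missing exact log-Sobolev inequality and is exactly where the almost-sure bound on $D^\circ_jX$ is used); the tensorization, the Herbst integration and the Chernoff optimization are all standard.
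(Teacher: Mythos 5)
Your proof is correct, and its overall architecture is the same as the paper's (a degraded log-Sobolev/entropy inequality for the product measure, followed by the Herbst argument and a Chernoff optimization), but the way you obtain the key entropy bound is genuinely different. The paper does not prove the entropy inequality itself: it quotes the degraded log-Sobolev inequality $\operatorname{Ent}_\circ[Y^2]\le 2\sum_j\E_\circ[\sup_j(Y-Y^j)^2]$ from a previous work, applies it to $Y=e^{X/2}$ together with the pointwise bound $|e^{X/2}-e^{X^j/2}|\le\tfrac12 e^{X/2}|X-X^j|e^{|X-X^j|/2}$ and the almost-sure bound $M_X=\sup_j\supess|X-X^j|\le 2\sup_j\supess|D^\circ_jX|\le L$, getting $\operatorname{Ent}_\circ[e^{X}]\le\tfrac N2 M_X^2e^{M_X}\E_\circ[e^X]$, and then also outsources the Herbst integration and the tail optimization to Ledoux's lecture notes. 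You instead derive the same estimate $\operatorname{Ent}_\circ[e^{\lambda X}]\le\tfrac N2\lambda^2L^2e^{\lambda L}\E_\circ[e^{\lambda X}]$ self-containedly, by tensorization of entropy over the product structure, factoring out $e^{\lambda\E_{\circ,j}[X]}$, and bounding the one-variable entropy of $e^{\lambda D^\circ_jX}$ through $\operatorname{Ent}\le\lambda\Cov[V,e^{\lambda V}]$ and the symmetrization $\Cov[V,e^{\lambda V}]=\tfrac12\E[(V-V')(e^{\lambda V}-e^{\lambda V'})]$ — this covariance step is exactly where the almost-sure bound on $D^\circ_jX$ enters, playing the role of the resampling bound in the paper's cited inequality; you then carry out the Herbst ODE and the explicit choice $\lambda=\tfrac1L\log(1+\tfrac r{NL})$ by hand (your intermediate exponent $-\tfrac r{2L}\log(1+\tfrac r{NL})$ is in fact slightly stronger than the stated $-\tfrac r{4L}\log(1+\tfrac r{NL})$, and the reduction to the Gaussian regime $r\le NL$ via $\log(1+x)\ge x/2$ is the same). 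What the paper's route buys is brevity through citations; what yours buys is a fully elementary, self-contained argument with all constants visible, at the cost of reproving standard ingredients (entropy tensorization, the Bobkov--Ledoux-type covariance bound, the Herbst integration). Your preliminary observation that the hypotheses force $|X|\le\tfrac12 NL$ via martingale increments is also correct and properly justifies finiteness and differentiability of the moment generating function, a point the paper leaves implicit.
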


\subsubsection{Link to linear derivatives}
As the following lemma shows, Glauber derivatives can be estimated in terms of linear derivatives. This is particularly convenient in the sequel to unify notations when both Glauber and linear derivatives are involved.

\begin{lem}\label{lem:glauber_general}
Given a smooth functional $\Phi:\calP(\Xd) \to \R$,
we have almost surely for all $k\in \llbracket N \rrbracket$ and all distinct indices~$j_1, \ldots, j_k \in \llbracket N \rrbracket$,
\begin{equation*}
\big| D_{j_1}^\circ \dots D_{j_k}^\circ \Phi(\mu^N_0) \big| \,\le\, N^{-k}\,2^k\sup_{\mu \in \calP(\Xd)} \Big\| \Lindk{\Phi}{k}(\mu,\cdot) \Big\|_{\Ld^\infty(\Xd^k)}.
\end{equation*}
\end{lem}

\begin{proof}
For all~$j \in \llbracket N \rrbracket$, by definition of the Glauber derivative and of the linear derivative, we can compute
\begin{eqnarray}
\label{eq:decompo_Glauber_derivative}
D_{j}^\circ \Phi(\mu^N_0) &=& \Phi(\mu^N_0)-\int_{\Xd}\Phi \Big(\mu_0^N+\tfrac1N(\delta_{z}-\delta_{Z_0^{j,N}})\Big)\,\mu_\circ(\ddr z)\nonumber\\
&=&N^{-1}\int_0^1 \int_{\Xd} \int_{\Xd} \Lind{\Phi}\Big(\mu_0^N+\tfrac{1-s}N(\delta_{z}-\delta_{Z_0^{j,N}})\,,\,y\Big)\,(\delta_{Z_0^{j,N}}-\delta_{z})(\ddr y)\,\mu_\circ(\ddr z)\,\ddr s.\label{eq:glauber_general-rel}
\end{eqnarray}
By induction, we are led to the following representation formula for iterated Glauber derivatives: for all $k\ge1$ and all distinct indices $j_1,\ldots,j_k\in \llbracket N\rrbracket$,
\begin{multline}\label{eq:multi_d_Glauber}
D_{j_1}^\circ\ldots D_{j_k}^\circ \Phi(\mu^N_0)
 \,=\,N^{-k}  \int_{([0,1] \times \Xd\times\Xd)^k} \Lindk{\Phi}{k} \bigg(\mu_0^N+\sum_{i=1}^k\tfrac{1-s_i}N(\delta_{z_i}-\delta_{Z_0^{j_i,N}})\,,\, y_1, \ldots, y_k \bigg)\\[-2mm]
\times  \prod_{i=1}^k (\delta_{Z^{j_i,N}_0} - \delta_{z_i}) (\ddr y_i) \,\mu_\circ(\ddr z_i) \, \ddr s_i,
\end{multline}
and the conclusion follows.
\end{proof}

\subsection{Moment estimates on particle dynamics}
The following uniform-in-time moment estimates are stated both for the particle dynamics and for its mean-field counterpart.
The proof easily follows along the lines of the analysis in~\cite{Bolley_2010} and is postponed to Appendix~\ref{sec:app-pr}. Recall the notation $Q$ from~\eqref{eq:QE-not}. 

\begin{lem}\label{lem:unif-mom-est}
There exist $\lambda,C>0$ (only depending on $d,\beta,a$, $\|\nabla W\|_{\Ld^{\infty}(\R^d)}$) such that the following moment estimates hold.
\begin{enumerate}[(i)]
\item \emph{Algebraic moments:}
For all $t\ge0$, $N,k\ge1$, and $\mu\in\Pc(\Xd)$, we have
\begin{eqnarray}
\E_B\big[Q(\mu_t^N)^k\big]&\le&(Ck)^\frac k2(1+\tfrac kN)^\frac{k}{2}\Big(\int_\Xd\langle e^{-\lambda t}z\rangle^{2}\,\mu^N_0(\ddr z)\Big)^\frac k2,
\label{eq:estim-mom-Brownian}\\
\int_\Xd|z|^{k}\,m(t,\mu)&\le&(Ck)^k\int_\Xd\langle e^{-\lambda t}z\rangle^{k}\,\mu(\ddr z).\nonumber
\end{eqnarray}
\item \emph{Subgaussian moments:}
For all $t,\alpha\ge0$, $N,k\ge1$, $\mu\in\Pc(\Xd)$, and $0\le\delta<2$, we have
\begin{eqnarray}\label{eq:estim-mom-part-exp1}
\E_B\big[e^{\alpha Q(\mu_t^N)^\delta}\big]&\le&\exp\Big(C_\delta \alpha(\tfrac{\alpha} N+1)^{\frac\delta{2-\delta}}\Big)\exp\bigg(C\alpha\Big(\int_\Xd |e^{-\lambda t}z|^2\mu^N_0(\ddr z)\Big)^{\frac\delta2}\bigg),\\
\int_\Xd e^{\alpha|z|^{\delta}}m(t,\mu)&\le&\exp\Big(C_\delta \alpha^{\frac2{2-\delta}}\Big)\int_\Xd e^{C\alpha |e^{-\lambda t}z|^{\delta}}\mu(\ddr z),\nonumber
\end{eqnarray}
for some constant $C_\delta$ only depending on $d,\beta,a,\delta$, $\|\nabla W\|_{\Ld^{\infty}(\R^d)}$.
\smallskip\item \emph{Gaussian moments:}
For all $t\ge0$, $N,k\ge1$, and $\mu\in\Pc(\Xd)$, we have for all $0\le\alpha\le\frac1{C}N$,
\begin{equation}\label{eq:estim-mom-part-exp}
\E_B\big[e^{\alpha Q(\mu_t^N)^2}\big]
~\le~C\exp\Big(C\alpha\int_\Xd|e^{-\lambda t}z|^2\mu_0^N(\ddr z)\Big),
\end{equation}
and for all $0\le\alpha\le\frac1{C}$,
\[\int_\Xd e^{\alpha|z|^2}\, m(t,\mu)~\le~C\int_\Xd e^{C\alpha |e^{-\lambda t} z|^2} \mu(\ddr z).\]
\end{enumerate}
\end{lem}

\subsection{Ergodic estimates for mean field}\label{sec:prel-ergodic}
The last main ingredient takes form of ergodic estimates for the linearized mean-field equation, which is indeed the key tool for our uniform-in-time estimates in the spirit of~\cite{Delarue_Tse_21}.
Given $\mu \in \calP(\Xd)$, the linearized mean-field McKean--Vlasov operator at $\mu$
is defined as follows: for all $h\in C^\infty_c(\Xd)$ with $\int_\Xd h=0$,
\begin{align}
\label{eq:def_linearized}
L_\mu h \,:=\, 
\tfrac12 \Div(a_0 \nabla h)  - \Div\big( b(\cdot,\mu) h \big) - \Div\Big(\mu \int_{\Xd} \Lind{b}(\cdot,\mu,z) \,h(z)\,\ddr z \Big).
\end{align}
In the Langevin setting~\eqref{eq:Langevin-par}, this means for all $h$ on $\Xd=\R^d\times\R^d$,
\begin{equation}\label{eq:def_linearized-Lang}
L_\mu h\,=\,\tfrac12\Div_v((\nabla_v+\beta v)h) -v\cdot\nabla_xh+(\nabla A+\kappa\nabla W\ast\mu)\cdot\nabla_vh+\kappa(\nabla W\ast h)\cdot\nabla_v\mu,
\end{equation}
and in the Brownian setting~\eqref{eq:Brownian-par}, this means for all $h$ on $\Xd=\R^d$,
\[L_\mu h\,=\,\tfrac12 \triangle h + \Div(h\nabla A) +\kappa\Div(h\nabla W\ast\mu)+\kappa\Div(\mu \nabla W\ast h).\]
For our purposes in this work, we shall establish ergodic estimates in a weighted Sobolev framework with arbitrary integrability, negative regularity, and polynomial weight: more precisely,
for all $1\le q\le 2$ and $p\ge0$, we consider the space $\Ld^q(\langle z\rangle^p)$ as the weighted Lebesgue space with the norm
\[\|h\|_{\Ld^q(\langle z\rangle^p)}\,:=\,\|\langle z\rangle^ph\|_{\Ld^q(\Xd)}\,=\,\Big(\int_\Xd|h(z)|^q\,\langle z\rangle^{pq}\,\ddr z\Big)^\frac1q,\]
and, for all $k\ge0$, we consider the space $W^{-k,q}(\langle z\rangle^p)$ as the weighted negative Sobolev space associated with the dual norm
\begin{equation}\label{eq:def-W-kqzp}
\|h\|_{W^{-k,q}(\langle z\rangle^p)}\,:=\,\sup\bigg\{\int_\Xd hh'\langle z\rangle^p\,:\,\|h'\|_{W^{k,q'}(\Xd)}=1\bigg\},
\end{equation}
where $q':=\frac{q}{q-1}$ is the dual integrability exponent and where $W^{k,q'}(\Xd)$ is the standard Sobolev space with norm
\[\|h\|_{W^{k,q'}(\Xd)}\,:=\,\sup_{0\le j\le k}\|\nabla^jh\|_{\Ld^{q'}(\Xd)}.\]
In these terms, our ergodic estimates take on the following guise.
While item~(i) is well known (see e.g.~\cite{Bolley_2010} and the discussion below), our main contribution is to prove the Sobolev estimates of item~(ii).
Note that the restriction $pq'\gg1$ in the Langevin setting is fairly natural: indeed, we note for example that the restriction $pq'>d$ precisely ensures that the spatial density $\rho_h(x):=\int_{\R^d} h(x,v)\,\ddr v$ is defined in~$\Ld^1_\loc(\R^d)$ for all $h\in W^{-k,q}(\langle z\rangle^p)$.

\begin{theor}\label{thm:ergodic}
There exist $\kappa_0,\lambda_0>0$ (only depending on $d,\beta,a,\|W\|_{W^{1,\infty}(\R^d)}$)
such that the following results hold for any $\kappa\in[0,\kappa_0]$.
\begin{enumerate}[(i)]
\item There is a unique steady state $M$ for the mean-field evolution~\eqref{eq:MFL-McKean}, and the solution operator~\eqref{eq:MF-solop} satisfies for all $t\ge0$,
\begin{equation}\label{eq:relax-mt-ass}
\calW_2\big(m(t,\mu_\circ), M\big) \,\lesssim_{W,\beta,a}\, e^{-\lambda_0 t}\,\calW_2(\mu_\circ, M),
\end{equation}
where $\calW_2$ stands for the $2$-Wasserstein distance and where the multiplicative constant only depends on $d,\beta,a$, and $\|W\|_{W^{1,\infty}(\R^d)}$.
\smallskip\item
Given $1<q\le2$ and $0<p\le1$ with $pq'\gg_{\beta,a}1$ large enough (only depending on $d,\beta,a$), for all $k\ge2$ and all $f_\circ\in W^{-k,q}(\langle z\rangle^p)$ with $\int_\Xd f_\circ=0$, there is a unique solution $f\in\Ld^\infty_\loc(\R^+;W^{-k,q}(\langle z\rangle^p))$ to the Cauchy problem
\begin{align}\label{eq:erg_q}
\left\{
\begin{array}{l}
\partial_t f_t = L_{m(t,\mu_\circ)}f_t,\\
f_t|_{t=0} = f_\circ, 
\end{array}
\right.
\end{align}
and it satisfies for all $t\ge0$,
\begin{equation}\label{eq:estim-concl-qt}
\|f_t\|_{W^{-k,q}(\langle z\rangle^p)}
\,\lesssim_{W,\beta,k,p,q,a,\mu_\circ}\,
e^{-p\lambda_0t}\|f_\circ\|_{W^{-k,q}(\langle z\rangle^p)},
\end{equation}
where the multiplicative constant only depends on $d,\beta,k,p,q,a,\|W\|_{W^{k+d+1,\infty}(\R^d)}$, and $Q(\mu_\circ)$ (recall the notation~\eqref{eq:QE-not}).
In addition, the dependence on $Q(\mu_\circ)$ can be made at most exponential provided that $\kappa$ is small enough. More precisely, given $\theta\in(0,1]$, for all $k\ge2$ and $0<p\le\frac14\theta$, there is some $\kappa_{k,p}\in(0,\kappa_0]$ (only depending on $d,\beta,\theta,k,p,a$, $\|W\|_{W^{d+3,\infty}\cap W^{k+2,\infty}(\R^d)}$) such that the following holds provided $\kappa\in[0,\kappa_{k,p}]$: given $1<q\le2$ with $pq'\gg_{\beta,a}1$ large enough (only depending on $d,\beta,a$), we have for all $t\ge0$,
\begin{equation}\label{eq:estim-concl-qt-re}
\|f_t\|_{W^{-k,q}(\langle z\rangle^p)}
\,\lesssim_{W,\beta,k,p,q,a}\,
e^{pQ(\mu_\circ)^\theta}e^{-p\lambda_0t}\|f_\circ\|_{W^{-k,q}(\langle z\rangle^p)},
\end{equation}
where the multiplicative constant now only depends on $d,\beta,k,p,q,a$, $\|W\|_{W^{k+d+1,\infty}(\R^d)}$.
\end{enumerate}
\end{theor}

The convergence to equilibrium stated in item~(i) is well known: it was proven for instance by Bolley, Guillin, and Malrieu~\cite{Bolley_2010} by an elementary coupling argument
(see also~\cite{Malrieu_2001, Herau_2007, Bouchut_1995} for earlier results).
For corresponding results relying on convexity rather than on smallness of the interaction, we refer to~\cite{Monmarche_2017, Guillin_2021c} in the Langevin setting, and to~\cite{Malrieu_2001,Malrieu-03,Carrillo_2003, Carrillo_2005,Cattiaux-Guillin-Malrieu-08} in the Brownian setting.
Perturbations of the strictly convex case have also been investigated e.g.\@ in~\cite{Bolley_2012,Butkovsky_2014,Eberle_2018}.

Regarding the ergodic estimates stated in item~(ii), in the Brownian setting,
they easily follow from classical parabolic theory~\cite{Friedman_2008, Garroni_1993}. In the periodic case with $A\equiv0$, such estimates can be found in~\cite[Lemma~7.4]{Cardaliaguet_2013b} on the space $\Ld^\infty(\T^d)$, and in~\cite{Delarue_Tse_21} on the space $W^{-k,1}(\T^d)$ with $0\le k<2$. Those results are easily generalized to the case of a nontrivial confinement in the whole space $\R^d$, and they can be checked to hold on the space $W^{-k,q}(\langle z\rangle^p)$ for all $k\ge1$, $1\le q\le2$, and $0\le p\le1$. We emphasize in particular that they also hold on the unweighted space $W^{-k,1}(\R^d)$ for all $k\ge1$, in which case the dependence on $Q(\mu_\circ)$ in~\eqref{eq:estim-concl-qt} can be completely lifted.
We skip the details as it is similar to~\cite{Delarue_Tse_21}.
Note that the control of higher-order correlation functions indeed requires ergodic estimates in Sobolev spaces with \emph{arbitrary} negative regularity $k\ge1$.

The main challenge is to obtain the corresponding ergodic estimates in the kinetic Langevin setting, where parabolic tools are no longer available due to hypocoercivity.
This has been a very active area of research over the last two decades.
In the PDE community, the convergence to equilibrium for linear kinetic equations was first studied in~\cite{Helffer_2005,Herau_2003}.
General hypocoercivity techniques were developed in~\cite{Villani_2006,Dolbeault_2009,Dolbeault_Hypocoercivity_2015}, where the linear kinetic Fokker--Planck equation served as a prototypical example and where the exponential convergence to equilibrium was obtained both on the spaces~$\Ld^2(M^{-1/2})$ and $H^1(M^{-1/2})$.
Combining hypocoercivity techniques with so-called enlargement theory,
Gualdani, Mischler and Mouhot~\cite{Gualdani_2017, Mischler_2016} later obtained corresponding estimates on larger spaces.
While ergodic estimates in the Brownian setting hold on $W^{-k,1}(\Xd)$ for all~$k\ge1$, hypocoercivity techniques in the kinetic Langevin setting actually require working on weighted spaces $W^{-k,q}(\langle z\rangle^p)$ with integrability exponent $q>1$ and with $p>0$.
More precisely, enlargement theory as developed in~\cite{Mischler_2016} leads to estimates on $W^{-k,q}(\langle z\rangle^p)$ for all $q>1$, $k\in\{-1,0,1\}$, and for large enough weight exponents $p\gg1$.
Yet, for our purposes in this work, it is critical to be able to cover arbitrarily small $p>0$ when the integrability exponent $q$ is close enough to $1$.
This has led us to revisit and partially improve the work of Mischler and Mouhot~\cite{Mischler_2016}: our ergodic estimates are proven to hold for all $q>1$ and $p>0$ under the sole restriction that $pq'$ be large enough, which is of independent interest.
In addition, the control of higher-order correlation functions requires to cover arbitrary negative regularity $k\ge1$.
The proof is postponed to Section~\ref{sec:ergodic}.

\begin{rem}[Periodic setting]\label{rem:toroidal-erg}
As mentioned in the introduction, cf.\@ Section~\ref{subsubsec:periodic}, the above result can essentially be adapted to the corresponding periodic setting on the torus $\T^d$, for instance in the spatially-homogeneous setting $A\equiv0$, but some special care is then needed in the Langevin setting.
Indeed, the nonlinear hypocoercivity result that is available in that case is slightly weaker, cf.~\cite[Theorem 56]{Villani_2009}: it only yields a convergence rate~$t^{-\infty}$ in~\eqref{eq:relax-mt-ass}, thus leading to a similar decay rate~$t^{-\infty}$ instead of exponential in~\eqref{eq:estim-concl-qt}. Fortunately, the resulting subexponential estimates are still enough to repeat the proofs of Theorems~\ref{thm:main}, \ref{th:CLT}, and~\ref{th:concentr}, which can be checked to hold in the same form, replacing all exponential rates in time with superlinear ones. 
\end{rem}

%%%%%%%%%%%%%%%%%%%%%%%%%%%%%%%%%%%%%%%%%%%%
%%%%%%%%%%%%%%%%%%%%%%%%%%%%%%%%%%%%%%%%%%%%
%%%%%%%%%%%%%%%%%%%%%%%%%%%%%%%%%%%%%%%%%%%%

\section{Representation of Brownian cumulants}\label{sec:brownian}
This section is devoted to the representation of Brownian cumulants by means of Lions calculus.
More precisely, our starting point is the Lions expansion of Lemma~\ref{lem:CST0}: following~\cite{Chassagneux_2019}, it leads to an expansion of quantities of the form~$\E_B[\Phi(\mu^N_t)]$ as power series in $\frac1N$.
We introduce so-called \emph{L-graphs} (or Lions graphs) as a new diagrammatic representation that allows to efficiently capture cancellations in moment computations, leading us to a useful representation of Brownian cumulants.
Note that this is strictly unrelated to the so-called Lions forests developed by Delarue and Salkeld in~\cite{Delarue-Salkeld-21}.
In the sequel, we shall denote the $n$th time-integration simplex as
\[\triangle_t^n\,:=\,\big\{(t_1,\ldots,t_n)\in[0,t]^n:0< t_n<\ldots< t_1< t\big\},\]
and we also define
\[\triangle^n\,:=\,\big\{(t,\tau):t>0,\,\tau\in\triangle_t^n\big\}.\]

\subsection{Lions expansion along the flow}\label{subsec:iterated_expansion}
We appeal to Lemma~\ref{lem:CST0} similarly as in~\cite{Chassagneux_2019} to expand the Brownian expectation $\E_B[\Phi(\mu^N_t)]$ as a power series in $\frac1N$. To this aim, we start with the following iterative definition, which describes the natural quantities that appear in the expansion.

\begin{defin0}\label{def:Phi_U}
Given $n\ge1$ and a smooth functional $\Phi: \triangle^n \times \calP(\Xd) \to \R$, we define the sequence $(\Uc_\Phi^{(m)}, \Phi^{(m)})_{m\ge 1}$ as follows:
\begin{enumerate}[$\bullet$]
\item For $m = 1$, we set for all
$t>0$, $\tau = (\tau_1,\dots, \tau_n)\in\triangle_t^n$, $0 < s < \tau_n$, and $\mu \in \calP(\Xd)$,
\[ \Uc_\Phi^{(1)}\big((t,\tau,s), \mu\big) \,:=\, \Phi\big( (t,\tau), m(\tau_n -s ,\mu)\big),\] 
and 
\[ \Phi^{(1)}\big((t,\tau,s),\mu\big) \,:=\, \int_{\Xd} \Tr \Big[ a_0\, \partial^2_\mu \Uc_\Phi^{(1)}\big((t,\tau,s),\mu\big)(z,z) \Big] \, \mu(\ddr z). \]
\item For $m\ge2$, we iteratively define for all $t>0$, $\tau = (\tau_1, \ldots, \tau_{n+m-1})\in \triangle^{n+m-1}_t$, $0<s<\tau_{n+m-1}$, and~$\mu \in \calP(\Xd)$, 
\begin{align*}
\Uc_\Phi^{(m)}\big((t,\tau,s), \mu\big)\,:=\, \Phi^{(m-1)}\big((t,\tau), m(\tau_{n+m-1} - s, \mu)\big), 
\end{align*} 
and 
\begin{align*}
\Phi^{(m)}((t,\tau,s),\mu) \,:=\, \int_{\Xd} \Tr \Big[ a_0\, \partial^2_\mu \Uc_\Phi^{(m)}\big((t,\tau,s), \mu\big)(z,z) \Big] \, \mu(\ddr z). 
\end{align*}
\end{enumerate}
By convention, for $n=0$, given a smooth functional $\Phi: \calP(\Xd) \to \R$, we identify it with the functional $\tilde \Phi: \triangle^0 \times \calP(\Xd) \to \R$ given by~$\tilde \Phi(t,\mu) := \Phi(\mu)$, and we then set
\[\Uc_\Phi^{(1)}\big((t,s), \mu \big) \,:=\, \Phi\big(m(t-s,\mu)\big),\qquad 0<s<t,\]
from which we can define $\Uc_\Phi^{(m)},\Phi^{(m)}$ iteratively as above.
\end{defin0}

This definition is a minor extension of~\cite{Chassagneux_2019}, where only the case $n=0$ was considered.
By a straightforward adaptation of~\cite[Theorems~2.15--2.16]{Chassagneux_2019}, we emphasize that this definition always makes sense
with our smoothness assumptions.
Moreover, for all $n\ge0$ and all smooth functionals $\Phi: \triangle^n \times \calP(\Xd) \to \R$, it is clear from the definition that
for all $m, p \ge 1$, $t > 0$, $\tau \in \triangle^{n+m+p}_t$, and $\mu\in\calP(\Xd)$ we have
\begin{equation}\label{eq:relation_U_Phi}
\Uc_\Phi^{(m+p)} \big((t,\tau),\mu\big) \,=\, \Uc_{\Phi^{(m)}}^{(p)} \big((t,\tau), \mu \big),
\end{equation}
and similarly,
\begin{equation*}
\Phi^{(m+p)}\big((t,\tau), \mu\big) \,=\, (\Phi^{(m)})^{(p)}\big((t,\tau), \mu \big). 
\end{equation*}
In these terms, we can now state the following expansion result for functionals of the empirical measure along the particle dynamics.
This is similar to the so-called weak error expansion in~\cite[Theorem~2.9]{Chassagneux_2019}; we include a short proof for completeness.

\begin{prop}\label{prop:decomp-exp-B}
Given a smooth functional $\Phi : \calP(\Xd) \to \R$, we have for all~$n\ge0$ and $t>0$,
\begin{multline}\label{eq:expansion_order_n}
\E_B[\Phi(\mu^N_t)] ~=~ \sum_{m=0}^n \frac{1}{(2N)^m} \int_{\triangle^{m}_t} \Uc_\Phi^{(m+1)} \big( (t, \tau, 0), \mu^N_0 \big) \, \ddr \tau \\
+ \frac{1}{(2N)^{n+1}} \int_{\triangle^{n+1}_t} \E_B \Big[ \Uc_{\Phi}^{(n+2)} \big((t,\tau,\tau_{n+1}), \mu^N_{\tau_{n+1}} \big) \Big] \, \ddr \tau.
\end{multline}
\end{prop}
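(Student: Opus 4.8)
The plan is to prove \eqref{eq:expansion_order_n} by induction on $n$, each step being a single application of the Lions expansion of Lemma~\ref{lem:CST0}. Throughout I would use two elementary bookkeeping identities, immediate from Definition~\ref{def:Phi_U} and the semigroup property of $m$: for $\tau=(\tau_1,\dots,\tau_m)\in\triangle^m_t$ and $\mu\in\calP(\Xd)$,
\[
\Uc^{(m+1)}_\Phi\big((t,\tau,s),\mu\big)=\Phi^{(m)}\big((t,\tau),m(\tau_m-s,\mu)\big),\qquad
\Uc^{(m+1)}_\Phi\big((t,\tau,\tau_m),\mu\big)=\Phi^{(m)}\big((t,\tau),\mu\big),
\]
the second being the boundary case $s=\tau_m$ of the first; in particular, for fixed $(t,\tau)$, the map $r\mapsto\Phi^{(m)}((t,\tau),m(r,\cdot))=\Uc^{(m+1)}_\Phi((t,\tau,\tau_m-r),\cdot)$ is exactly the $P$-semigroup flow of the functional $\Phi^{(m)}((t,\tau),\cdot)$.

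For the base case $n=0$, I would apply Lemma~\ref{lem:CST0} to $\Phi$ itself with $s=t$: since $U(0,\cdot)=\Phi$ and $\E_B[M^N_{t,t}]=0$, this yields
\[
\E_B[\Phi(\mu^N_t)]=\Phi(m(t,\mu^N_0))+\frac1{2N}\int_0^t\E_B\Big[\int_\Xd\Tr\big[a_0\,\partial^2_\mu U(t-u,\mu^N_u)(z,z)\big]\,\mu^N_u(\ddr z)\Big]\,\ddr u,
\]
and by Definition~\ref{def:Phi_U} in the case $n=0$ the boundary term equals $\Uc^{(1)}_\Phi((t,0),\mu^N_0)$ while the inner integral equals $\Phi^{(1)}((t,u),\mu^N_u)=\Uc^{(2)}_\Phi((t,u,u),\mu^N_u)$, which is precisely \eqref{eq:expansion_order_n} for $n=0$ (the $m=0$ summand and the remainder).

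For the inductive step, assume \eqref{eq:expansion_order_n} holds for $n-1$; by the second identity above its remainder equals $\frac1{(2N)^n}\int_{\triangle^n_t}\E_B[\Phi^{(n)}((t,\tau),\mu^N_{\tau_n})]\,\ddr\tau$. I would fix $\tau=(\tau_1,\dots,\tau_n)\in\triangle^n_t$ and apply Lemma~\ref{lem:CST0} to the smooth functional $\Phi^{(n)}((t,\tau),\cdot)$, taking both the terminal time and $s$ equal to $\tau_n$. The associated $U$-flow is $r\mapsto\Uc^{(n+1)}_\Phi((t,\tau,\tau_n-r),\cdot)$, so the left-hand side is $\Phi^{(n)}((t,\tau),\mu^N_{\tau_n})$, the boundary term is $\Uc^{(n+1)}_\Phi((t,\tau,0),\mu^N_0)$, and, using Definition~\ref{def:Phi_U} once more, the $\frac1{2N}$-integrand at time $u$ is $\int_\Xd\Tr[a_0\,\partial^2_\mu\Uc^{(n+1)}_\Phi((t,\tau,u),\mu^N_u)(z,z)]\,\mu^N_u(\ddr z)=\Phi^{(n+1)}((t,\tau,u),\mu^N_u)$. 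Taking $\E_B$ (the martingale term drops), multiplying by $(2N)^{-n}$, and integrating over $\tau\in\triangle^n_t$ produces the $m=n$ summand of \eqref{eq:expansion_order_n} together with
\[
\frac1{(2N)^{n+1}}\int_{\triangle^n_t}\int_0^{\tau_n}\E_B\big[\Phi^{(n+1)}((t,\tau,u),\mu^N_u)\big]\,\ddr u\,\ddr\tau .
\]
Since $\{(\tau,u):\tau\in\triangle^n_t,\ 0<u<\tau_n\}=\triangle^{n+1}_t$ with $u$ playing the role of the last coordinate $\tau_{n+1}$, and since $\Phi^{(n+1)}((t,\tau_1,\dots,\tau_{n+1}),\cdot)=\Uc^{(n+2)}_\Phi((t,(\tau_1,\dots,\tau_{n+1}),\tau_{n+1}),\cdot)$ by the second identity, this last term is exactly the remainder at order $n$, which closes the induction.

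The manipulations above only require that every functional $\Phi^{(m)}((t,\tau),\cdot)$ and $\Uc^{(m)}_\Phi((t,\tau),\cdot)$ appearing here be regular enough for Lemma~\ref{lem:CST0} to apply and for its $\partial^2_\mu$-term to be bounded; as recorded after Definition~\ref{def:Phi_U}, this propagation of smoothness is a routine adaptation of \cite[Theorems~2.15--2.16]{Chassagneux_2019}, and the same boundedness legitimates the uses of Fubini's theorem (to exchange $\E_B$ with the integral over $\triangle^n_t$ and to integrate the one-step identity in $\tau$). I do not expect a real obstacle: the only delicate point is the bookkeeping of the shifted time variables in Definition~\ref{def:Phi_U}, i.e.\ identifying which slot plays the role of the running time each time Lemma~\ref{lem:CST0} is invoked, and this is dealt with once and for all by the two identities stated at the outset.
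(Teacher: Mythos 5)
Your proposal is correct and follows essentially the same route as the paper: induction on $n$ with a single application of Lemma~\ref{lem:CST0} per step (with $s$ equal to the terminal time so the martingale drops under $\E_B$), the base case handled identically, and the inductive step applying the expansion to $\Phi^{(n)}((t,\tau),\cdot)$, which differs from the paper's application to $\Phi^{(n+1)}((t,\tau),\cdot)$ only by an index shift. Your two bookkeeping identities are just Definition~\ref{def:Phi_U} together with the relation~\eqref{eq:relation_U_Phi} that the paper invokes at the same point, so there is no substantive difference.
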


\begin{proof}
We proceed by induction and split the proof into two steps.

\medskip
\step1 Case~$n=0$.\\
By Lemma~\ref{lem:CST0}, we find for all $0 \le s \le t$ and $\mu \in \calP(\Xd)$,
\[ \Phi\big(m(t-s,\mu_s^N)\big)\,=\,\Phi\big(m(t,\mu_0^N)\big)+\frac1{2N}\int_0^s\int_\Xd\Tr\Big[ a_0\,\partial_\mu^2\Uc_\Phi^{(1)}\big((t,u),\mu_u^N\big)(z,z)\Big]\,\mu_u^N(\ddr z)\,\ddr u+M_{t,s}^N,\]
for some square-integrable martingale $(M_{t,s}^N)_s$ with $M^N_{t,0} = 0$, where we use the notation $\Uc_\Phi^{(1)}$ from Definition~\ref{def:Phi_U}.
Hence, taking the expectation with respect to $\mathbb{P}_B$ and choosing $s=t$, we get
\[ \E_B [\Phi(\mu_t^N)] \,=\, \Phi\big(m(t,\mu_0^N)\big)+ \frac{1}{2N} \int_0^t \E_B \bigg[\int_{\Xd} \Tr \big[a_0\, \partial^2_\mu \Uc_\Phi^{(1)}\big((t,u),\mu_u^N\big)(z,z) \big] \, \mu^N_u(\ddr z) \bigg] \, \ddr u.\]
With the notation of Definition~\ref{def:Phi_U}, this means
\[ \E_B [\Phi(\mu_t^N)] \,=\, \Uc_\Phi^{(1)}\big((t,0),\mu_0^N\big)+ \frac{1}{2N} \int_0^t \E_B \Big[\Phi^{(1)}\big((t,s),\mu^N_s\big)\Big]\, \ddr s.\]
As $\Phi^{(1)}\big((t,s),\mu_s^N\big)=\Uc_\Phi^{(2)}\big((t,s,s),\mu_s^N\big)$, this proves~\eqref{eq:expansion_order_n} with $n=0$.

\medskip
\step2 General case.\\
We argue by induction.
Suppose that~\eqref{eq:expansion_order_n} has been established for some $n\ge0$.
Let $t > 0$ and $\tau = (\tau_1,\dots, \tau_{n+1}) \in \triangle^{n+1}_t$ be fixed.
Applying Lemma~\ref{lem:CST0} with $\mu\mapsto\Phi(\mu)$ replaced by $\mu\mapsto\Phi^{(n+1)}((t,\tau),\mu)$, we find similarly as in Step~1,
\begin{multline*}
\E_B\big[\Phi^{(n+1)}\big((t,\tau),\mu_{\tau_{n+1}}^N\big)\big]
\,=\,\Phi^{(n+1)}\big((t,\tau),m(\tau_{n+1},\mu_0^N)\big)\\
+\frac1{2N}\int_0^{\tau_{n+1}} \E_B \bigg[ \int_\Xd\Tr\Big[a_0\,\partial_\mu^2\Uc_{\Phi^{(n+1)}}^{(1)}\big((t,\tau,u),\mu_u^N\big)(z,z)\Big] \,\mu_u^N(\ddr z)\bigg]\,\ddr u.
\end{multline*}
Using this to further decompose the remainder term in~\eqref{eq:expansion_order_n}, using the notation of Definition~\ref{def:Phi_U}, and recalling~\eqref{eq:relation_U_Phi}, we precisely deduce that~\eqref{eq:expansion_order_n} also holds with $n$ replaced by $n+1$.
\end{proof}

\subsection{Graphical notation and definition of L-graphs}\label{subsec:diagram}
We introduce a graphical notation associated with Definition~\ref{def:Phi_U}, defining the notion of L-graphs (or Lions graphs), which will considerably simplify combinatorial manipulations in the sequel.
Let $\Phi:\calP(\Xd)\to\R$ be a reference smooth functional.

\medskip
\noindent
$\bullet$ {\it Base point.} Given $n \ge0$ and a smooth functional $\Psi: \triangle^n \times \calP(\Xd) \to \R$, we set for all $t > 0$, $\tau= (\tau_1,\dots,\tau_n) \in \triangle^n_t$, and $0<s<\tau_n$,
\begin{align*}
\RS{p}_{[\Psi]}\big((t,\tau,s), \mu\big)
\,:=\, \Uc_\Psi^{(1)}\big((t,\tau, s), \mu\big)
\,=\, \Psi\big((t,\tau),m(\tau_n-s, \mu)\big).
\end{align*}
In case of the reference smooth functional $\Psi=\Phi$ , we drop the subscript and simply set
\[\RS{p}\big((t,s),\mu\big)
\,:=\,\RS{p}_{[\Phi]}\big((t,s),\mu\big)\,=\,\Phi\big(m(t-s,\mu)\big).\]

\noindent
$\bullet$ {\it Round edge.}
In view of Proposition~\ref{prop:decomp-exp-B}, the key operation that we want to account for in our graphical representation is 
\begin{align*}
\Uc_\Psi^{(k)}
\,\mapsto\, \Uc_\Psi^{(k+1)},
\end{align*}
cf.\@ Definition~\ref{def:Phi_U}. This will be represented with the symbol $\RS{b}$, which we henceforth call ``round edge''. More precisely, given $n \ge0$ and a smooth functional $\Psi: \triangle^n \times \calP(\Xd) \to \R$, we define for all $(t,\tau)\in\triangle^n$, $0<s<\tau_n$, and $\mu\in\Pc(\Xd)$,
\begin{equation}\label{eq:def-round-edge}
\begin{tikzpicture}[blue,baseline = -.7ex]
\begin{scope}[local bounding box=foo]
\draw node (0,0){\text{\color{black}$\Psi$}};
\end{scope}
\draw[rounded corners] (foo.south west) rectangle (foo.north east);
\end{tikzpicture}
\big((t,\tau,s),\mu\big)
\,:=\,\bigg(\int_\Xd\Tr\Big[a_0\,\partial_\mu^2\Psi\big((t,\tau),\nu\big)(z,z)\Big]\,\nu(\ddr z)\bigg)\bigg|_{\nu= m(\tau_n-s,\mu)},
\end{equation}
hence for instance
\begin{equation*}
\RS{pb} \,=\, \Uc_\Phi^{(2)},\qquad
\RS{pbB}\,=\,\Uc_\Phi^{(3)},
\end{equation*}
and so on.
In particular, we emphasize that
\[ \begin{tikzpicture}[blue,baseline=-.7ex]
\begin{scope}[local bounding box=foo]
\draw node (0,0){\text{\color{black}$\Psi$}};
\end{scope}
\draw[rounded corners] (foo.south west) rectangle (foo.north east);
\end{tikzpicture}
\,\ne\,
\begin{tikzpicture}[blue,baseline = -.7ex]
\begin{scope}[local bounding box=foo]
\draw node (0,0){\text{$\RS{p}_{\color{black}[\Psi]}$}};
\end{scope}
\draw[rounded corners] (foo.south west) rectangle (foo.north east);
\end{tikzpicture}.\]
When iterating this operation, we add a subscript $(m)$ to indicate the number $m$ of iterations, that is,
\begin{align*}
\RS{pb}_{(0)} \,:=\, \RS{p}, \qquad \RS{pb}_{(1)} \,:=\, \RS{pb}, \qquad \quad \RS{pb}_{(m+1)} \,:=\,
\begin{tikzpicture}[blue,baseline = -.7ex]
\begin{scope}[local bounding box=foo]
\draw node (0,0){\text{\color{black}$\RS{pb}_{(m)}$}};
\end{scope}
\draw[rounded corners] (foo.south west) rectangle (foo.north east);
\end{tikzpicture}.
\end{align*}
With this notation, the identity~\eqref{eq:relation_U_Phi} takes on the following guise, for all $m, p \ge0$, 
\begin{align*}
\RS{pb}_{(m+p)}
\,=\,
\begin{tikzpicture}[blue,baseline = -.7ex]
\begin{scope}[local bounding box=foo]
\draw node (0,0){\text{\color{black}$\RS{pb}_{(m)}$}};
\end{scope}
\draw[rounded corners] (foo.south west) rectangle (foo.north east);
\end{tikzpicture}_{(p)}.
\end{align*}

\noindent
$\bullet$ {\it Time integration.}
We introduce a short-hand notation for ordered time integrals: given $n,m\ge0$ and given a smooth functional $\Psi:\triangle^{n+m}\times\Pc(\Xd)\to\R$, we define for all $(t,\tau)
\in\triangle^n$ and $\mu\in\Pc(\Xd)$,
\[\Big(\int_{\triangle^m}\Psi\Big)\big((t,\tau),\mu\big)
\,:=\,\Big(\int_{\triangle_t^m}\Psi\Big)(\tau,\mu)
\,:=\,\int_{\triangle^m_{\tau_n}}\Psi\big((t,\tau,\sigma),\mu\big)\,\ddr\sigma,\]
and also, for $m\ge1$,
\[\Big(\int_{\triangle^{m-1}\times0}\Psi\Big)\big((t,\tau),\mu\big)
\,:=\,\Big(\int_{\triangle^{m-1}_t\times0}\Psi\Big)(\tau,\mu)
\,:=\,\int_{\triangle^{m-1}_{\tau_n}}\Psi\big((t,\tau,\sigma,0),\mu\big)\,\ddr\sigma.\]
In these terms, the result of Proposition~\ref{prop:decomp-exp-B} takes on the following guise: for all $n\ge0$ and $t>0$,
\begin{multline}\label{eq:decomp-exp-B-reform-graph}
\E_B[\Phi(\mu_t^N)]~=~\sum_{m=0}^n\frac1{(2N)^m}\Big(\int_{\triangle_t^m\times0}
{\begin{tikzpicture}[blue,baseline = -.7ex]
\begin{scope}[local bounding box=box1]
\node[circle,draw,fill,inner sep=0pt,minimum size=3pt] (1) at (0,0) {};
\draw[white] (1) circle(0.2);
\end{scope}
\draw[rounded corners] (box1.south west) rectangle (box1.north east);
\end{tikzpicture}}
_{(m)}\Big)(\mu_0^N)\\
~+~\frac1{(2N)^{n+1}}\int_{\triangle_t^{n+1}}\E_B\Big[
{\begin{tikzpicture}[blue,baseline = -.7ex]
\begin{scope}[local bounding box=box1]
\node[circle,draw,fill,inner sep=0pt,minimum size=3pt] (1) at (0,0) {};
\draw[white] (1) circle(0.2);
\end{scope}
\draw[rounded corners] (box1.south west) rectangle (box1.north east);
\end{tikzpicture}}
_{(n+1)}\big((t,\tau,\tau_{n+1}),\mu_{\tau_{n+1}}^N\big)\Big]\,\ddr\tau.
\end{multline}
In order to compute cumulants of functionals of the empirical measure along the flow, we shall need to apply~\eqref{eq:decomp-exp-B-reform-graph} with $\Phi$ replaced by powers $\Phi^k$ with $k\ge1$, and try to recognize the structure of the relation between moments and cumulants, cf.~Lemma~\ref{lem:mom-cum}. This will be performed in Section~\ref{sec:rep-Gausscum} and motivates the further notation that we introduce below.

\medskip\noindent
$\bullet$ {\it Products.}
Products of different functionals are simply denoted by juxtaposing the corresponding graphs.
Some care is however needed on how to identify respective time variables. For that purpose, we include subscripts with angular brackets indicating labels of the time variables for the different subgraphs: given $n,m\ge0$, given functionals $\Psi:\triangle^n\times\Pc(\Xd)\to\R$ and $\Theta:\triangle^m\times\Pc(\Xd)\to\R$, and given $i_1<\ldots<i_n$ and $j_1<\ldots<j_m$ with $\{i_1,\ldots,i_n\}\cup\{j_1,\ldots,j_m\}=\llbracket p\rrbracket$ (possibly not disjoint), we set for all $(t,\tau)\in\triangle^{p}$ and $\mu\in\Pc(\Xd)$,
\begin{equation}\label{eq:def-prod-graph}
\Psi_{\langle i_1,\ldots,i_n\rangle}
\Theta_{\langle j_1,\ldots,j_m\rangle}
\big((t,\tau),\mu\big)
\,:=\,
\Psi\big((t,\tau_{i_1},\ldots,\tau_{i_n}),\mu\big)
\Theta\big((t,\tau_{j_1},\ldots,\tau_{j_m}),\mu\big).
\end{equation}
For instance,
\[{\begin{tikzpicture}[blue,baseline = -.7ex]
\begin{scope}[local bounding box=box1]
\node[circle,draw,fill,inner sep=0pt,minimum size=3pt] (1) at (0,0) {};
\draw[white] (1) circle(0.2);
\end{scope}
\draw[rounded corners] (box1.south west) rectangle (box1.north east);
\end{tikzpicture}}
_{\langle 1,4\rangle}
{\begin{tikzpicture}[blue,baseline = -.7ex]
\begin{scope}[local bounding box=box1]
\node[circle,draw,fill,inner sep=0pt,minimum size=3pt] (1) at (0,0) {};
\draw[white] (1) circle(0.2);
\end{scope}
\draw[rounded corners] (box1.south west) rectangle (box1.north east);
\end{tikzpicture}}
_{(2)\langle2,3,4\rangle}
\,\big((t,\tau_1,\tau_2,\tau_3,\tau_4),\mu\big)
\,=\,
\Uc_\Phi^{(2)}\big((t,\tau_1,\tau_4),\mu\big)
\,\Uc_\Phi^{(3)}\big((t,\tau_2,\tau_3,\tau_4),\mu\big).\]
We also occasionally use indices to label time variables in subgraphs, for instance
\[{\begin{tikzpicture}[blue,baseline = -.7ex]
\begin{scope}[local bounding box=box1]
\node (1) at (0,0) {$
{\begin{tikzpicture}[blue,baseline = -.7ex]
\node[circle,draw,fill,inner sep=0pt,minimum size=3pt] (1) at (0,0) {};
\draw[white] (1) circle(0.2);
\end{tikzpicture}}
{\color{black}_{\!\!\langle1\rangle}}
$};
\end{scope}
\draw[rounded corners] (box1.south west) rectangle (box1.north east);
\end{tikzpicture}}
_{\langle 4\rangle}
{\begin{tikzpicture}[blue,baseline = -.7ex]
\begin{scope}[local bounding box=box1]
\node (1) at (0,0) {\text{$
{\begin{tikzpicture}[blue,baseline = -.7ex]
\begin{scope}[local bounding box=box1]
\node[circle,draw,fill,inner sep=0pt,minimum size=3pt] (1) at (0,0) {};
\draw[white] (1) circle(0.2);
\end{scope}
\draw[rounded corners] (box1.south west) rectangle (box1.north east);
\end{tikzpicture}}
{\color{black}_{\langle2,3\rangle}}$}};
\end{scope}
\draw[rounded corners] (box1.south west) rectangle (box1.north east);
\end{tikzpicture}}
_{\langle4\rangle}
\,\equiv\,
{\begin{tikzpicture}[blue,baseline = -.7ex]
\begin{scope}[local bounding box=box1]
\node[circle,draw,fill,inner sep=0pt,minimum size=3pt] (1) at (0,0) {};
\draw[white] (1) circle(0.2);
\end{scope}
\draw[rounded corners] (box1.south west) rectangle (box1.north east);
\end{tikzpicture}}
_{\langle 1,4\rangle}
{\begin{tikzpicture}[blue,baseline = -.7ex]
\begin{scope}[local bounding box=box1]
\node[circle,draw,fill,inner sep=0pt,minimum size=3pt] (1) at (0,0) {};
\draw[white] (1) circle(0.2);
\end{scope}
\draw[rounded corners] (box1.south west) rectangle (box1.north east);
\end{tikzpicture}}
_{(2)\langle2,3,4\rangle}.\]

\noindent
$\bullet$ {\it Straight edges.}
When applying the round edge~\eqref{eq:def-round-edge} to a power of a given functional, we are led to defining another type of operation, which we represent by a straight edge between subgraphs. More precisely, given $n,m\ge0$, given smooth functionals $\Psi:\triangle^{n+1}\times\Pc(\Xd)\to\R$ and $\Theta:\triangle^{m+1}\times\Pc(\Xd)\to\R$, and given a partition $\{i_1,\ldots,i_{n}\}\uplus\{j_1,\ldots,j_{m}\}=\llbracket n+m\rrbracket$ with $i_1<\ldots<i_n$ and $j_1<\ldots<j_m$, we set for all $(t,\tau,s,s')\in\triangle^{n+m+2}$,
\begin{multline}\label{eq:def-straight-edge}
\begin{tikzpicture}[blue,baseline = -.7ex]
\begin{scope}[local bounding box=box1]
\draw node(1) at (0,0){\text{\color{black}$\Psi_{\langle i_1,\ldots, i_n,m+n+1\rangle}$}};
\end{scope}
\begin{scope}[local bounding box=box2]
\draw node(2) at (4.4,0){\text{\color{black}$\Theta_{\langle j_1,\ldots, j_m,m+n+1\rangle}$}};
\end{scope}
\draw node(1) at (2.2,-0.2){\text{\tiny\color{black}$\langle m\!+\!n\!+\!2\rangle$}};
\path[-] (box1) edge (box2);
\draw[rounded corners,dotted] (box1.south west) rectangle (box1.north east);
\draw[rounded corners,dotted] (box2.south west) rectangle (box2.north east);
\end{tikzpicture}
\,\big((t,\tau,s,s',\mu\big)\\
\,:=\,\bigg(\int_\Xd\Big(\partial_\mu\Psi\big((t,\tau_{i_1},\ldots,\tau_{i_n},s),\nu\big)(z)
\cdot a_0\partial_\mu\Theta\big((t,\tau_{j_1},\ldots,\tau_{j_m},s),\nu\big)(z)\Big)\,\nu(\ddr z)\bigg)\bigg|_{\nu= m(s-s',\mu)}.
\end{multline}
Note that the dotted boxes around $\Psi$ and $\Theta$ above have no particular meaning in the graphical notation: they are just meant to emphasize that the straight edge is between the subgraphs corresponding to $\Psi$ and $\Theta$;
we remove them when no confusion is possible, simply writing for instance
\begin{equation*}
{\begin{tikzpicture}[blue,baseline = -.7ex]
\node[circle,draw,fill,inner sep=0pt,minimum size=3pt] (1) at (0,0) {};
\node[circle,draw,fill,inner sep=0pt,minimum size=3pt] (2) at (0.85,0) {};
\node (1') at (0,-0.2) {{\tiny\color{black}$\langle1\rangle$}};
\node (3') at (0.425,-0.2) {{\tiny\color{black}$\langle2\rangle$}};
\node (2') at (0.85,-0.2) {{\tiny\color{black}$\langle1\rangle$}};
\path[-] (1) edge (2);
\end{tikzpicture}}
\,\equiv\,
\begin{tikzpicture}[blue,baseline = -.7ex]
\begin{scope}[local bounding box=box1]
\node[circle,draw,fill,inner sep=0pt,minimum size=3pt] (1) at (0,0) {};
\node (1') at (0,-0.2) {{\tiny\color{black}$\langle1\rangle$}};
\node (1'') at (0,0.1) {{\tiny\color{white}$.$}};
\end{scope}
\begin{scope}[local bounding box=box2]
\node[circle,draw,fill,inner sep=0pt,minimum size=3pt] (2) at (1.1,0) {};
\node (2') at (1.1,-0.2) {{\tiny\color{black}$\langle1\rangle$}};
\node (2'') at (1.1,0.1) {{\tiny\color{white}$.$}};
\end{scope}
\draw (box1) -- (box2);
\node (3') at (0.55,-0.3) {{\tiny\color{black}$\langle2\rangle$}};
\draw[rounded corners,dotted] (box1.south west) rectangle (box1.north east);
\draw[rounded corners,dotted] (box2.south west) rectangle (box2.north east);
\end{tikzpicture}\,,
\qquad
\begin{tikzpicture}[blue,baseline = -.7ex]
\begin{scope}[local bounding box=box1]
\node[circle,draw,fill,inner sep=0pt,minimum size=3pt] (1) at (0,0) {};
\draw[white] (1) circle(0.2);
\end{scope}
\draw[rounded corners] (box1.south west) rectangle (box1.north east);
\node[circle,draw,fill,inner sep=0pt,minimum size=3pt] (2) at (0.85,0) {};
\node (1') at (0,-0.4) {{\tiny\color{black}$\langle1,2\rangle$}};
\node (3') at (0.48,-0.2) {{\tiny\color{black}$\langle3\rangle$}};
\node (2') at (0.85,-0.2) {{\tiny\color{black}$\langle2\rangle$}};
\path[-] (box1) edge (2);
\end{tikzpicture}
\,\equiv\,
\begin{tikzpicture}[blue,baseline = -.7ex]
\begin{scope}[local bounding box=box11]
\begin{scope}[local bounding box=box1]
\node[circle,draw,fill,inner sep=0pt,minimum size=3pt] (1) at (0,0) {};
\draw[white] (1) circle(0.2);
\end{scope}
\draw[rounded corners] (box1.south west) rectangle (box1.north east);
\node (1') at (0,-0.4) {{\tiny\color{black}$\langle1,2\rangle$}};
\node (1'') at (0,0.23) {{\tiny\color{white}$.$}};
\end{scope}
\begin{scope}[local bounding box=box2]
\node[circle,draw,fill,inner sep=0pt,minimum size=3pt] (2) at (1.2,0.052) {};
\node (2') at (1.2,-0.22) {{\tiny\color{black}$\langle2\rangle$}};
\node (2'') at (1.2,0.14) {{\tiny\color{white}$.$}};
\end{scope}
\draw[rounded corners,dotted] (box11.south west) rectangle (box11.north east);
\draw[rounded corners,dotted] (box2.south west) rectangle (box2.north east);
\path[-] (box11) edge (box2);
\node (3') at (0.67,-0.28) {{\tiny\color{black}$\langle3\rangle$}};
\end{tikzpicture}\,.
\end{equation*}

\noindent
$\bullet$ {\it Graphical notation and terminology.}
Starting from a number of base points, surrounding some subgraphs by round edges, and connecting some subgraphs via straight edges, we are led to a class of diagrams that we shall call {\it L-graphs} (or Lions graphs).
Viewing round edges as loops,
\[\begin{tikzpicture}[blue,baseline = -.7ex]
\tikzset{every loop/.style={}}
\begin{scope}[local bounding box=box1]
\draw node(1) at (0,0){\text{\color{black}$\Psi$}};
\end{scope}
\path[-] (box1) edge [loop above] (box1);
\draw[rounded corners,dotted] (box1.south west) rectangle (box1.north east);
\end{tikzpicture}
\,\equiv\,
\begin{tikzpicture}[blue,baseline = -.7ex]
\begin{scope}[local bounding box=box1]
\draw node(1) at (0,0){\text{\color{black}$\Psi$}};
\end{scope}
\draw[rounded corners] (box1.south west) rectangle (box1.north east);
\end{tikzpicture}\,,
\qquad
\begin{tikzpicture}[blue,baseline = -.7ex]
\tikzset{every loop/.style={}}
\begin{scope}[local bounding box=box1]
\draw node(1) at (0,0){\text{\color{black}$\Psi$}};
\end{scope}
\path[-] (box1) edge [loop above] (box1);
\path[-] (box1) edge [loop right] (box1);
\draw[rounded corners,dotted] (box1.south west) rectangle (box1.north east);
\end{tikzpicture}
\,\equiv\,
\begin{tikzpicture}[blue,baseline = -.7ex]
\begin{scope}[local bounding box=box1]
\draw node(1) at (0,0){
\begin{tikzpicture}[blue,baseline = -.7ex]
\begin{scope}[local bounding box=box1]
\draw node(1) at (0,0){\text{\color{black}$\Psi$}};
\end{scope}
\draw[rounded corners] (box1.south west) rectangle (box1.north east);
\end{tikzpicture}
};
\end{scope}
\draw[rounded corners] (box1.south west) rectangle (box1.north east);
\end{tikzpicture}\,,\]
we can view L-graphs as (undirected) multi-hypergraphs that satisfy a number of properties.
First recall that a multi-hypergraph is a pair $(V,E)$ where:
\begin{enumerate}[---]
\item $V$ is a set of elements called base points or vertices;
\smallskip\item $E$ is a multiset of elements called edges, which are pairs of non-empty subsets of $V$. The two subsets that are connected by an edge are called the ends of the edge.
\end{enumerate}
We then formally define an {\it (unlabeled) L-graph} as a multi-hypergraph $\Psi=(V,E)$ such that the following three properties hold:
\begin{enumerate}[---]
\item an edge in $E$ is either a loop (so-called round edge) or it connects {\it disjoint} vertex subsets (so-called straight edge): in other words, for all~$\{A,B\}\in E$, we have either $A=B$ or~$A\cap B=\varnothing$;
\smallskip\item a vertex subset $S\subset V$ can only be the end of at most one straight edge, but it can at the same time be the end of several round edges; in particular, each straight edge is simple, but round edges can be multiple;
\smallskip\item if a vertex subset $S\subset V$ is the end of some edge (round or straight), then strict subsets of $S$ can only be connected to other strict subsets of $S$: in other words, for all~$\{A,B\}\in E$ and $A'\subsetneq A$, the condition $\{A',A''\}\in E$ implies $A''\subsetneq A$.
\end{enumerate}
A {\it labeled L-graph} is an unlabeled L-graph endowed with a time labeling $V\sqcup E\to\N$, which associates a time label to each vertex and each edge.
Note that each labeled L-graph is uniquely associated to a functional that can be obtained by iterating the round edge operation~\eqref{eq:def-round-edge}, the straight edge operation~\eqref{eq:def-straight-edge}, and by taking products~\eqref{eq:def-prod-graph}.
We introduce some further useful terminology:
\begin{enumerate}[---]
\item {\it Induced subgraphs.} Given an L-graph $(V,E)$ and a subset $S\subset V$, we define the {\it L-graph induced by $S$} as the pair $(S,E_S)$ where $E_S$ is the multiset of all edges $\{A,B\}\in E$ with $A,B\subset S$. We define the {\it L-graph strictly induced by $S$} as the pair $(S,E_S')$ where $E_S'$ is now the multiset of all edges $\{A,B\}\in E$ with $A,B\subsetneq S$. By definition, we note that induced L-graphs are indeed L-graphs themselves, and moreover $E_S'$ coincides with $E_S$ after removing all occurrences of the round edge~$\{S\}$.
We call {\it L-subgraph} of $(V,E)$ any L-graph $(S,F)$ with $S\subset V$ and $E_S'\subset F\subset E_S$.
\smallskip\item {\it Stability.} 
Given an L-graph $(V,E)$, an L-subgraph $(S,F)$ is said to be {\it stable} if for all $\{A,B\}\in E$ with $A\subsetneq S$ we also have $B\subsetneq S$. In particular, by definition of an L-graph, a vertex subset that is the end of an edge is automatically inducing a stable subgraph.
\smallskip\item {\it Connectedness.}
An L-graph $(V,E)$ is said to be {\it connected} if there is no partition $V=A\cup B$ with $A,B\ne\varnothing$, $A\cap B=\varnothing$, and $E=E_{A}\cup E_{B}$. An L-graph can be uniquely decomposed into its connected components.
\smallskip\item {\it Irreducibility.}
An L-graph $(V,E)$ is said to be {\it irreducible} if for all straight edges $\{A,B\}\in E$ the induced subgraphs $(A,E_A)$ and $(B,E_B)$ are both connected and if for all round edge $\{A\}\in E$ the strictly induced subgraph $(A,E_A')$ is connected.
\end{enumerate}

\noindent
$\bullet$ {\it Rules for time ordering and symmetrization.}
Given an L-graph $(V,E)$, we consider the following four rules that restrict the possibilities for the time labeling $V\sqcup E\to\N$:
\begin{enumerate}[(R1)]
\item[(R1)] {\it Round edges.} In accordance with definition~\eqref{eq:def-round-edge}, the time label of a round edge $\{A\}\in E$ must always be larger than time labels of the strictly induced subgraph $(A,E_A')$; in other words, it must be larger than time labels of the subgraph that the round edge `surrounds'.
\smallskip\item[(R2)] {\it Straight edges.} In accordance with definition~\eqref{eq:def-straight-edge}, the time label of a straight edge $\{A,B\}\in E$ must always be larger than time labels of the two induced subgraphs $(A,E_A)$ and $(B,E_B)$. In addition, the last time label of the two subgraphs must coincide.
\smallskip\item[(R3)] {\it Products.} In any stable subgraph $(S,F)$, decomposing it into its connected components, the last time label of each component coincides.
\smallskip\item[(R4)] {\it No other repetition and no gap.} Apart from equalities of time labels imposed by the above three rules~(R1)--(R3), all time labels must be different. In addition, the set of time labels, that is, the image of the time labeling map $V\sqcup E\to\N$, must be of the form $\llbracket n \rrbracket=\{1,\ldots,n\}$ for some~$n\ge1$ (that is, without gap).
\end{enumerate}
In our notation, an unlabeled L-graph will be understood as the arithmetic average of all the labeled L-graphs that can be obtained by endowing the graph with a time labeling that satisfies the above four rules (R1)--(R4). For instance,
\begin{eqnarray*}
{\begin{tikzpicture}[blue,baseline = -.7ex]
\begin{scope}[local bounding box=box1]
\node[circle,draw,fill,inner sep=0pt,minimum size=3pt] (1) at (0,0) {};
\draw[white] (1) circle(0.2);
\end{scope}
\draw[rounded corners] (box1.south west) rectangle (box1.north east);
\end{tikzpicture}}
\,\,
{\begin{tikzpicture}[blue,baseline = -.7ex]
\begin{scope}[local bounding box=box1]
\node[circle,draw,fill,inner sep=0pt,minimum size=3pt] (1) at (0,0) {};
\draw[white] (1) circle(0.2);
\end{scope}
\draw[rounded corners] (box1.south west) rectangle (box1.north east);
\end{tikzpicture}}
&\equiv&
\tfrac12\Big(
{\begin{tikzpicture}[blue,baseline = -.7ex]
\begin{scope}[local bounding box=box1]
\node[circle,draw,fill,inner sep=0pt,minimum size=3pt] (1) at (0,0) {};
\draw[white] (1) circle(0.2);
\end{scope}
\draw[rounded corners] (box1.south west) rectangle (box1.north east);
\end{tikzpicture}}
_{\langle 1,3\rangle}
{\begin{tikzpicture}[blue,baseline = -.7ex]
\begin{scope}[local bounding box=box1]
\node[circle,draw,fill,inner sep=0pt,minimum size=3pt] (1) at (0,0) {};
\draw[white] (1) circle(0.2);
\end{scope}
\draw[rounded corners] (box1.south west) rectangle (box1.north east);
\end{tikzpicture}}
_{\langle2,3\rangle}
+
{\begin{tikzpicture}[blue,baseline = -.7ex]
\begin{scope}[local bounding box=box1]
\node[circle,draw,fill,inner sep=0pt,minimum size=3pt] (1) at (0,0) {};
\draw[white] (1) circle(0.2);
\end{scope}
\draw[rounded corners] (box1.south west) rectangle (box1.north east);
\end{tikzpicture}}
_{\langle 2,3\rangle}
{\begin{tikzpicture}[blue,baseline = -.7ex]
\begin{scope}[local bounding box=box1]
\node[circle,draw,fill,inner sep=0pt,minimum size=3pt] (1) at (0,0) {};
\draw[white] (1) circle(0.2);
\end{scope}
\draw[rounded corners] (box1.south west) rectangle (box1.north east);
\end{tikzpicture}}
_{\langle1,3\rangle}
\Big)
~=~
{\begin{tikzpicture}[blue,baseline = -.7ex]
\begin{scope}[local bounding box=box1]
\node[circle,draw,fill,inner sep=0pt,minimum size=3pt] (1) at (0,0) {};
\draw[white] (1) circle(0.2);
\end{scope}
\draw[rounded corners] (box1.south west) rectangle (box1.north east);
\end{tikzpicture}}
_{\langle 1,3\rangle}
{\begin{tikzpicture}[blue,baseline = -.7ex]
\begin{scope}[local bounding box=box1]
\node[circle,draw,fill,inner sep=0pt,minimum size=3pt] (1) at (0,0) {};
\draw[white] (1) circle(0.2);
\end{scope}
\draw[rounded corners] (box1.south west) rectangle (box1.north east);
\end{tikzpicture}}
_{\langle2,3\rangle}\,,\\
{\begin{tikzpicture}[blue,baseline = -.7ex]
\begin{scope}[local bounding box=box1]
\node[circle,draw,fill,inner sep=0pt,minimum size=3pt] (1) at (0,0) {};
\draw[white] (1) circle(0.2);
\end{scope}
\draw[rounded corners] (box1.south west) rectangle (box1.north east);
\end{tikzpicture}}
\,\,{\begin{tikzpicture}[blue,baseline = -.7ex]
\begin{scope}[local bounding box=box1]
\node[circle,draw,fill,inner sep=0pt,minimum size=3pt] (1) at (0,0) {};
\draw[white] (1) circle(0.2);
\end{scope}
\draw[rounded corners] (box1.south west) rectangle (box1.north east);
\end{tikzpicture}}
_{(2)}
&\equiv&
\tfrac13\Big(
{\begin{tikzpicture}[blue,baseline = -.7ex]
\begin{scope}[local bounding box=box1]
\node[circle,draw,fill,inner sep=0pt,minimum size=3pt] (1) at (0,0) {};
\draw[white] (1) circle(0.2);
\end{scope}
\draw[rounded corners] (box1.south west) rectangle (box1.north east);
\end{tikzpicture}}
_{\langle 1,4\rangle}
{\begin{tikzpicture}[blue,baseline = -.7ex]
\begin{scope}[local bounding box=box1]
\node[circle,draw,fill,inner sep=0pt,minimum size=3pt] (1) at (0,0) {};
\draw[white] (1) circle(0.2);
\end{scope}
\draw[rounded corners] (box1.south west) rectangle (box1.north east);
\end{tikzpicture}}
_{(2)\langle2,3,4\rangle}
+
{\begin{tikzpicture}[blue,baseline = -.7ex]
\begin{scope}[local bounding box=box1]
\node[circle,draw,fill,inner sep=0pt,minimum size=3pt] (1) at (0,0) {};
\draw[white] (1) circle(0.2);
\end{scope}
\draw[rounded corners] (box1.south west) rectangle (box1.north east);
\end{tikzpicture}}
_{\langle 2,4\rangle}
{\begin{tikzpicture}[blue,baseline = -.7ex]
\begin{scope}[local bounding box=box1]
\node[circle,draw,fill,inner sep=0pt,minimum size=3pt] (1) at (0,0) {};
\draw[white] (1) circle(0.2);
\end{scope}
\draw[rounded corners] (box1.south west) rectangle (box1.north east);
\end{tikzpicture}}
_{(2)\langle1,3,4\rangle}
+
{\begin{tikzpicture}[blue,baseline = -.7ex]
\begin{scope}[local bounding box=box1]
\node[circle,draw,fill,inner sep=0pt,minimum size=3pt] (1) at (0,0) {};
\draw[white] (1) circle(0.2);
\end{scope}
\draw[rounded corners] (box1.south west) rectangle (box1.north east);
\end{tikzpicture}}
_{\langle 3,4\rangle}
{\begin{tikzpicture}[blue,baseline = -.7ex]
\begin{scope}[local bounding box=box1]
\node[circle,draw,fill,inner sep=0pt,minimum size=3pt] (1) at (0,0) {};
\draw[white] (1) circle(0.2);
\end{scope}
\draw[rounded corners] (box1.south west) rectangle (box1.north east);
\end{tikzpicture}}
_{(2)\langle1,2,4\rangle}
\Big),\\
\begin{tikzpicture}[blue,baseline = -.7ex]
\begin{scope}[local bounding box=box1]
\node[circle,draw,fill,inner sep=0pt,minimum size=3pt] (1) at (0,0) {};
\draw[white] (1) circle(0.2);
\end{scope}
\draw[rounded corners] (box1.south west) rectangle (box1.north east);
\begin{scope}[local bounding box=box2]
\node[circle,draw,fill,inner sep=0pt,minimum size=3pt] (2) at (0.5,-0.3) {};
\draw[white] (2) circle(0.2);
\end{scope}
\draw[rounded corners] (box2.south west) rectangle (box2.north east);
\node[circle,draw,fill,inner sep=0pt,minimum size=3pt] (3) at (0.5,0.3) {};
\path[-] (box2) edge (3);
\end{tikzpicture}
&\equiv&
\tfrac13\bigg(
\begin{tikzpicture}[blue,baseline = -.7ex]
\begin{scope}[local bounding box=box1]
\node[circle,draw,fill,inner sep=0pt,minimum size=3pt] (1) at (0,0) {};
\draw[white] (1) circle(0.2);
\end{scope}
\draw[rounded corners] (box1.south west) rectangle (box1.north east);
\begin{scope}[local bounding box=box2]
\node[circle,draw,fill,inner sep=0pt,minimum size=3pt] (2) at (0.65,-0.4) {};
\draw[white] (2) circle(0.2);
\end{scope}
\draw[rounded corners] (box2.south west) rectangle (box2.north east);
\node[circle,draw,fill,inner sep=0pt,minimum size=3pt] (3) at (0.65,0.4) {};
\node (1') at (0,-0.4) {{\tiny\color{black}$\langle1,4\rangle$}};
\node (2') at (1.2,-0.4) {{\tiny\color{black}$\langle2,3\rangle$}};
\node (3') at (0.95,0.4) {{\tiny\color{black}$\langle3\rangle$}};
\node (4') at (0.9,0) {{\tiny\color{black}$\langle4\rangle$}};
\path[-] (box2) edge (3);
\end{tikzpicture}
+
\begin{tikzpicture}[blue,baseline = -.7ex]
\begin{scope}[local bounding box=box1]
\node[circle,draw,fill,inner sep=0pt,minimum size=3pt] (1) at (0,0) {};
\draw[white] (1) circle(0.2);
\end{scope}
\draw[rounded corners] (box1.south west) rectangle (box1.north east);
\begin{scope}[local bounding box=box2]
\node[circle,draw,fill,inner sep=0pt,minimum size=3pt] (2) at (0.65,-0.4) {};
\draw[white] (2) circle(0.2);
\end{scope}
\draw[rounded corners] (box2.south west) rectangle (box2.north east);
\node[circle,draw,fill,inner sep=0pt,minimum size=3pt] (3) at (0.65,0.4) {};
\node (1') at (0,-0.4) {{\tiny\color{black}$\langle2,4\rangle$}};
\node (2') at (1.2,-0.4) {{\tiny\color{black}$\langle1,3\rangle$}};
\node (3') at (0.95,0.4) {{\tiny\color{black}$\langle3\rangle$}};
\node (4') at (0.9,0) {{\tiny\color{black}$\langle4\rangle$}};
\path[-] (box2) edge (3);
\end{tikzpicture}
+
\begin{tikzpicture}[blue,baseline = -.7ex]
\begin{scope}[local bounding box=box1]
\node[circle,draw,fill,inner sep=0pt,minimum size=3pt] (1) at (0,0) {};
\draw[white] (1) circle(0.2);
\end{scope}
\draw[rounded corners] (box1.south west) rectangle (box1.north east);
\begin{scope}[local bounding box=box2]
\node[circle,draw,fill,inner sep=0pt,minimum size=3pt] (2) at (0.65,-0.4) {};
\draw[white] (2) circle(0.2);
\end{scope}
\draw[rounded corners] (box2.south west) rectangle (box2.north east);
\node[circle,draw,fill,inner sep=0pt,minimum size=3pt] (3) at (0.65,0.4) {};
\node (1') at (0,-0.4) {{\tiny\color{black}$\langle3,4\rangle$}};
\node (2') at (1.2,-0.4) {{\tiny\color{black}$\langle1,2\rangle$}};
\node (3') at (0.95,0.4) {{\tiny\color{black}$\langle2\rangle$}};
\node (4') at (0.9,0) {{\tiny\color{black}$\langle4\rangle$}};
\path[-] (box2) edge (3);
\end{tikzpicture}
\bigg).
\end{eqnarray*}
As we shall see in the next section, the whole point of this graphical notation is that it allows for quick and easy computations to derive representation formulas for Brownian cumulants.
We summarize the main graphical computation rules in the following lemma.
Note in particular that item~(ii) below implies that any L-graph is equal to a linear combination of {\it irreducible} L-graphs with the same number of vertices and edges.

\begin{lem}[Graphical computation rules]\label{lem:comput-graph}\
\begin{enumerate}[(i)]
\item For all $k\ge1$, a base point associated with the power $\Phi^k$ is equivalent to the product of $k$ copies of the basepoint associated with $\Phi$,
\begin{align*}
\RS{p}_{[\Phi]}\,=\,\RS{p},
\qquad
\RS{p}_{[\Phi^2]}\,=\,\RS{p}\RS{p},
\qquad
\RS{p}_{[\Phi^k]}\,=\,(\RS{p})^k.
\end{align*}
\item For any L-graph $\Psi: \triangle^{n} \times \calP(\Xd) \to \R$, for all $t > 0$, $\tau = (\tau_1, \ldots, \tau_n) \in \triangle^n_t$ and $0 < s < \tau_n$, for all $\mu \in \calP(\Xd)$, 
\begin{align}
    \label{eq:semigroup_prop_L_graph}
    \Psi \big((t, \tau), m(\tau_n-s, \mu)\big) = \Psi \big((t,\tau_1, \ldots, \tau_{n-1}, s), \mu \big). 
\end{align} 
\item Given two L-graphs $\Psi:\triangle^{n+1}\times\Pc(\Xd)\to\R$ and $\Theta:\triangle^{m+1}\times\Pc(\Xd)\to\R$,
and given $i_1<\ldots<i_n$ and $j_1<\ldots<j_m$ with $\{i_1,\ldots,i_n\}\cup\{j_1,\ldots,j_m\}=\llbracket p\rrbracket$ for some $0\le p\le n+m$, we find
\begin{eqnarray}
\begin{tikzpicture}[blue,baseline = -.7ex]
\begin{scope}[local bounding box=foo]
\node (1) at (0,0){\text{\color{black}$\Psi_{\langle i_1,\ldots,i_n,p+1\rangle}$}};
\node (2) at (2.5,0){\text{\color{black}$\Theta_{\langle j_1,\ldots,j_m,p+1\rangle}$}};
\end{scope}
\draw[rounded corners] (foo.south west) rectangle (foo.north east);
\end{tikzpicture}
_{\langle p+2\rangle}
&=&
\begin{tikzpicture}[blue,baseline = -.7ex]
\begin{scope}[local bounding box=box1]
\node (1) at (0,0){\text{\color{black}$\Psi$}};
\end{scope}
\draw[rounded corners] (box1.south west) rectangle (box1.north east);
\end{tikzpicture}
_{\langle i_1,\ldots,i_n,p+1,p+2\rangle}
\Theta_{\langle j_1,\ldots,j_m,p+2\rangle}
\nonumber\\
&&
+\,
\Psi_{\langle i_1,\ldots,i_n,p+2\rangle}
~\begin{tikzpicture}[blue,baseline = -.8ex]
\begin{scope}[local bounding box=box1]
\node (1) at (0,0){\text{\color{black}$\Theta$}};
\end{scope}
\draw[rounded corners] (box1.south west) rectangle (box1.north east);
\end{tikzpicture}
_{\langle j_1,\ldots,j_m,p+1,p+2\rangle}
\nonumber\\
&&
+\,2\,
\begin{tikzpicture}[blue,baseline = -.7ex]
\begin{scope}[local bounding box=box1]
\node (1) at (0,0){{\color{black}$\Psi_{\langle i_1,\ldots,i_n,p+1\rangle}$}};
\end{scope}
\begin{scope}[local bounding box=box2]
\node (2) at (3.5,0){{\color{black}$\Theta_{\langle j_1,\ldots,j_m,p+1\rangle}$}};
\end{scope}
\path[-] (box1) edge (box2);
\draw[rounded corners,dotted] (box1.south west) rectangle (box1.north east);
\draw[rounded corners,dotted] (box2.south west) rectangle (box2.north east);
\node (3) at (1.75,-0.2){{\color{black}\tiny$\langle p\!+\!2\rangle$}};
\end{tikzpicture}.
\label{eq:decomp-round-PsiTheta}
\end{eqnarray}
In particular, in the first right-hand side term, we note that the penultimate time label~$p+1$ of~$\Psi$ is larger than all the time labels $j_1,\ldots,j_m$ in~$\Theta$ (and conversely in the second term), thus adding nontrivial time ordering not implied by the basic rules \emph{(R1)}--\emph{(R4)}.
Using symmetrized notations, we find for instance
\begin{eqnarray}
{\begin{tikzpicture}[blue,baseline = -.7ex]
\begin{scope}[local bounding box=box1]
\node[circle,draw,fill,inner sep=0pt,minimum size=3pt] (1) at (0,0) {};
\node[circle,draw,fill,inner sep=0pt,minimum size=3pt] (2) at (0.3,0) {};
\draw[white] (1) circle(0.2);
\draw[white] (2) circle(0.2);
\end{scope}
\draw[rounded corners] (box1.south west) rectangle (box1.north east);
\end{tikzpicture}}
&=&
2\,{\begin{tikzpicture}[blue,baseline = -.7ex]
\begin{scope}[local bounding box=box1]
\node[circle,draw,fill,inner sep=0pt,minimum size=3pt] (1) at (0,0) {};
\draw[white] (1) circle(0.2);
\end{scope}
\node[circle,draw,fill,inner sep=0pt,minimum size=3pt] (2) at (-0.4,0) {};
\draw[rounded corners] (box1.south west) rectangle (box1.north east);
\end{tikzpicture}}
\,+\,
2\,{\begin{tikzpicture}[blue,baseline = -.7ex]
\node[circle,draw,fill,inner sep=0pt,minimum size=3pt] (1) at (0,0) {};
\node[circle,draw,fill,inner sep=0pt,minimum size=3pt] (2) at (0.3,0) {};
\path[-] (1) edge (2);
\end{tikzpicture}}\,,
\label{eq:comp-gr-1(..)}
\\
{\begin{tikzpicture}[blue,baseline = -.7ex]
\begin{scope}[local bounding box=box1]
\node (1) at (0,0) {
\begin{tikzpicture}[blue,baseline = -.7ex]
\begin{scope}[local bounding box=box1]
\node[circle,draw,fill,inner sep=0pt,minimum size=3pt] (1) at (0,0) {};
\node[circle,draw,fill,inner sep=0pt,minimum size=3pt] (2) at (0.3,0) {};
\draw[white] (1) circle(0.2);
\draw[white] (2) circle(0.2);
\end{scope}
\draw[rounded corners] (box1.south west) rectangle (box1.north east);
\end{tikzpicture}
};
\end{scope}
\draw[rounded corners] (box1.south west) rectangle (box1.north east);
\end{tikzpicture}}
&=&
2\,{\begin{tikzpicture}[blue,baseline = -.7ex]
\begin{scope}[local bounding box=box1]
\node (1) at (0,0) {
\begin{tikzpicture}[blue,baseline = -.7ex]
\begin{scope}[local bounding box=box1]
\node[circle,draw,fill,inner sep=0pt,minimum size=3pt] (1) at (0,0) {};
\draw[white] (1) circle(0.2);
\end{scope}
\draw[rounded corners] (box1.south west) rectangle (box1.north east);
\end{tikzpicture}
};
\end{scope}
\node[circle,draw,fill,inner sep=0pt,minimum size=3pt] (2) at (-0.5,0) {};
\draw[rounded corners] (box1.south west) rectangle (box1.north east);
\end{tikzpicture}}
\,+\,
2\,{\begin{tikzpicture}[blue,baseline = -.7ex]
\begin{scope}[local bounding box=box1]
\node[circle,draw,fill,inner sep=0pt,minimum size=3pt] (1) at (0,0) {};
\draw[white] (1) circle(0.2);
\end{scope}
\begin{scope}[local bounding box=box2]
\node[circle,draw,fill,inner sep=0pt,minimum size=3pt] (2) at (0.5,0) {};
\draw[white] (2) circle(0.2);
\end{scope}
\draw[rounded corners] (box1.south west) rectangle (box1.north east);
\draw[rounded corners] (box2.south west) rectangle (box2.north east);
\end{tikzpicture}}
\,+\,
4\,{\begin{tikzpicture}[blue,baseline = -.7ex]
\begin{scope}[local bounding box=box1]
\node[circle,draw,fill,inner sep=0pt,minimum size=3pt] (1) at (0.5,0) {};
\draw[white] (1) circle(0.2);
\end{scope}
\node[circle,draw,fill,inner sep=0pt,minimum size=3pt] (2) at (0,0) {};
\draw[rounded corners] (box1.south west) rectangle (box1.north east);
\path[-] (2) edge (box1);
\end{tikzpicture}}
\,+\,
2\,
{\begin{tikzpicture}[blue,baseline = -.6ex]
\begin{scope}[local bounding box=box1]
\node[circle,draw,fill,inner sep=0pt,minimum size=3pt] (1) at (0,0) {};
\node[circle,draw,fill,inner sep=0pt,minimum size=3pt] (2) at (0.3,0) {};
\draw[white] (1) circle(0.2);
\draw[white] (2) circle(0.2);
\path[-] (1) edge (2);
\end{scope}
\draw[rounded corners] (box1.south west) rectangle (box1.north east);
\end{tikzpicture}}\,.
\label{eq:comp-gr-2(..)}
\end{eqnarray}
This naturally generalizes to products of more than two functionals; we skip the details for conciseness.
\smallskip\item Given functionals $\Psi:\triangle^{n+1}\times\Pc(\Xd)\to\R$ and $\Theta:\triangle^{m+1}\times\Pc(\Xd)\to\R$, the time integral of their symmetrized product can be factorized as
\[\binom{n+m}{n}\int_{\triangle^{n+m}\times0}(\,\Psi~\Theta\,)~=~\Big(\int_{\triangle^{n}\times0}\Psi\Big)\Big(\int_{\triangle^{m}\times0}\Theta\Big).\qedhere\]
\end{enumerate}
\end{lem}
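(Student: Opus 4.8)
The plan is to prove the four items in the stated order, since item~(ii) is the technical ingredient behind item~(iii). Item~(i) is immediate: by definition the base point attached to a functional $\Psi$ is $\RS{p}_{[\Psi]}((t,s),\mu)=\Psi(m(t-s,\mu))$, and products of L-graphs are formed by juxtaposition with identified time labels, cf.~\eqref{eq:def-prod-graph}; since a base point carries a single time variable, juxtaposing $k$ copies of $\RS{p}$ sharing that variable produces $(t,s,\mu)\mapsto\Phi(m(t-s,\mu))^k=\Phi^k(m(t-s,\mu))$, which is exactly $\RS{p}_{[\Phi^k]}$. For item~(ii) I would argue by structural induction on the way $\Psi$ is assembled from $\Phi$ through the base-point, round-edge~\eqref{eq:def-round-edge}, straight-edge~\eqref{eq:def-straight-edge}, and product~\eqref{eq:def-prod-graph} operations; by linearity of both sides in $\Psi$ it suffices to treat a single labeling. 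The key structural observation is that, for the base-point, round-edge and straight-edge operations, the new (last) time variable enters the measure argument only through a factor $m(\tau_{\mathrm{prev}}-s,\cdot)$, where $\tau_{\mathrm{prev}}$ is the previous last time variable. Hence, substituting $m(\tau_n-s,\mu)$ for $\mu$ and using the semigroup property $m(u,m(u',\mu))=m(u+u',\mu)$ of the mean-field flow~\eqref{eq:MF-solop}, the composition collapses, $m(\tau_{\mathrm{prev}}-\tau_n,m(\tau_n-s,\mu))=m(\tau_{\mathrm{prev}}-s,\mu)$, which is exactly~\eqref{eq:semigroup_prop_L_graph}; for the product operation one reduces to the two factors, using rule (R3) to guarantee that their last time labels coincide so that the induction hypothesis applies simultaneously to both. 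The base case $\Psi=\RS{p}$ is itself a direct instance of the semigroup property.

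For item~(iii), the starting point is the Leibniz rule for the linear derivative of a product, which follows at once from the defining relation~\eqref{eq:def_lind}, together with its second-order version $\frac{\delta^2(\Psi\Theta)}{\delta\mu^2}(\mu,y_1,y_2)=\frac{\delta^2\Psi}{\delta\mu^2}(\mu,y_1,y_2)\,\Theta(\mu)+\frac{\delta\Psi}{\delta\mu}(\mu,y_1)\frac{\delta\Theta}{\delta\mu}(\mu,y_2)+\frac{\delta\Psi}{\delta\mu}(\mu,y_2)\frac{\delta\Theta}{\delta\mu}(\mu,y_1)+\Psi(\mu)\frac{\delta^2\Theta}{\delta\mu^2}(\mu,y_1,y_2)$. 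Differentiating in $y_1,y_2$, setting $y_1=y_2=z$, contracting with the symmetric matrix $a_0$ and integrating against the measure, the two mixed terms coincide and yield the factor $2$:
\begin{multline*}
\int_\Xd\Tr\big[a_0\,\partial^2_\mu(\Psi\Theta)(\nu,z,z)\big]\,\nu(\ddr z)
\,=\,\Big(\int_\Xd\Tr[a_0\,\partial^2_\mu\Psi(\nu,z,z)]\,\nu(\ddr z)\Big)\Theta(\nu)
+\Psi(\nu)\Big(\int_\Xd\Tr[a_0\,\partial^2_\mu\Theta(\nu,z,z)]\,\nu(\ddr z)\Big)\\
+2\int_\Xd\partial_\mu\Psi(\nu,z)\cdot a_0\,\partial_\mu\Theta(\nu,z)\,\nu(\ddr z).
\end{multline*}
Taking $\nu=m(\tau_{p+1}-\tau_{p+2},\mu)$, I would then recognize the three terms on the right-hand side as the round edge~\eqref{eq:def-round-edge} applied to $\Psi$, the round edge applied to $\Theta$, and twice the straight edge~\eqref{eq:def-straight-edge} between $\Psi$ and $\Theta$; the one point that needs care is that in the first two terms the untouched factor must pick up the new time label $p+2$, which is produced precisely by item~(ii), rewriting $\Theta(m(\tau_{p+1}-\tau_{p+2},\mu))$ carrying last label $p+1$ as $\Theta(\cdot)$ carrying last label $p+2$. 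This gives~\eqref{eq:decomp-round-PsiTheta}, and the displayed examples follow by specializing $\Psi,\Theta$ to base points or to $\RS{pb}$ and collapsing the symmetrization.

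For item~(iv), I would reduce to labeled L-graphs by linearity. The symmetrized product integrated over $\triangle^{n+m}\times0$ is the arithmetic mean, over the $\binom{n+m}{n}$ partitions of the $n+m$ free time labels into an increasing $n$-tuple $I$ for $\Psi$ and an increasing $m$-tuple $J$ for $\Theta$, of $\int_{\triangle^{n+m}_t}\Psi((t,\tau_I,0),\mu)\,\Theta((t,\tau_J,0),\mu)\,\ddr\tau$. For a fixed partition, extracting $(\tau_i)_{i\in I}$ and $(\tau_j)_{j\in J}$ is a measure-preserving bijection from the subset of the ordered simplex $\triangle^{n+m}_t$ with the prescribed interleaving onto the corresponding interleaving region inside $\triangle^n_t\times\triangle^m_t$; summing over all partitions, these regions tile $\triangle^n_t\times\triangle^m_t$ up to a null set, so by Fubini
\[\sum_{(I,J)}\int_{\triangle^{n+m}_t}\Psi\big((t,\tau_I,0),\mu\big)\,\Theta\big((t,\tau_J,0),\mu\big)\,\ddr\tau=\Big(\int_{\triangle^n_t}\Psi\big((t,u,0),\mu\big)\,\ddr u\Big)\Big(\int_{\triangle^m_t}\Theta\big((t,w,0),\mu\big)\,\ddr w\Big).\]
Dividing by $\binom{n+m}{n}$ and unfolding the time-integral notation from Section~\ref{subsec:diagram} yields the factorization.

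I expect item~(iii) to be the main obstacle: although it is conceptually only a Leibniz rule, matching the three resulting terms to round and straight edges while correctly propagating the extra time label through item~(ii) is where errors could most easily creep in. The second point deserving attention is the simplex-tiling argument in item~(iv); all other steps amount to bookkeeping with the definitions of Section~\ref{subsec:diagram}.
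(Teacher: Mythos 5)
Your proposal is correct and follows essentially the same route as the paper's proof: item (i) from the definitions, item (ii) by propagating the semigroup property of the mean-field flow through the base-point, round-edge, straight-edge and product operations, item (iii) by the Leibniz/chain rule for the Lions derivative of a product (the symmetric mixed terms producing the factor $2$) combined with item (ii) to carry the new time label, and item (iv) by decomposing the product of simplices according to interleavings of the time variables, which is the paper's permutation-sum argument. No gaps to report.
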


\begin{proof}
All four items are direct consequences of the definitions.
First, the definition of the basepoint in the graphical notation means
\[\RS{p}_{[\Phi^k]}\big((t,s),\mu\big)
\,=\,\Uc_{\Phi^k}^{(1)}\big((t,s),\mu\big)
\,=\,(\Phi^k)\big(t,m(t-s,\mu)\big)
\,=\,\big(\Phi\big(t,m(t-s,\mu)\big)\big)^k,\]
which proves item~(i). Second, recalling the semigroup property
\[ m(\tau_{n-1} - \tau_n, m(\tau_n - s, \mu)) = m(\tau_{n-1}-s, \mu), \]
we find from item~(i) that $\RS{p}_{[\Phi^k]}$ satisfies \eqref{eq:semigroup_prop_L_graph}. From the definitions \eqref{eq:def-round-edge} and \eqref{eq:def-straight-edge}, this property is clearly conserved when surrounding an L-graph satisfying \eqref{eq:semigroup_prop_L_graph} by a round edge, and when connecting two such L-graphs by a straight edge, which yields item~(ii). The proof of items~(iii) and~(iv) is divided into the following two steps.

\medskip
\step1 Proof of~(iii).\\
Given smooth functionals $\Psi:\triangle^{n+1}\times\Pc(\Xd)\to\R$ and $\Theta:\triangle^{m+1}\times\Pc(\Xd)\to\R$,
and given $i_1<\ldots<i_n$ and $j_1<\ldots<j_m$ with $\{i_1,\ldots,i_n\}\cup\{j_1,\ldots,j_m\}=\llbracket p\rrbracket$, the definition of the round edge notation yields for all $(t,\tau)\in\triangle^{p+2}$ and $\mu\in\Pc(\Xd)$,
\begin{multline*}
\begin{tikzpicture}[blue,baseline = -.7ex]
\begin{scope}[local bounding box=foo]
\node (1) at (0,0){\text{\color{black}$\Psi_{\langle i_1,\ldots,i_n,p+1\rangle}$}};
\node (2) at (2.3,0){\text{\color{black}$\Theta_{\langle j_1,\ldots,j_m,p+1\rangle}$}};
\end{scope}
\draw[rounded corners] (foo.south west) rectangle (foo.north east);
\end{tikzpicture}
_{\langle p+2\rangle}\big((t,\tau),\mu\big)
\\
\,=\,
\bigg(\int_{\Xd}\Tr\Big[a_0\,\partial_\mu^2\Big(\Psi\big((t,\tau_{i_1},\ldots,\tau_{i_n},\tau_{p+1}),\nu\big)\\[-3mm]
\times\Theta\big((t,\tau_{ j_1},\ldots,\tau_{j_m},\tau_{p+1}),\nu\big)\Big)(z,z)\Big]\,\nu(\ddr z)\bigg)\bigg|_{\nu=m(\tau_{p+1}-\tau_{p+2},\mu)},
\end{multline*}
and thus, using the chain rule for the L-derivative,
\begin{eqnarray*}
\lefteqn{\begin{tikzpicture}[blue,baseline = -.7ex]
\begin{scope}[local bounding box=foo]
\node (1) at (0,0){\text{\color{black}$\Psi_{\langle i_1,\ldots,i_n,p+1\rangle}$}};
\node (2) at (2.3,0){\text{\color{black}$\Theta_{\langle j_1,\ldots,j_m,p+1\rangle}$}};
\end{scope}
\draw[rounded corners] (foo.south west) rectangle (foo.north east);
\end{tikzpicture}
_{\langle p+2\rangle}\big((t,\tau),\mu\big)
}\\
&=&
\bigg(\int_{\Xd}\Tr\Big[a_0\,(\partial_\mu^2\Psi)\big((t,\tau_{i_1},\ldots,\tau_{i_n},\tau_{p+1}),\nu\big)(z,z)\Big]\\[-3mm]
&&\hspace{3cm}\times\Theta\big((t,\tau_{ j_1},\ldots,\tau_{j_m},\tau_{p+1}),\nu\big)\,\nu(\ddr z)\bigg)\bigg|_{\nu=m(\tau_{p+1}-\tau_{p+2},\mu)}
\\
&+&\bigg(\int_{\Xd}\Tr\Big[a_0\,(\partial_\mu^2\Theta)\big((t,\tau_{j_1},\ldots,\tau_{j_m},\tau_{p+1}),\nu\big)(z,z)\Big]\\[-3mm]
&&\hspace{3cm}\times\Psi\big((t,\tau_{i_1},\ldots,\tau_{i_n},\tau_{p+1}),\nu\big)\,\nu(\ddr z)\bigg)\bigg|_{\nu=m(\tau_{p+1}-\tau_{p+2},\mu)}
\\
&+&2\,\bigg(\int_{\Xd}\Big((\partial_\mu\Psi)\big((t,\tau_{i_1},\ldots,\tau_{i_n},\tau_{p+1}),\nu\big)(z)\\[-3mm]
&&\hspace{3cm}\cdot a_0(\partial_\mu\Theta)\big((t,\tau_{j_1},\ldots,\tau_{j_m},\tau_{p+1}),\nu\big)(z)\Big]\,\nu(\ddr z)\bigg)\bigg|_{\nu=m(\tau_{p+1}-\tau_{p+2},\mu)},
\end{eqnarray*}
or equivalently,
\begin{eqnarray*}
\lefteqn{\begin{tikzpicture}[blue,baseline = -.7ex]
\begin{scope}[local bounding box=foo]
\node (1) at (0,0){\text{\color{black}$\Psi_{\langle i_1,\ldots,i_n,p+1\rangle}$}};
\node (2) at (2.5,0){\text{\color{black}$\Theta_{\langle j_1,\ldots,j_m,p+1\rangle}$}};
\end{scope}
\draw[rounded corners] (foo.south west) rectangle (foo.north east);
\end{tikzpicture}
_{\langle p+2\rangle}\big((t,\tau,\tau_{p+2}),\mu\big)
}\\
&=&\Theta\big((t,\tau_{j_1},\ldots,\tau_{j_m},\tau_{p+1}),m(\tau_{p+1}-\tau_{p+2},\mu)\big)
\\
&&\hspace{2cm}\times\bigg(\int_{\Xd}\Tr\Big[a_0\,(\partial_\mu^2\Psi)\big((t,\tau_{i_1},\ldots,\tau_{i_n},\tau_{p+1}),\nu\big)(z,z)\Big]\,\nu(\ddr z)\bigg)\bigg|_{\nu=m(\tau_{p+1}-\tau_{p+2},\mu)}
\\
&+&\Psi\big((t,\tau_{i_1},\ldots,\tau_{i_n},\tau_{p+1}),m(\tau_{p+1}-\tau_{p+2},\mu)\big)\\
&&\hspace{2cm}\times\bigg(\int_{\Xd}\Tr\Big[a_0\,(\partial_\mu^2\Theta)\big((t,\tau_{j_1},\ldots,\tau_{j_m},\tau_{p+1}),\nu\big)(z,z)\Big]\,\nu(\ddr z)\bigg)\bigg|_{\nu=m(\tau_{p+1}-\tau_{p+2},\mu)}
\\
&+&2\,\bigg(\int_{\Xd}\Big((\partial_\mu\Psi)\big((t,\tau_{i_1},\ldots,\tau_{i_n},\tau_{p+1}),\nu\big)(z)\\[-3mm]
&&\hspace{3cm}\cdot a_0(\partial_\mu\Theta)\big((t,\tau_{j_1},\ldots,\tau_{j_m},\tau_{p+1}),\nu\big)(z)\Big]\,\nu(\ddr z)\bigg)\bigg|_{\nu=m(\tau_{p+1}-\tau_{p+2},\mu)}.
\end{eqnarray*}
Further using item~(ii), the identity~\eqref{eq:decomp-round-PsiTheta} follows.

\medskip
\step2 Proof of~(iv).\\
By definition of the symmetrized product, we have for all $(t,\tau)\in\triangle^{n+m}$ and $0<s<\tau_{n+m}$,
\begin{multline*}
(\Psi\,\Theta)\big((t,\tau,s),\mu\big)\,=\,\binom{n+m}{n}^{-1}
\sum_{\sigma\in\Sym(n+m)}
\mathds1_{\sigma(1)<\ldots<\sigma(n)}\,\mathds1_{\sigma(n+1)<\ldots<\sigma(n+m)}\\
\times\Psi\big((t,\tau_{\sigma(1)},\ldots,\tau_{\sigma(n)},s),\mu\big)\,\Theta\big((t,\tau_{\sigma(n+1)},\ldots,\tau_{\sigma(n+m)},s),\mu\big),
\end{multline*}
and thus, taking the time integral,
\begin{multline*}
\int_{\triangle^{n+m}_t}(\Psi\,\Theta)\,\big((t,\tau,0),\mu\big)\,\ddr\tau\,=\,\binom{n+m}{n}^{-1}
\sum_{\substack{\sigma\in\Sym(n+m)}}
\mathds1_{\sigma(1)<\ldots<\sigma(n)}\,\mathds1_{\sigma(n+1)<\ldots<\sigma(n+m)}\\
\times\int_{\triangle_t^{n+m}}\Psi\big((t,\tau_{\sigma(1)},\ldots,\tau_{\sigma(n)},0),\mu\big)\,\Theta\big((t,\tau_{\sigma(n+1)},\ldots,\tau_{\sigma(n+m)},0),\mu\big)\,\ddr\tau.
\end{multline*}
Noting that the sum over permutations allows to reconstruct the full product of integrals, we obtain
\begin{equation*}
\int_{\triangle^{n+m}_t}(\Psi\,\Theta)\,\big((t,\tau,0),\mu\big)\,\ddr\tau\,=\,\binom{n+m}{n}^{-1}
\Big(\int_{\triangle_t^{n}}\Psi\big((t,\tau,0),\mu\big)\,\ddr\tau\Big)
\Big(\int_{\triangle_t^m}
\Theta\big((t,\tau,0),\mu\big)\,\ddr\tau\Big),
\end{equation*}
which is precisely the statement of item~(iv).
\end{proof}

\subsection{Graphical representation of Brownian cumulants}\label{sec:rep-Gausscum}
Starting from Proposition~\ref{prop:decomp-exp-B} in form of~\eqref{eq:decomp-exp-B-reform-graph}, we can use the above graphical notation to easily compute cumulants of functionals of the empirical measure along the particle dynamics. We first illustrate this by a direct computation of the leading contribution to the variance and to the third cumulant.

\begin{lem}
\label{lem:first_brownian_moments}
Given a smooth functional $\Phi:\Pc(\Xd)\to\R$, we can represent the variance and the third cumulant along the particle dynamics as
\begin{eqnarray*}
\Var_B[\Phi(\mu_t^N)]&=&\frac1{N}\int_{\triangle_t\times0}
{\begin{tikzpicture}[blue,baseline = -.7ex]
\node[circle,draw,fill,inner sep=0pt,minimum size=3pt] (1) at (0,0) {};
\node[circle,draw,fill,inner sep=0pt,minimum size=3pt] (2) at (0.3,0) {};
\path[-] (1) edge (2);
\end{tikzpicture}}
~(\mu_0^N)
~+~
\frac{E_{N,2}}{(2N)^2},\\
\kappa_B^3\big[\Phi(\mu_t^N)\big]&=&
\frac{3}{N^2}\,\int_{\triangle_t^2\times0}{\begin{tikzpicture}[blue,baseline = -.6ex]
\node[circle,draw,fill,inner sep=0pt,minimum size=3pt] (1) at (0,0) {};
\begin{scope}[local bounding box=box1]
\node[circle,draw,fill,inner sep=0pt,minimum size=3pt] (2) at (0.5,-0.2) {};
\node[circle,draw,fill,inner sep=0pt,minimum size=3pt] (3) at (0.5,0.2) {};
\draw[white] (2) circle(0.2);
\draw[white] (3) circle(0.2);
\path[-] (2) edge (3);
\end{scope}
\draw[rounded corners,dotted] (box1.south west) rectangle (box1.north east);
\path[-] (1) edge (box1);
\end{tikzpicture}}\,(\mu_0^N)
~+~\frac{E_{N,3}}{(2N)^3},
\end{eqnarray*}
where the error terms $E_{N,2},E_{N,3}$ are given explicitly by
\begin{eqnarray*}
E_{N,2}&:=&
A_{2,2}
-
2A_{1,2}\,\E_B[\Phi(\mu_t^N)]
+
\frac{(A_{1,2})^2}{(2N)^2}
-
\Big(\int_{\triangle_t\times0}
{\begin{tikzpicture}[blue,baseline = -.7ex]
\begin{scope}[local bounding box=box1]
\node[circle,draw,fill,inner sep=0pt,minimum size=3pt] (1) at (0,0) {};
\draw[white] (1) circle(0.2);
\end{scope}
\draw[rounded corners] (box1.south west) rectangle (box1.north east);
\end{tikzpicture}}
\,(\mu_0^N)\Big)^2,\\
E_{N,3}&:=&
A_{3,3} - 3 A_{2,2} \, \E_B[\Phi(\mu^N_t)] - A_{1,3}\,\E_B[\Phi(\mu^N_t)^2] + 4 A_{1,3} \, \E_B[\Phi(\mu_t^N)]^2 - \frac{2}{(2N)^3} \, A_{1,3}^2 \E_B[\Phi(\mu_t^N)]  \\
&&
\,-\,\int_{\triangle_t^3\times0} \bigg( 4 \, 
{\begin{tikzpicture}[blue,baseline = -.7ex]
\node[circle,draw,fill,inner sep=0pt,minimum size=3pt] (1) at (-1,0) {};
\begin{scope}[local bounding box=box1]
\node[circle,draw,fill,inner sep=0pt,minimum size=3pt] (1) at (-0.6,0) {};
\draw[white] (1) circle(0.2);
\end{scope}
\begin{scope}[local bounding box=box2]
\node (2) at (0,0) {
{\begin{tikzpicture}[blue,baseline = -.7ex]
\begin{scope}[local bounding box=box3]
\node[circle,draw,fill,inner sep=0pt,minimum size=3pt] (3) at (0,0) {};
\draw[white] (3) circle(0.2);
\end{scope}
\draw[rounded corners] (box3.south west) rectangle (box3.north east);
\end{tikzpicture}}
};
\end{scope}
\draw[rounded corners] (box1.south west) rectangle (box1.north east);
\draw[rounded corners] (box2.south west) rectangle (box2.north east);
\end{tikzpicture}}
\, + \,  
{\begin{tikzpicture}[blue,baseline = .2ex]
\begin{scope}[local bounding box=box1]
\node[circle,draw,fill,inner sep=0pt,minimum size=3pt] (1) at (0,0) {};
\draw[white] (1) circle(0.2);
\end{scope}
\begin{scope}[local bounding box=box2]
\node[circle,draw,fill,inner sep=0pt,minimum size=3pt] (2) at (0.5,0) {};
\draw[white] (2) circle(0.2);
\end{scope}
\begin{scope}[local bounding box=box3]
\node[circle,draw,fill,inner sep=0pt,minimum size=3pt] (3) at (0.25,0.4) {};
\draw[white] (3) circle(0.2);
\end{scope}
\draw[rounded corners] (box1.south west) rectangle (box1.north east);
\draw[rounded corners] (box2.south west) rectangle (box2.north east);
\draw[rounded corners] (box3.south west) rectangle (box3.north east);
\end{tikzpicture}}
\,+\,
6\,
{\begin{tikzpicture}[blue,baseline = -.6ex]
\begin{scope}[local bounding box=box1]
\node[circle,draw,fill,inner sep=0pt,minimum size=3pt] (1) at (0,0) {};
\draw[white] (1) circle(0.2);
\end{scope}
\begin{scope}[local bounding box=box2]
\node[circle,draw,fill,inner sep=0pt,minimum size=3pt] (2) at (0.5,-0.15) {};
\node[circle,draw,fill,inner sep=0pt,minimum size=3pt] (3) at (0.5,0.15) {};
\draw[white] (2) circle(0.2);
\draw[white] (3) circle(0.2);
\path[-] (3) edge (2);
\end{scope}
\draw[rounded corners] (box1.south west) rectangle (box1.north east);
\draw[rounded corners] (box2.south west) rectangle (box2.north east);
\end{tikzpicture}}
\,+\;
12\,
{\begin{tikzpicture}[blue,baseline = -.6ex]
\begin{scope}[local bounding box=box1]
\node[circle,draw,fill,inner sep=0pt,minimum size=3pt] (1) at (0,0) {};
\draw[white] (1) circle(0.2);
\end{scope}
\begin{scope}[local bounding box=box2]
\node[circle,draw,fill,inner sep=0pt,minimum size=3pt] (2) at (0.5,-0.2) {};
\draw[white] (2) circle(0.2);
\end{scope}
\node[circle,draw,fill,inner sep=0pt,minimum size=3pt] (3) at (0.5,0.2) {};
\draw[rounded corners] (box1.south west) rectangle (box1.north east);
\draw[rounded corners] (box2.south west) rectangle (box2.north east);
\path[-] (3) edge (box2);
\end{tikzpicture}}
\,+\;
6\,
{\begin{tikzpicture}[blue,baseline = -.6ex]
\begin{scope}[local bounding box=box1]
\node (1) at (0,0) {
{\begin{tikzpicture}[blue,baseline = -.6ex]
\begin{scope}[local bounding box=box1]
\node[circle,draw,fill,inner sep=0pt,minimum size=3pt] (1) at (0,0) {};
\draw[white] (1) circle(0.2);
\end{scope}
\draw[rounded corners] (box1.south west) rectangle (box1.north east);
\end{tikzpicture}}
};
\end{scope}
\node[circle,draw,fill,inner sep=0pt,minimum size=3pt] (2) at (0.5,-0.15) {};
\node[circle,draw,fill,inner sep=0pt,minimum size=3pt] (3) at (0.5,0.15) {};
\draw[rounded corners] (box1.south west) rectangle (box1.north east);
\path[-] (3) edge (2);
\end{tikzpicture}}
\bigg)
\,(\mu_0^N)
\\
&&
\,-\,
\frac1{2N}
\int_{\triangle_t^4\times0}\bigg(
\,
{\begin{tikzpicture}[blue,baseline =-.6ex]
\begin{scope}[local bounding box=box1]
\node[circle,draw,fill,inner sep=0pt,minimum size=3pt] (1) at (0,-0.25) {};
\draw[white] (1) circle(0.2);
\end{scope}
\begin{scope}[local bounding box=box2]
\node (2) at (0.6,0) {
{\begin{tikzpicture}[blue,baseline = .2ex]
\begin{scope}[local bounding box=box0]
\node[circle,draw,fill,inner sep=0pt,minimum size=3pt] (0) at (0,0) {};
\draw[white] (0) circle(0.2);
\end{scope}
\draw[rounded corners] (box0.south west) rectangle (box0.north east);
\end{tikzpicture}}
};
\end{scope}
\begin{scope}[local bounding box=box3]
\node[circle,draw,fill,inner sep=0pt,minimum size=3pt] (3) at (0,0.25) {};
\draw[white] (3) circle(0.2);
\end{scope}
\draw[rounded corners] (box1.south west) rectangle (box1.north east);
\draw[rounded corners] (box2.south west) rectangle (box2.north east);
\draw[rounded corners] (box3.south west) rectangle (box3.north east);
\end{tikzpicture}}
\,+\,
2\,
{\begin{tikzpicture}[blue,baseline =-.6ex]
\node[circle,draw,fill,inner sep=0pt,minimum size=3pt] (1) at (0,0) {};
\begin{scope}[local bounding box=box2]
\node (2) at (0.5,0) {
{\begin{tikzpicture}[blue,baseline = .2ex]
\begin{scope}[local bounding box=box0]
\node[circle,draw,fill,inner sep=0pt,minimum size=3pt] (0) at (0,0) {};
\draw[white] (0) circle(0.2);
\end{scope}
\draw[rounded corners] (box0.south west) rectangle (box0.north east);
\end{tikzpicture}}
};
\end{scope}
\begin{scope}[local bounding box=box3]
\node (3) at (1.3,0) {
{\begin{tikzpicture}[blue,baseline = .2ex]
\begin{scope}[local bounding box=box4]
\node[circle,draw,fill,inner sep=0pt,minimum size=3pt] (4) at (0,0) {};
\draw[white] (4) circle(0.2);
\end{scope}
\draw[rounded corners] (box4.south west) rectangle (box4.north east);
\end{tikzpicture}}
};
\end{scope}
\draw[rounded corners] (box2.south west) rectangle (box2.north east);
\draw[rounded corners] (box3.south west) rectangle (box3.north east);
\end{tikzpicture}}
\,+\,
12\,
{\begin{tikzpicture}[blue,baseline = -.6ex]
\begin{scope}[local bounding box=box1]
\node (1) at (0,0) {
{\begin{tikzpicture}[blue,baseline = -.6ex]
\begin{scope}[local bounding box=box0]
\node[circle,draw,fill,inner sep=0pt,minimum size=3pt] (0) at (0,0) {};
\draw[white] (0) circle(0.2);
\end{scope}
\draw[rounded corners] (box0.south west) rectangle (box0.north east);
\end{tikzpicture}}
};
\end{scope}
\begin{scope}[local bounding box=box2]
\node[circle,draw,fill,inner sep=0pt,minimum size=3pt] (2) at (0.6,-0.2) {};
\draw[white] (2) circle(0.2);
\end{scope}
\node[circle,draw,fill,inner sep=0pt,minimum size=3pt] (3) at (0.6,0.2) {};
\draw[rounded corners] (box1.south west) rectangle (box1.north east);
\draw[rounded corners] (box2.south west) rectangle (box2.north east);
\path[-] (3) edge (box2);
\end{tikzpicture}}
\,+\;
6\,
{\begin{tikzpicture}[blue,baseline = -.6ex]
\begin{scope}[local bounding box=box1]
\node (1) at (0,0) {
{\begin{tikzpicture}[blue,baseline = -.6ex]
\begin{scope}[local bounding box=box1]
\node[circle,draw,fill,inner sep=0pt,minimum size=3pt] (1) at (0,0) {};
\draw[white] (1) circle(0.2);
\end{scope}
\draw[rounded corners] (box1.south west) rectangle (box1.north east);
\end{tikzpicture}}
};
\end{scope}
\begin{scope}[local bounding box=box2]
\node[circle,draw,fill,inner sep=0pt,minimum size=3pt] (2) at (0.6,-0.15) {};
\node[circle,draw,fill,inner sep=0pt,minimum size=3pt] (3) at (0.6,0.15) {};
\draw[white] (2) circle(0.2);
\draw[white] (3) circle(0.2);
\path[-] (3) edge (2);
\end{scope}
\draw[rounded corners] (box1.south west) rectangle (box1.north east);
\draw[rounded corners] (box2.south west) rectangle (box2.north east);
\end{tikzpicture}}
\bigg)(\mu_0^N)
\\ 
&&
\,+\, \E_B[\Phi(\mu_t^N)] \Big( \int_{\triangle^3_t \times 0} 4 \, {\begin{tikzpicture}[blue,baseline = -.7ex]
\begin{scope}[local bounding box=box1]
\node[circle,draw,fill,inner sep=0pt,minimum size=3pt] (1) at (-0.6,0) {};
\draw[white] (1) circle(0.2);
\end{scope}
\begin{scope}[local bounding box=box2]
\node (2) at (0,0) {
{\begin{tikzpicture}[blue,baseline = -.7ex]
\begin{scope}[local bounding box=box3]
\node[circle,draw,fill,inner sep=0pt,minimum size=3pt] (3) at (0,0) {};
\draw[white] (3) circle(0.2);
\end{scope}
\draw[rounded corners] (box3.south west) rectangle (box3.north east);
\end{tikzpicture}}
};
\end{scope}
\draw[rounded corners] (box1.south west) rectangle (box1.north east);
\draw[rounded corners] (box2.south west) rectangle (box2.north east);
\end{tikzpicture}} \,(\mu^N_0) 
\, + \, 
\frac1{N} \,
{\begin{tikzpicture}[blue,baseline =-.6ex]
\begin{scope}[local bounding box=box2]
\node (2) at (0.5,0) {
{\begin{tikzpicture}[blue,baseline = .2ex]
\begin{scope}[local bounding box=box0]
\node[circle,draw,fill,inner sep=0pt,minimum size=3pt] (0) at (0,0) {};
\draw[white] (0) circle(0.2);
\end{scope}
\draw[rounded corners] (box0.south west) rectangle (box0.north east);
\end{tikzpicture}}
};
\end{scope}
\begin{scope}[local bounding box=box3]
\node (3) at (1.3,0) {
{\begin{tikzpicture}[blue,baseline = .2ex]
\begin{scope}[local bounding box=box4]
\node[circle,draw,fill,inner sep=0pt,minimum size=3pt] (4) at (0,0) {};
\draw[white] (4) circle(0.2);
\end{scope}
\draw[rounded corners] (box4.south west) rectangle (box4.north east);
\end{tikzpicture}}
};
\end{scope}
\draw[rounded corners] (box2.south west) rectangle (box2.north east);
\draw[rounded corners] (box3.south west) rectangle (box3.north east);
\end{tikzpicture}} \,(\mu^N_0) \bigg) 
\end{eqnarray*}
where for shortness we have defined
\[A_{k,m}\,:=\,\int_{\triangle_t^{m}}\E_B\Big[\,
{\begin{tikzpicture}[blue,baseline = -1.25ex]
\begin{scope}[local bounding box=box1]
\node[circle,draw,fill,inner sep=0pt,minimum size=3pt] (1) at (0,0) {};
\draw[white] (1) circle(0.2);
\node (2) at (0.15,-0.2) {{\tiny\color{black}$[\Phi^k]$}};
\end{scope}
\draw[rounded corners] (box1.south west) rectangle (box1.north east);
\end{tikzpicture}}
_{(m)}\!\big((t,\tau,\tau_m),\mu_{\tau_m}^N\big)\Big]\,\ddr\tau.\qedhere\]
\end{lem}

\begin{proof}
We split the proof into three steps.

\medskip
\step1 Formula for variance.\\
Using Proposition~\ref{prop:decomp-exp-B} in form of~\eqref{eq:decomp-exp-B-reform-graph} to accuracy $O(N^{-2})$, we find
\begin{equation}\label{eq:expand-Phi2-prop}
\E_B[\Phi(\mu_t^N)^2]
~=~
{\begin{tikzpicture}[blue,baseline = -.7ex]
\node[circle,draw,fill,inner sep=0pt,minimum size=3pt] (1) at (0,0) {};
\node[circle,draw,fill,inner sep=0pt,minimum size=3pt] (2) at (0.3,0) {};
\draw[white] (1) circle(0.2);
\draw[white] (2) circle(0.2);
\end{tikzpicture}}\!
\big((t,0),\mu_0^N\big)
+
\frac1{2N}\int_{\triangle_t\times0}
{\begin{tikzpicture}[blue,baseline = -.7ex]
\begin{scope}[local bounding box=box1]
\node[circle,draw,fill,inner sep=0pt,minimum size=3pt] (1) at (0,0) {};
\node[circle,draw,fill,inner sep=0pt,minimum size=3pt] (2) at (0.3,0) {};
\draw[white] (1) circle(0.2);
\draw[white] (2) circle(0.2);
\end{scope}
\draw[rounded corners] (box1.south west) rectangle (box1.north east);
\end{tikzpicture}}
\,(\mu_0^N)
~+~\frac{A_{2,2}}{(2N)^{2}},
\end{equation}
with the notation for $A_{2,2}$ in the statement.
By Lemma~\ref{lem:comput-graph}(iii) in form of~\eqref{eq:comp-gr-1(..)},
we get
\begin{multline}\label{eq:expand-Phi2-appllem}
\E_B[\Phi(\mu_t^N)^2]
~=~
\Big(\!{\begin{tikzpicture}[blue,baseline = -.7ex]
\node[circle,draw,fill,inner sep=0pt,minimum size=3pt] (1) at (0,0) {};
\draw[white] (1) circle(0.2);
\end{tikzpicture}}
\big((t,0),\mu_0^N\big)\Big)^2
\\
+
\frac2{2N}\Big(\!{\begin{tikzpicture}[blue,baseline = -.7ex]
\node[circle,draw,fill,inner sep=0pt,minimum size=3pt] (1) at (0,0) {};
\draw[white] (1) circle(0.2);
\end{tikzpicture}}
\big((t,0),\mu_0^N\big)\Big)\Big(\int_{\triangle_t\times0}
{\begin{tikzpicture}[blue,baseline = -.7ex]
\begin{scope}[local bounding box=box1]
\node[circle,draw,fill,inner sep=0pt,minimum size=3pt] (1) at (0,0) {};
\draw[white] (1) circle(0.2);
\end{scope}
\draw[rounded corners] (box1.south west) rectangle (box1.north east);
\end{tikzpicture}}
\,(\mu_0^N)\Big)
+
\frac2{2N}\int_{\triangle_t\times0}
{\begin{tikzpicture}[blue,baseline = -.7ex]
\node[circle,draw,fill,inner sep=0pt,minimum size=3pt] (1) at (0,0) {};
\node[circle,draw,fill,inner sep=0pt,minimum size=3pt] (2) at (0.3,0) {};
\path[-] (1) edge (2);
\end{tikzpicture}}
~(\mu_0^N)
+\frac{A_{2,2}}{(2N)^{2}}.
\end{multline}
On the other hand, using again Proposition~\ref{prop:decomp-exp-B} in form of~\eqref{eq:decomp-exp-B-reform-graph} to accuracy $O(N^{-2})$, we also have
\begin{equation*}
\E_B[\Phi(\mu_t^N)]\,=\,
{\begin{tikzpicture}[blue,baseline = -.7ex]
\node[circle,draw,fill,inner sep=0pt,minimum size=3pt] (1) at (0,0) {};
\draw[white] (1) circle(0.2);
\end{tikzpicture}}
\!\big((t,0),\mu_0^N\big)
+
\frac1{2N}\int_{\triangle_t\times0}
{\begin{tikzpicture}[blue,baseline = -.7ex]
\begin{scope}[local bounding box=box1]
\node[circle,draw,fill,inner sep=0pt,minimum size=3pt] (1) at (0,0) {};
\draw[white] (1) circle(0.2);
\end{scope}
\draw[rounded corners] (box1.south west) rectangle (box1.north east);
\end{tikzpicture}}
\,(\mu_0^N)
~+~\frac{A_{1,2}}{(2N)^{2}}.
\end{equation*}
Taking the square of this identity, and comparing it to~\eqref{eq:expand-Phi2-appllem}, the formula for the variance follows after straightforward simplifications.

\medskip
\step2 Next-order formula for variance.\\
Before turning to the third cumulant, we expand the formula for the variance to the next order.
Instead of~\eqref{eq:expand-Phi2-prop}, we start from Proposition~\ref{prop:decomp-exp-B} with accuracy $O(N^{-3})$, in form of
\begin{equation*}
\E_B[\Phi(\mu_t^N)^2]
~=~
{\begin{tikzpicture}[blue,baseline = -.7ex]
\node[circle,draw,fill,inner sep=0pt,minimum size=3pt] (1) at (0,0) {};
\node[circle,draw,fill,inner sep=0pt,minimum size=3pt] (2) at (0.3,0) {};
\draw[white] (1) circle(0.2);
\draw[white] (2) circle(0.2);
\end{tikzpicture}}\!
\big((t,0),\mu_0^N\big)
+
\frac1{2N}\int_{\triangle_t\times0}
{\begin{tikzpicture}[blue,baseline = -.7ex]
\begin{scope}[local bounding box=box1]
\node[circle,draw,fill,inner sep=0pt,minimum size=3pt] (1) at (0,0) {};
\node[circle,draw,fill,inner sep=0pt,minimum size=3pt] (2) at (0.3,0) {};
\draw[white] (1) circle(0.2);
\draw[white] (2) circle(0.2);
\end{scope}
\draw[rounded corners] (box1.south west) rectangle (box1.north east);
\end{tikzpicture}}
\,(\mu_0^N)
+
\frac1{(2N)^2}\int_{\triangle_t^2\times0}
{\begin{tikzpicture}[blue,baseline = -.7ex]
\begin{scope}[local bounding box=box1]
\node (1) at (0,0) {
{\begin{tikzpicture}[blue,baseline = -.7ex]
\begin{scope}[local bounding box=box1]
\node[circle,draw,fill,inner sep=0pt,minimum size=3pt] (1) at (0,0) {};
\node[circle,draw,fill,inner sep=0pt,minimum size=3pt] (2) at (0.3,0) {};
\draw[white] (1) circle(0.2);
\draw[white] (2) circle(0.2);
\end{scope}
\draw[rounded corners] (box1.south west) rectangle (box1.north east);
\end{tikzpicture}}
};
\end{scope}
\draw[rounded corners] (box1.south west) rectangle (box1.north east);
\end{tikzpicture}}
\,(\mu_0^N)
\\
+
\frac{A_{2,3}}{(2N)^{3}}.
\end{equation*}
Then further appealing to Lemma~\ref{lem:comput-graph}(iii) in form of~\eqref{eq:comp-gr-1(..)}, as well as to Lemma~\ref{lem:comput-graph}(iv),
we obtain, instead of~\eqref{eq:expand-Phi2-appllem},
\begin{multline*}
\E_B[\Phi(\mu_t^N)^2]
~=~
\Big({\begin{tikzpicture}[blue,baseline = -.7ex]
\node[circle,draw,fill,inner sep=0pt,minimum size=3pt] (1) at (0,0) {};
\draw[white] (1) circle(0.2);
\end{tikzpicture}}\!
\big((t,0),\mu_0^N\big)\Big)^2
+
\frac2{2N}\Big(\!{\begin{tikzpicture}[blue,baseline = -.7ex]
\node[circle,draw,fill,inner sep=0pt,minimum size=3pt] (1) at (0,0) {};
\draw[white] (1) circle(0.2);
\end{tikzpicture}}
\big((t,0),\mu_0^N\big)\Big)
\Big(\int_{\triangle_t\times0}
{\begin{tikzpicture}[blue,baseline = -.7ex]
\begin{scope}[local bounding box=box1]
\node[circle,draw,fill,inner sep=0pt,minimum size=3pt] (1) at (0,0) {};
\draw[white] (1) circle(0.2);
\end{scope}
\draw[rounded corners] (box1.south west) rectangle (box1.north east);
\end{tikzpicture}}
\,(\mu_0^N)\Big)
+\frac2{2N}\int_{\triangle_t\times0}
{\begin{tikzpicture}[blue,baseline = -.7ex]
\node[circle,draw,fill,inner sep=0pt,minimum size=3pt] (1) at (0,0) {};
\node[circle,draw,fill,inner sep=0pt,minimum size=3pt] (2) at (0.3,0) {};
\path[-] (1) edge (2);
\end{tikzpicture}}
~(\mu_0^N)
\\
+
\frac2{(2N)^2}
\Big({\begin{tikzpicture}[blue,baseline = -.7ex]
\node[circle,draw,fill,inner sep=0pt,minimum size=3pt] (1) at (0,0) {};
\draw[white] (1) circle(0.2);
\end{tikzpicture}}\!
\big((t,0),\mu_0^N\big)\Big)
\Big(\int_{\triangle_t^2\times0}
{\begin{tikzpicture}[blue,baseline = -.7ex]
\begin{scope}[local bounding box=box1]
\node (1) at (0,0) {
{\begin{tikzpicture}[blue,baseline = -.7ex]
\begin{scope}[local bounding box=box1]
\node[circle,draw,fill,inner sep=0pt,minimum size=3pt] (1) at (0,0) {};
\draw[white] (1) circle(0.2);
\end{scope}
\draw[rounded corners] (box1.south west) rectangle (box1.north east);
\end{tikzpicture}}
};
\end{scope}
\draw[rounded corners] (box1.south west) rectangle (box1.north east);
\end{tikzpicture}}
\,(\mu_0^N)\Big)
+
\frac1{(2N)^2}\Big(\int_{\triangle_t\times0}
{\begin{tikzpicture}[blue,baseline = -.7ex]
\begin{scope}[local bounding box=box1]
\node[circle,draw,fill,inner sep=0pt,minimum size=3pt] (1) at (0,0) {};
\draw[white] (1) circle(0.2);
\end{scope}
\draw[rounded corners] (box1.south west) rectangle (box1.north east);
\end{tikzpicture}}
\,(\mu_0^N)\Big)^2
\\
+
\frac1{(2N)^2}\int_{\triangle_t^2\times0}
\Big(4\,{\begin{tikzpicture}[blue,baseline = -.7ex]
\begin{scope}[local bounding box=box1]
\node[circle,draw,fill,inner sep=0pt,minimum size=3pt] (1) at (0.5,0) {};
\draw[white] (1) circle(0.2);
\end{scope}
\node[circle,draw,fill,inner sep=0pt,minimum size=3pt] (2) at (0,0) {};
\draw[rounded corners] (box1.south west) rectangle (box1.north east);
\path[-] (2) edge (box1);
\end{tikzpicture}}
\,+\,
2\,
{\begin{tikzpicture}[blue,baseline = -.6ex]
\begin{scope}[local bounding box=box1]
\node[circle,draw,fill,inner sep=0pt,minimum size=3pt] (1) at (0,0) {};
\node[circle,draw,fill,inner sep=0pt,minimum size=3pt] (2) at (0.3,0) {};
\draw[white] (1) circle(0.2);
\draw[white] (2) circle(0.2);
\path[-] (1) edge (2);
\end{scope}
\draw[rounded corners] (box1.south west) rectangle (box1.north east);
\end{tikzpicture}}\Big)\,(\mu_0^N)
+
\frac{A_{2,3}}{(2N)^{3}}.
\end{multline*}
On the other hand, using again Proposition~\ref{prop:decomp-exp-B} in form of~\eqref{eq:decomp-exp-B-reform-graph} to accuracy $O(N^{-3})$, we also have
\begin{equation}\label{eq:formula-exp-nextorder}
\E_B[\Phi(\mu_t^N)]\,=\,
{\begin{tikzpicture}[blue,baseline = -.7ex]
\node[circle,draw,fill,inner sep=0pt,minimum size=3pt] (1) at (0,0) {};
\draw[white] (1) circle(0.2);
\end{tikzpicture}}
\!\big((t,0),\mu_0^N\big)
+
\frac1{2N}\int_{\triangle_t\times0}
{\begin{tikzpicture}[blue,baseline = -.7ex]
\begin{scope}[local bounding box=box1]
\node[circle,draw,fill,inner sep=0pt,minimum size=3pt] (1) at (0,0) {};
\draw[white] (1) circle(0.2);
\end{scope}
\draw[rounded corners] (box1.south west) rectangle (box1.north east);
\end{tikzpicture}}
\,(\mu_0^N)
+
\frac1{(2N)^2}\int_{\triangle_t\times0}
{\begin{tikzpicture}[blue,baseline = -.7ex]
\begin{scope}[local bounding box=box1]
\node (1) at (0,0) {
{\begin{tikzpicture}[blue,baseline = -.7ex]
\begin{scope}[local bounding box=box1]
\node[circle,draw,fill,inner sep=0pt,minimum size=3pt] (1) at (0,0) {};
\draw[white] (1) circle(0.2);
\end{scope}
\draw[rounded corners] (box1.south west) rectangle (box1.north east);
\end{tikzpicture}}
};
\end{scope}
\draw[rounded corners] (box1.south west) rectangle (box1.north east);
\end{tikzpicture}}
\,(\mu_0^N)
+\frac{A_{1,3}}{(2N)^{3}}.
\end{equation}
Taking the square of this identity, and comparing it to the previous one for $\E_B[\Phi(\mu_t^N)^2]$, we are led to
\begin{equation}\label{eq:formula-var-nextorder}
\Var_B[\Phi(\mu_t^N)]
~=~
\frac2{2N}\int_{\triangle_t\times0}
{\begin{tikzpicture}[blue,baseline = -.7ex]
\node[circle,draw,fill,inner sep=0pt,minimum size=3pt] (1) at (0,0) {};
\node[circle,draw,fill,inner sep=0pt,minimum size=3pt] (2) at (0.3,0) {};
\path[-] (1) edge (2);
\end{tikzpicture}}
\,(\mu_0^N)
+
\frac1{(2N)^2}\int_{\triangle_t^2\times0}
\Big(4\,{\begin{tikzpicture}[blue,baseline = -.7ex]
\begin{scope}[local bounding box=box1]
\node[circle,draw,fill,inner sep=0pt,minimum size=3pt] (1) at (0.5,0) {};
\draw[white] (1) circle(0.2);
\end{scope}
\node[circle,draw,fill,inner sep=0pt,minimum size=3pt] (2) at (0,0) {};
\draw[rounded corners] (box1.south west) rectangle (box1.north east);
\path[-] (2) edge (box1);
\end{tikzpicture}}
\,+\,
2\,
{\begin{tikzpicture}[blue,baseline = -.6ex]
\begin{scope}[local bounding box=box1]
\node[circle,draw,fill,inner sep=0pt,minimum size=3pt] (1) at (0,0) {};
\node[circle,draw,fill,inner sep=0pt,minimum size=3pt] (2) at (0.3,0) {};
\draw[white] (1) circle(0.2);
\draw[white] (2) circle(0.2);
\path[-] (1) edge (2);
\end{scope}
\draw[rounded corners] (box1.south west) rectangle (box1.north east);
\end{tikzpicture}}\Big)\,(\mu_0^N)
+\frac{R_N}{(2N)^3}.
\end{equation}
where the error term $R_N$ is given by
\begin{equation*}
R_N\,:=\,
A_{2,3}
-2A_{1,3}\,\E_B[\Phi(\mu_t^N)]
+\frac{(A_{1,3})^2}{(2N)^3}
-
2
\int_{\triangle_t^3\times0}
{\begin{tikzpicture}[blue,baseline = -.7ex]
\begin{scope}[local bounding box=box1]
\node[circle,draw,fill,inner sep=0pt,minimum size=3pt] (1) at (-0.6,0) {};
\draw[white] (1) circle(0.2);
\end{scope}
\begin{scope}[local bounding box=box2]
\node (2) at (0,0) {
{\begin{tikzpicture}[blue,baseline = -.7ex]
\begin{scope}[local bounding box=box3]
\node[circle,draw,fill,inner sep=0pt,minimum size=3pt] (3) at (0,0) {};
\draw[white] (3) circle(0.2);
\end{scope}
\draw[rounded corners] (box3.south west) rectangle (box3.north east);
\end{tikzpicture}}
};
\end{scope}
\draw[rounded corners] (box1.south west) rectangle (box1.north east);
\draw[rounded corners] (box2.south west) rectangle (box2.north east);
\end{tikzpicture}}
\,(\mu_0^N)
-
\frac1{2N}
\Big(\int_{\triangle_t^2\times0}
{\begin{tikzpicture}[blue,baseline = -.7ex]
\begin{scope}[local bounding box=box1]
\node (1) at (0,0) {
{\begin{tikzpicture}[blue,baseline = -.7ex]
\begin{scope}[local bounding box=box1]
\node[circle,draw,fill,inner sep=0pt,minimum size=3pt] (1) at (0,0) {};
\draw[white] (1) circle(0.2);
\end{scope}
\draw[rounded corners] (box1.south west) rectangle (box1.north east);
\end{tikzpicture}}
};
\end{scope}
\draw[rounded corners] (box1.south west) rectangle (box1.north east);
\end{tikzpicture}}
\,(\mu_0^N)\Big)^2.
\end{equation*}

\medskip
\step3 Formula for third cumulant.\\
Using Proposition~\ref{prop:decomp-exp-B} in form of~\eqref{eq:decomp-exp-B-reform-graph}, we find
\begin{equation*}
\E_B[\Phi(\mu_t^N)^3]
~=~
{\begin{tikzpicture}[blue,baseline = -.1ex]
\node[circle,draw,fill,inner sep=0pt,minimum size=3pt] (1) at (0,0) {};
\node[circle,draw,fill,inner sep=0pt,minimum size=3pt] (2) at (0.3,0) {};
\node[circle,draw,fill,inner sep=0pt,minimum size=3pt] (2) at (0.15,0.25) {};
\draw[white] (1) circle(0.2);
\draw[white] (2) circle(0.2);
\end{tikzpicture}}
\,\big((t,0),\mu_0^N\big)
+
\frac1{2N}\Big(\int_{\triangle_t\times0}
{\begin{tikzpicture}[blue,baseline = -.1ex]
\begin{scope}[local bounding box=box1]
\node[circle,draw,fill,inner sep=0pt,minimum size=3pt] (1) at (0,0) {};
\node[circle,draw,fill,inner sep=0pt,minimum size=3pt] (2) at (0.3,0) {};
\node[circle,draw,fill,inner sep=0pt,minimum size=3pt] (3) at (0.15,0.25) {};
\draw[white] (1) circle(0.2);
\draw[white] (2) circle(0.2);
\draw[white] (3) circle(0.2);
\end{scope}
\draw[rounded corners] (box1.south west) rectangle (box1.north east);
\end{tikzpicture}}
\,(\mu_0^N)\Big)
+
\frac1{(2N)^2}\Big(\int_{\triangle_t^2\times0}
{\begin{tikzpicture}[blue,baseline = -.4ex]
\begin{scope}[local bounding box=box1]
\node (1) at (0,0) {
{\begin{tikzpicture}[blue,baseline = -.1ex]
\begin{scope}[local bounding box=box1]
\node[circle,draw,fill,inner sep=0pt,minimum size=3pt] (1) at (0,0) {};
\node[circle,draw,fill,inner sep=0pt,minimum size=3pt] (2) at (0.3,0) {};
\node[circle,draw,fill,inner sep=0pt,minimum size=3pt] (3) at (0.15,0.25) {};
\draw[white] (1) circle(0.2);
\draw[white] (2) circle(0.2);
\draw[white] (3) circle(0.2);
\end{scope}
\draw[rounded corners] (box1.south west) rectangle (box1.north east);
\end{tikzpicture}}
};
\end{scope}
\draw[rounded corners] (box1.south west) rectangle (box1.north east);
\end{tikzpicture}}
\,(\mu_0^N)\Big)
+
\frac{A_{3,3}}{(2N)^{3}}.
\end{equation*}
To compute the different right-hand side terms, we appeal to Lemma~\ref{lem:comput-graph} in form of
\begin{eqnarray*}
{\begin{tikzpicture}[blue,baseline =.1ex]
\begin{scope}[local bounding box=box1]
\node[circle,draw,fill,inner sep=0pt,minimum size=3pt] (1) at (0,0) {};
\node[circle,draw,fill,inner sep=0pt,minimum size=3pt] (2) at (0.3,0) {};
\node[circle,draw,fill,inner sep=0pt,minimum size=3pt] (3) at (0.15,0.25) {};
\draw[white] (1) circle(0.2);
\draw[white] (2) circle(0.2);
\draw[white] (3) circle(0.2);
\end{scope}
\draw[rounded corners] (box1.south west) rectangle (box1.north east);
\end{tikzpicture}}
&=&
3\,{\begin{tikzpicture}[blue,baseline = -.7ex]
\begin{scope}[local bounding box=box1]
\node[circle,draw,fill,inner sep=0pt,minimum size=3pt] (1) at (0,0) {};
\draw[white] (1) circle(0.2);
\end{scope}
\node[circle,draw,fill,inner sep=0pt,minimum size=3pt] (2) at (0.38,0.15) {};
\node[circle,draw,fill,inner sep=0pt,minimum size=3pt] (3) at (0.38,-0.15) {};
\draw[rounded corners] (box1.south west) rectangle (box1.north east);
\end{tikzpicture}}
\,+\,
6\,{\begin{tikzpicture}[blue,baseline = -.7ex]
\node[circle,draw,fill,inner sep=0pt,minimum size=3pt] (1) at (0,0) {};
\node[circle,draw,fill,inner sep=0pt,minimum size=3pt] (2) at (0.22,0.15) {};
\node[circle,draw,fill,inner sep=0pt,minimum size=3pt] (3) at (0.22,-0.15) {};
\path[-] (2) edge (3);
\end{tikzpicture}}\,,
\\
{\begin{tikzpicture}[blue,baseline = -.4ex]
\begin{scope}[local bounding box=box0]
\node (1) at (0,0) {
{\begin{tikzpicture}[blue,baseline = -.1ex]
\begin{scope}[local bounding box=box1]
\node[circle,draw,fill,inner sep=0pt,minimum size=3pt] (1) at (0,0) {};
\node[circle,draw,fill,inner sep=0pt,minimum size=3pt] (2) at (0.3,0) {};
\node[circle,draw,fill,inner sep=0pt,minimum size=3pt] (3) at (0.15,0.25) {};
\draw[white] (1) circle(0.2);
\draw[white] (2) circle(0.2);
\draw[white] (3) circle(0.2);
\end{scope}
\draw[rounded corners] (box1.south west) rectangle (box1.north east);
\end{tikzpicture}}
};
\end{scope}
\draw[rounded corners] (box0.south west) rectangle (box0.north east);
\end{tikzpicture}}
&=&
3\,
{\begin{tikzpicture}[blue,baseline =-.5ex]
\begin{scope}[local bounding box=box1]
\node (1) at (0,0) {
\begin{tikzpicture}[blue,baseline = -.1ex]
\begin{scope}[local bounding box=box1]
\node[circle,draw,fill,inner sep=0pt,minimum size=3pt] (1) at (0,0) {};
\draw[white] (1) circle(0.2);
\end{scope}
\draw[rounded corners] (box1.south west) rectangle (box1.north east);
\end{tikzpicture}
};
\end{scope}
\draw[rounded corners] (box1.south west) rectangle (box1.north east);
\node[circle,draw,fill,inner sep=0pt,minimum size=3pt] (2) at (0.55,0.15) {};
\node[circle,draw,fill,inner sep=0pt,minimum size=3pt] (3) at (0.55,-0.15) {};
\end{tikzpicture}}
+ 
3\, 
\big({\begin{tikzpicture}[blue,baseline = -.7ex]
\begin{scope}[local bounding box=box1]
\node[circle,draw,fill,inner sep=0pt,minimum size=3pt] (1) at (0,0) {};
\draw[white] (1) circle(0.2);
\end{scope}
;
\draw[rounded corners] (box1.south west) rectangle (box1.north east);
\end{tikzpicture}}\big)^2
{\begin{tikzpicture}[blue]
\node[circle,draw,fill,inner sep=0pt,minimum size=3pt] (2) at (0,0) {};
\end{tikzpicture}} \, 
+ \,
12\,
{\begin{tikzpicture}[blue,baseline = -.5ex]
\begin{scope}[local bounding box=box1]
\node[circle,draw,fill,inner sep=0pt,minimum size=3pt] (1) at (0.35,-0.15) {};
\draw[white] (1) circle(0.2);
\end{scope}
\node[circle,draw,fill,inner sep=0pt,minimum size=3pt] (2) at (0,0) {};
\node[circle,draw,fill,inner sep=0pt,minimum size=3pt] (3) at (0.35,0.25) {};
\draw[rounded corners] (box1.south west) rectangle (box1.north east);
\path[-] (3) edge (box1);
\end{tikzpicture}}
\,+\,
6\,
{\begin{tikzpicture}[blue,baseline = -.6ex]
\begin{scope}[local bounding box=box1]
\node[circle,draw,fill,inner sep=0pt,minimum size=3pt] (1) at (0,0) {};
\draw[white] (1) circle(0.2);
\end{scope}
\node[circle,draw,fill,inner sep=0pt,minimum size=3pt] (2) at (0.4,-0.2) {};
\node[circle,draw,fill,inner sep=0pt,minimum size=3pt] (3) at (0.4,0.2) {};
\draw[rounded corners] (box1.south west) rectangle (box1.north east);
\path[-] (2) edge (3);
\end{tikzpicture}}
\,+\,
6\,{\begin{tikzpicture}[blue,baseline = -.6ex]
\node[circle,draw,fill,inner sep=0pt,minimum size=3pt] (1) at (0,0) {};
\begin{scope}[local bounding box=box1]
\node[circle,draw,fill,inner sep=0pt,minimum size=3pt] (2) at (0.4,-0.2) {};
\node[circle,draw,fill,inner sep=0pt,minimum size=3pt] (3) at (0.4,0.2) {};
\draw[white] (2) circle(0.2);
\draw[white] (3) circle(0.2);
\path[-] (2) edge (3);
\end{scope}
\draw[rounded corners] (box1.south west) rectangle (box1.north east);
\end{tikzpicture}}
\,+\,
12\,{\begin{tikzpicture}[blue,baseline = -.6ex]
\node[circle,draw,fill,inner sep=0pt,minimum size=3pt] (1) at (0,0) {};
\begin{scope}[local bounding box=box1]
\node[circle,draw,fill,inner sep=0pt,minimum size=3pt] (2) at (0.5,-0.2) {};
\node[circle,draw,fill,inner sep=0pt,minimum size=3pt] (3) at (0.5,0.2) {};
\draw[white] (2) circle(0.2);
\draw[white] (3) circle(0.2);
\path[-] (2) edge (3);
\end{scope}
\draw[rounded corners,dotted] (box1.south west) rectangle (box1.north east);
\path[-] (1) edge (box1);
\end{tikzpicture}} 
\end{eqnarray*}
Inserting these identities into the above,
comparing with~\eqref{eq:formula-exp-nextorder} and~\eqref{eq:formula-var-nextorder}, 
and recalling that the third cumulant is given by
\begin{equation*}
\kappa_3[\Phi(\mu_t^N)]
\,=\,
\E_B[\Phi(\mu_t^N)^3]
-3\Var_B[\Phi(\mu_t^N)]\E_B[\Phi(\mu_t^N)]
-\E_B[\Phi(\mu_t^N)]^3,
\end{equation*}
the formula in the statement follows after straightforward simplifications.
\end{proof}

We now show how the above explicit diagrammatic computation can be pursued systematically to higher orders.
First note that starting from Proposition~\ref{prop:decomp-exp-B} in form of~\eqref{eq:decomp-exp-B-reform-graph} and appealing to the computation rules of Lemma~\ref{lem:comput-graph} to expand each L-graph into a sum of irreducible graphs, the $k$th moment of a smooth functional along the flow can be expanded as a power series in $N^{-1}$, where the term of order $O(N^{-m})$ is given by a sum of all irreducible L-graphs with $k$ vertices and $m$ edges (see indeed~\eqref{eq:mom-expand-L} below).
In the above lemma, for the first cumulants, we manage to capture cancellations showing that the power series for the variance starts at order $O(N^{-1})$ and that the power series for the third cumulant starts at $O(N^{-2})$.
In the following result, we unravel the underlying combinatorial structure and show how cancellations can be systematically captured to higher orders: in a nutshell, the power series for cumulants takes the same form as for moments, except that the sum over irreducible L-graphs is restricted to {\it connected} graphs, cf.~\eqref{eq:cum-expand-L}.
In particular, given that for $m<k-1$ there is no connected L-graph with $k$ vertices and only $m$ edges, we deduce that the power series for the $k$th cumulant must start at order $O(N^{1-k})$.

\begin{prop}\label{prop:expand-Phi-NjLions}
Given a smooth functional $\Phi:\Pc(\Xd)\to\R$, for all $k\ge1$, we can expand as follows the $k$th Brownian moment along the flow: for all $n\ge0$,
\begin{equation}\label{eq:mom-expand-L}
\E_B[\Phi(\mu_t^N)^k]~=~\sum_{m=0}^{n}\frac1{(2N)^m}\sum_{\Psi\in\Gamma(k,m)}\gamma(\Psi)\int_{\triangle_t^{m}\times0}\Psi~+~\frac{R_{N,n}^k(t)}{(2N)^{n+1}},
\end{equation}
where $\Gamma(k,m)$ stands for the set of all (unlabeled) irreducible L-graphs with $k$ vertices and $m$ edges,
where $\gamma$ is some map from $\Gamma(k,m)$ to $\N$,
and where the remainder is given by
\begin{equation}\label{eq:def-RNnt}
R_{N,n}^k(t)\,:=\,\int_{\triangle_t^{n+1}}\E_B\Big[
{\begin{tikzpicture}[blue,baseline = -1.25ex]
\begin{scope}[local bounding box=box1]
\node[circle,draw,fill,inner sep=0pt,minimum size=3pt] (1) at (0,0) {};
\draw[white] (1) circle(0.2);
\node (2) at (0.15,-0.2) {{\tiny\color{black}$[\Phi^{k}]$}};
\end{scope}
\draw[rounded corners] (box1.south west) rectangle (box1.north east);
\end{tikzpicture}}
_{(n+1)}\big((t,\tau,\tau_{n+1}),\mu_{\tau_{n+1}}^N\big)\Big]\,\ddr\tau.
\end{equation}
Moreover, with this notation, the $k$th cumulant can be expanded as follows: for all $n\ge0$,
\begin{equation}\label{eq:cum-expand-L}
\kappa_B^k[\Phi(\mu_t^N)]~=~
\mathds1_{n\ge k-1}\sum_{m=k-1}^{n}\frac1{(2N)^m}\sum_{\Psi\in\Gamma_\circ(k,m)}\gamma(\Psi)\int_{\triangle_t^{m}\times0}\Psi~+~\frac{\tilde R_{N,n}^k(t)}{(2N)^{n+1}},
\end{equation}
where the sum is now restricted to $\Gamma_\circ(k,m)\subset\Gamma(k,m)$, which stands for the subset of all \emph{connected} (unlabeled) irreducible L-graphs with $k$ vertices and $m$ edges,
and where the remainder $\tilde R_{N,n}^k(t)$ can be expressed as a linear combination of elements of the set
\[\bigg\{S_{N,n}^{k-j}(t)\, \prod_{i=1}^j\E_B[\Phi(\mu_t^N)^{i}]^{\alpha_i}\,:\,0\le j\le k~\text{and}~\alpha_1,\ldots,\alpha_j\in\N~\text{with}~\sum_{i=1}^ji\alpha_i=j\bigg\},\]
with bounded coefficients independent of $N,\Phi,\mu_0^N,t$,
where the factors $\{S_{N,n}^{k}(t)\}_k$ are defined by
\begin{equation}\label{eq:def-SNnt}
S_{N,n}^k(t)
\,:=\,
R_{N,n}^k(t)
-\sum_{j=1}^k\binom{k-1}{j-1}\sum_{m=0}^nR_{N,n-m}^{k-j}(t)\sum_{\Psi\in\Gamma_\circ(j,m)}\gamma(\Psi)\int_{\triangle_t^m\times0}\Psi(\mu_0^N).\qedhere
\end{equation}
\end{prop}

\begin{proof}
We split the proof into three steps.

\medskip
\step1 Proof of~\eqref{eq:mom-expand-L}.\\
By Proposition~\ref{prop:decomp-exp-B} in form of~\eqref{eq:decomp-exp-B-reform-graph}, we recall that we have for all $k\ge1$ and $n\ge0$,
\begin{equation}\label{eq:moment-phimutN-expand}
\E_B[\Phi(\mu_t^N)^k]~=~\sum_{m=0}^{n}\frac1{(2N)^m}\Big(\int_{\triangle_t^m\times0}
{\begin{tikzpicture}[blue,baseline = -1.25ex]
\begin{scope}[local bounding box=box1]
\node[circle,draw,fill,inner sep=0pt,minimum size=3pt] (1) at (0,0) {};
\draw[white] (1) circle(0.2);
\node (2) at (0.15,-0.2) {{\tiny\color{black}$[\Phi^{k}]$}};
\end{scope}
\draw[rounded corners] (box1.south west) rectangle (box1.north east);
\end{tikzpicture}}
_{(m)}(\mu_0^N)\Big)
~+~\frac{R_{N,n}^k(t)}{(2N)^{n+1}},
\end{equation}
with remainder $R_{N,n}^k(t)$ defined in~\eqref{eq:def-RNnt}.
In order to prove~\eqref{eq:mom-expand-L}, it remains to use Lemma~\ref{lem:comput-graph}(iii) to expand the L-graphs in the above right-hand side as sums of \emph{irreducible} L-graphs.
By a direct induction argument,
we note that all time labelling satisfying the basic rules (R1)--(R4) appear symmetrically in the expansion, thus proving that for all $k\ge1$ and $m\ge0$ we can expand
\begin{equation}\label{eq:decomp-Phik-m}
{\begin{tikzpicture}[blue,baseline = -1.25ex]
\begin{scope}[local bounding box=box1]
\node[circle,draw,fill,inner sep=0pt,minimum size=3pt] (1) at (0,0) {};
\draw[white] (1) circle(0.2);
\node (2) at (0.15,-0.2) {{\tiny\color{black}$[\Phi^k]$}};
\end{scope}
\draw[rounded corners] (box1.south west) rectangle (box1.north east);
\end{tikzpicture}}
_{(m)}
\,=\,
\sum_{\Psi\in\Gamma(k,m)}\gamma(\Psi)\Psi,
\end{equation}
for some map $\gamma:\Gamma(k,m)\to\N$, where as in the statement $\Gamma(k,m)$ stands for the set of all (unlabeled) irreducible L-graphs with $k$ vertices and $m$ edges.
This already proves~\eqref{eq:mom-expand-L}.

\medskip
\step2 Proof that for all $k\ge1$ and $m\ge0$ the map $\gamma:\Gamma(k,m)\to\N$ in~\eqref{eq:decomp-Phik-m} satisfies for all L-graphs $\Psi\in\Gamma(k,m)$,
\begin{equation}\label{eq:gamma-split-form}
\gamma(\Psi)\,=\,\sum_{\substack{\Theta\subset\Psi\\\text{connected}\\\text{component}}}\binom{k-1}{V(\Theta)-1}\binom{m}{E(\Theta)}\,\gamma(\Theta)\,\gamma(\Psi\setminus\Theta),
\end{equation}
where the sum runs over all connected components $\Theta$ of the L-graph $\Psi$,
where $V(\Theta)$ and $E(\Theta)$ stand for the number of vertices and the number of edges in $\Theta$, respectively,
and where $\Psi\setminus \Theta$ stands for the L-subgraph obtained by removing the component $\Theta$ from $\Psi$.

\medskip\noindent
To prove this identity, we start by noting that, when appealing to Lemma~\ref{lem:comput-graph}(iii) to iteratively prove~\eqref{eq:decomp-Phik-m}, the map $\gamma$ can be given an explicit interpretation:
for all $\Psi\in\Gamma(k,m)$, the coefficient $\gamma(\Psi)$ is the positive integer given by
\[\gamma(\Psi)\,:=\,2^{SE(\Psi)}N(\Psi),\]
where $SE(\Psi)$ is the number of straight edges in $\Psi$ and where $N(\Psi)$ is the number of ways to obtain the graph $\Psi$ by starting from $k$ labeled vertices and by iteratively adding round or straight edges between stable subgraphs.
Conditioning on the connected component that the vertex with the first label belongs to, the identity~\eqref{eq:gamma-split-form} immediately follows from this interpretation.

\medskip
\step3 Proof of~\eqref{eq:cum-expand-L}.\\
Given $k\ge1$ and $m\ge0$, the result~\eqref{eq:gamma-split-form} of Step~2 implies
\[\sum_{\Psi\in\Gamma(k,m)}\gamma(\Psi)\Psi\,=\,\sum_{j=1}^k\binom{k-1}{j-1}\sum_{p=0}^m\binom{m}{p}\bigg(\sum_{\substack{\Psi\in\Gamma(j,p)\\\text{connected}}}\gamma(\Psi)\Psi\bigg)\bigg(\sum_{\Psi\in\Gamma(k-j,m-p)}\gamma(\Psi)\Psi\bigg),\]
and thus, by~\eqref{eq:decomp-Phik-m},
\[{\begin{tikzpicture}[blue,baseline = -1.25ex]
\begin{scope}[local bounding box=box1]
\node[circle,draw,fill,inner sep=0pt,minimum size=3pt] (1) at (0,0) {};
\draw[white] (1) circle(0.2);
\node (2) at (0.15,-0.2) {{\tiny\color{black}$[\Phi^k]$}};
\end{scope}
\draw[rounded corners] (box1.south west) rectangle (box1.north east);
\end{tikzpicture}}
_{(m)}
\,=\,\sum_{j=1}^k\binom{k-1}{j-1}\sum_{p=0}^m\binom{m}{p}\bigg(\sum_{\substack{\Psi\in\Gamma(j,p)\\\text{connected}}}\gamma(\Psi)\Psi\bigg){\begin{tikzpicture}[blue,baseline = -1.25ex]
\begin{scope}[local bounding box=box1]
\node[circle,draw,fill,inner sep=0pt,minimum size=3pt] (1) at (0,0) {};
\draw[white] (1) circle(0.2);
\node (2) at (0.15,-0.2) {{\tiny\color{black}$[\Phi^{k-j}]$}};
\end{scope}
\draw[rounded corners] (box1.south west) rectangle (box1.north east);
\end{tikzpicture}}
_{(m-p)}.\]
Taking the time integral and appealing to Lemma~\ref{lem:comput-graph}(iii), this leads us to
\begin{equation*}
\int_{\triangle_t^m\times0}
{\begin{tikzpicture}[blue,baseline = -1.25ex]
\begin{scope}[local bounding box=box1]
\node[circle,draw,fill,inner sep=0pt,minimum size=3pt] (1) at (0,0) {};
\draw[white] (1) circle(0.2);
\node (2) at (0.15,-0.2) {{\tiny\color{black}$[\Phi^k]$}};
\end{scope}
\draw[rounded corners] (box1.south west) rectangle (box1.north east);
\end{tikzpicture}}
_{(m)}(\mu)
\,=\,\sum_{j=1}^k\binom{k-1}{j-1}\sum_{p=0}^m\bigg(\sum_{\substack{\Psi\in\Gamma(j,p)\\\text{connected}}}\gamma(\Psi)\int_{\triangle_t^p\times0}\Psi(\mu)\bigg)
\bigg(
\int_{\triangle_t^{m-p}\times0}{\begin{tikzpicture}[blue,baseline = -1.25ex]
\begin{scope}[local bounding box=box1]
\node[circle,draw,fill,inner sep=0pt,minimum size=3pt] (1) at (0,0) {};
\draw[white] (1) circle(0.2);
\node (2) at (0.15,-0.2) {{\tiny\color{black}$[\Phi^{k-j}]$}};
\end{scope}
\draw[rounded corners] (box1.south west) rectangle (box1.north east);
\end{tikzpicture}}
_{(m-p)}(\mu)\bigg).
\end{equation*}
Now recalling~\eqref{eq:moment-phimutN-expand}, and defining
\[L_{N,n}^k(t,\mu_0^N)\,:=\,\sum_{m=0}^{n}\frac1{(2N)^{m}}\sum_{\substack{\Psi\in\Gamma(k,m)\\\text{connected}}}\gamma(\Psi)\int_{\triangle_t^m\times0}\Psi(\mu_0^N),\]
we deduce
\begin{equation}\label{eq:rel-mom-cum-phimunt-imit}
\E_B[\Phi(\mu_t^N)^k]\\
\,=\,
\sum_{j=1}^k\binom{k-1}{j-1}L_{N,n}^j(t,\mu_0^N)
\,\E_B[\Phi(\mu_t^N)^{k-j}]
+\frac{S_{N,n}^k(t)}{(2N)^{n+1}},
\end{equation}
with remainder $S_{N,n}^k(t)$ defined in~\eqref{eq:def-SNnt}.
Finally, we recall the recurrence relation of Lemma~\ref{lem:mom-cum} between moments and cumulants: for all $k\ge1$, we have
\[\E_B[\Phi(\mu_t^N)^k]\,=\,\sum_{j=1}^{k}\binom{k-1}{j-1}\kappa^{j}_B[\Phi(\mu_t^N)]\,\E_B[\Phi(\mu_t^N)^{k-j}].\]
Comparing this with the identity~\eqref{eq:rel-mom-cum-phimunt-imit} above, the conclusion follows by a direct induction.
\end{proof}

\subsection{Error estimates}
We turn to the uniform-in-time estimation of error terms in expansions such as~\eqref{eq:mom-expand-L} or~\eqref{eq:cum-expand-L}. 
We start with the following lemma describing
L-derivatives
of the solution of the mean-field McKean--Vlasov equation~\eqref{eq:MFL-McKean}. {Note that the representation formula below
corresponds to the one in~\cite{Tse_2021} for the Brownian dynamics.}

\begin{lem}\label{lem:diff-mk}
For all $t\ge0$ and $\phi\in C^\infty_c(\Xd)$, the functional $\mu\mapsto \int_\Xd\phi\, m(t,\mu)$ is smooth and its linear functional derivatives can be represented as follows: for all $k\ge1$, $\mu\in\Pc(\Xd)$, and~$y_1,\ldots,y_k\in\Xd$,
\begin{equation}\label{eq:rep-dmu-k-m}
\Lindk{}{k}\Big(\mu\mapsto\int_\Xd\phi\, m(t,\mu)\Big)(\mu,y_1,\ldots,y_k)\,=\,\int_\Xd \phi \,m^{(k)}(t,\mu,y_1,\ldots,y_k),
\end{equation}
where $m^{(k)}(\cdot,\mu,y_1,\ldots,y_k):\R^+\times\Xd\to\R$ is a distributional solution of the linear Cauchy problem
\begin{equation}\label{eq:cauchy_mk}
\left\{
\begin{array}{l}
\partial_t m^{(k)}(t,\mu,y_1,\ldots,y_k) - L_{m(t,\mu)}m^{(k)}(t,\mu,y_1,\ldots,y_k)=F_k(t,\mu,y_1,\ldots,y_k),\\
m^{(k)}(t,\mu,y_1,\ldots,y_k)|_{t=0} = (-1)^{k-1}(\delta_{y_k} - \mu),
\end{array}
\right.
\end{equation}
where for all $\mu\in\Pc(\Xd)$ we recall that $L_\mu$ stands for the linearized McKean--Vlasov operator at~$\mu$, cf.~\eqref{eq:def_linearized},
and where the source term $F_k$ is given by
\begin{multline}\label{eq:def_F_lambda}
F_k(t,\mu,y_1,\ldots,y_k)\,:=\,
\sum_{j=1}^{k-1}\sum_{\substack{I_0\cup I_1=\llbracket k\rrbracket\\\text{disjoint}}}\mathds1_{\sharp I_0=j,\sharp I_1=k-j}\\
\times\Div\bigg(m^{(j)}(t,\mu,y_{I_0})\int_{\Xd}\Lind{b}(\cdot,m(t,\mu),z)\, m^{(k-j)}((t,z),\mu,y_{I_1})\,\ddr z\bigg),
\end{multline}
where for $I=\{i_1,\ldots,i_r\}$ with $1\le i_1<\ldots<i_r\le k$ we set $y_I:=(y_{i_1},\ldots,y_{i_r})$.
Note that for $k=1$ we have $F_1\equiv0$.
In addition, given $\kappa_0, \lambda_0$ as in Theorem \ref{thm:ergodic}, we have the following uniform-in-time estimates: given $\kappa \in [0,\kappa_0]$, $1 < q \le 2$, and $0 < p \le 1$, further assuming in the Langevin setting that $p q' \gg_{\beta,a} 1$ is large enough (only depending on $d,\beta,a$), we have for all $k \ge 1$, $\alpha_1,\ldots,\alpha_k\ge0$, $y_1,\ldots, y_k \in \Xd$, for all $t\ge 0$, $\mu \in \calP(\Xd)$,
\begin{equation}\label{eq:estim-decay-mklin}
\|\nabla_{y_1}^{\alpha_1}\ldots \nabla_{y_k}^{\alpha_k}m^{(k)}(t,\mu,y_1,\ldots,y_k)\|_{W^{-((k+\max_j\alpha_j)\vee 2),q}(\langle z\rangle^p)}
\,\lesssim\,
\langle y_1\rangle^p\ldots \langle y_k\rangle^p e^{-p\lambda_0 t},
\end{equation}
where the multiplicative constant only depends on $d,W,\beta,k,p,q,a$, $\max_j\alpha_j$, and $Q(\mu)$.
In the Brownian setting, the dependence on $Q(\mu)$ can be lifted in case of estimates in the unweighted spaces $W^{-k,1}(\R^d)$.
In the Langevin setting, the dependence on $Q(\mu)$ can be made at most exponential provided that $\kappa$ is small enough:
given $\theta\in(0,1]$, for all $\ell\ge2$ and $0<p\le\frac14\theta$, given $\kappa\in[0,\kappa_{\ell,p}]$ with $\kappa_{\ell,p}$ as in Theorem~\ref{thm:ergodic}(ii), given $1 < q \le 2$ with $pq' \gg_{\beta,a} 1$ large enough (only depending on $d,\beta,a$), we have for all $k \ge 1$, $\alpha_1,\ldots, \alpha_k \ge 0$ with $k+\max_j\alpha_j\le\ell$, for all $y_1,\ldots, y_k \in \Xd$ and $t \ge 0$, 
\begin{equation}\label{eq:estim-decay-mklin-Q}
\|\nabla_{y_1}^{\alpha_1}\ldots \nabla_{y_k}^{\alpha_k}m^{(k)}(t,\mu,y_1,\ldots,y_k)\|_{W^{-((k+\max_j\alpha_j)\vee 2),q}(\langle z\rangle^p)}
\,\lesssim\,
\langle y_1\rangle^p\ldots \langle y_k\rangle^p e^{-p\lambda_0 t} \, e^{2kp Q(\mu)^\theta},
\end{equation}
where the multiplicative constant only depends on $d,W,\beta,\theta,\ell,k,p,q,a$, $\max_j\alpha_j$.
\end{lem}

\begin{proof}
By successively taking linear derivatives in the McKean--Vlasov equation~\eqref{eq:MFL-McKean}, the representation~\eqref{eq:rep-dmu-k-m}--\eqref{eq:cauchy_mk} in terms of linearized equations is straightforward with source term given by
\begin{multline*}
F_k(t,\mu,y_1,\ldots,y_k)\,:=\,
\sum_{l=1}^k\sum_{a_0=0}^{k-1}\sum_{\substack{a_1+\ldots+a_l=k-a_0\\ 1\le a_1\le \ldots\le a_l<k}}\sum_{\substack{I_0\cup\ldots\cup I_l=\llbracket k\rrbracket\\\text{disjoint}}}\mathds1_{\forall 0\le r\le l:\sharp I_r=a_r}\\
\times\Div\bigg(m^{(j)}(t,\mu,y_{I_0})\int_{\Xd^l}\Lindk{b}{l}(\cdot,m(t,\mu),z_1,\ldots,z_l)\\
\times m^{(a_1)}((t,z_1),\mu,y_{I_1})\ldots m^{(a_l)}((t,z_l),\mu,y_{I_l})\,\ddr z_1\ldots\ddr z_l\bigg),
\end{multline*}
where we recall the notation $y_I=(y_{i_1},\ldots,y_{i_r})$ for $I=\{i_1,\ldots,i_r\}$.
The pairwise structure of interactions actually yields various simplifications: noting that
\[\Lindk{(W\ast\mu)}{l}(\cdot,\mu,z_1,\ldots,z_l)=(-1)^lW\ast(\delta_{z_l}-\mu),\]
and noting that $\int_{\Xd} m^{(k)}(t,\mu,y_1,\ldots,y_k)=0$ for all $k\ge1$, the above expression for $F_k$ reduces precisely to~\eqref{eq:def_F_lambda}.
We emphasize that this simplification is not essential, but it slightly simplifies the computations.
It remains to deduce the decay estimate~\eqref{eq:estim-decay-mklin}:
we argue by induction and split the proof into two steps. Let $1 < q \le 2$ and $0 < p \le1$ be fixed and assume $pq' \gg_{\beta, a} 1$ large enough in the Langevin setting.

\medskip
\step1 Preliminary: we note some properties of the spaces $W^{-k,q}(\langle z\rangle^p)$ and their duals~$W^{k,q'}(\Xd)$,
\begin{enumerate}[---]
\item for all $\ell\ge0$ and $h\in C^\infty_c(\Xd)$,
\begin{equation}\label{eq:norm-Hs-weig-1}
\quad\|\nabla h\|_{W^{\ell,q'}(\Xd)}\,\le\,\|h\|_{W^{\ell+1,q'}(\Xd)};
\end{equation}
\begin{equation}\label{eq:norm-Hs-weig-2}
\|\nabla h\|_{W^{-\ell, q}(\langle z\rangle^p)}\,\lesssim\,\|h\|_{W^{1-\ell, q}(\langle z\rangle^p)};
\end{equation}
\item for all $\ell>\frac{1}{q'}\dim(\Xd)$, $h\in C^\infty_c(\Xd)$, and $y\in\Xd$,
\begin{equation}\label{eq:init}
\|h\|_{W^{-\ell,q}(\langle z\rangle^p)}\lesssim_{\ell,q}\int_{\Xd} |h|\langle z\rangle^p,\qquad
\|\delta_y\|_{W^{-\ell,q}(\langle z\rangle^p)}\,\lesssim_{\ell,  q}\,\langle y \rangle^p.
\end{equation}
\end{enumerate}
The claim~\eqref{eq:norm-Hs-weig-1} is a direct consequence of the definition of $W^{k,q'}(\Xd)$. The claim~\eqref{eq:norm-Hs-weig-2} is a direct consequence of \eqref{eq:norm-Hs-weig-1} by definition of dual norms.
The claim~\eqref{eq:init} follows from the Sobolev embedding and the definition of dual norms.

\medskip
\step2 Conclusion.\\
Applying $\nabla_{y_1}^{\alpha_1}\ldots\nabla_{y_k}^{\alpha_k}$ to both sides of equation~\eqref{eq:cauchy_mk}, setting $n = \max_j \alpha_j$, and appealing to Theorem~\ref{thm:ergodic}(ii) together with Duhamel's principle, we obtain for all $\ell\ge2$,
\begin{multline*}
\|\nabla_{y_1}^{\alpha_1}\ldots\nabla_{y_k}^{\alpha_k}m^{(k)}(t,\mu,y_1,\ldots,y_k)\|_{W^{-\ell,q}(\langle z\rangle^p)}\\
\,\lesssim_{W,\beta,\ell,p,q,a}\, C_{\ell,p}(\mu) \bigg( \,
e^{-p\lambda_0 t} \|\nabla_{y_1}^{\alpha_1}\ldots\nabla_{y_k}^{\alpha_k}(\delta_{y_k}-\mu)\|_{W^{-\ell,q}(\langle z\rangle^p)}\\
+\int_0^t \, e^{-p\lambda_0(t-s)} \|\nabla_{y_1}^{\alpha_1}\ldots\nabla_{y_k}^{\alpha_k}F_k(s,\mu,y_1,\ldots,y_k)\|_{W^{-\ell,q}(\langle z\rangle^p)}\ddr s \bigg),
\end{multline*}
up to a multiplicative constant only depending on $d,\beta,\ell,p,q,a$, $\|W\|_{W^{k+d+1,\infty}(\R^d)}$, and where the constant $C_{\ell,p}(\mu)$ further depends on $Q(\mu)$ and can be chosen as $C_{p,\mu}:=e^{pQ(\mu)^\theta}$ in case $p\in(0,\frac14\theta]$ and $\kappa\in[0,\kappa_{\ell,p}]$. Without loss of generality, we may assume that $\kappa_{\ell,p}$ is decreasing in $\ell$ and that $C_{\ell,p}(\mu)$ is increasing in~$\ell$.
The first right-hand side term is bounded by
\[\|\nabla_{y_1}^{\alpha_1}\ldots\nabla_{y_k}^{\alpha_k}(\delta_{y_k}-\mu)\|_{W^{-\ell,q}(\langle z\rangle^p)}\,\le\,\mathds1_{\alpha_1=\ldots=\alpha_{k-1}=0}\|\nabla^{\alpha_k}\delta_{y_k}\|_{W^{-\ell,q}(\langle z\rangle^p)}+\mathds1_{\alpha_1=\ldots=\alpha_{k}=0}\|\mu\|_{W^{-\ell,q}(\langle z\rangle^p)},\]
and thus,
by~\eqref{eq:norm-Hs-weig-2} and~\eqref{eq:init}, for $\ell>\alpha_k+\frac{1}{q'}\dim\Xd$,
\[\|\nabla_{y_1}^{\alpha_1}\ldots\nabla_{y_k}^{\alpha_k}(\delta_{y_k}-\mu)\|_{W^{-\ell,q}(\langle z\rangle^p)}\,\lesssim_{\ell,q}\,\langle y_k\rangle^p+Q(\mu)^p.\]
As by assumption we have $q'\ge pq'\gg1$, this estimate holds for any $\ell\ge\alpha_k+1$, say.
To shorten notation, let us introduce the following norms: given~$k\ge1$, we define for $\ell,n\ge0$ and $H:\Xd\times\Xd^k\to\R$,
\[\fatnorm{H}_{\ell,n}\,:=\,\sup_{0\le\alpha_1,\ldots,\alpha_k\le n}\sup_{y_1,\ldots,y_k\in\Xd}\Big(\langle y_1 \rangle^{-p}\ldots\langle y_k \rangle^{-p}\|\nabla_{y_1}^{\alpha_1}\ldots \nabla_{y_k}^{\alpha_k}H(\cdot,y_1,\ldots,y_k)\|_{W^{-\ell,q}(\langle z\rangle^p)}\Big).\]
In these terms, the above reads as follows, for all $k\ge1$, $n \ge 0$, and $\ell \ge (n+1) \vee 2$,
%/ $n+2 \le \ell \le n + \ell_0$,
\begin{equation}\label{eq:mkFk-estim-synth}
\fatnorm{m^{(k)}(t,\mu)}_{\ell,n}
\,\lesssim_{W,\beta,\ell,p,q,a}\, C_{\ell,p}(\mu)\Big( Q(\mu)^p e^{-p \lambda_0 t} 
+ \int_0^t e^{-p\lambda_0(t-s)} \fatnorm{F_k(s,\mu)}_{\ell,n}\ddr s \Big).
\end{equation}
We turn to the estimation of the source term $F_k$ as defined in~\eqref{eq:def_F_lambda}.
Recalling the choice of $b$, cf.~\eqref{eq:Langevin-par} and~\eqref{eq:Brownian-par}, and using again~\eqref{eq:norm-Hs-weig-2},
we easily find for all $\ell,\ell',n\ge0$,
\begin{equation*}
\fatnorm{F_k(t,\mu)}_{\ell+1,n}
\,\lesssim_{W,\beta,\ell,\ell',k,a}\,\max_{1\le j\le k-1}\fatnorm{m^{(j)}(t,\mu)}_{\ell,n}\fatnorm{m^{(k-j)}(t,\mu)}_{\ell',n}.
\end{equation*}
Inserting this into~\eqref{eq:mkFk-estim-synth}, and recalling that $F_1=0$, we deduce by induction for all $k\ge1$, $n\ge0$, $\ell\ge (n + k) \vee 2$, and $t \ge 0$, 
\begin{equation*}
\fatnorm{m^{(k)}(t,\mu)}_{\ell,n}
\,\lesssim_{W,\beta,\ell,k,p,q,a}\,
C_{\ell,p}(\mu)^{2k-1} Q(\mu)^{kp} e^{-p\lambda_0 t}.
\end{equation*}
By definition of $C_{\ell,p}(\mu)$, this concludes the proof of~\eqref{eq:estim-decay-mklin-Q}.
\end{proof}

With the above estimates at hand, we may now turn to the estimation of the L-derivative of smooth functionals along the particle dynamics.
For that purpose, we define the following hierarchy of norms:
for any smooth functional $\Phi: \Pc(\Xd) \to\R$, we define for $\mu \in \calP(\Xd)$, $k \ge 1$, $n\ge0$, and $p \ge 0$,
\[\3\Phi\3_{k,n,p,\mu}\,:=\,\max_{1\le j\le k}~\max_{0\le\alpha_1,\ldots,\alpha_j\le n} \bigg(~\sup_{y_1,\ldots,y_j\in\Xd} \langle y_1 \rangle^{-p} \ldots \langle y_j \rangle^{-p} \Big|\nabla_{y_1}^{\alpha_1}\ldots\nabla_{y_j}^{\alpha_j}\Lindk{\Phi}{j}\big(\mu,y_1,\ldots,y_j\big)\Big|\bigg).\]
The following result can be iterated to estimate arbitrary Lions graphs.

\begin{lem}\label{lem:control_graph}
Let $\kappa_0, \lambda_0$ be as in Theorem \ref{thm:ergodic} and let $\kappa \in [0,\kappa_0]$. Given~$m \ge 0$ and a smooth functional $\Psi: \triangle^m \times \calP(\Xd) \mapsto \R$, we have for all $k\ge1$, $n\ge0$, for all $0<p\le \frac12$, $(t,\tau,s) \in \triangle^{m+1}$,
\begin{align}\label{eq:estim-3norm-UPsi}
\3 \Uc_\Psi^{(1)}((t,\tau,s),\cdot )\3_{k,n,p,\mu} \,\le C_{W,\beta,k,p,n,a,\mu}\, e^{-p \lambda_0 (\tau_m - s)} \, \3 \Psi((t,\tau),\cdot) \3_{k,n+k+1,\frac p2, m(\tau_m-s,\mu)},
\end{align}
and in addition 
\begin{align*}
\vertiii{ {\begin{tikzpicture}[blue,baseline = -.75ex]
\begin{scope}[local bounding box=box1]
\draw node (0,0){\text{\color{black}$\Psi$}};
\end{scope}
\draw[rounded corners] (box1.south west) rectangle (box1.north east);
\end{tikzpicture}}
((t,\tau,s),\cdot) }_{k, n, p, \mu}
\,\le C_{W,\beta,k,p,n,a, \mu}\, e^{-p \lambda_0(\tau_m-s)}
\, \3 \Psi((t,\tau),\cdot) \3_{k+2,n+k+2,\frac p2, m(\tau_m-s,\mu)}.
\end{align*}
Moreover, given~$m, m' \ge 0$, smooth functionals $\Psi: \triangle^{m+1} \times \Xd \mapsto \R$ and $\Theta: \triangle^{m'+1} \times \Xd \mapsto \R$, and given a partition $\{i_1,\ldots, i_m\} \cup \{j_1, \ldots, j_{m'}\} = \llbracket m + m' \rrbracket$ with $i_1 < \ldots < i_m$ and $j_1 < \ldots < j_{m'}$, we have for all $k \ge 1$, $n \ge 0$, for all $0<p \le \frac12$, $(t,\tau,s,s') \in \triangle^{m+m'+2}$,
\begin{multline*}
{\vertiii{\begin{tikzpicture}[blue,baseline = -.7ex]
\begin{scope}[local bounding box=box1]
\draw node(1) at (0,0){\text{\color{black}$\Psi_{\langle i_1,\ldots, i_m,m+m'+1\rangle}$}};
\end{scope}
\begin{scope}[local bounding box=box2]
\draw node(2) at (4.4,0){\text{\color{black}$\Theta_{\langle j_1,\ldots, j_{m'},m+m'+1\rangle}$}};
\end{scope}
\draw node(1) at (2.2,-0.2){\text{\tiny\color{black}$\langle m\!+\!m'\!+\!2\rangle$}};
\path[-] (box1) edge (box2);
\draw[rounded corners,dotted] (box1.south west) rectangle (box1.north east);
\draw[rounded corners,dotted] (box2.south west) rectangle (box2.north east);
\end{tikzpicture}((t,\tau,s,s'), \cdot)}_{k,n,p,\mu}}\\
\,\le\,C_{W,\beta,k,p,n,a, \mu} e^{-p \lambda_0(s-s')}
\3 \Psi \big((t,\tau_{i_1}, \ldots, \tau_{i_m},s),\cdot\big)\3_{k+1,n+k+2,\frac p3,m(s-s',\mu)} \\
\times \3 \Theta \big((t,\tau_{j_1}, \ldots, \tau_{j_{m'}},s),\cdot\big)\3_{k+1,n+k+2,\frac p3,m(s-s', \mu)}.
\end{multline*}
In these estimates, the multiplicative constant $C_{W,\beta,k,p,n,a, \mu}$ only depends on $d, \beta, W, k, p, n, a$, and~$Q(\mu)$.
In the Langevin setting, the dependence on $Q(\mu)$ can be made at most exponential provided that $\kappa$ is small enough: given $\theta\in(0,1]$, for all $\ell\ge 2$ and $0<p\le\frac14\theta$, given $\kappa\in[0,\kappa_{\ell,p}]$ with $\kappa_{\ell,p}$ as in Theorem~\ref{thm:ergodic}(ii), for all $k\ge1$ and $n\ge0$ with $n+k\le\ell$, the above inequalities hold with multiplicative constants of the form
\[ C_{\beta, W, k, p, n, a, \mu} \,=\,e^{2kpQ(\mu)^\theta}\,C_{\beta,W,k,p,n,a},\]
for some constant $C_{\beta,W,k,p,n,a}$ only depending on $d,\beta, W,k,p,n,a$.
\end{lem}

\begin{proof}
We focus on the result for $\kappa \in [0,\kappa_0]$, while the precise dependence on $Q(\mu)$ for small $\kappa$ follows from a straightforward adaptation using Lemma \ref{lem:diff-mk} in form of~\eqref{eq:estim-decay-mklin-Q} instead of~\eqref{eq:estim-decay-mklin}. 
Given $m\ge0$ and a smooth functional $\Psi:\triangle^m\times\Pc(\Xd)\to\R$, recalling that $\Uc_\Psi^{(1)}$ is defined in Definition~\ref{def:Phi_U}, we can compute
\begin{align}\label{eq:formula_lind_U}
\Lind{\Uc_\Psi^{(1)}}\big((t,\tau,s),\mu,y\big)\,=\,\int_{\Xd}\Lind{\Psi}\big((t,\tau),m(\tau_m-s,\mu),\cdot\big)\,m^{(1)}(\tau_m-s,\mu,y),
\end{align}
with $m^{(1)}$ as defined in Lemma~\ref{lem:diff-mk}.
Recalling the definition of dual norms and applying Lemma~\ref{lem:diff-mk}, given $1<q\le2$ and $0<p\le1$ with $pq'\gg_{\beta,a}1$ large enough,
we deduce for all $(t,\tau,s)\in\triangle^{m+1}$, $\mu \in \calP(\Xd)$, and $y \in \Xd$,
\begin{multline*}
\Big| \Lind{\Uc_\Psi^{(1)}}\big((t,\tau,s),\mu,y\big) \Big|
\,\le\, \|m^{(1)}(\tau_m -s, \mu, y) \|_{W^{-2, q}(\langle z\rangle^{p})} \, \Big\|\langle \cdot \rangle^{-p}\Lind{\Psi}\big((t,\tau), m(\tau_m-s, \mu), \cdot\big)\Big\|_{W^{2, q'}(\Xd)} \\
\,\lesssim_{W,\beta,p,q,a,\mu}\,\langle y \rangle^{p} e^{-p \lambda_0 (\tau_m-s)}
\3 \Psi \3_{1, 2,\frac p2, m(\tau_m-s,\mu)},
\end{multline*}
where in the last estimate we further used $pq'>2\dim\Xd$.
By induction, on top of~\eqref{eq:formula_lind_U}, we find for all $k\ge1$ and $y_1,\ldots,y_k \in \Xd$,
\begin{multline}\label{eq:formula_lindk_u}
\Lindk{\Uc_\Psi^{(1)}}{k}\big((t,\tau,s),\mu,y_1,\ldots,y_k\big)\,=\,\sum_{l=1}^k\sum_{\substack{a_1+\ldots+a_l=k\\1\le a_1\le \ldots\le a_l\le k}}\sum_{\substack{I_1\cup\ldots\cup I_l=\llbracket k\rrbracket\\\text{disjoint}}}\mathds1_{\forall1\le r\le l:\sharp I_r=a_r}\\
\times\int_{\Xd^l}\Lindk{\Psi}{l}\big((t,\tau),m(\tau_m-s,\mu),z_1,\ldots,z_l\big)\,m^{(a_1)}\big((\tau_m-s,z_1),\mu,y_{I_1}\big)\\
\ldots m^{(a_l)}\big((\tau_m-s,z_l),\mu,y_{I_l}\big)\,\ddr z_1\ldots\ddr z_l,
\end{multline}
and the conclusion~\eqref{eq:estim-3norm-UPsi} then follows similarly using Lemma~\ref{lem:diff-mk}.
We turn to the estimation of the round edge.
By definition~\eqref{eq:def-round-edge}, we can write
\begin{equation*}
\begin{tikzpicture}[blue,baseline = -.7ex]
\begin{scope}[local bounding box=foo]
\draw node (0,0){\text{\color{black}$\Psi$}};
\end{scope}
\draw[rounded corners] (foo.south west) rectangle (foo.north east);
\end{tikzpicture}
\big((t,\tau,s),\mu\big)
\,=\,\Uc_{\Psi'}^{(1)}\big((t,\tau,s),\mu\big),
\end{equation*}
in terms of
\[\Psi'\big((t,\tau),\mu\big):=\int_\Xd \Tr\Big[a_0\partial_\mu^2\Psi\big((t,\tau),\mu\big)(z,z)\Big]\mu(\ddr z).\]
By a similar induction as the one performed to get~\eqref{eq:formula_lindk_u},
we find for all $k\ge1$ and $y_1,\ldots,y_k \in \Xd$,
\begin{multline*}
\Lindk{\Uc_{\Psi'}^{(1)}}{k}\big((t,\tau,s),\mu, y_1,\dots,y_k\big) = \sum_{j=0}^{k}\sum_{l=1}^j \sum_{\substack{a_1+\ldots+a_l=j\\1\le a_1\le \ldots\le a_l\le j}}\sum_{\substack{I_1\cup\ldots\cup I_l\subset\llbracket k\rrbracket\\\text{disjoint}}}\mathds1_{\forall1\le r\le l:\sharp I_r=a_r}\\
\times\int_{\Xd^{l+1}} \Lindk{}{l} \Tr \Big[a_0 \partial^2_\mu \Psi \big((t,\tau),m(\tau_m-s,\mu),z,z \big) \Big] (z_1,\ldots,z_l)\,m^{(a_1)}\big((\tau_m-s,z_1),\mu,y_{I_1}\big)\\
\ldots m^{(a_l)}\big((\tau_m-s,z_l),\mu,y_{I_l}\big)\, m^{(k-j)}\big((\tau_m-s,z), \mu, y_{\llbracket k \rrbracket \setminus (I_1\cup\ldots\cup I_l)}\big)\,\ddr z_1\ldots\ddr z_l \, \ddr z.
\end{multline*}
For $j<k$, the terms
can be estimated as before using Lemma~\ref{lem:diff-mk}. For $j=k$,
we use the following: for any bounded function $\varphi$, by Lemma~\ref{lem:unif-mom-est}, we have
\[\int_{\Xd} \varphi(z,z) \, m\big((\tau_m-s,z),\mu\big) \, \ddr z \,\lesssim_{W,\beta,a}\, \sup_{z\in \Xd} \Big( \langle z \rangle^{-p} \, \varphi(z,z) \Big) \int_{\Xd} \langle z\rangle^{p} \, \mu(\ddr z),\]
and the conclusion then follows by Jensen's inequality.
The argument for the straight edge is similar and we skip the details for shortness.
\end{proof}

The above result can be iterated to estimate arbitrary Lions graphs. Combining it with the diagrammatic representation of moments and cumulants in Proposition~\ref{prop:expand-Phi-NjLions}, we obtain the following corollary. We emphasize that it is crucial here that all constants in the previous results depend at most exponentially on the second moments of the empirical measure, since the expectation of these factors can then be estimated along the particle dynamics by Lemma~\ref{lem:unif-mom-est}.

\begin{cor}[Truncated Lions expansions]\label{prop:truncated_expansion}
Assume that the initial law $\mu_\circ$ has stretched exponential moments~\eqref{eq:mom-mu0} for some $\theta>0$.
For all $n \ge 0$, there exists $\kappa_n \in (0,\kappa_0]$ (only depending on $d, W,\beta,\theta, n,a$) such that, given $\kappa \in [0,\kappa_n]$, we can expand as follows Brownian moments and cumulants along the particle dynamics: for all smooth functionals $\Phi:\Pc(\Xd)\to\R$ and all $k\ge1$,
\begin{eqnarray}
\E_\circ \bigg[\Big|\E_B[\Phi(\mu_t^N)^k]~-~\sum_{m=0}^{n}\frac1{(2N)^m}\sum_{\Psi\in\Gamma(k,m)}\gamma(\Psi)\int_{\triangle_t^{m}\times0} \Psi\Big|\bigg]&\lesssim&N^{-n-1},\label{eq:estim-truncated-expansion-mom}\\
\E_\circ \bigg[\Big|\kappa_B^k[\Phi(\mu_t^N)]~-~
\mathds1_{n\ge k-1}\sum_{m=k-1}^{n}\frac1{(2N)^m}\sum_{\Psi\in\Gamma_\circ(k,m)}\gamma(\Psi)\int_{\triangle_t^{m}\times0} \Psi\Big|\bigg]&\lesssim&N^{-n-1},\label{eq:estim-truncated-expansion-cum}
\end{eqnarray}
where we recall that $\Gamma(k,m)$ stands for the set of all (unlabeled) irreducible L-graphs with $k$ vertices and $m$ edges
and that $\Gamma_\circ(k,m)$ stands for the subset of all \emph{connected} (unlabeled) irreducible L-graphs with $k$ vertices and $m$ edges,
where $\gamma$ is some map $\Gamma(k,m)\to\N$, and where multiplicative constants only depend on $d, W,\beta,k,n, a,\mu_\circ$, and on
\[ \sup_{\mu \in \calP(\Xd)} \3 \Phi \3_{2(n+1), (n+1)(n+4),3^{-n-4}\theta,\mu}.\]
\end{cor}

\begin{proof}
Let $0 < p \le \frac14\theta$ be fixed. Given $n \ge 1$, define
\[ \kappa_n \,:=\, \min_{1 \le j \le n} ~\min_{1\le\ell\le (n+1)(n+4)} \kappa_{2\ell, 2^{-j} p} \,>\,0,\]
where the $\kappa_{\ell,p}$'s are as in Theorem~\ref{thm:ergodic}(ii), and let then $\kappa \in [0, \kappa_n]$ be fixed.
By definition~\eqref{eq:def-round-edge} of the round edge, and by Lemma~\ref{lem:unif-mom-est},
given $m\ge0$ and a smooth functional $\Psi: \triangle^m\times \calP(\Xd) \to \R$, we have for all $(t,\tau,s) \in \triangle^{m+1}$, and 
$\mu \in \calP(\Xd)$,
\begin{multline*}
\big|{\begin{tikzpicture}[blue,baseline = -.75ex]
\begin{scope}[local bounding box=box1]
\draw node (0,0){\text{\color{black}$\Psi$}};
\end{scope}
\draw[rounded corners] (box1.south west) rectangle (box1.north east);
\end{tikzpicture}}((t,\tau,s), \mu)\big|\\
\,\lesssim_{W,\beta,p,a}\,\Big(\int_{\Xd} \langle z \rangle^{p} \mu(\ddr z)\Big)
\3 \Psi((t,\tau), \cdot) \3_{2, 1,\frac p2, m(\tau_m-s,\mu)}
\,\lesssim_{W,\beta,p,a}\,Q(\mu)^p
\3 \Psi((t,\tau), \cdot) \3_{2, 1,\frac p2, m(\tau_m-s,\mu)}.
\end{multline*}
In particular, for all $k\ge1$, $0 \le m \le n$, $(t,\tau, s) \in \triangle^{m+2}$,
\begin{equation*}
\Big|{\begin{tikzpicture}[blue,baseline = -1.25ex]
\begin{scope}[local bounding box=box1]
\node[circle,draw,fill,inner sep=0pt,minimum size=3pt] (1) at (0,0) {};
\draw[white] (1) circle(0.2);
\node (2) at (0.15,-0.2) {{\tiny\color{black}$[\Phi^k]$}};
\end{scope}
\draw[rounded corners] (box1.south west) rectangle (box1.north east);
\end{tikzpicture}}_{(m+1)}((t,\tau,s), \mu^N_s)\Big|
\,\lesssim_{W,\beta,p,a}\,
Q(\mu_s^N)^p\,
\Vertiii{ {\begin{tikzpicture}[blue,baseline = -1.25ex]
\begin{scope}[local bounding box=box1]
\node[circle,draw,fill,inner sep=0pt,minimum size=3pt] (1) at (0,0) {};
\draw[white] (1) circle(0.2);
\node (2) at (0.15,-0.2) {{\tiny\color{black}$[\Phi^k]$}};
\end{scope}
\draw[rounded corners] (box1.south west) rectangle (box1.north east);
\end{tikzpicture}}_{(m)}((t,\tau), \cdot)}_{2,1,\frac p2, m(\tau_{m+1}-s, \mu^N_s)}.
\end{equation*}
Now repeatedly applying Lemma~\ref{lem:control_graph} to control the right-hand side, and using Lemma~\ref{lem:unif-mom-est}(i) in form of $Q(m(u,\mu))\lesssim Q(\mu)$ for all $u\ge0$, we get for all $k\ge1$ and $(t,\tau,s)\in\triangle^{m+2}$,
\begin{multline*}
\Big|{\begin{tikzpicture}[blue,baseline = -1.25ex]
\begin{scope}[local bounding box=box1]
\node[circle,draw,fill,inner sep=0pt,minimum size=3pt] (1) at (0,0) {};
\draw[white] (1) circle(0.2);
\node (2) at (0.15,-0.2) {{\tiny\color{black}$[\Phi^k]$}};
\end{scope}
\draw[rounded corners] (box1.south west) rectangle (box1.north east);
\end{tikzpicture}}_{(m+1)}((t,\tau,s), \mu^N_s)\Big|
\,\lesssim_{W,\beta,m,p,a}\,
Q(\mu_s^N)^{p} \Big( \prod_{i=1}^{m} e^{4i2^{-i}pQ(\mu^N_s)^\theta} \Big) \Big( \prod_{i=1}^m e^{-2^{i-m-1}p\lambda_0(\tau_{i} - \tau_{i+1})} \Big) \\
\times\vertiii{\RS{p}_{[\Phi^k]}((t,\tau_1),\cdot)}_{2(m+1),(m+1)(m+2)-1,2^{-m-1}p,m(\tau_1-s, \mu^N_s)}.
\end{multline*}
Recalling $\RS{p}_{[\Phi^k]}=\Uc^{(1)}_{\Phi^k}=(\Uc^{(1)}_{\Phi})^k$, using again Lemma~\ref{lem:control_graph}, and noting that $\sum_{i=1}^m4 \, i\, 2^{-i}\le8$ and $Q^p\le e^{p\theta^{-1}Q^\theta}$,
this means, setting $\tau_0 := t$,
%and using $x^p e^{\frac32 x} \le (4p)^{-p} e^{\frac3{16p}} \, e^x$ for all $x$ and $(2 i) 2^{-1} \le 1$ for all $i \ge 1$,
\begin{multline}
\label{eq:temp_truncation_k_0}
\Big|{\begin{tikzpicture}[blue,baseline = -1.25ex]
\begin{scope}[local bounding box=box1]
\node[circle,draw,fill,inner sep=0pt,minimum size=3pt] (1) at (0,0) {};
\draw[white] (1) circle(0.2);
\node (2) at (0.15,-0.2) {{\tiny\color{black}$[\Phi^k]$}};
\end{scope}
\draw[rounded corners] (box1.south west) rectangle (box1.north east);
\end{tikzpicture}}_{(m+1)}((t,\tau,s), \mu^N_s)\Big|
\,\lesssim_{W,\beta,m,p,a}\,
 e^{3Q(\mu^N_s)^\theta} \Big( \prod_{i=0}^m e^{-2^{i-m-1}p\lambda_0(\tau_{i} - \tau_{i+1})} \Big) \\
\times\3\Phi\3_{2(m+1),(m+1)(m+4),2^{-m-2}p,m(t-s, \mu^N_s)}^k.
\end{multline}
Using Lemma~\ref{lem:unif-mom-est}(ii) with exponent $\delta =\theta$, we have
\begin{align*}
\E[e^{3Q(\mu^N_s)}]\, \lesssim_{W,\beta,\theta,a}\,\E_\circ[ e^{CQ(\mu^N_0)^\theta}] \,\lesssim_{W,\beta,\theta,a}\,\int_{\Xd}e^{C|z|^\theta}\mu_\circ(\ddr z),
\end{align*}
for some constant $C > 0$ only depending on $d,\beta,a$, $\|\nabla W\|_{\Ld^\infty(\R^d)}$, and some multiplicative constant further depending on $\theta$.
Taking the expectation in \eqref{eq:temp_truncation_k_0} and using the moment assumption on the initial law, we thus find
\begin{equation*}
\E\bigg[\Big|{\begin{tikzpicture}[blue,baseline = -1.25ex]
\begin{scope}[local bounding box=box1]
\node[circle,draw,fill,inner sep=0pt,minimum size=3pt] (1) at (0,0) {};
\draw[white] (1) circle(0.2);
\node (2) at (0.15,-0.2) {{\tiny\color{black}$[\Phi^k]$}};
\end{scope}
\draw[rounded corners] (box1.south west) rectangle (box1.north east);
\end{tikzpicture}}_{(m+1)}((t,\tau,s), \mu^N_s)\Big|\bigg]
\,\lesssim_{W,\beta,m,p,a,\mu_\circ}\, e^{-2^{-m-1}p\lambda_0(t - \tau_{m+1})}\sup_{\mu \in \calP(\Xd)} \3\Phi\3_{2(m+1),(m+1)(m+4),2^{-m-2}p,\mu}^k,
\end{equation*}
and the conclusion~\eqref{eq:estim-truncated-expansion-mom} then follows from Proposition~\ref{prop:expand-Phi-NjLions} in form of~\eqref{eq:mom-expand-L}, choosing e.g.\@ \mbox{$p=\frac14\theta$}.
Noting that similar a priori bounds on any irreducible L-graph can be obtained iteratively from Lemmas~\ref{lem:unif-mom-est} and~\ref{lem:control_graph}, the proof of~\eqref{eq:estim-truncated-expansion-cum} follows similarly from~\eqref{eq:cum-expand-L}.
\end{proof}

%%%%%%%%%%%%%%%%%%%%%%%%%%%%%%%%%%%%%%%%%%%%
%%%%%%%%%%%%%%%%%%%%%%%%%%%%%%%%%%%%%%%%%%%%
%%%%%%%%%%%%%%%%%%%%%%%%%%%%%%%%%%%%%%%%%%%%

\section{Higher-order propagation of chaos}\label{sec:refined_chaos}

This section is devoted to the proof of the correlation estimates of Theorem~\ref{thm:main}, as well as of their consequences on corrections to mean field as stated in Corollary~\ref{cor:bogo}.

\begin{proof}[Proof of Theorem~\ref{thm:main}]
Let $m_0 \ge 2$ and let $\kappa \in [0,\kappa_{m_0-2}]$ be fixed with $\kappa_{m_0-2}$ as in Corollary~\ref{prop:truncated_expansion}. For $t\ge0$ and $\phi\in C^\infty_c(\Xd)$, consider the random variables
\[X_t^N(\phi):=\int_\Xd\phi\,\mu_t^N.\]
In the spirit of Lemma~\ref{lem:cumtocorrel}, we start by estimating cumulants of $X_t^N(\phi)$.
By the law of total cumulance, cf.~Lemma~\ref{lem:total_cumulance}, they can be decomposed as follows, for all $m\ge2$,
\begin{equation}\label{eq:tot-cum-Xtphi}
\kappa^m[X_t^N(\phi)]\, =\, \sum_{\pi\vdash\llbracket m \rrbracket} \kappa_\circ^{\sharp \pi} \Big[ \big(\kappa^{\sharp A}_B[X_t^N(\phi)]\big)_{A\in\pi}\Big].
\end{equation}
We appeal to Corollary~\ref{prop:truncated_expansion} with $\Phi(\mu):=\int_\Xd\phi\mu$ to expand Brownian cumulants of $X_t^N(\phi)=\Phi(\mu_t^N)$: for all $1\le k\le m\le m_0$, we find
\[\E_\circ \bigg[\Big|\kappa_B^{k}[X_t^N(\phi)]-\mathds1_{k<m}\sum_{p=k-1}^{m-2}\frac1{(2N)^p}\sum_{\Psi\in\Gamma_\circ(k,p)}\gamma(\Psi)\int_{\triangle_t^{p}\times0}\Psi\Big|\bigg]\,\lesssim_{W,\beta,\phi,m,a,\mu_\circ}\,N^{1-m},\]
where we recall that $\Gamma_\circ(k,m)$ stands for the set of all connected irreducible L-graphs with $k$ vertices and $m$ edges built from the reference base point $\Phi$,
and where the multiplicative constant only depends on $d,W,\beta,m,a,\mu_\circ$, and on the $W^{r,\infty}(\Xd)$ norm of $\phi$ for some $r$ only depending on $m$.
Inserting this approximation into~\eqref{eq:tot-cum-Xtphi}, we deduce
\begin{multline}\label{eq:decomp-cum-B0-BB}
\bigg|\kappa^m[X_t^N(\phi)]\,-\, \sum_{s=2}^m\sum_{\{A_1,\ldots,A_s\}\vdash\llbracket m \rrbracket}~\sum_{p_1=\sharp A_1-1}^{m-2}\ldots\sum_{p_s=\sharp A_s-1}^{m-2}\frac1{(2N)^{p_1+\ldots+p_s}}\\
\times\sum_{\Psi_1\in\Gamma_\circ(\sharp A_1,p_1)}\ldots\sum_{\Psi_s\in\Gamma_\circ(\sharp A_s,p_s)}\gamma(\Psi_1)\ldots\gamma(\Psi_s)\\
\times\kappa_\circ^s\Big[\int_{\triangle_t^{p_1}\times0}\Psi_1,\ldots,\int_{\triangle_t^{p_s}\times0}\Psi_s\Big]\bigg|
\,\lesssim_{W,\beta,\phi,m,a,\mu_\circ}\,N^{1-m}.
\end{multline}
It remains to estimate the joint Glauber cumulants in this expression.
For that purpose, we appeal to the higher-order Poincar\'e inequality of Proposition~\ref{prop:control_Glauber}: recalling that Glauber derivatives can be bounded by linear derivatives, cf.~\eqref{eq:multi_d_Glauber}, we get
\begin{multline*}
\kappa_\circ^{s}\Big[\int_{\triangle_t^{p_1}\times0}\Psi_1,\ldots,\int_{\triangle_t^{p_s}\times0}\Psi_s\Big]\\
\,\lesssim_s\,N^{1-s}
\sum_{k=0}^{s-2} \sum_{\substack{a_1, \dots, a_{s} \ge 1 \\ \sum_j a_j = s+k}} \prod_{j=1}^{s}
\bigg\| \int_{([0,1]\times\Xd\times\Xd)^{a_j}}\Lindk{}{a_j}\Big(\int_{\triangle_t^{p_j}\times0}\Psi_j\Big)(m^{N,s_1,\ldots,s_{a_j}}_0,y_1,\ldots,y_{a_j})\\[-2mm]
\times\prod_{l=1}^{a_j}(\delta_{Z_0^{l,N}}-\delta_{z_l})(\ddr y_l)\,\mu_\circ(\ddr z_l)\,\ddr s_l\,\bigg\|_{\Ld^\frac{s+k}{a_j}(\Omega_\circ)},
\end{multline*}
where we have set for abbreviation
\[m^{N,s_1,\ldots,s_{a_j}}_0\,:=\,\mu_0^N+\sum_{l=1}^{a_j}\frac{1-s_l}{N}(\delta_{z_l}-\delta_{Z_0^{l,N}}).\]
Norms of linear derivatives of each $\Psi_j\in\Gamma_\circ(\sharp A_j,p_j)$ can be estimated using Lemmas~\ref{lem:unif-mom-est} and \ref{lem:control_graph}, together with the moment assumption on $\mu_\circ$. Inserting the result into~\eqref{eq:decomp-cum-B0-BB}, we conclude for all $1\le m\le m_0$,
\begin{equation*}
\kappa^m[X_t^N(\phi)]\,\lesssim_{W,\beta,\phi,m,a,\mu_\circ}\,N^{1-m}.
\end{equation*}
We now appeal to Lemma~\ref{lem:cumtocorrel} to turn this into an estimate on correlation functions: the above cumulant estimate implies for all $1\le m\le m_0$,
\begin{equation*}
\Big|\int_{\Xd^{m}}\phi^{\otimes m}G^{m,N}\Big|
\,\lesssim_{W,\beta,\phi,m,a,\mu_\circ}\, N^{1-m}
+\sum_{\substack {\pi\vdash\llbracket m\rrbracket \\ \sharp\pi<m}} \sum_{\rho\vdash\pi} N^{\sharp\pi-\sharp\rho-m+1}\bigg|\int_{\Xd^{\sharp\pi}}\Big(\bigotimes_{B\in\pi}\phi^{\sharp B}\Big)\Big(\bigotimes_{D\in\rho}G^{\sharp D,N}(z_D)\Big)\,\ddr z_\pi\bigg|,
\end{equation*}
and a direct induction argument then yields
\[\Big|\int_{\Xd^{m}}\phi^{\otimes m}G^{m,N}\Big|\,\lesssim_{W,\beta,\phi,m,a,\mu_\circ}\,N^{1-m}.\]
As the multiplicative constant only depends on $\phi$ via its $W^{r,\infty}(\Xd)$ norm for some $r$ only depending on~$m$, the conclusion of Theorem~\ref{thm:main} follows by duality.
\end{proof}

We now turn to the proof of Corollary~\ref{cor:bogo} as a straightforward consequence of the correlation estimates of Theorem~\ref{thm:main} using the BBGKY hierarchy.
For the proof, we shall need an additional ingredient from Section~\ref{sec:ergodic}: on top of the ergodic estimates of Theorem~\ref{thm:ergodic}(ii) for the linearized mean-field operator $L_\mu$, we will also use a reduced version in form of Proposition~\ref{prop:estim-Vts-Wk1*} for the modified operator~$R_\mu$ defined in~\eqref{eq:def-Rmu}.

\begin{proof}[Proof of Corollary~\ref{cor:bogo}]
Let $\lambda_0>0$ be as in Theorem~\ref{thm:ergodic}, and assume that it is chosen smaller than the exponent $\lambda$ in Lemma~\ref{lem:unif-mom-est} and than the exponent $\lambda_1$ in Proposition~\ref{prop:estim-Vts-Wk1*}.
We split the proof into two steps.

\medskip
\step1 Proof that for all $k\gg1$, $1<q\le2$, and $0<p\ll_\theta1$ with $pq'\gg_{\beta,a}1$, provided $\kappa\ll_{W,\beta,p,a}1$, we have
\begin{equation}\label{eq:estim-F1Nmu}
\|F_t^{1,N}-\mu_t\|_{W^{-k,q}(\langle z\rangle^p)}\,\lesssim\,N^{-1},\qquad
\|\tilde\mu_t^N-\mu_t\|_{W^{-k,q}(\langle z\rangle^p)}\,\lesssim\,N^{-1}.
\end{equation}
We focus on the estimate on $F^{1,N}-\mu$, while that on $\tilde\mu^N-\mu$ is obtained similarly.
From the BBGKY hierarchy~\eqref{eq:BBGKY0}, decomposing $F^{2,N}=(F^{1,N})^{\otimes2}+G^{2,N}$ and using the operator~$R_\mu$ defined in~\eqref{eq:def-Rmu}, we find that $F^{1,N}$ satisfies
\begin{equation}\label{eq:hier-F1N}
\partial_tF^{1,N}\,=\,R_{F^{1,N}}F^{1,N}
+\kappa\int_{\Xd}\nabla W(x-x_*)\cdot\nabla_{v}(G^{2,N}-\tfrac1NF^{2,N})(\cdot,z_*)\,\ddr z_*,
\end{equation}
while the mean-field equation~\eqref{eq:VFP} takes the form
\[\partial_t\mu\,=\,R_{\mu}\mu.\]
Taking the difference between those two equations, we deduce
\begin{equation*}
\partial_t(F^{1,N}-\mu)\,=\,R_{\mu}(F^{1,N}-\mu)
+\kappa h^N,
\end{equation*}
where we have set
\begin{equation*}
h^N\,:=\,\nabla W\ast(F^{1,N}-\mu)\cdot\nabla_vF^{1,N}\\
+\int_{\Xd}\nabla W(x-x_*)\cdot\nabla_{v}(G^{2,N}-\tfrac1NF^{2,N})(\cdot,z_*)\,\ddr z_*.
\end{equation*}
Note that $F^{1,N}|_{t=0} =\mu|_{t=0}= \mu_\circ$.
Given $1<q\le2$ and $0<p\le1$ with $pq'\gg_{\beta,a}1$, the ergodic estimate of Proposition~\ref{prop:estim-Vts-Wk1*} then yields for all $k\ge0$ and $t\ge0$,
\begin{equation}\label{eq:estim-F1N-mu-gronw}
\|F^{1,N}_t-\mu_t\|_{W^{-k,q}(\langle z\rangle^p)}\,\le\, \kappa \int_0^te^{-p\lambda_1(t-s)}\Big(C_{k,p,q}+C_{k,p,q,\mu_\circ}e^{-p\lambda_0s}\Big)\|h_s^N\|_{W^{-k,q}(\langle z\rangle^p)}\,\ddr s,
\end{equation}
for some constant $C_{k,p,q}$ only depending on $d,W,\beta,k,p,q,a$, and some constant $C_{k,p,q,\mu_\circ}$ further depending on~$Q(\mu_\circ)$.
Now note that by definition of $h^N$ we can bound for $k\ge1$,
\begin{eqnarray*}
\lefteqn{\|h^N\|_{W^{-k,q}(\langle z\rangle^p)}}\\
&\lesssim_{W,k}&\|\nabla W\ast(F^{1,N}-\mu)\|_{W^{k-1,\infty}(\R^d)}\|F^{1,N}\|_{W^{1-k,q}(\langle z\rangle^p)}
+\|G^{2,N}-\tfrac1NF^{2,N}\|_{W^{1-k,q}(\langle z\rangle^p)^{\otimes2}}\\
&\lesssim_{W,k}&\|F^{1,N}-\mu\|_{W^{-k,q}(\langle z\rangle^p)}\|F^{1,N}\|_{W^{1-k,q}(\langle z\rangle^p)}
+\|G^{2,N}-\tfrac1NF^{2,N}\|_{W^{1-k,q}(\langle z\rangle^p)^{\otimes2}},
\end{eqnarray*}
where we have used in particular $pq'\ge4d$. For $k\ge2$, note that the Sobolev embedding and Lemma~\ref{lem:unif-mom-est}(i) yield
\[\|F^{1,N}_s\|_{W^{1-k,q}(\langle z\rangle^p)}\,\lesssim\,\int_\Xd\langle z\rangle^pF^{1,N}_s\,=\,\E\Big[\int_\Xd\langle z\rangle^p\mu^{N}_s\Big]\,\lesssim_{W,\beta,a}\,1+e^{-p\lambda_0s}Q(\mu_\circ)^p.\]
In addition, although only stated in $W^{-k,1}(\Xd)$ for $k\gg1$, note that the proof of the correlation estimates in Theorem~\ref{thm:main}  (for $G^{2,N}$, say) actually holds in $W^{-k,q}(\langle z\rangle^p)$ for all $k\gg1$, $1<q\le2$, and $0<p\ll_\theta1$ with $pq'\gg_{\beta,a}1$ provided $\kappa\ll_{W,\beta,p,a}1$.
Recalling $F^{2,N} = G^{2,N} + F^{1,N} \otimes F^{1,N}$, the above then yields
\begin{equation*}
\|h^N_s\|_{W^{-k,q}(\langle z\rangle^p)}
\,\le\,\Big(C_{k,p,q}+C_{k,p,q,\mu_\circ}e^{-p\lambda_0s}\Big)\|F^{1,N}_s-\mu_s\|_{W^{-k,q}(\langle z\rangle^p)}
+C_{k,p,q,\mu_\circ}N^{-1}.
\end{equation*}
Inserting this into~\eqref{eq:estim-F1N-mu-gronw}, we obtain for all $t\ge0$,
\begin{multline*}
\|F^{1,N}_t-\mu_t\|_{W^{-k,q}(\langle z\rangle^p)}\,\le\,C_{k,p,q,\mu_\circ}N^{-1}\\
+\kappa \int_0^te^{-p\lambda_1(t-s)}\Big(C_{k,p,q}+C_{k,p,q,\mu_\circ}e^{-p\lambda_0s}\Big)\|F^{1,N}_s-\mu_s\|_{W^{-k,q}(\langle z\rangle^p)}\,\ddr s,
\end{multline*}
and the claim~\eqref{eq:estim-F1Nmu} then follows from Gr\"onwall's inequality provided $\kappa C_{k,p,q}\le p\lambda_0$ (which is independent of $Q(\mu_\circ)$).

\medskip
\step2 Conclusion.\\
From the BBGKY hierarchy~\eqref{eq:BBGKY0}, decomposing again $F^{2,N}=(F^{1,N})^{\otimes2}+G^{2,N}$, further using the cluster expansion~\eqref{eq:cluster-exp0} for $F^{3,N}$,
and using the operators~$L_\mu$ and~$R_\mu$ defined in~\eqref{eq:def_linearized-Lang} and~\eqref{eq:def-Rmu}, we find that the correlation function~$G^{2,N}$ satisfies
\begin{multline*}
\partial_tG^{2,N}\,=\,L_{F^{1,N}}^{(2)}G^{2,N}
+\tfrac\kappa N\sum_{i\ne j}\Big(\nabla W(x_i-x_j)-\nabla W\ast F^{1,N}(x_i)\Big)\cdot\nabla_{v_i}(F^{1,N})^{\otimes2}\\
+\tfrac\kappa N\sum_{i\ne j}\Big(\nabla W(x_i-x_j)-2\nabla W\ast F^{1,N}(x_i)\Big)\cdot\nabla_{v_i}G^{2,N}
\\
-\tfrac{\kappa}N\sum_{i\ne j}\int_{\Dd^d}\nabla W(x_i-x_*)\cdot\Big(2G^{2,N}(z_j,z_*)\nabla_{v_i}F^{1,N}(z_i)+F^{1,N}(z_j)\nabla_{v_i}G^{2,N}(z_i,z_*)\Big)\,\ddr z_*\\[-4mm]
+\kappa\tfrac{N-2}N\sum_{i}\int_{\Dd^d}\nabla W(x_i-x_*)\cdot\nabla_{v_i}G^{3,N}(z_1,z_2,z_*)\,\ddr z_*,
\end{multline*}
where we recall the short-hand notation $L_\mu^{(2)}=L_\mu\otimes\Id+\Id\otimes L_\mu$.
Comparing this, as well as~\eqref{eq:hier-F1N}, with the defining equations for $\tilde\mu^N$ and~$g^{(2)}$ in the statement, we are led to the following equations for the approximation errors,
\begin{eqnarray*}
\partial_t(F^{1,N}-\tilde\mu^{N})&=&L_{\mu}(F^{1,N}-\tilde\mu^N)
+\tfrac\kappa N\int_{\Dd^d}\nabla W(x-x_*)\cdot\nabla_v(NG^{2,N}-g^{(2)})(\cdot,z_*)\,\ddr z_*
+\kappa \tilde h^{1,N},\\
\partial_t(NG^{2,N}-g^{(2)})&=&L_\mu^{(2)}(NG^{2,N}-g^{(2)})+\kappa h^{2,N},
\end{eqnarray*}
where we have set
\begin{align*}
\tilde h^{1,N}
&:=
-\tfrac1N\int_{\Dd^d}\nabla W(x-x_*)\cdot\nabla_v(F^{2,N}-\mu^{\otimes2})(\cdot,z_*)\,\ddr z_*\\
&\quad +\nabla W\ast(F^{1,N}-\tilde\mu^N)\cdot\nabla_v(F^{1,N}-\mu)
+\nabla W\ast(\tilde\mu^N-\mu)\cdot\nabla_v(F^{1,N}-\tilde\mu^N),\\[2mm]
h^{2,N}&:=\sum_{i\ne j}\nabla W(x_i-x_j)\cdot\nabla_{v_i}(F^{2,N}-\mu^{\otimes2})\\[-1mm]
&\quad -\sum_{i}\Big(\nabla W\ast F^{1,N}(z_i)\cdot\nabla_{v_i}(F^{1,N})^{\otimes2}-\nabla W\ast\mu(x_i)\cdot\nabla_{v_i}\mu^{\otimes2}\Big)\\
&\quad +\sum_{i}\nabla W\ast(\tfrac{N-2}NF^{1,N}-\mu)(x_i)\cdot\nabla_{v_i}(NG^{2,N})\\[-1mm]
&\quad +\sum_{i\ne j}\nabla_{v_i}(\tfrac{N-2}NF^{1,N}-\mu)(z_i)\cdot\int_{\Dd^d}\nabla W(x_i-x_*)(NG^{2,N})(z_j,z_*)\,\ddr z_*\\[-1mm]
&\quad -\sum_{i\ne j}F^{1,N}(z_j)\int_{\Dd^d}\nabla W(x_i-x_*)\cdot\nabla_{v_i}G^{2,N}(z_i,z_*)\,\ddr z_*  \\
&\quad + (N-2) \sum_{i}\int_{\Dd^d}\nabla W(x_i-x_*)\cdot\nabla_{v_i}G^{3,N}(z_1,z_2,z_*)\,\ddr z_*.
\end{align*}
Note that $F^{1,N}|_{t=0} =\tilde \mu^N|_{t=0}= \mu_\circ$ and $NG^{2,N}|_{t=0}=g^{(2)}|_{t=0}=0$.
Given $1<q\le2$ and $0<p\le1$ with $pq'\gg_{\beta,a}1$, and $\kappa\ll_{W,\beta,a}1$, the ergodic estimates of Theorem~\ref{thm:ergodic} then yield for all $k\ge2$ and $t\ge0$,
\begin{eqnarray*}
\|F^{1,N}_t-\tilde\mu^N_t\|_{W^{-k,q}(\langle z\rangle^p)}
&\lesssim_{W,\beta,k,p,q,a,\mu_\circ}&\int_0^te^{-p\lambda_0(t-s)}\Big(N^{-1}\|NG^{2,N}_s-g^{(2)}_s\|_{W^{1-k,q}(\langle z\rangle^p)}\\[-2mm]
&&\hspace{5cm}+\|\tilde h^{1,N}_s\|_{W^{-k,q}(\langle z\rangle^p)}\Big)\,\ddr s,\\[-2mm]
\|NG^{2,N}_t-g^{(2)}_t\|_{W^{-k,q}(\langle z\rangle^p)}&\lesssim_{W,\beta,k,p,q,a,\mu_\circ}&\int_0^te^{-p\lambda_0(t-s)}\|h^{2,N}_s\|_{W^{-k,q}(\langle z\rangle^p)}\,\ddr s.
\end{eqnarray*}
By definition of $\tilde h^{1,N}$ and $h^{2,N}$, appealing to the result~\eqref{eq:estim-F1Nmu} of Step~1, together with the correlation estimates of Theorem~\ref{thm:main}, we find for all $k\gg1$, $1<q\le2$, and $0<p\ll_\theta1$ with $pq'\gg_{\beta,a}1$, provided~$\kappa\ll_{W,\beta,p,a}1$,
\begin{equation*}
\|\tilde h^{1,N}\|_{W^{-k,q}(\langle z\rangle^p)}\,\lesssim_{W,\beta,k,p,q,a,\mu_\circ}\,N^{-2},\qquad
\|h^{2,N}\|_{W^{-k,q}(\langle z\rangle^p)}\,\lesssim_{W,\beta,k,p,q,a,\mu_\circ}\,N^{-1}.
\end{equation*}
Inserting this into the above, the conclusion follows.
\end{proof}

%%%%%%%%%%%%%%%%%%%%%%%%%%%%%%%%%%%%%%%%%%%%
%%%%%%%%%%%%%%%%%%%%%%%%%%%%%%%%%%%%%%%%%%%%
%%%%%%%%%%%%%%%%%%%%%%%%%%%%%%%%%%%%%%%%%%%%

\section{Quantitative central limit theorem}\label{sec:clt}
This section is devoted to the proof of Theorem~\ref{th:CLT}. For $t\ge0$ and $\phi\in C^\infty_c(\R^d)$, consider the centered random variables
\[S^N_t(\phi)\,:=\,\sqrt N Y^N_t(\phi)\,:=\,\sqrt N\bigg(\int_{\Xd}\phi\mu^N_t-\E\Big[\int_{\Xd}\phi\mu^N_t\Big]\bigg).\]
We shall start by using a Lions expansion to split the contributions from initial data and from Brownian forces in the fluctuations.
From there, we separately analyze initial and Brownian fluctuations, using tools from Glauber and Lions calculus, respectively.

\subsection{Gaussian Dean--Kawasaki equation}\label{sec:DK-def}
We consider the Gaussian Dean--Kawasaki SPDE~\eqref{eq:DK-Lang}. With our general notation, covering the Langevin and Brownian settings at the same time, this reads as follows,
\begin{align}\label{eq:DK-gen}
\left\{\begin{array}{ll}
\partial_t \nu_t
\,=\, L_{\mu_t}\nu_t +\Div(\sqrt{\mu_t} \sigma_0\xi_t),&\text{for $t\ge0$},\\
\nu_t|_{t = 0}=\nu_\circ,&
\end{array}\right.
\end{align}
where:
\begin{enumerate}[---]
\item $L_{\mu}$ is the linearized mean-field operator defined in~\eqref{eq:def_linearized};
\smallskip\item $\mu_t:=m(t,\mu_\circ)$ is the solution of the mean-field McKean--Vlasov equation~\eqref{eq:MFL-McKean};
\smallskip\item $\nu_\circ$ is the Gaussian field describing the fluctuations of the initial empirical measure,
in the sense that~$\sqrt N\int_\Xd\phi (\mu^N_0-\mu_\circ)$ converges in law to $\int_\Xd\phi\,\nu_\circ$ for all $\phi\in C^\infty_c(\Xd)$; in other words, ~$\nu_\circ$ is the random tempered distribution on $\Xd$ characterized by having Gaussian law with
\begin{equation}\label{eq:init-fluct}
\quad\Var\Big[\int_\Xd\phi\,\nu_\circ\Big]\,=\,\int_\Xd \Big(\phi-\int_\Xd\phi\,\mu_\circ\Big)^2\mu_\circ,\qquad\E\Big[\int_\Xd\phi\,\nu_\circ\Big]\,=\,0,\qquad\text{for all $\phi\in C^\infty_c(\Xd)$;}
\end{equation}
\item $\xi_t$ is a Gaussian white noise on $\R\times\Xd$ and is taken independent of $\nu_\circ$.
\end{enumerate}
As $\nu_\circ$ and $\xi$ are random tempered distributions on $\Xd$ and $\R\times\Xd$, respectively, equation~\eqref{eq:DK-gen} is naturally understood in the distributional sense almost surely, and its solution~$\nu$ must itself be sought as a random tempered distribution on $\R^+\times\Xd$.

In order to solve equation~\eqref{eq:DK-gen}, we start by introducing some notation.
Given $\kappa\in[0,\kappa_0]$, $k \ge 2$, $1<q\le2$, and $0<p\le\frac14\theta$ with $pq'\gg_{\beta,a}1$ large enough, for all $h \in W^{-k,q}(\langle z \rangle^p)$, we can define $\{U_{t,s}[h]\}_{s \le t}$ as the unique weak solution in $C_\loc([s,\infty);W^{-k,q}(\langle z \rangle^p))$ of
\begin{equation}\label{eq:def_U}
\left\{\begin{array}{ll}
\partial_t U_{t,s}[h] = L_{\mu_t} U_{t,s}[h],&\text{for $t \ge s$},\\
U_{t,s}[h]|_{t=s}= h,&
\end{array}\right.
\end{equation}
see Theorem~\ref{thm:ergodic}(ii).
We also consider the dual evolution $\{U_{t,s}^*\}_{t \ge s}$ on $W^{-k,q}(\langle z\rangle^p)^*$, where for all $t\ge s$ we define $U_{t,s}^*$ as the adjoint of $U_{t,s}$,
\[ \int_{\Xd} U_{t,s}^*[g]h\,=\, \int_{\Xd} U_{t,s}[h]g.\]
Recall that the condition $pq'>d$ ensures that the dual space $W^{-k,q}(\langle z\rangle^p)^*$ contains $W^{k,\infty}(\Xd)$.
For any $g \in W^{-k,q}(\langle z\rangle^p)^*$ and $t\ge0$, the dual flow $s \mapsto U_{t,s}^\ast[g]$ naturally belongs to $C([0, t]; W^{-k,q}(\langle z\rangle^p)^*)$, where $W^{-k,q}(\langle z\rangle^p)^*$ is endowed with the weak topology. Denoting by $L_\mu^*$ the adjoint of the linearized mean-field operator $L_\mu$, we find that the dual flow satisfies the backward Cauchy problem
\begin{equation}\label{eq:dual-test-lin-semigr}
\left\{\begin{array}{ll}
\partial_sU_{t,s}^\ast[g]=-L_{\mu_s}^*U_{t,s}^*[g],&\text{for $0\le s\le t$},\\
U_{t,s}^\ast[g]|_{s=t}=g.
\end{array}\right.
\end{equation}

In these terms, using the theory of Da Prato and Zabczyk~\cite{Prato_1992}, and more specifically its non-autonomous extension by Seidler~\cite{Seidler_1993}, we can check that the Gaussian Dean--Kawasaki equation~\eqref{eq:DK-gen} admits a unique weak solution that is a random element in $C(\R^+;\Sc'(\Xd))$, and it can be expressed by Duhamel's principle
\begin{equation*}
\nu_t\,:=\,U_{t,0}[\nu_\circ]+\int_0^tU_{t,s}\big[\Div\big(\sqrt{\mu_s}\sigma_0\xi_s\big)\big]\,\ddr s.
\end{equation*}
In particular, the law of the solution is characterized by its covariance structure
\begin{eqnarray}
\Var\bigg[\int_\Xd \phi\nu_t\bigg]
&=&\Var\bigg[\int_\Xd U_{t,0}^*[\phi]\, \nu_\circ\bigg]+\Var\bigg[\int_0^t\Big(\int_\Xd \sqrt{\mu_s}\sigma_0^T\nabla U_{t,s}^*[\phi] \cdot \xi_s\Big)\,\ddr s\bigg]\nonumber\\
&=&\int_\Xd\Big(U_{t,0}^*[\phi]-\int_\Xd U_{t,0}^*[\phi]\,\mu_\circ\Big)^2\mu_\circ\,+\,\int_0^t\Big(\int_\Xd \big|\sigma_0^T\nabla U_{t,s}^*[\phi] \big|^2 \mu_s\Big)\,\ddr s.\label{eq:cov-DK}
\end{eqnarray}

\subsection{Splitting fluctuations}
By means of a Lions expansion, we start by showing that fluctuations can be split neatly into contributions from initial data and from Brownian forces.

\begin{lem}\label{lem:fluct-split}
There exists $\kappa_1 \in (0,\kappa_0]$ (only depending on $d,\beta,W,\theta,a$) such that given $\kappa\in[0,\kappa_1]$
%and assume that the initial law $\mu_\circ$ satisfies $\int_\Xd|z|^{p_0}\mu_\circ(\ddr z)<\infty$ for some $p_0>0$.
we have for all $\phi\in C^\infty_c(\Xd)$ and~$t\ge0$,
\[  \big\|S_t^N(\phi)-C_t^N(\phi)-D_t^N(\phi)\big\|_{\Ld^2(\Omega)}\,\lesssim_{W,\beta,a,\mu_\circ}\,N^{-\frac12}\|\phi\|_{W^{3,\infty}(\Xd)},\]
in terms of
\begin{eqnarray*}
C_t^N(\phi)&:=&\sqrt N\bigg(\int_{\Xd}\phi\,m(t,\mu^N_0)-\E_\circ\Big[\int_{\Xd}\phi\,m(t,\mu^N_0)\Big]\bigg),\\
D_t^N(\phi)&:=&\frac1{\sqrt N}\sum_{i=1}^N\int_0^t\partial_\mu U(t-s,\mu_s^N)(Z^{i,N}_s)\cdot\sigma_0\ddr B^i_s,
\end{eqnarray*}
where we have set for abbreviation $U(t,\mu):=\int_\Xd \phi\, m(t,\mu)$.
\end{lem}

\begin{proof}
Let~$\kappa_0,\lambda_0$ be as in Theorem~\ref{thm:ergodic} and let $1 < q \le 2$ and $0 < p \le \frac14\theta$ be fixed with $pq' \gg_{\beta,a} 1$ large enough. For this value of $p$, using the notations of Theorem~\ref{thm:ergodic}(ii), we set $\kappa_1:= \min(\kappa_{2,p}, \kappa_{3,p})$ which belongs to $(0,\kappa_0]$ and we let $\kappa \in [0, \kappa_1]$ be fixed. 
Starting point is the Lions expansion of Lemma~\ref{lem:CST0},
\begin{multline*}
\int_{\Xd}\phi\,\mu^N_t\,=\,\int_{\Xd}\phi\, m(t,\mu_0^N)
+\frac1N\sum_{i=1}^N\int_0^t\partial_\mu U(t-s,\mu_s^N)(Z^{i,N}_s)\cdot\sigma_0\ddr B^i_s\\
+\frac1{2N}\int_0^t\int_\Xd\Tr\Big[a_0\,\partial_\mu^2U(t-s,\mu_s^N)(z,z)\Big]\mu_s^N(\ddr z)\,\ddr s.
\end{multline*}
Multiplying by $\sqrt N$, subtracting the expectation to both sides of the identity, taking the $\Ld^2(\Omega)$ norm, and recognizing the definition of $C_t^N(\phi)$ and $D_t^N(\phi)$, we are led to
\begin{equation}\label{eq:temp_decompo_YN}
\big\|S_t^N(\phi)-C_t^N(\phi)-D_t^N(\phi)\big\|_{\Ld^2(\Omega)}\,\le\,
\frac1{2\sqrt N}\bigg\|\int_0^t\int_\Xd\Tr\Big[a_0\,\partial_\mu^2U(t-s,\mu_s^N)(z,z)\Big]\mu_s^N(\ddr z)\,\ddr s\bigg\|_{\Ld^2(\Omega)}.
\end{equation}
By Lemma~\ref{lem:diff-mk}, we get
\begin{equation*}
\int_0^t\int_\Xd\Tr\Big[a_0\,\partial_\mu^2U(t-s,\mu_s^N)(z,z)\Big]\mu_s^N(\ddr z)\,\ddr s
\, \lesssim_{W,\beta,p,a} \, \|\phi\|_{W^{3,\infty}(\Xd)}\int_0^t e^{-p\lambda_0(t-s)} Q(\mu^N_s)^{2p} e^{4pQ(\mu^N_s)^\theta} \,\ddr s.
\end{equation*}
Appealing to Lemma~\ref{lem:unif-mom-est} together with the exponential moment assumption for $\mu_\circ$, we deduce
\begin{equation*}
\bigg\|\int_0^t\int_\Xd\Tr\Big[a_0\,\partial_\mu^2U(t-s,\mu_s^N)(z,z)\Big]\mu_s^N(\ddr z)\,\ddr s\bigg\|_{\Ld^2(\Omega)}
\, \lesssim_{W,\beta,p,a, \mu_\circ} \, \|\phi\|_{W^{3,\infty}(\Xd)}.
\end{equation*}
Combined with~\eqref{eq:temp_decompo_YN}, this yields the conclusion.
\end{proof}

\subsection{Initial fluctuations}
We establish the following quantitative CLT for initial fluctuations $C_t^N(\phi)$. The proof is split into two separate parts: the asymptotic normality of $C_t^N(\phi)$ and the convergence of its variance structure. For both parts, we exploit tools from Glauber calculus: more precisely, the first part follows from Stein's method in form of the so-called second-order Poincar\'e inequality of Proposition~\ref{prop:2ndP}, while for the second part our starting point is the Helffer--Sj\"ostrand representation for the variance in Lemma~\ref{lem:L0-prop}(iii).

\begin{lem}\label{lem:init-fluct}
Let $\lambda_0$ be as in Theorem~\ref{thm:ergodic}, $\kappa_1$ as in Lemma~\ref{lem:fluct-split}, and let $\kappa\in[0,\kappa_1]$ be fixed.
The random variable~$C_t^N(\phi)$ defined in Lemma~\ref{lem:fluct-split} satisfies for all $\phi\in C^\infty_c(\Xd)$ and $t\ge0$,
\begin{multline*}
\ddr_2\Big({C_t^N(\phi)}\,,\,{\sigma^C_t(\phi,\mu_\circ)\Nc}\Big)\\[-2mm]
\,\lesssim_{W,\beta,a,\mu_\circ}\,N^{-\frac12}e^{-\frac12\theta\lambda_0 t}\|\phi\|^2_{W^{2,\infty}(\Xd)} \bigg(1+\Big(\tfrac{\sigma_t^C(\phi,\mu_\circ)}{\|\phi\|_{W^{2,\infty}(\Xd)}} + (N^{-\frac13}e^{-\frac12\theta\lambda_0 t})^{\frac12}\Big)^{-1}\bigg),
\end{multline*}
where the limit variance is defined by
\begin{equation}\label{eq:def-sigC}
\sigma_t^C(\phi,\mu_\circ)^2\,:=\,\Var_\circ\!\big[(U_{t,0}^*[\phi])(Z_\circ^{1,N})\big]\,=\,\int_\Xd\Big(U_{t,0}^*[\phi]-\int_\Xd U_{t,0}^*[\phi]\,\mu_\circ\Big)^2\mu_\circ,
\end{equation}
where we recall that $U_{t,0}^*$ is defined in~\eqref{eq:dual-test-lin-semigr},
that $\ddr_2$ is the second-order Zolotarev metric~\eqref{eq:Zolo}, and that $\Nc$ stands for a standard normal random variable.
\end{lem}

\begin{proof}
Let $1 < q \le 2$, $0 < p \le \frac14\theta$, and $\kappa_1$ be fixed as in the proof of Lemma~\ref{lem:fluct-split}, and let $\kappa \in [0,\kappa_1]$. 
We split the proof into three steps.

\medskip
\step1 Asymptotic normality: proof that for all $t\ge0$,
\begin{multline}\label{eq:as-norm-Glaub}
\ddr_2\bigg(\frac{C_t^N(\phi)}{\Var_\circ[C_t^N(\phi)]^\frac12}\,,\,\Nc\bigg)
\,+\,\ddr_{\operatorname{W}}\bigg(\frac{C_t^N(\phi)}{\Var_\circ[C_t^N(\phi)]^\frac12}\,,\,\Nc\bigg)
\,+\,\ddr_{\operatorname{K}}\bigg(\frac{C_t^N(\phi)}{\Var_\circ[C_t^N(\phi)]^\frac12}\,,\,\Nc\bigg)\\
\,\lesssim_{W,\beta,p,a,\mu_\circ}\, N^{-\frac12}e^{-2p\lambda_0 t}\|\phi\|_{W^{2,\infty}(\Xd)}^2\Var_\circ[C_t^N(\phi)]^{-1}\Big(1+\|\phi\|_{W^{2,\infty}(\Xd)}\Var_\circ[C_t^N(\phi)]^{-\frac12}\Big).
\end{multline}
Set for abbreviation
\[\hat C_t^N(\phi)\,:=\,\frac{C_t^N(\phi)}{\Var_\circ[C_t^N(\phi)]^\frac12}.\]
By Proposition~\ref{prop:2ndP}, we can estimate
\begin{multline*}
\ddr_2\big(\hat C_t^N(\phi),\Nc\big)
\,+\,\ddr_{\operatorname{W}}\big(\hat C_t^N(\phi),\Nc\big)
\,+\,\ddr_{\operatorname{K}}\big(\hat C_t^N(\phi),\Nc\big)
\,\lesssim\,\Var_\circ[C_t^N(\phi)]^{-\frac32}\sum_{j=1}^N\E_\circ\big[{|D^\circ_j C_t^N(\phi)|^6}\big]^\frac12\\
+\Var_\circ[C_t^N(\phi)]^{-1}\bigg(\sum_{j=1}^N\Big(\sum_{l=1}^N\E_\circ\big[{|D^\circ_l C_t^N(\phi)|^4}\big]^\frac14\E_\circ\big[{|D^\circ_j D^\circ_l C_t^N(\phi)|^4}\big]^\frac14\Big)^2\bigg)^\frac12.
\end{multline*}
Recalling the definition of $C_t^N(\phi)$ in Lemma~\ref{lem:fluct-split}, using~\eqref{eq:decompo_Glauber_derivative} and~\eqref{eq:multi_d_Glauber} to bound Glauber derivatives by means of linear derivatives, appealing to Lemma~\ref{lem:diff-mk} to estimate the latter, and distinguishing between the cases $l=j$ and $l\ne j$ in the second right-hand side term, we deduce
\begin{multline*}
\ddr_2\big(\hat C_t^N(\phi),\Nc\big)
\,+\,\ddr_{\operatorname{W}}\big(\hat C_t^N(\phi),\Nc\big)
\,+\,\ddr_{\operatorname{K}}\big(\hat C_t^N(\phi),\Nc\big)\\
\,\lesssim_{W,\beta,p,a}\,\Var_\circ[C_t^N(\phi)]^{-\frac32}N^{-\frac12}e^{-3p\lambda_0 t} \|\phi\|_{W^{2,\infty}(\Xd)}^3\E_\circ\big[ e^{CpQ(\mu^N_0)^\theta} \big] \\
+\Var_\circ[C_t^N(\phi)]^{-1}N^{-\frac12}e^{-2p\lambda_0 t} \|\phi\|_{W^{2,\infty}(\Xd)}^2\E_\circ\big[e^{CpQ(\mu^N_0)^\theta} \big].
\end{multline*}
and the claim follows from the exponential moment assumption on $\mu_\circ$. 

\medskip
\step2 Convergence of the variance: proof that for all $t\ge0$,
\begin{equation}\label{eq:var-CtN-exp}
\Big|\Var_\circ[C_t^N(\phi)]-\sigma_t^C(\phi,\mu_\circ)^2\Big|
\,\lesssim_{W,\beta,p,a,\mu_\circ}\,N^{-1}e^{-2p\lambda_0 t}\|\phi\|_{W^{2,\infty}(\Xd)}^2,
\end{equation}
where the limit variance is defined in~\eqref{eq:def-sigC}.

\medskip\noindent
By the definition of $C_t^N(\phi)$ in~Lemma~\ref{lem:fluct-split}, we have
\[\Var_\circ[C_t^N(\phi)]\,=\,N\Var_\circ\Big[\int_\Xd\phi\,m(t,\mu_0^N)\Big].\]
Appealing to the Helffer--Sj\"ostrand representation for the variance in terms of Glauber calculus, cf.~Lemma~\ref{lem:L0-prop}(iii), we get
\[\Var_\circ[C_t^N(\phi)]\,=\,N\sum_{j=1}^N\E_\circ\bigg[\Big(D^{\circ}_j\int_\Xd\phi\,m(t,\mu_0^N)\Big)\Lc_\circ^{-1}\Big(D^{\circ}_j\int_\Xd\phi\,m(t,\mu_0^N)\Big)\bigg].\]
By exchangeability, this is equivalently written as
\begin{equation*}
\Var_\circ[C_t^N(\phi)]\,=\,N^2\,\E_\circ\bigg[\Big(D^{\circ}_1\int_\Xd\phi\,m(t,\mu_0^N)\Big)\Lc_\circ^{-1}\Big(D^{\circ}_1\int_\Xd\phi\,m(t,\mu_0^N)\Big)\bigg].
\end{equation*}
Using the short-hand notation $\E_\circ^1:=\E_\circ[\,\cdot\,|\{Z_\circ^{i,N}\}_{2\le i\le N}]$, $\E_\circ^{\ne1}:=\E_\circ[\,\cdot\,|Z_{\circ}^{1,N}]$, and $\Var_\circ^{\ne1}:=\Var_\circ[\,\cdot\,|Z_{\circ}^{1,N}]$, and noting that $\E_\circ^{\ne1}\Lc_\circ^{-1}D_1^\circ=\Lc_\circ^{-1}\E_\circ^{\ne1}D_1^\circ$, we deduce from the triangle inequality
\begin{multline*}
\bigg|\Var_\circ[C_t^N(\phi)]-N^2\,\E_\circ\bigg[\bigg(\E_\circ^{\ne1}\Big[D^{\circ}_1\int_\Xd\phi\,m(t,\mu_0^N)\Big]\bigg)\Lc_\circ^{-1}\bigg(\E_\circ^{\ne1}\Big[D^{\circ}_1\int_\Xd\phi\,m(t,\mu_0^N)\Big]\bigg)\bigg]\bigg|\\
\,\le\,N^2\,\E_{\circ}^1\bigg[\Var_\circ^{\ne1}\Big[D_1^\circ \int_\Xd\phi\,m(t,\mu_0^N)\Big]\bigg].
\end{multline*}
Since we have $\Lc_\circ X=X$ for any $\sigma(Z_\circ^{1,N})$-measurable random variable $X$ with $\E_\circ[X]=0$, the operator~$\Lc_\circ^{-1}$ can be replaced by $\Id$ in the left-hand side.
Further appealing to the variance inequality~\eqref{eq:Poinc-Glauber} for Glauber calculus, we are led to
\begin{equation*}
\bigg|\Var_\circ[C_t^N(\phi)]-N^2\,\E_\circ\bigg[\E_\circ^{\ne1}\Big[D^{\circ}_1\int_\Xd\phi\,m(t,\mu_0^N)\Big]^2\bigg]\bigg|
\,\le\,N^3\,\E_{\circ}\bigg[\Big|D^{\circ}_2 D^{\circ}_1 \int_\Xd\phi\,m(t,\mu_0^N)\Big|^2\bigg].
\end{equation*}
Using~\eqref{eq:multi_d_Glauber} to bound Glauber derivatives by means of linear derivatives, appealing to Lemma~\ref{lem:diff-mk} to estimate the latter, and recalling the exponential moment assumption for~$\mu_\circ$, we obtain
\begin{equation}\label{eq:conv-var-preco}
\bigg|\Var_\circ[C_t^N(\phi)]-N^2\,\E_\circ\bigg[\E_\circ^{\ne1}\Big[D^{\circ}_1\int_\Xd\phi\,m(t,\mu_0^N)\Big]^2\bigg]\bigg|
\,\lesssim_{W,\beta,p,a,\mu_\circ}\,N^{-1}e^{-2p\lambda_0 t}\|\phi\|_{W^{2,\infty}(\Xd)}^2.
\end{equation}
It remains to evaluate the Glauber derivative in the left-hand side.
Recalling again the link between Glauber and linear derivatives, cf.~\eqref{eq:glauber_general-rel}, and appealing to Lemma~\ref{lem:diff-mk} for the computation of the linear derivative, we get
\begin{multline}\label{eq:decomp-derDminit}
{D^{\circ}_1 \int_{\Xd} \phi \, m(t,\mu^N_0)}\\
\,=\,N^{-1}\int_0^1\int_{\Xd}\int_{\Xd}\bigg(\int_\Xd \phi \, m^{(1)}\big(t,\mu_0^N+\tfrac{1-s}N(\delta_z-\delta_{Z_\circ^{1,N}}),y\big)\bigg)\,(\delta_{Z_\circ^{1,N}}-\delta_z)(\ddr y)\,\mu_\circ(\ddr z)\,\ddr s.
\end{multline}
Let us further appeal to the definition of linear derivative to replace the measure $\mu_0^N+\frac{1-s}N(\delta_z-\delta_{Z_\circ^{1,N}})$ in the argument of $m^{(1)}$ by
\[\mu_{0,z'}^N\,:=\,\mu_0^N+\tfrac{1}N(\delta_{z'}-\delta_{Z_\circ^{1,N}})\,=\,\tfrac1N\delta_{z'}+\tfrac1N\textstyle\sum_{j=2}^N\delta_{Z_\circ^{j,N}},\]
where $z'$ is a new variable integrated over with respect to $\mu_\circ$.
Using Lemma~\ref{lem:diff-mk} to estimate the additional linear derivative that constitutes the resulting error, we get
\begin{multline*}
\E_\circ\bigg[\Big|{D^{\circ}_1 \int_{\Xd} \phi \, m(t,\mu^N_0)}
-N^{-1}\int_{\Xd}\int_{\Xd}\int_{\Xd}\Big(\int_\Xd \phi \, m^{(1)}(t,\mu_{0,z'}^N,y)\Big)\,(\delta_{Z_\circ^{1,N}}-\mu_\circ)(\ddr y)\,\mu_\circ(\ddr z')\Big|^2\bigg]^\frac12\\
\,\lesssim_{W,\beta,p,a,\mu_\circ}\, N^{-2}e^{-p\lambda_0 t}\|\phi\|_{W^{2,\infty}(\Xd)}.
\end{multline*}
Inserting this into~\eqref{eq:conv-var-preco} and reorganizing expectations and integrals, we obtain
\begin{multline*}
\bigg|\Var_\circ[C_t^N(\phi)]-\int_\Xd\E_\circ\bigg[\int_{\Xd}\Big(\int_\Xd \phi \, m^{(1)}(t,\mu_{0}^N,y)\Big)\,(\delta_{z}-\mu_\circ)(\ddr y)\bigg]^2\,\mu_\circ(\ddr z)\bigg|\\
\,\lesssim_{W,\beta,p,a,\mu_\circ}\,N^{-1}e^{-2p\lambda_0 t}\|\phi\|_{W^{2,\infty}(\Xd)}^2.
\end{multline*}
We are now in position to appeal to Lemma~\ref{lem:chassagneux} to replace $\mu_0^N$ by $\mu_\circ$ in the expectation. Using as before Lemma~\ref{lem:diff-mk} and the exponential moment assumption for $\mu_\circ$ to estimate linear derivatives, this leads us to
\begin{multline}\label{eq:def-VarC-prim}
\bigg|\Var_\circ[C_t^N(\phi)]-\int_\Xd\bigg(\int_{\Xd}\Big(\int_\Xd \phi \, m^{(1)}(t,\mu_\circ,y)\Big)\,(\delta_{z}-\mu_\circ)(\ddr y)\bigg)^2\,\mu_\circ(\ddr z)\bigg|\\
\,\lesssim_{W,\beta,p,a,\mu_\circ}\,N^{-1}e^{-2p\lambda_0 t}\|\phi\|_{W^{2,\infty}(\Xd)}^2.
\end{multline}
Using the notation~\eqref{eq:def_U}, the definition of $m^{(1)}$ in Lemma~\ref{lem:diff-mk} amounts to
\begin{equation*}
m^{(1)}(t,\mu_\circ,y)=U_{t,0}[\delta_y-\mu_\circ],
\end{equation*}
and the claim~\eqref{eq:var-CtN-exp} follows with the limit variance $\sigma_t^C(\phi,\mu_\circ)$ defined in~\eqref{eq:def-sigC}.

\medskip
\step3 Conclusion.\\
By homogeneity of~$\ddr_2$ and by the triangle inequality, we can estimate
\begin{eqnarray*}
\lefteqn{\ddr_2\Big({C_t^N(\phi)}\,,\,{\sigma^C_t(\phi,\mu_\circ)\Nc}\Big)}\\
&=&\Var[C_t^N(\phi)]\,\ddr_2\bigg(\frac{C_t^N(\phi)}{\Var[C_t^N(\phi)]^{1/2}}\,,\,\frac{\sigma^C_t(\phi,\mu_\circ)}{\Var[C_t^N(\phi)]^{1/2}}\Nc\bigg)\\
&\le&\Var[C_t^N(\phi)]\,\ddr_2\bigg(\frac{C_t^N(\phi)}{\Var[C_t^N(\phi)]^{1/2}}\,,\,\Nc\bigg)+\tfrac12\Big|\Var[C_t^N(\phi)]-\sigma^C_t(\phi,\mu_\circ)^2\Big|.
\end{eqnarray*}
By the asymptotic normality~\eqref{eq:as-norm-Glaub} and by the convergence result~\eqref{eq:var-CtN-exp} for the variance, we then get
\begin{equation}\label{eq:estim-d2-preconcl-init}
\ddr_2\Big({C_t^N(\phi)}\,,\,{\sigma^C_t(\phi,\mu_\circ)\Nc}\Big)
\,\lesssim_{W,\beta,p,a,\mu_\circ}\, N^{-\frac12}e^{-2p\lambda_0 t}\|\phi\|_{W^{2,\infty}(\Xd)}^2\Big(1+\|\phi\|_{W^{2,\infty}(\Xd)}\Var[C_t^N(\phi)]^{-\frac12}\Big).
\end{equation}
It remains to deal with the last factor involving the inverse of the variance.
For that purpose, we distinguish between two cases:
\begin{enumerate}[---]
\item Case~1: $\sigma_t^C(\phi,\mu_\circ)^2\ge L\|\phi\|_{W^{2,\infty}(\Xd)}^2$.\\
In this case, the convergence result~\eqref{eq:var-CtN-exp} for the variance yields
\begin{eqnarray*}
\|\phi\|_{W^{2,\infty}(\Xd)}\Var[C_t^N(\phi)]^{-\frac12}&\le&\|\phi\|_{W^{2,\infty}(\Xd)}\Big(\sigma_t^C(\phi,\mu_\circ)^2-C N^{-1}e^{-2p\lambda_0 t}\|\phi\|_{W^{2,\infty}(\Xd)}^2 \Big)_+^{-\frac12}\\
&\le&(L-CN^{-1}e^{-2p\lambda_0 t})_+^{-\frac12},
\end{eqnarray*}
so that~\eqref{eq:estim-d2-preconcl-init} becomes
\begin{equation*}
\ddr_2\Big({C_t^N(\phi)}\,,\,{\sigma^C_t(\phi,\mu_\circ)\Nc}\Big) \\
\,\lesssim\,N^{-\frac12}e^{-2p\lambda_0 t}\|\phi\|_{W^{2,\infty}(\Xd)}^2\Big(1+ (L-CN^{-1}e^{-2p\lambda_0 t})_+^{-\frac12}\Big).
\end{equation*}
\item Case~2: $\sigma_t^C(\phi,\mu_\circ)^2\le L\|\phi\|_{W^{2,\infty}(\Xd)}^2$.\\
In this case, the convergence result~\eqref{eq:var-CtN-exp} for the variance yields
\begin{eqnarray*}
\ddr_2\Big({C_t^N(\phi)}\,,\,{\sigma^C_t(\phi,\mu_\circ)\Nc}\Big)
&\le&\sigma^C_t(\phi,\mu_\circ)^2+\Var_\circ[C_t^N(\phi)]\\
&\le&(2L+C   N^{-1}e^{-2p\lambda_0t})\|\phi\|_{W^{2,\infty}(\Xd)}^2.
\end{eqnarray*}
\end{enumerate}
Optimizing between those two cases and choosing $p=\frac14\theta$, the conclusion follows.
\end{proof}

\subsection{Brownian fluctuations}
We establish the following quantitative CLT for Brownian fluctuations $D_t^N(\phi)$.
The proof is based on the Lions expansion combined with a simple heat-kernel PDE argument.
The additional estimate~\eqref{eq:CLT-0-indep} in the statement below ensures that the CLT for $D_t^N(\phi)$ also holds given $C_t^N(\phi)$, which is key to deduce a joint result.

\begin{lem}[Brownian fluctuations]\label{prop:CLT-0}
Let $\kappa_1$ be as in Lemma \ref{lem:fluct-split}
and let~$\kappa\in[0,\kappa_1]$ be fixed. The random variable~$D_t^N(\phi)$ defined in Lemma~\ref{lem:fluct-split} satisfies for all $t\ge0$,
\begin{equation}\label{eq:CLT-0-main}
\ddr_2\big(\,D_t^N(\phi)\,,\,\sigma_t^D(\phi,\mu_\circ)\,\Nc\,\big)\,\lesssim_{W,\beta,a,\mu_\circ}\,N^{-\frac12} \|\phi\|_{W^{4,\infty}(\Xd)}^2,
\end{equation}
where the limit variance is given by
\begin{equation}\label{eq:def-limvar-sigmaD}
\sigma_t^D(\phi,\mu_\circ)^2\,:=\,\int_0^t\Big(\int_\Xd\big|\sigma_0^T\nabla U_{t,s}^*[\phi])\big|^2\,m(s,\mu_\circ)\Big)\,\ddr s,
\end{equation}
where we recall that $U_{t,s}^*$ is defined in~\eqref{eq:dual-test-lin-semigr},
that $\ddr_2$ is the second-order Zolotarev metric~\eqref{eq:Zolo}, and that $\Nc$ stands for a standard normal random variable.
In addition, for all $h\in C^2_b(\R^2)$ and $t\ge0$,
\begin{multline}\label{eq:CLT-0-indep}
\Big|\E\big[h\big(C_t^N(\phi),D_t^N(\phi)\big)\big]-\E_\circ\E_\Nc\big[h\big(C_t^N(\phi),\sigma_t^D(\phi,\mu_\circ)\Nc\big)\big]\Big|\\[-1mm]
\,\lesssim_{W,\beta,a,\mu_\circ}\,N^{-\frac12}\|\phi\|_{W^{4,\infty}(\Xd)}^2\|\partial_2^2h\|_{\Ld^\infty(\R^2)},
\end{multline}
where the standard normal variable $\Nc$ is taken independent both of initial data and of Brownian forces, and where we denote by $\E_\Nc$ the expectation with respect to $\Nc$.
\end{lem}

\begin{proof}
Let $\lambda_0$ be as in Theorem~\ref{thm:ergodic},
let $1 < q \le 2$, $0 < p \le \frac14\theta$, and $\kappa_1$ be fixed as in the proof of Lemma~\ref{lem:fluct-split}, and let $\kappa\in[0,\kappa_1]$.
We focus on the proof of~\eqref{eq:CLT-0-main}, while the additional statement~\eqref{eq:CLT-0-indep} can be obtained along the exact same lines --- simply replacing the test function $g$ below by $h(C_t^N(\phi),\cdot)$ and recalling that $C_t^N(\phi)$ is independent of Brownian forces.
We split the proof into two steps.

\medskip
\step1 Proof that for all $g\in C^2_b(\R)$ and $t\ge0$,
\begin{equation}\label{eq:ODE-char-re}
\Big|\E\big[g(D_{t}^N(\phi))\big]-\E_\circ\E_{\Nc}\Big[g\big(\sigma_t^D(\phi,\mu_0^N)\,\Nc\big)\Big]\Big|\,\lesssim_{W,\beta,a,\mu_\circ}\,N^{-\frac12}\|\phi\|_{W^{4,\infty}(\Xd)}^2 \|g''\|_{\Ld^\infty(\R)},
\end{equation}
where the limit variance is defined in~\eqref{eq:def-limvar-sigmaD},
and where as in the statement $\Nc$ stands for a standard normal random variable taken independent both of initial data and of Brownian forces.

\medskip\noindent
To prove this result, let us consider the $(\mathcal{F}^B_s)_{0\le s \le t}$-martingale $(D_{s,t}^N(\phi))_{0\le s\le t}$ given by
\[D_{s,t}^N(\phi)\,:=\,\frac1{\sqrt N}\sum_{i=1}^N\int_0^s(\partial_\mu U)(t-u,\mu_u^N)(Z^{i,N}_u)\cdot\sigma_0\ddr B_u^i,\]
which satisfies
\begin{equation*}
D_{0,t}^N(\phi)=0,\qquad D_{t,t}^N(\phi)=D_{t}^N(\phi).
\end{equation*}
Let $g\in C^2_b(\R)$ be fixed. By definition of $D_{s,t}^N(\phi)$, It\^o's lemma yields for all $\rho\in\R$ and $0\le s\le t$,
\begin{equation}\label{eq:ODE-char}
\frac{\ddr}{\ddr s}\E_B\big[g(\rho +D_{s,t}^N(\phi))\big]\,=\,\tfrac12\E_B\bigg[g''(\rho +D_{s,t}^N(\phi))\int_\Xd\big|\sigma_0^T(\partial_\mu U)(t-s,\mu_s^N)\big|^2\,\mu_s^N\bigg].
\end{equation}
Appealing to the Lions expansion in form of Corollary~\ref{cor:CST}(ii), we find for all $0\le s\le t$,
\begin{multline*}
\bigg\|\int_\Xd\big|\sigma_0^T(\partial_\mu U)(t-s,\mu_s^N)\big|^2\,\mu_s^N
-\int_\Xd\big|\sigma_0^T(\partial_\mu U)(t-s,m(s,\mu_0^N))\big|^2\,m(s,\mu_0^N)\bigg\|_{\Ld^2(\Omega_B)}\\
\,\lesssim\,N^{-\frac12}\E_B\bigg[\int_0^s\int_\Xd\big|\partial_\mu H_{t-s}(s-u,\mu_u^N)(z)\big|^2\mu_u^N(\ddr z)\,\ddr u\bigg]^\frac12\\
+N^{-1}\E_B\bigg[\Big(\int_0^s\int_\Xd\big|\partial_\mu^2H_{t-s}(s-u,\mu_u^N)(z,z)\big|\,\mu_u^N(\ddr z)\,\ddr u\Big)^2\bigg],
\end{multline*}
in terms of
\[H_{t-s}(u,\mu)\,:=\,\int_{\Xd} \Big| \sigma_0^T \partial_\mu U(t-s,m(u,\mu))\Big|^2 m(u,\mu).\]
Appealing to Lemma~\ref{lem:diff-mk} to estimate the multiple linear derivatives,
%using $x^3 \le (3/ep)^3 e^{px}$ and 
together with the moment bounds of Lemma~\ref{lem:unif-mom-est}, we get after straightforward computations for all $0\le s\le t$,
\begin{multline*}
\bigg\|\int_\Xd\big|\sigma_0^T(\partial_\mu U)(t-s,\mu_s^N)\big|^2\,\mu_s^N
-\int_\Xd\big|\sigma_0^T(\partial_\mu U)(t-s,m(s,\mu_0^N))\big|^2\,m(s,\mu_0^N)\bigg\|_{\Ld^2(\Omega_B)}\\
\,\lesssim_{W,\beta,p,a}\, N^{-\frac12}e^{-p\lambda_0(t-s)} e^{CpQ(\mu^N_0)^\theta}\|\phi\|_{W^{4,\infty}(\Xd)}^2.
\end{multline*}
Inserting this into~\eqref{eq:ODE-char}, and using the short-hand notation
\[\kappa_{s,t}(\phi,\mu)\,:=\,\int_\Xd\big|\sigma_0^T(\partial_\mu U)(t-s,m(s,\mu))\big|^2\,m(s,\mu),\]
we deduce for all $\rho\in\R$, $0\le s\le t$, and $\lambda \in [0,\frac18\lambda_0)$,
\begin{multline*}
\bigg|\frac{\ddr}{\ddr s}\E_B\big[g(\rho +D_{s,t}^N(\phi))\big]-\tfrac12\kappa_{s,t}(\phi, \mu_0^N)\,\frac{\ddr^2}{\ddr \rho^2}\E_B\big[g(\rho +D_{s,t}^N(\phi))\big]\bigg|\\
\,\lesssim_{W,\beta,p,a}\, N^{-\frac12}e^{-p\lambda_0 (t-s)}e^{CpQ(\mu^N_0)^\theta} \|\phi\|_{W^{4,\infty}(\Xd)}^2\|g''\|_{\Ld^\infty(\R)} .
\end{multline*}
In order to solve this approximate heat equation for the map $(s,\rho)\mapsto \E_B[g(\rho+D_{s,t}^N(\phi))]$ on $[0,t]\times\R$,
we can appeal for instance to the Feynman--Kac formula with $D_{0,t}^N(\phi)=0$. Equivalently, this amounts to integrating the above estimate with the associated heat kernel. It leads us to deduce for all~$\rho\in\R$ and $0\le s\le t$,
\begin{equation*}
\bigg|\E_B\big[g(\rho +D_{s,t}^N(\phi))\big]-\E_{\Nc}\Big[g\Big(\rho +\Nc\sqrt{\textstyle\int_0^s \kappa_{u,t}(\phi,\mu_0^N)\,\ddr u}\Big)\Big]\bigg|\,\lesssim_{W,\beta,p,a}\,N^{-\frac12}e^{CpQ(\mu_0^N)} \|\phi\|_{W^{4,\infty}(\Xd)}^2 \|g''\|_{\Ld^\infty(\R)}.
\end{equation*}
In particular, setting $\rho=0$ and $s=t$, recalling $D_{t,t}^N(\phi)=D_t^N(\phi)$, taking the expectation with respect to initial data, and recalling the exponential moment assumption for $\mu_\circ$, we get
\begin{equation*}
\bigg|\E\big[g(D_{t}^N(\phi))\big]-\E_\circ\E_{\Nc}\Big[g\Big(\Nc\sqrt{\textstyle\int_0^t \kappa_{u,t}(\phi,\mu_0^N)\,\ddr u}\Big)\Big]\bigg|\,\lesssim_{W,\beta,p,a,\mu_\circ}\,N^{-\frac12} \, \|\phi\|_{W^{4,\infty}(\Xd)}^2 \, \|g''\|_{\Ld^\infty(\R)}.
\end{equation*}
Using the notation~\eqref{eq:def_U} and recalling that the definition of $m^{(1)}$ in Lemma~\ref{lem:diff-mk} amounts to
\begin{equation*}
m^{(1)}(t,\mu_\circ,y)=U_{t,0}[\delta_y-\mu_\circ],
\end{equation*}
we recognize the definition~\eqref{eq:def-limvar-sigmaD} of $\sigma_t^D$,
\begin{eqnarray}
\int_0^t \kappa_{u,t}(\phi,\mu)\,\ddr u
&=&\int_0^t\Big(\int_\Xd\big|\sigma_0^T(\partial_\mu U)(t-s,m(s,\mu))\big|^2\,m(s,\mu) \Big)\,\ddr s\nonumber\\
&=&\int_0^t\Big(\int_\Xd\big|\sigma_0^T\nabla U_{t,s}^*[\phi])\big|^2\,m(s,\mu)\Big)\,\ddr s\nonumber\\
&=&\sigma_t^D(\phi,\mu)^2,\label{eq:redef-sigmaD}
\end{eqnarray}
and the claim~\eqref{eq:ODE-char-re} follows.

\medskip
\step2 Proof that for all $g\in C^2_b(\R)$ and $t\ge0$,
\begin{equation}\label{eq:sigma-t-0-appr}
\Big|\E_\circ\E_{\Nc}\Big[g\big(\sigma_t^D(\phi,\mu_0^N)\,\Nc\big)\Big]-\E_{\Nc}\Big[g\big(\sigma_t^D(\phi,\mu_\circ)\,\Nc\big)\Big]\Big|\,\lesssim_{W,\beta,a,\mu_\circ}\,N^{-\frac12} \|\phi\|_{W^{4,\infty}(\Xd)}^2\|g''\|_{\Ld^\infty(\R)}.
\end{equation}
Combining this with the result~\eqref{eq:ODE-char-re} of Step~1, and recalling the definition~\eqref{eq:Zolo} of the second-order Zolotarev metric, this will conclude the proof of~\eqref{eq:CLT-0-main}.
Set for shortness $\sigma_t^N:=\sigma_t^D(\phi,\mu_0^N)$ and~$\sigma_t:=\sigma_t^D(\phi,\mu_\circ)$.
We can decompose
\begin{equation*}
\E_{\Nc}\big[g(\sigma_t^N\Nc)\big]-\E_{\Nc}\big[g(\sigma_t\, \Nc)\big]
\,=\,(\sigma_t^N-\sigma_t)\int_0^1\E_{\Nc}\Big[\Nc g'\Big(\big(\sigma_t+\rho(\sigma_t^N-\sigma_t)\big)\Nc\Big)\Big]\,\ddr\rho,
\end{equation*}
and a Gaussian integration by parts then yields
\begin{equation*}
\E_{\Nc}\big[g(\sigma_t^N\Nc)\big]-\E_{\Nc}\big[g(\sigma_t \, \Nc)\big]
\,=\,(\sigma_t^N-\sigma_t)\int_0^1\big(\sigma_t+\rho(\sigma_t^N-\sigma_t)\big)\,\E_{\Nc}\Big[g''\Big(\big(\sigma_t+\rho(\sigma_t^N-\sigma_t)\big)\Nc\Big)\Big]\,\ddr\rho.
\end{equation*}
Hence,
\begin{equation*}
\Big|\E_{\Nc}\big[g(\sigma_t^N\Nc)\big]-\E_{\Nc}\big[g(\sigma_t \, \Nc)\big]\Big|
\,\le\,|\sigma_t^N-\sigma_t|\big(|\sigma_t|+|\sigma_t^N-\sigma_t|\big)\|g''\|_{\Ld^\infty(\R)}.
\end{equation*}
Taking the expectation with respect to initial data, the claim~\eqref{eq:sigma-t-0-appr} would follow provided that we could show for all $t\ge0$,
\begin{eqnarray}
|\sigma_t|&\lesssim_{W,\beta,a,\mu_\circ}& \|\phi\|_{W^{2,\infty}(\Xd)},\label{eq:estim-sigmat}\\
\E_\circ[|\sigma_t^N-\sigma_t|^2]^\frac12&\lesssim_{W,\beta,a,\mu_\circ}&N^{-\frac12}\|\phi\|_{W^{4,\infty}(\Xd)}.\label{eq:estim-sigmaN0t}
\end{eqnarray}
For that purpose, we first recall that by~\eqref{eq:redef-sigmaD} we can write 
\begin{equation}\label{eq:redef-sigmaD-bis}
\sigma_t^D(\phi,\mu)^2\,=\,\int_0^t\Big(\int_\Xd\big|\sigma_0^T(\partial_\mu U)(t-s,m(s,\mu))\big|^2\,m(s,\mu) \Big)\,\ddr s,
\end{equation}
with the short-hand notation $U(t,\mu):=\int_\Xd\phi\, m(t,\mu)$.
Applying Lemma~\ref{lem:diff-mk} to estimate the linear derivative, and combining it with the moment bounds of Lemma~\ref{lem:unif-mom-est}, the claim~\eqref{eq:estim-sigmat} follows after straightforward computations.
We turn to the proof of~\eqref{eq:estim-sigmaN0t}.
By the triangle inequality, we can decompose
\[\E_\circ\big[|\sigma_t^D(\phi,\mu_0^N)-\sigma_t^D(\phi,\mu_\circ)|^2\big]^\frac12\,\le\,\big|\E_\circ[\sigma_t^D(\phi,\mu_0^N)]-\sigma_t^D(\phi,\mu_\circ)\big|+\Var_\circ[\sigma_t^D(\phi,\mu_0^N)]^\frac12,\]
and we estimate both terms separately.
On the one hand, starting again from~\eqref{eq:redef-sigmaD-bis}, appealing to Lemma~\ref{lem:chassagneux}, using Lemma~\ref{lem:diff-mk} to estimate the multiple linear derivatives, and combining it with the bounds of Lemma~\ref{lem:unif-mom-est} and with the exponential moment assumption for $\mu_\circ$, we easily get
\[\big|\E_\circ[\sigma_t^D(\phi,\mu_0^N)]-\sigma_t^D(\phi,\mu_\circ)\big|\,\lesssim_{W,\beta,a,\mu_\circ}\, N^{-1} \|\phi\|_{W^{4,\infty}(\Xd)}^2.\]
On the other hand, using the variance inequality~\eqref{eq:Poinc-Glauber} for Glauber calculus, and appealing to~\eqref{eq:decompo_Glauber_derivative} to bound Glauber derivatives in terms of linear derivatives, we find
\begin{multline*}
\Var_\circ\big[\sigma_t^D(\phi,\mu_0^N)\big]
\,\le\,\sum_{j=1}^N\E_\circ\big[|D^{\circ}_j \sigma_t^D(\phi,\mu_0^N)|^2\big]\\ 
\,\lesssim\,N^{-1} \, \E_\circ \bigg[ \Big| \int_0^1 \int_\Xd\int_\Xd \Lind{\sigma_t^D}\Big(\phi,\mu^N_0 + \tfrac{1-s}{N}(\delta_z - \delta_{Z_0^{1,N}}), y\Big)\, (\delta_{Z^{1,N}_0} - \delta_z)(\ddr y) \,\mu_\circ(\ddr z)\,\ddr s \Big|^2 \bigg].
\end{multline*}
Further using Lemma~\ref{lem:diff-mk} to estimate the multiple linear derivatives, and combining it with the subgaussian bounds of Lemma~\ref{lem:unif-mom-est} and with the exponential moment assumption for $\mu_\circ$, we deduce
\[\Var_\circ[\sigma_t^D(\phi,\mu_0^N)]\,\lesssim_{W,\beta,a,\mu_\circ}\,N^{-1}\|\phi\|_{W^{3,\infty}(\Xd)}^2,\]
and the claim~\eqref{eq:estim-sigmaN0t} follows.
\end{proof}

\subsection{Proof of Theorem~\ref{th:CLT}}
Let~$\lambda_0$ be as in Theorem~\ref{thm:ergodic}, $\kappa_1$ as in Lemma~\ref{lem:fluct-split}, and let $\kappa\in[0,\kappa_1]$ be fixed.
Let also~$g\in C^2_b(\R)$ be momentarily fixed with $g'(0)=0$ and $\|g''\|_{\Ld^\infty(\R)}=1$.
By Lemma~\ref{lem:fluct-split}, we find
\[\Big|\E\big[g(S_t^N(\phi))\big]-\E\big[g\big(C_t^N(\phi)+D_t^N(\phi)\big)\big]\Big|\,\lesssim_{W,\beta,a,\mu_\circ}\, N^{-\frac12}\|\phi\|_{W^{3,\infty}(\Xd)}^2.\]
Next, appealing to~\eqref{eq:CLT-0-indep} in Lemma~\ref{prop:CLT-0} for the asymptotic normality of $D_t^N(\phi)$ given $C_t^N(\phi)$, we deduce
\begin{equation}\label{eq:CLT-Y-part-Brow}
\Big|\E\big[g(S_t^N(\phi))\big]-\E_\circ\E_\Nc\big[g\big(C_t^N(\phi)+\sigma_t^D(\phi,\mu_\circ)\Nc\big)\big]\Big|\,\lesssim_{W,\beta,a,\mu_\circ}\,N^{-\frac12} \|\phi\|_{W^{4,\infty}(\Xd)}^2,
\end{equation}
where $\Nc$ stands for a standard normal random variable taken independent both of initial data and of Brownian forces.
It remains to combine this with our analysis of fluctuations of $C_t^N(\phi)$.
Appealing to the asymptotic normality of $C_t^N(\phi)$ as stated in Lemma~\ref{lem:init-fluct}, and testing that result in Zolotarev metric with the function $\E_{\Nc}[g(\cdot+\sigma_t^D(\phi,\mu_\circ)\Nc)]\in C^2_b(\R)$, we deduce
\begin{multline*}
\Big|\E\big[g(S_t^N(\phi))\big]-\E_{\Nc}\E_{\Nc'}\big[g\big(\sigma_t^C(\phi,\mu_\circ)\Nc'+\sigma_t^D(\phi,\mu_\circ)\Nc\big)\big]\Big|\\
\,\lesssim_{W,\beta,a,\mu_\circ}\, N^{-\frac12}\|\phi\|_{W^{4,\infty}(\Xd)}^2\bigg(1+e^{-\frac12\theta\lambda_0 t}\Big(\tfrac{\sigma_t^C(\phi,\mu_\circ)}{\|\phi\|_{W^{2,\infty}(\Xd)}}+ (N^{-\frac13}e^{-\frac12\theta\lambda_0 t})^{\frac12}\Big)^{-1}\bigg),
\end{multline*}
where $\Nc'$ stands for another standard normal random variable taken independent of initial data, of Brownian forces, and of~$\Nc$.
Taking the supremum over $g$, and noting that $\sigma_t^C(\phi,\mu_\circ)\Nc'+\sigma_t^D(\phi,\mu_\circ)\Nc$ has the same distribution as $\sigma_t(\phi,\mu_\circ)\Nc$ with total variance
\[\sigma_t(\phi,\mu_\circ)^2\,:=\,\sigma_t^C(\phi,\mu_\circ)^2+\sigma_t^D(\phi,\mu_\circ)^2,\]
we conclude
\begin{align*}
\ddr_2\Big(S_t^N(\phi)\,,\,\sigma_t(\phi,\mu_\circ)\Nc\Big)
&~~\lesssim_{W,\beta,a,\mu_\circ}
N^{-\frac12}\|\phi\|_{W^{4,\infty}(\Xd)}^2\bigg(1+e^{-\frac12\theta\lambda_0 t}\Big(\tfrac{\sigma_t^C(\phi,\mu_\circ)}{\|\phi\|_{W^{2,\infty}(\Xd)}}+ (N^{-\frac13}e^{-\frac12\theta\lambda_0 t})^{\frac12}\Big)^{-1}\bigg)\\
&~~\le~~\Big(N^{-\frac12}+N^{-\frac13} e^{-\frac14\theta\lambda_0 t}\Big)\|\phi\|_{W^{4,\infty}(\Xd)}^2.
\end{align*}
Noting that the total variance $\sigma_t(\phi,\mu_\circ)$ coincides with the variance predicted by the Gaussian Dean--Kawasaki equation, cf.~\eqref{eq:cov-DK}, the conclusion follows.\qed

%%%%%%%%%%%%%%%%%%%%%%%%%%%%%%%%%%%%%%%%%%%%
%%%%%%%%%%%%%%%%%%%%%%%%%%%%%%%%%%%%%%%%%%%%
%%%%%%%%%%%%%%%%%%%%%%%%%%%%%%%%%%%%%%%%%%%%

\section{Concentration estimates}\label{sec:concentration}

This section is devoted to the proof of Theorem~\ref{th:concentr}. As the initial law $\mu_\circ$ is assumed here to have compact support, we can choose e.g.\@ $\theta = 1$. Let $1 < q \le 2$ and $0 < p \le \frac14$ be fixed with $pq' \gg_{\beta,a} 1$. Using the notation $\kappa_{\ell,p}$ from Theorem~\ref{thm:ergodic}(ii), we set $\kappa_1 := \min (\kappa_{2,p}, \kappa_{3,p}) \in (0,\kappa_0]$, and we let $\kappa \in [0,\kappa_1]$ be fixed.
Let also $\lambda_0$ be as in Theorem~\ref{thm:ergodic}.
For $t\ge0$ and $\phi\in C^\infty_c(\Xd)$, consider the centered random variables
\[Y_t^N(\phi)\,:=\,X_t^N(\phi)-\E[X_t^N(\phi)]\,=\,\int_\Xd\phi\,\mu_t^N-\E\Big[\int_\Xd\phi\,\mu_t^N\Big].\]
We establish concentration by means of moment estimates.
By the Lions expansion of~Lemma~\ref{lem:CST0}, we can decompose
\begin{equation*}
Y_t^N(\phi)\,=\,\tilde Y_t^N(\phi)+M_t^N(\phi)+\frac1{2N}\big(E_t^N(\phi)-\E[E_t^N(\phi)]\big),
\end{equation*}
in terms of
\begin{eqnarray*}
\tilde Y_t^N(\phi)&:=&\int_\Xd\phi\,m(t,\mu_0^N)-\E\Big[\int_\Xd\phi\,m(t,\mu_0^N)\Big],\\
M_t^N(\phi)&:=&\frac1N\sum_{i=1}^N\int_0^t\partial_\mu U(t-s,\mu_s^N)(Z_s^{i,N})\cdot\sigma_0\ddr B_s^i,\\
E_t^N(\phi)&:=&\int_0^t\int_\Xd\Tr\Big[a_0\partial_\mu^2U(t-s,\mu_s^N)(z,z)\Big]\mu_s^N(\ddr z)\,\ddr s,
\end{eqnarray*}
where we use the short-hand notation
\[U(t-s,\mu)\,:=\,\int_\Xd\phi\, m(t-s,\mu).\]
For all $k\ge1$, we may then decompose the $k$th moment of $Y_t^N(\phi)$ as
\begin{equation}\label{eq:mom-decompY}
\E\big[|Y_t^N(\phi)|^k\big]\,\le\,3^k\E_\circ\big[|\tilde Y_t^N(\phi)|^k\big]+3^k\E\big[|M_t^N(\phi)|^k\big]+3^kN^{-k}\E\big[|E_t^N(\phi)|^k\big].
\end{equation}
We separately analyze the three right-hand side terms and split the proof into four steps.
In the sequel, all constants $C$ are implicitly allowed to depend on $W,\beta,p,q,a$, as well as on the support of $\mu_\circ$, which is assumed here to be compact.

\medskip
\step{1} Proof that for all $1\le k\le N$, 
\begin{equation}\label{eq:bound-tildeYtN}
\E_\circ\big[|\tilde Y_t^N(\phi)|^k\big]\,\le\,\Big(\frac{Ck}{N}\Big)^\frac k2e^{-kp\lambda_0t}\|\langle z\rangle^{-p}\phi\|_{W^{2,q'}(\Xd)}^k.
\end{equation}
Using~\eqref{eq:multi_d_Glauber} to estimate the Glauber derivative by means of a linear derivative, using Lemma~\ref{lem:diff-mk} to control the latter, and noting that the support assumption for $\mu_\circ$ ensures that $Q(\mu_0^N)$ and $|Z_0^{j,N}|$ are uniformly bounded almost surely, we get
\begin{equation*}
|D^{\circ}_j \tilde Y_t^N(\phi)|\,\le\, CN^{-1}e^{-p\lambda_0 t}\|\langle z\rangle^{-p}\phi\|_{W^{2,q'}(\Xd)}.
\end{equation*}
We may then appeal to Proposition~\ref{lem:concentration_Glauber}, to the effect of
\begin{eqnarray*}
\E_\circ\big[|\tilde Y_t^N(\phi)|^k\big]
&\le&\inf_{\lambda> 0}\Big\{k!\lambda^{-k}\E_\circ\big[e^{\lambda\tilde Y_t^N(\phi)}\big]\Big\}\\
&\le&\inf_{\lambda> 0}\Big\{k!\lambda^{-k}\exp\Big(C\lambda e^{-p\lambda_0t} \|\langle z\rangle^{-p}\phi\|_{W^{2,q'}(\Xd)}\big(e^{2\lambda CN^{-1}e^{-p\lambda_0t}\|\langle z\rangle^{-p}\phi\|_{W^{2,q'}(\Xd)}}-1\big)\Big)\Big\}.
\end{eqnarray*}
Choosing $\lambda=(kN)^{1/2}(2Ce^{-p\lambda_0t}\|\langle z\rangle^{-p}\phi\|_{W^{2,q'}(\Xd)})^{-1}$, the claim~\eqref{eq:bound-tildeYtN} follows.

\medskip
\step{2} Proof that for all $1\le k\le N$,
\begin{equation}\label{eq:bound-martMtN}
\E [ |M^N_t(\phi)|^k] \,\le\,\Big(\frac{Ck}{N}\Big)^\frac k2\|\langle z\rangle^{-p}\phi\|_{W^{2,q'}(\Xd)}^k.
\end{equation}
We recall that $(M^N_t(\phi))_t$ is a martingale with quadratic variation given by
\begin{equation*}
\langle M^N_t(\phi) \rangle \,=\, \frac{1}{N^2} \sum_{i=1}^N \int_0^t \big| \partial_\mu U(t-s, \mu^N_s )(Z^{i,N}_s)\big|^2\, \ddr s.
\end{equation*}
Appealing to Lemma~\ref{lem:diff-mk} to estimate the L-derivative, we get with our choice of $p,q,\kappa$,
\begin{equation*}
\langle M^N_t(\phi) \rangle \,\le\, CN^{-1}\|\langle z\rangle^{-p}\phi\|_{W^{2,q'}(\Xd)}^2\int_0^te^{-2p\lambda_0(t-s)} Q(\mu_s^N)^{2p}e^{4pQ(\mu^N_s)}\ddr s.
\end{equation*}
By the Burkholder--Davis--Gundy inequality, see e.g.~\cite[Theorem~1]{Wang_1991}, we have for all $k\ge1$,
\[\E [ |M^N_t(\phi)|^k] \,\le\, (Ck)^{\frac{k}{2}} \E[ \langle M^N_t(\phi) \rangle^{\frac{k}{2}}],\]
and thus
\begin{equation*}
\E [ |M^N_t(\phi)|^k] \,\le\,\Big(\frac{Ck}{N}\Big)^\frac k2\|\langle z\rangle^{-p}\phi\|_{W^{2,q'}(\Xd)}^k\max_{0\le s\le t}\E\big[Q(\mu^N_s)^{kp}e^{2kpQ(\mu^N_s)}\big].
\end{equation*}
Appealing to Lemma \ref{lem:unif-mom-est} together with the compact support assumption on $\mu_\circ$, we find for $s\ge0$,
\begin{equation*}
\E \big[ Q(\mu^N_s)^{kp}e^{2kpQ(\mu^N_s)} \big]\,\le\,\E \big[e^{3kpQ(\mu^N_s)} \big] \,\le\, e^{Ck^2/N} \E_\circ \big[ e^{CkQ(\mu^N_0)}\big] \,\lesssim\, e^{Ck+Ck^2/N},
\end{equation*}
and the claim~\eqref{eq:bound-martMtN} follows.

\medskip
\step{3} Proof that for all $1\le k\le N$, 
\begin{equation}\label{eq:EN_moment}
\E[|E_t^N(\phi)|^k]\,\le\,C^k\|\langle z\rangle^{-p}\phi\|_{W^{3,q'}(\Xd)}^k.
\end{equation}
Recalling the definition of $E_t^N(\phi)$, and appealing to Lemma~\ref{lem:diff-mk} to estimate multiple L-derivatives, we get
\[|E_t^N(\phi)|\,\lesssim\,\|\langle z\rangle^{-p}\phi\|_{W^{3,q'}(\Xd)}\int_0^te^{-p\lambda_0(t-s)}Q(\mu^N_s)^{2p}e^{4pQ(\mu^N_s)}\,\ddr s.\]
Now appealing to the moment bounds of Lemma~\ref{lem:unif-mom-est}, together with Jensen's inequality and with the compact support assumption for $\mu_\circ$, the claim~\eqref{eq:EN_moment} follows.

\medskip
\step{4} Conclusion.\\
Inserting the results of the first three steps into~\eqref{eq:mom-decompY}, we obtain for all $1\le k\le N$, 
\[\E[|Y_t^N(\phi)|^k]\,\le\,\Big(\frac{Ck}{N}\Big)^\frac k2\|\phi\|_{W^{3,\infty}(\Xd)}^k. \]
Combining with the trivial bound $|Y_t^N(\phi)|\le2\|\phi\|_{L^\infty(\Xd)}$, we find that the above estimate actually holds for all $k\ge1$ (without the restriction to $k\le N$).
For $r\ge0$, we may then deduce by Markov's inequality, for all $k\ge1$,
\begin{equation*}
\pr{Y_t^N(\phi)\ge r}\,\le\,r^{-k}\,\E[|Y_t^N(\phi)|^k]\,\le\,\bigg(\frac{Ck}{Nr^2}\bigg)^\frac k2\|\phi\|_{W^{3,\infty}(\Xd)}^k.
\end{equation*}
Choosing
$k=N r^2\big(e C\|\phi\|_{W^{3,\infty}(\Xd)}^2\big)^{-1}$,
the conclusion follows.
\qed

%%%%%%%%%%%%%%%%%%%%%%%%%%%%%%%%%%%%%%%%%%%%
%%%%%%%%%%%%%%%%%%%%%%%%%%%%%%%%%%%%%%%%%%%%
%%%%%%%%%%%%%%%%%%%%%%%%%%%%%%%%%%%%%%%%%%%%

\section{Ergodic Sobolev estimates for mean field}\label{sec:ergodic}
This section is devoted to the proof of Theorem~\ref{thm:ergodic}.
As recalled in Section~\ref{sec:prel-ergodic}, the convergence to equilibrium in item~(i) of the statement is well known.
We focus here on the proof of the ergodic estimates in item~(ii) and we restrict to the kinetic Langevin setting~\eqref{eq:Langevin-par}, while we emphasize that the same arguments can be repeated and substantially simplified in the Brownian setting (in which case the estimates further hold on the unweighted spaces $W^{-k,1}(\R^d)$).

As stated in item~(i), recall that the mean-field evolution~\eqref{eq:VFP} has a unique steady state~$M$ for~$\kappa$ in $[0,\kappa_0]$, which can be characterized in the Langevin setting as the unique solution of the fixed-point Gibbs equation
\begin{equation*}
M(x,v) \,=\, c_M \exp\Big[-\beta \Big(\tfrac12|v|^2 + A(x) + \kappa W\ast M(x) \Big)\Big], \qquad z=(x,v) \in \Xd,
\end{equation*}
where $c_M$ is the normalizing constant such that $\int_{\Xd} M= 1$. Note that this fixed-point equation has indeed a unique solution provided that $\kappa \beta \|W\|_{\Ld^\infty(\R^d)} < 1$.

\subsection{Exponential relaxation for modified linearized operators}
We start by considering the following modified version of the linearized operator $L_\mu$ defined in~\eqref{eq:def_linearized-Lang}, where we remove the (compact) convolution term:
given a measure $\mu\in\Pc(\Xd)$, we define for all~$h\in C^\infty_c(\Xd)$,
\begin{equation}\label{eq:def-Rmu}
R_\mu h\,:=\,\tfrac12\Div_v((\nabla_v+\beta v)h)-v\cdot\nabla_xh+(\nabla A+\kappa\nabla W\ast\mu)\cdot\nabla_v h.
\end{equation}
Given $\mu_\circ\in\Pc(\Xd)$, $s\ge0$, and $h_s\in C^\infty_c(\Xd)$, recalling that $\mu_t:=m(t,\mu_\circ)$ stands for the solution of the mean-field equation~\eqref{eq:VFP},
we consider the following (non-autonomous) equation,
\begin{equation*}
\left\{\begin{array}{l}
\partial_th_t=R_{\mu_t}h_t,\quad\text{for $t\ge s$},\\
h_t|_{t=s}=h_s.
\end{array}\right.
\end{equation*}
It is easily checked that this linear parabolic equation is well-posed with $h\in C_\loc([s,\infty);\Ld^2(M^{-1/2}))$ whenever the initial condition~$h_s$ belongs to $\Ld^2(M^{-1/2})$.
We then consider the associated fundamental solution operators $\{V_{t,s}\}_{t\ge s\ge0}$ on~$\Ld^2(M^{-1/2})$ defined by
\begin{equation}\label{eq:def-Vts}
h_t\,=\,V_{t,s}h_s.
\end{equation}
Note that $\int_\Xd V_{t,s}h_s=\int_\Xd h_s$ for all $t\ge s$.
We establish the following exponential convergence result to the steady state.

\begin{prop}\label{prop:estim-Vts-Wk1*}
Let $\kappa_0$ be as in Theorem~\ref{thm:ergodic}(i) and let $\kappa\in[0,\kappa_0]$.
There exists $\lambda_1>0$ (only depending on $d,\beta,a$, $\|W\|_{W^{1,\infty}(\R^d)}$) such that the following holds: given $1 < q \le 2$ and $0 < p \le 1$ with $pq' \gg_{\beta,a} 1$ large enough (only depending on $d,\beta,a$), we have for all $k\ge0$, $h_s\in C^\infty_c(\Xd)$, $t\ge s\ge0$, and $\delta\in[0,1]$,
\begin{equation*}
\Big\|V_{t,s}h_s-M\int_\Xd h_s\Big\|_{W^{-k,q}(\langle z\rangle^p)}
\,\lesssim_{W,\beta,k,p,q,a}\,
e^{-(\delta\wedge p)\lambda_1(t-s)}\Big(1+e^{-\delta\lambda_1s}Q(\mu_\circ)^{\delta+\kappa^2C_k}\Big)\|h_s\|_{W^{-k,q}(\langle z\rangle^p)},
\end{equation*}
for some constant $C_k$ only depending on $d,\beta,k,a$, $\|W\|_{W^{k+2,\infty}(\R^d)}$,
and where the multiplicative constant further depends on $p,q$, $\|W\|_{W^{k+d+1,\infty}(\R^d)}$.
\end{prop}

To prove this result, we shall first establish the exponential decay on negative Sobolev spaces with the Gibbs weight~$M$, that is, the exponential decay on the smaller spaces $H^{-k}(M^{-1/2})$, and next we shall appeal to the enlargement theory of Gualdani, Mischler and Mouhot~\cite{Gualdani_2017, Mischler_2016} to conclude with the desired result on~$W^{-k,q}(\langle z\rangle^p)$. Here, for all $k\ge0$, the space~$H^{-k}(M^{-1/2})$ is defined as the weighted negative Sobolev space associated with the dual norm
\begin{equation}\label{eq:def-H-kM}
\|h\|_{H^{-k}(M^{-1/2})}\,:=\,\sup\bigg\{\int_\Xd hh'M^{-1}\,:\,\|h'\|_{H^k(M^{-1/2})}=1\bigg\},
\end{equation}
where $H^k(M^{-1/2})$ is the standard weighted Sobolev space with norm
\[\|h\|_{H^k(M^{-1/2})}\,:=\,\sup_{i,j\ge0,\,i+j\le k}\|\nabla_x^i\nabla_v^jh\|_{\Ld^2(M^{-1/2})},\qquad\|h\|_{\Ld^2(M^{-1/2})}\,:=\,\Big(\int_\Xd|h|^2M^{-1}\Big)^\frac12.\]
Note that the treatment of the weight in the definition of those weighted spaces differs slightly from the one in the definition of $W^{-k,q}(\langle z\rangle^p)$, cf.~\eqref{eq:def-W-kqzp}, but for convenience we stick to this slight inconsistency in the choice of definitions.

In order to appeal to enlargement theory, we start by introducing a suitable decomposition of the operator~$R_\mu$.
Let a cut-off function $\chi\in C^\infty_c(\Xd)$ be fixed with $\chi(z)=1$ for $|z|\le1$, and set
\begin{equation}\label{eq:def_chiR}
\chi_R(z):=\chi(\tfrac1Rz).
\end{equation}
In those terms, we split the operator $R_\mu$ as follows,
\begin{eqnarray}
R_\mu&:=& \mathcal{A}+B_\mu,\label{eq:def_calB}\\
\calA h&:=&\Lambda\chi_Rh,\nonumber\\
B_\mu h&:=&\tfrac12\Div_v((\nabla_v+\beta v)h)-v\cdot\nabla_xh+(\nabla A+\kappa\nabla W\ast\mu)\cdot\nabla_vh-\Lambda\chi_Rh,\nonumber
\end{eqnarray}
for some constants $\Lambda,R>0$ to be properly chosen later on (see Lemmas~\ref{lem:estim-Wts} and~\ref{lem:estim-reg} below).
Let us denote by $\{W_{t,s}\}_{t\ge s\ge0}$ the fundamental solution operators for the (non-autonomous) evolution equation associated with~$B_\mu$: for all~$s\ge0$ and $h_s\in C^\infty_c(\Xd)$, we define $h_t=W_{t,s}h_s$ as the solution of
\begin{equation*}
\left\{\begin{array}{l}
\partial_th_t=B_{\mu_t}h_t,\quad\text{for $t\ge s$},\\
h_t|_{t=s}=h_s.
\end{array}\right.
\end{equation*}
Again, it is easily checked that this equation is well-posed with $h\in C_\loc([s,\infty);\Ld^2(M^{-1/2}))$ whenever $h_s\in\Ld^2(M^{-1/2})$.
Our proof of Proposition~\ref{prop:estim-Vts-Wk1*} is based on the following three preliminary lemmas, the proofs of which are postponed to Sections~\ref{sec:pr-lem:estim-Vts-Hk}, \ref{sec:pr-lem:estim-Wts}, \ref{sec:pr-lem:estim-Wts-2},
and~\ref{sec:pr-lem:estim-reg}
below.

\begin{lem}[Exponential decay on restricted space]\label{lem:estim-Vts-Hk}
Let $\kappa_0$ be as in Theorem~\ref{thm:ergodic}(i) and let $\kappa\in[0,\kappa_0]$.
There exists $\lambda_1>0$ (only depending on $d,\beta,a$, $\|W\|_{W^{1,\infty}(\R^d)}$) such that the following holds: for all $k\ge0$, $h_s\in C^\infty_c(\Xd)$, $t\ge s\ge0$, and $\delta\in[0,1]$,
{\allowdisplaybreaks\begin{multline*}
\Big\|V_{t,s}h_s-M\int_\Xd h_s\Big\|_{H^{-k}(M^{-1/2})}\\
\,\le\,C_{k}e^{-\delta\lambda_1(t-s)}\|g_s\|_{H^{-k}(M^{-1/2})}\Big(1+\kappa\big( e^{-\lambda_1 s}Q(\mu_\circ)\big)^\delta\Big)\Big(1+e^{-\lambda_1s}\big(e^{\lambda_1t}\wedge Q(\mu_\circ) \big)\Big)^{\kappa^2C_{k}}\\
\,\le\,C_{k}e^{-\delta\lambda_1(t-s)}\|g_s\|_{H^{-k}(M^{-1/2})}\Big(1+e^{-\lambda_1 s}Q(\mu_\circ)\Big)^{\delta+\kappa^2C_{k}},
\end{multline*}}
for some constant $C_{k}$ only depending on $d,\beta,k,a$, $\|W\|_{W^{k+2,\infty}(\R^d)}$.
$\|W\|_{W^{k+2,\infty}(\R^d)}$.
\end{lem}

\begin{lem}[Exponential decay for modified operator]\label{lem:estim-Wts}
Let $\kappa_0$ be as in Theorem~\ref{thm:ergodic}(i) and let~$\kappa\in[0,\kappa_0]$.
There exists $\lambda_2>0$ (only depending on $d,\beta,a$, $\|W\|_{W^{1,\infty}(\R^d)}$), such that the following holds: given $1<q\le2$ and $0<p\le1$ with $pq'\gg_{\beta,a}1$ large enough (only depending on $d,\beta,a$), choosing $\Lambda,R$ large enough (only depending on $d,\beta,p,a$, $\|W\|_{W^{1,\infty}(\R^d)}$), we have for all $k\ge0$, $h_s\in C^\infty_c(\Xd)$, and~$t\ge s\ge0$,
\begin{eqnarray}
\|W_{t,s}h_s\|_{W^{-k,q}(\langle z\rangle^p)}&\lesssim_{W,\beta,k,p,q,a}&e^{-p \lambda_2(t-s)}\|h_s\|_{W^{-k,q}(\langle z\rangle^p)},\label{eq:estim-Wts-zp}\\
\|W_{t,s}h_s\|_{H^{-k}(M^{-1/2})}&\lesssim_{W,\beta,k,p,q,a}&e^{-\lambda_2(t-s)}\|h_s\|_{H^{-k}(M^{-1/2})},\label{eq:estim-Wts-M}
\end{eqnarray}
where the multiplicative constants only depend on $d,\beta,k,p,q,a$, $\|W\|_{W^{k+1,\infty}(\R^d)}$.
\end{lem}

\begin{lem}[Regularization estimate]\label{lem:estim-reg}
Let $\kappa_0,\lambda_2$ be as in Theorem~\ref{thm:ergodic}(i) and Lemma~\ref{lem:estim-Wts}, respectively, and let $\kappa\in[0,\kappa_0]$.
There is some $n\ge1$ large enough (only depending on $d$) such that the following holds: given $1<q\le2$ and $0<p\le1$ with $pq'\gg_{\beta,a}1$ large enough (only depending on $d,\beta,a$), choosing $\Lambda,R$ large enough (only depending on $d,\beta,p,a$, $\|W\|_{W^{1,\infty}(\R^d)}$),
we have for all $k\ge0$, $h_s\in C^\infty_c(\Xd)$, and $t\ge s\ge0$,
\begin{multline*}
\int_{s\le s_1\le \ldots\le s_n\le t}\|\calA W_{t,s_n}\calA W_{s_n,s_{n-1}}\ldots \calA W_{s_1,s}h_s\|_{H^{-k}(M^{-1/2})}\,\ddr s_1\ldots\ddr s_n\\
\,\lesssim_{W,\beta,k,p,q,a}\,e^{-p\lambda_2(t-s)}\|h_s\|_{W^{-k,q}(\langle z\rangle^p)},
\end{multline*}
where the multiplicative constant only depends on $d,\beta,k,p,q,a$, $\|W\|_{W^{k+d+1,\infty}(\R^d)}$.
\end{lem}

With those lemmas at hand, we are now in position to conclude the proof of Proposition~\ref{prop:estim-Vts-Wk1*} based on the enlargement theory of Gualdani, Mischler, and Mouhot~\cite{Gualdani_2017, Mischler_2016}.

\begin{proof}[Proof of Proposition~\ref{prop:estim-Vts-Wk1*}]
Let $\lambda_1,\lambda_2$ be defined in Lemmas~\ref{lem:estim-Vts-Hk} and~\ref{lem:estim-Wts}, respectively,
and let $1<q\le2$, $k\ge0$, and $0<p\le1$ be fixed with $pq'\gg1$ large enough in the sense of Lemmas~\ref{lem:estim-Wts} and~\ref{lem:estim-reg}.
We note that the space $H^{-k}(M^{-1/2})$ is continuously embedded in $W^{-k,q}(\langle z\rangle^p)$: by definition of dual norms, we find for all $h\in C^\infty_c(\Xd)$,
\begin{equation}\label{eq:embedding-W-k1-Hk}
\|h\|_{W^{-k,q}(\langle z\rangle^p)}\,\lesssim_{W,\beta,k,a}\,\|h\|_{H^{-k}(M^{-1/2})},
\end{equation}
where the constant only depends on $d,\beta,k,a$, $\|W\|_{W^{k,\infty}(\R^d)}$.
In this setting, we can appeal to enlargement theory to extend the estimates of Lemma~\ref{lem:estim-Vts-Hk} to $W^{-k,q}(\langle z\rangle^p)$:
by Lemmas~\ref{lem:estim-Vts-Hk}, \ref{lem:estim-Wts}, and~\ref{lem:estim-reg}, we can apply~\cite[Theorem~1.1]{Mischler_2016} and the conclusion precisely follows.
For completeness, we include a short proof as the present situation does not exactly fit in the semigroup setting of~\cite{Mischler_2016}.
Starting point is the following form of the Duhamel formula: based on the decomposition~\eqref{eq:def_calB}, the fundamental solution operators $\{V_{t,s}\}_{0\le s\le t}$ can be expanded around $\{W_{t,s}\}_{0\le s\le t}$ via
\[V_{t,s}\,=\,W_{t,s}+\int_s^tV_{t,u}\calA W_{u,s}\,\ddr u.\]
By iteration, we get for any $n\ge1$,
\[V_{t,s}\,=\,W_{t,s}+\sum_{j=1}^{n-1}\int_s^tW_{t,u}(\calA W)^j_{u,s}\,\ddr u+\int_s^tV_{t,u}(\calA W)^n_{u,s}\,\ddr u,\]
where we have set for abbreviation, for all $j\ge1$ and $0\le s_0\le s_j$,
\[(\calA W)^{ j}_{s_j,s_0}\,:=\,\int_{s_0\le s_1\le\ldots\le s_{j-1}\le s_j}\calA W_{s_j,s_{j-1}}\calA W_{s_{j-1},s_{j-2}}\ldots \calA W_{s_1,s_0}\,\ddr s_1\ldots\ddr s_{j-1}.\]
Given $h_s\in C^\infty_c(\Xd)$,
taking norms,
applying the exponential decay of Lemma~\ref{lem:estim-Wts} for $\{W_{t,s}\}_{0\le s\le t}$ on the space~$W^{-k,q}(\langle z\rangle^p)$, noting that $A$ is bounded on $W^{-k,q}(\langle z\rangle^p)$ and that $|\int_\Xd h|\lesssim_{k,p,q}\|h\|_{W^{-k,q}(\langle z\rangle^p)}$ provided $pq'>2d$, we get for any $n\ge1$,
\begin{multline*}
\Big\|V_{t,s}h_s-M\int_\Xd h_s\Big\|_{W^{-k,q}(\langle z\rangle^p)}
\,=\,\Big\|V_{t,s}h_s-M\int_\Xd V_{t,s}h_s\Big\|_{W^{-k,q}(\langle z\rangle^p)}\\
\,\lesssim_{W,\beta,k,p,q,n,a}\,\big(1+(t-s)^n\big)e^{-p\lambda_2(t-s)}\|h_s\|_{W^{-k,q}(\langle z\rangle^p)}\\
+\int_{s}^t\Big\|V_{t,u}(\calA W)^n_{u,s}h_s-M\int_\Xd (\calA W)^n_{u,s}h_s\Big\|_{W^{-k,q}(\langle z\rangle^p)}\,\ddr u.
\end{multline*}
In order to estimate the last right-hand side term, we recall the embedding~\eqref{eq:embedding-W-k1-Hk} and we use the exponential relaxation of Lemma~\ref{lem:estim-Vts-Hk} for $\{V_{t,s}\}_{0\le s\le t}$ on the space $H^{-k}(M^{-1/2})$: for any $\delta\in[0,1]$, we obtain
\begin{multline*}
\Big\|V_{t,s}h_s-M\int_\Xd h_s\Big\|_{W^{-k,q}(\langle z\rangle^p)}
\,\lesssim_{W,\beta,k,p,q,n,a}\,\big(1+(t-s)^{n}\big)e^{-p\lambda_2(t-s)}\|h_s\|_{W^{-k,q}(\langle z\rangle^p)}\\
+\Big(1+e^{-\lambda_1s}Q(\mu_\circ)\Big)^{\delta+\kappa^2C_k}\int_s^te^{-\delta\lambda_1(t-u)}\|(\calA W)^{ n}_{u,s}h_s\|_{H^{-k}(M^{-1/2})}\,\ddr u,
\end{multline*}
for some constant $C_k$ only depending on $d,\beta,k,a$, $\|W\|_{W^{k+2,\infty}(\R^d)}$.
Now using the regularization estimate of Lemma~\ref{lem:estim-reg} for $n\ge1$ large enough (only depending on $d$), we are led to
\begin{multline*}
\Big\|V_{t,s}h_s-M\int_\Xd h_s\Big\|_{W^{-k,q}(\langle z\rangle^p)}
\,\lesssim_{W,\beta,k,p,q,a}\,\big(1+(t-s)^{n}\big)e^{-p\lambda_2(t-s)}\|h_s\|_{W^{-k,q}(\langle z\rangle^p)}\\
+\|h_s\|_{W^{-k,q}(\langle z\rangle^p)}\Big(1+e^{-\lambda_1s}Q(\mu_\circ)\Big)^{\delta+\kappa^2C_k}\int_s^te^{-\delta\lambda_1(t-u)}e^{-p\lambda_2(u-s)}\,\ddr u,
\end{multline*}
and the conclusion follows (up to renaming $\frac12(\lambda_1\wedge\lambda_2)$ as $\lambda_1$).
\end{proof}

\subsection{Proof of Theorem~\ref{thm:ergodic}(ii)}
In this section, we establish Theorem~\ref{thm:ergodic}(ii) perturbatively as a consequence of Proposition~\ref{prop:estim-Vts-Wk1*}.
As a preliminary, we start with the following a priori estimates for the mean-field evolution~\eqref{eq:MFL-McKean} on the weighted spaces $W^{-k,q}(\langle z\rangle^p)$.

\begin{lem}\label{lem:dual_estimate_m}
Let $\kappa\in[0,1]$. There exists $\lambda>0$ (only depending on $\beta,a$) such that the following holds: given $1<q\le2$ and $0 < p \le 1$ with $pq'\ge4d$, we have for all $k\ge1$ and $t\ge0$,
\[\|m(t,\mu_\circ)\|_{W^{-k,q}(\langle z\rangle^p)}\,\lesssim\, \int_\Xd\langle z\rangle^p\,m(t,\mu_\circ)\,\lesssim\,1+e^{-p\lambda t}Q(\mu_\circ)^p,\]
where multiplicative constants only depend on $d,\beta,a$, $\|W\|_{W^{1,\infty}(\R^d)}$.
\end{lem}

\begin{proof}
Set $\mu_t:=m(t,\mu_\circ)$ for abbreviation.
Noting that for $q'\ge pq'\ge4d$ and $k\ge1$ the Sobolev embedding yields
\begin{equation*}
\|\mu_t\|_{W^{-k,q}(\langle z\rangle^{p})}\,\le\,\|\mu_t\|_{W^{-1,q}(\langle z\rangle^{p})}\,\lesssim\,\int_\Xd\langle z\rangle^{p}\,\mu_t(\ddr z)\,\le\,\Big(\int_\Xd\langle z\rangle^2\mu_t(\ddr z)\Big)^{\frac p2}\,=\,Q(\mu_t)^p,
\end{equation*}
the conclusion follows from Lemma~\ref{lem:unif-mom-est}(i).
\end{proof}

With this estimate at hand, we can now conclude the proof of Theorem~\ref{thm:ergodic}(ii) in the Langevin setting. As we treat the (compact) convolution term $(L_\mu-R_\mu)h=\kappa(\nabla W\ast h)\cdot\nabla_v\mu$ perturbatively, we emphasize that some care is needed to obtain a result that is valid for $\kappa \in [0,\kappa_0]$ independently of~$k,p,q$. This is achieved through an iterative procedure.

\begin{proof}[Proof of Theorem~\ref{thm:ergodic}(ii)]
By a standard approximation argument, it suffices to consider $f_\circ\in C^\infty_c(\Xd)$
with $\int_{\Xd} f_\circ = 0$.
In that case, the well-posedness of the Cauchy problem~\eqref{eq:erg_q} is standard and it remains to establish the stability estimate~\eqref{eq:estim-concl-qt}.
Let $\kappa_0,\lambda_1$ be as in Proposition~\ref{prop:estim-Vts-Wk1*}, and assume that~$\lambda_1$ is chosen to be smaller than the exponent~$\lambda$ in Lemma~\ref{lem:dual_estimate_m}.
In terms of the modified linearized operator~$R_\mu$ defined in~\eqref{eq:def-Rmu}, further setting $\mu_t:=m(t,\mu_\circ)$ for abbreviation,
and defining the operator
\[A_th\,:=\,(\nabla W\ast h)\cdot\nabla_v\mu_t,\]
the equation~\eqref{eq:erg_q}
can be reformulated as
\[\partial_tf_t\,=\,R_{\mu_t}f_t+\kappa A_tf_t,\]
hence, by Duhamel's formula,
\begin{equation*}
f_t=V_{t,0}f_\circ
+\kappa\int_0^tV_{t,s}A_sf_s\,\ddr s.
\end{equation*}
Iterating this formula,
we get for any $n\ge1$,
\begin{multline}\label{eq:ft-duhiterated}
f_t\,=\,
V_{t,0}f_\circ
+\sum_{m=2}^{n}\kappa^{m-1}\int_{0\le s_1\le\ldots\le s_{m-1}\le t}V_{t,s_{m-1}}A_{s_{m-1}}V_{s_{m-1},s_{m-2}}\ldots A_{s_1}V_{s_1,0}f_\circ\,\ddr s_1\ldots \ddr s_{m-1}\\
+\kappa^{n}\int_{0\le s_1\le \ldots\le s_n\le t}V_{t,s_n}A_{s_n}V_{s_n,s_{n-1}}\ldots A_{s_2}V_{s_2,s_1}A_{s_1}f_{s_1}\,\ddr s_1\ldots \ddr s_{n}.
\end{multline}
From here, we split the proof into two steps.

\medskip
\step1 Estimate on $A_t$:
given $1<q_0, q\le2$ and $0<p_0,p\le1$ with $p_0q_0',pq'\ge4d$, we have for all $k_0\ge2$, $k\ge0$, $h\in C_c^\infty(\Xd)$, and $t\ge0$,
\begin{equation}\label{eq:At-estim}
\|A_t h\|_{W^{-k_0,q_0}(\langle z \rangle^{p_0})}
\,\lesssim_{W,\beta,k,a}\,\Big(1+e^{-p_0\lambda_1t}Q(\mu_\circ)^{p_0}\Big)\|h\|_{W^{- k, q}(\langle z \rangle^{ p})},
\end{equation}
where the multiplicative constant only depends on $d,\beta,k,a$, $\|W\|_{W^{k+2,\infty}(\R^d)}$.

\medskip\noindent
By definition of $A_t$, given $1<q_0,q\le2$ and $0<p_0,p\le1$ with $pq'\ge4d$, we can bound for all $k_0\ge2$, $k\ge0$, $h\in C^\infty_c(\Xd)$, and $t\ge0$,
\begin{eqnarray*}
\|A_t h\|_{W^{-k_0,q_0}(\langle z \rangle^{p_0})}
&\le&\|(\nabla W\ast h)\mu_t\|_{W^{-1,q_0}(\langle z \rangle^{p_0})}\\
&\lesssim&\|\mu_t\|_{W^{-1,q_0}(\langle z\rangle^{p_0})}\|W\ast h\|_{W^{2,\infty}(\R^d)}\\
&\lesssim_k&\|\mu_t\|_{W^{-1,q_0}(\langle z\rangle^{p_0})}\|W\|_{W^{k+2,\infty}(\R^d)}\|h\|_{W^{- k, q}(\langle z \rangle^{ p})}.
\end{eqnarray*}
Now applying Lemma~\ref{lem:dual_estimate_m}, provided that $p_0q_0'\ge4d$ and recalling $\lambda \ge \lambda_1$, the claim follows.

\medskip
\step2 Conclusion.\\
Starting from~\eqref{eq:ft-duhiterated}, leaving the last term aside, appealing to~\eqref{eq:At-estim} with $k_0=k$, $q_0=q$, $p_0=p$, as well as to the ergodic estimates of Proposition~\ref{prop:estim-Vts-Wk1*}, we obtain for all $1<q\le2$ and $0<p\le1$ with $pq'\gg_{\beta,a}1$ large enough (only depending on $d,\beta,a$), $k\ge2$, $n\ge1$, $t\ge0$, and~$\delta\in[0,1]$,
\begin{multline}\label{eq:estim-ft-iterate}
\|f_t\|_{W^{-k,q}(\langle z\rangle^p)}
\,\le\,
C_{k,p,q}e^{-(\delta\wedge p)\lambda_1t}\|f_\circ\|_{W^{-k,q}(\langle z\rangle^p)}\sum_{m=0}^{n-1} \tfrac{(\kappa C_{k,p,q}t)^m}{m!}Q(\mu_\circ)^{m(p+\delta+\kappa^2C_k)}
\\
+\kappa^{n}\int_{0\le s_1\le \ldots\le s_n\le t}\|V_{t,s_n}A_{s_n}V_{s_n,s_{n-1}}\ldots A_{s_2}V_{s_2,s_1}A_{s_1}f_{s_1}\|_{W^{-k,q}(\langle z\rangle^p)}\,\ddr s_1\ldots \ddr s_{n},
\end{multline}
for some constant $C_k$ only depending on $d,\beta,k,a$, $\|W\|_{W^{k+2,\infty}(\R^d)}$,
and some constant $C_{k,p,q}$ further depending on $p,q$, $\|W\|_{W^{k+d+1,\infty}(\R^d)}$. 
It remains to estimate the last right-hand side term, for which more care is needed in order to avoid a bad dependency in the multiplicative constant.
For that purpose, given a reference exponent $0<p_0\le1$, first note that we can choose $1<q_0\le2$ with $p_0q_0'\gg_{\beta,a}1$ large enough (only depending on $d,\beta,a$), such that:
\begin{enumerate}[---]
\item both Lemma~\ref{lem:dual_estimate_m} and Proposition~\ref{prop:estim-Vts-Wk1*} hold in the space $W^{-2,q_0}(\langle z\rangle^{p_0})$;
\smallskip\item for all $k\ge2$, $1<q\le q_0$, and $0<p\le\frac12p_0$, we have the following embedding, for all $h\in C^\infty_c(\Xd)$,
\[\|h\|_{W^{-k,q}(\langle z\rangle^p)}\,\lesssim\, \|h\|_{W^{-2,q_0}(\langle z\rangle^{p_0})},\]
where the multiplicative constant only depends on $d$.
\end{enumerate}
With this choice, for all $k\ge2$, $1<q\le q_0$, $0<p\le\frac12 p_0$, and $\delta\in[0,1]$, applying the above embedding and then using iteratively Proposition~\ref{prop:estim-Vts-Wk1*} in $W^{-2,q_0}(\langle z\rangle^{p_0})$ and the result~\eqref{eq:At-estim} of Step~1 with $k=k_0=2$, $q=q_0$, and $p=p_0$, we can estimate
\begin{eqnarray*}
\lefteqn{\|V_{t,s_n}A_{s_n}V_{s_n,s_{n-1}}\ldots A_{s_2}V_{s_2,s_1}A_{s_1}f_{s_1}\|_{W^{-k,q}(\langle z\rangle^p)}}\\
&\lesssim&\|V_{t,s_n}A_{s_n}V_{s_n,s_{n-1}}\ldots A_{s_2}V_{s_2,s_1}A_{s_1}f_{s_1}\|_{W^{-2,q_0}(\langle z\rangle^{p_0})}\\
&\le&C_1^n\|A_{s_1}f_{s_1}\|_{W^{-2,q_0}(\langle z\rangle^{p_0})}e^{-(\delta\wedge p_0)\lambda_1(t-s_1)}\Big(1+e^{-\delta\lambda_1s_{1}}Q(\mu_\circ)^{\delta+\kappa^2C_0}\Big)\\
&&\qquad\times\prod_{j=2}^n\Big(1+e^{-(\delta\wedge p_0)\lambda_1s_{j}}Q(\mu_\circ)^{p_0+\delta+\kappa^2C_0}\Big),
\end{eqnarray*}
for some constant $C_0$ only depending on $d,\beta,a$, $\|W\|_{W^{4,\infty}(\R^d)}$, and some constant $C_1$ further depending on $p_0,q_0,\|W\|_{W^{d+3,\infty}(\R^d)}$.
Now applying~\eqref{eq:At-estim} with $k_0=2$, given $1<q\le q_0$ and $0<p\le\frac12p_0$ with~$pq'\ge4d$, we deduce for all $k\ge2$ and $\delta\in[0,1]$,
\begin{multline*}
\|V_{t,s_n}A_{s_n}V_{s_n,s_{n-1}}\ldots A_{s_2}V_{s_2,s_1}A_{s_1}f_{s_1}\|_{W^{-k,q}(\langle z\rangle^p)}\\
\,\le\,C_2C_1^n\|f_{s_1}\|_{W^{-k,q}(\langle z\rangle^{p})}e^{-(\delta\wedge p_0)\lambda_1(t-s_1)}\prod_{j=1}^n\Big(1+e^{-(\delta\wedge p_0)\lambda_1s_{j}}Q(\mu_\circ)^{p_0+\delta+\kappa^2C_0}\Big),
\end{multline*}
for some constant $C_2$ only depending on $d,\beta,k,a$, $\|W\|_{W^{k+2,\infty}(\R^d)}$.
Expanding the product, we get
\begin{multline*}
\|V_{t,s_n}A_{s_n}V_{s_n,s_{n-1}}\ldots A_{s_2}V_{s_2,s_1}A_{s_1}f_{s_1}\|_{W^{-k,q}(\langle z\rangle^p)}
\,\le\,C_2C_1^n\|f_{s_1}\|_{W^{-k,q}(\langle z\rangle^{p})}e^{-(\delta\wedge p_0)\lambda_1(t-s_1)}\\
+C_2(2C_1)^n\|f_{s_1}\|_{W^{-k,q}(\langle z\rangle^{p})}Q(\mu_\circ)^{n(p_0+\delta+\kappa^2C_0)}e^{-(\delta\wedge p_0)\lambda_1(t-s_1)}\sum_{j=1}^ne^{-(\delta\wedge p_0)\lambda_1s_j}.
\end{multline*}
Taking the time integral,
we are then led to
\begin{multline*}
\int_{0\le s_1\le \ldots\le s_n\le t}\|V_{t,s_n}A_{s_n}V_{s_n,s_{n-1}}\ldots A_{s_2}V_{s_2,s_1}A_{s_1}f_{s_1}\|_{W^{-k,q}(\langle z\rangle^p)}\ddr s_1\ldots\ddr s_n\\
\,\le\,C_2(2C_1)^n((\delta\wedge p_0)\lambda_1)^{1-n}\int_0^te^{-\frac12(\delta\wedge p_0)\lambda_1(t-s)}\|f_{s}\|_{W^{-k,q}(\langle z\rangle^{p})}\,\ddr s\\
+C_2(4C_1)^n((\delta\wedge p_0)\lambda_1)^{1-n}Q(\mu_\circ)^{n(p_0+\delta+\kappa^2C_0)}
\int_0^te^{-\frac12(\delta\wedge p_0)\lambda_1t}\|f_{s}\|_{W^{-k,q}(\langle z\rangle^{p})}\ddr s.
\end{multline*}
Inserting this into~\eqref{eq:estim-ft-iterate}, applying Gr\"onwall's inequality, renaming the constants, and recalling that~$q_0$ can be chosen only depending on $d,\beta,a,p_0$, we conclude the following: given $1<q\le2$ and~$0<p\le\frac12p_0\le\frac12$ with $pq'\gg_{\beta,a}1$ large enough (only depending on $d,\beta,a$), we have for all $n\ge1$, $k\ge2$, $\delta\in[0,1]$, and $t\ge0$,
\begin{multline*}
\|f_t\|_{W^{-k,q}(\langle z\rangle^p)}
\,\le\,
C_{k,p,q}e^{-\frac12(\delta\wedge p)\lambda_1t}\|f_\circ\|_{W^{-k,q}(\langle z\rangle^p)}\sum_{m=0}^{n-1} \tfrac{(\kappa C_{k,p,q}t)^m}{m!}Q(\mu_\circ)^{m(p+\delta+\kappa^2C_k)}\\
\times\exp\bigg(C_k(\kappa \delta^{-1}C_1)^{n}t+C_k(\kappa \delta^{-1}C_1)^{n}Q(\mu_\circ)^{n(p_0+\delta+\kappa^2C_0)}\bigg),
\end{multline*}
for some constant $C_0$ only depending on $d,\beta,a$, $\|W\|_{W^{4,\infty}(\R^d)}$, some constant $C_1$ further depending on $p_0$, $\|W\|_{W^{d+3,\infty}(\R^d)}$,
some constant $C_k$ only depending on $d,\beta,k,a$, $\|W\|_{W^{k+2,\infty}(\R^d)}$, and some constant $C_{k,p,q}$ further depending on $p,q$, $\|W\|_{W^{k+d+1,\infty}(\R^d)}$.
We can apply this last estimate for instance in the following two ways:
\begin{enumerate}[---]
\item given $\theta\in(0,1]$, choosing $p_0=\frac12\theta$, $\delta=\frac14\theta$, $n=1$, and requiring $\kappa$ to be small enough such that $\kappa \delta^{-1}C_1C_k\le\frac14(\delta\wedge p)\lambda_1$ and $\kappa^2 C_0 \le \frac14\theta$;
\item choosing $p_0=\delta=1$, requiring $\kappa$ to be small enough such that $\kappa\delta^{-1}C_1\le\frac12$, and then choosing $n\ge1$ such that $2^{-n}C_k\le\frac14p\lambda_1$;
\end{enumerate}
and the conclusion follows (with $\lambda_0:=\frac14\lambda_1$).
\end{proof}
\dd

\subsection{Proof of Lemma~\ref{lem:estim-Vts-Hk}: ergodic estimates with Gibbs weight}\label{sec:pr-lem:estim-Vts-Hk}
This section is devoted to the proof of Lemma~\ref{lem:estim-Vts-Hk}. We start by considering the standard kinetic Fokker--Planck operator
\[R_M h\,:=\,\tfrac12\Div_v((\nabla_v+\beta v)h)-v\cdot\nabla_xh+(\nabla A+\kappa\nabla W\ast M)\cdot\nabla_vh.\]
The exponential relaxation of the associated semigroup $\{e^{tR_M}\}_{t\ge0}$ on $\Ld^2(M^{-{1/2}})$ was established in the seminal work of Dolbeault, Mouhot, and Schmeiser~\cite{Dolbeault_Hypocoercivity_2015} based on hypocoercivity techniques.
We post-process this well-known result to further derive estimates on Sobolev spaces with arbitrary negative regularity. For that purpose, we appeal to a duality argument and argue by induction using parabolic estimates.

\begin{lem}\label{lem:relax-LM0-FP}
Let $\kappa_0$ be as in Theorem~\ref{thm:ergodic}(i) and let $\kappa\in[0,\kappa_0]$.
There exists $\lambda_1>0$ (only depending on $d,\beta,a$, $\|W\|_{W^{1,\infty}(\R^d)}$) such that for all $k\ge0$, $h\in C^\infty_c(\Xd)$, and $t\ge0$,
\begin{equation}\label{eq:res00}
\Big\|e^{tR_M}h-M\int_\Xd h\Big\|_{H^{-k}(M^{-{1/2}})}\,\lesssim_{W,\beta,k,a}\,e^{-\lambda_1 t}\|h\|_{H^{-k}(M^{-{1/2}})},
\end{equation}
where the multiplicative factor only depends on $d,\beta,k,a$, $\|W\|_{W^{k+1,\infty}(\R^d)}$.
\end{lem}

\begin{proof}
We set for abbreviation $\pi_M^\bot h:=h-M\int_\Xd h$, and we note that $\int_\Xd e^{tR_M}h=\int_\Xd h$ for all $t\ge0$.
By definition of dual norms, cf.~\eqref{eq:def-H-kM}, also recalling the definition of the steady state $M$, it suffices to show that there is some $\lambda_1>0$ such that for all $k\ge0$, $h\in C^\infty_c(\Xd)$, and $t\ge0$,
\begin{equation*}
\|\pi_M^\bot e^{tR_M^*}h\|_{H^k(M^{-{1/2}})}
\,\lesssim_{W,\beta,k,a}\,e^{-\lambda_1 t}\|h\|_{H^k(M^{-{1/2}})},
\end{equation*}
where $R_M^*$ stands for the dual Fokker--Planck operator
\begin{equation}\label{eq:def-LM0-star}
R_M^* h\,=\,\tfrac12\Div_v((\nabla_v+\beta v)h)+v\cdot\nabla_xh-(\nabla A+\kappa\nabla W\ast M)\cdot\nabla_v h.
\end{equation}
We shall actually prove the following more detailed estimate, further capturing the dissipation: there is some $\lambda_1>0$ such that for all $k\ge0$, $h\in C^\infty_c(\Xd)$, and $t\ge0$,
\begin{multline}
    \label{eq:res00k}
e^{\lambda_1 t}\|\pi_M^\bot e^{tR_M^*}h\|_{H^k(M^{-{1/2}})}
+\Big(\int_0^t e^{2\lambda_1 s}\|(\nabla_v+\beta v)e^{s R_M^*}h\|_{H^k(M^{-{1/2}})}^2\,\ddr s\Big)^\frac12 \\
\,\lesssim_{W,\beta,k,a}\,
\|h\|_{H^k(M^{-{1/2}})}.
\end{multline}
We split the proof into two steps.

\medskip
\step1 Case $k=0$: there is some $\lambda_1>0$ (only depending on $d,\beta,a$, $\|W\|_{W^{1,\infty}(\Xd)}$) such that for all $h\in C^\infty_c(\Xd)$ and $t\ge0$,
\begin{equation}\label{eq:res001}
\|\pi_M^\bot e^{tR_M^*}h\|_{\Ld^2(M^{-{1/2}})}
\,\lesssim_{W,\beta,a}\,e^{-\lambda_1 t}\|\pi_M^\bot h\|_{\Ld^2(M^{-{1/2}})}.
\end{equation}
This was precisely established by Dolbeault, Mouhot, and Schmeiser in~\cite[Theorem~10]{Dolbeault_Hypocoercivity_2015}.

\medskip
\step2 Conclusion: proof of~\eqref{eq:res00k}.\\
Given $h\in C^\infty_c(\Xd)$, we set for shortness $J_{t}^{\alpha,\gamma}:=\nabla_x^\alpha\nabla_v^\gamma \pi_M^\bot e^{tR_M^*}h$ for multi-indices $\alpha,\gamma\in\N^d$.
By definition, it satisfies
\begin{equation}\label{eq:Jtalphagamma}
\left\{\begin{array}{l}
\partial_tJ_{t}^{\alpha,\gamma}=R_M^*J_t^{\alpha,\gamma}+r_{t}^{\alpha,\gamma},\quad\text{for $t\ge0$},\\
J_{t}^{\alpha,\gamma}|_{t=0}=\nabla_x^\alpha\nabla_v^\gamma(\pi_M^\bot h),
\end{array}\right.
\end{equation}
where the source term $r_{t}^{\alpha,\gamma}$ is given by
\begin{equation}\label{eq:def-rtalphagamma-remainder}
r_{t}^{\alpha,\gamma}\,:=\,[\nabla_x^\alpha\nabla_v^\gamma ,R_M^*]\pi_M^\bot e^{tR_M^*}h.
\end{equation}
On the one hand, by Duhamel's formula in form of
\begin{equation*}
J_t^{\alpha,\gamma}\,=\,e^{tR_M^*}\nabla_x^\alpha\nabla_v^\gamma(\pi_M^\bot h)+\int_0^te^{(t-s)R_M^*}r_s^{\alpha,\gamma}\,\ddr s,
\end{equation*}
the exponential decay~\eqref{eq:res001} yields
\begin{equation}\label{eq:estim-dec-Jt-0}
\|J_t^{\alpha,\gamma}\|_{\Ld^2(M^{-{1/2}})}\,\lesssim_{W,\beta,a}\,e^{-\lambda_1t}\|\nabla_x^\alpha\nabla_v^\gamma (\pi_M^\bot h)\|_{\Ld^2(M^{-{1/2}})}+\int_0^te^{-\lambda_1(t-s)}\|r_s^{\alpha,\gamma}\|_{\Ld^2(M^{-{1/2}})}\,\ddr s.
\end{equation}
On the other hand, integrating by parts, the energy identity for equation~\eqref{eq:Jtalphagamma} takes the form
\begin{eqnarray}
\partial_t\|J_t^{\alpha,\gamma}\|_{\Ld^2(M^{-{1/2}})}^2
&=&
2\int_\Xd J_t^{\alpha,\gamma}(R_M^*J_t^{\alpha,\gamma})M^{-1}
+2\int_\Xd J_t^{\alpha,\gamma}r_t^{\alpha,\gamma}M^{-1}
\nonumber\\
&\le&
-\|(\nabla_v+\beta v)J_t^{\alpha,\gamma}\|_{\Ld^2(M^{-{1/2}})}^2
+2\|J_t^{\alpha,\gamma}\|_{\Ld^2(M^{-{1/2}})}\|r_t^{\alpha,\gamma}\|_{\Ld^2(M^{-{1/2}})}.
\label{eq:estim-energy-Jalphagamma}
\end{eqnarray}
Regarding the dissipation term in this last estimate, we make the following observation: integrating by parts and using $\nabla_vM^{-1}=\beta vM^{-1}$, we find for all $f\in C^\infty_c(\Xd)$,
\begin{eqnarray}
\int_{\Xd} |\nabla_vf|^2 M^{-1}
&=&-\int_{\Xd}  M^{-1}f\big((\nabla_v+\beta v)\cdot\nabla_vf\big)\nonumber\\
&=&-\int_{\Xd}  M^{-1}f\,\Div_v\big((\nabla_v+\beta v)f\big)+\beta d\int_{\Xd}  |f|^2M^{-1}\nonumber\\
&=&\int_{\Xd}  |(\nabla_v+\beta v)f|^2M^{-1}+\beta d\int_{\Xd}  |f|^2M^{-1}.\label{eq:dissip-decomp}
\end{eqnarray}
Using this to replace half of the dissipation term in~\eqref{eq:estim-energy-Jalphagamma}, we get
\begin{multline*}
\partial_t\|J_t^{\alpha,\gamma}\|_{\Ld^2(M^{-{1/2}})}^2
+\tfrac12\Big(\|\nabla_vJ_t^{\alpha,\gamma}\|_{\Ld^2(M^{-{1/2}})}^2
+\|(\nabla_v+\beta v)J_t^{\alpha,\gamma}\|_{\Ld^2(M^{-{1/2}})}^2\Big)\\
\,\le\,
\tfrac{\beta d}2\|J_t^{\alpha,\gamma}\|_{\Ld^2(M^{-{1/2}})}^2+2\|J_t^{\alpha,\gamma}\|_{\Ld^2(M^{-{1/2}})}\|r_t^{\alpha,\gamma}\|_{\Ld^2(M^{-{1/2}})},
\end{multline*}
and thus, by Gr\"onwall's inequality, for all $\lambda\ge0$,
\begin{multline*}
\sup_{0\le s\le t}\Big(e^{2\lambda s}\|J_s^{\alpha,\gamma}\|_{\Ld^2(M^{-{1/2}})}^2\Big)
+\int_0^te^{2\lambda s}\Big(\|\nabla_vJ_s^{\alpha,\gamma}\|_{\Ld^2(M^{-{1/2}})}^2+\|(\nabla_v+\beta v)J_s^{\alpha,\gamma}\|_{\Ld^2(M^{-{1/2}})}^2\Big)\,\ddr s\\
\,\lesssim_{\beta,d,\lambda}\,
\|\nabla_x^\alpha\nabla_v^\gamma(\pi_M^\bot h)\|_{\Ld^2(M^{-{1/2}})}^2
+\Big(\int_0^te^{\lambda s}\|r_s^{\alpha,\gamma}\|_{\Ld^2(M^{-{1/2}})}\,\ddr s\Big)^2
+\int_0^te^{2\lambda s}\|J_s^{\alpha,\gamma}\|_{\Ld^2(M^{-{1/2}})}^2\,\ddr s,
\end{multline*}
where the multiplicative constant is of the form $C_{d, \beta}(1+\lambda)$.
Now using~\eqref{eq:estim-dec-Jt-0} to bound the last term, we obtain for all $0\le\lambda<\lambda'<\lambda_1$,
\begin{multline*}
\sup_{0\le s\le t}\Big(e^{2\lambda s}\|J_s^{\alpha,\gamma}\|_{\Ld^2(M^{-{1/2}})}^2\Big)
+\int_0^te^{2\lambda s}\Big(\|\nabla_vJ_s^{\alpha,\gamma}\|_{\Ld^2(M^{-{1/2}})}^2+\|(\nabla_v+\beta v)J_s^{\alpha,\gamma}\|_{\Ld^2(M^{-{1/2}})}^2\Big)\,\ddr s\\
\,\lesssim_{W,\beta,\lambda,\lambda',a}\,
\|\nabla_x^\alpha\nabla_v^\gamma(\pi_M^\bot h)\|_{\Ld^2(M^{-{1/2}})}^2
+\Big(\int_0^te^{\lambda' s}\|r_s^{\alpha,\gamma}\|_{\Ld^2(M^{-{1/2}})}\,\ddr s\Big)^2.
\end{multline*}
By definition of $R_M^*$, cf.~\eqref{eq:def-LM0-star}, the source term $r^{\alpha,\gamma}$ defined in~\eqref{eq:def-rtalphagamma-remainder} takes the form
\begin{equation*}
r_{t}^{\alpha,\gamma}=
\sum_{i:e_i\le\gamma}\binom{\gamma}{e_i}J_t^{\alpha+e_i,\gamma-e_i}
+\!\!\sum_{(\alpha',\gamma')<(\alpha,\gamma)}\binom{\alpha}{\alpha'}\binom{\gamma}{\gamma'}\nabla_x^{\alpha-\alpha'}\nabla_v^{\gamma-\gamma'}\big(\tfrac\beta2v-\nabla A-\kappa\nabla W\ast M\big)\cdot\nabla_vJ_t^{\alpha',\gamma'},
\end{equation*}
so the above yields for all $0\le\lambda<\lambda'<\lambda_1$,
\begin{multline*}
\sup_{0\le s\le t}\Big(e^{2\lambda s}\|J_s^{\alpha,\gamma}\|_{\Ld^2(M^{-{1/2}})}^2\Big)
+\int_0^te^{2\lambda s}\Big(\|\nabla_vJ_s^{\alpha,\gamma}\|_{\Ld^2(M^{-{1/2}})}^2+\|(\nabla_v+\beta v)J_s^{\alpha,\gamma}\|_{\Ld^2(M^{-{1/2}})}^2\Big)\,\ddr s\\
\,\lesssim_{W,\beta, \lambda,\lambda', \alpha,\gamma,a}\,
\|\nabla_x^\alpha\nabla_v^\gamma(\pi_M^\bot h)\|_{\Ld^2(M^{-{1/2}})}^2
+\max_{i:e_i\le\gamma}\sup_{0\le s\le t}\Big(e^{2\lambda' s}\|J_s^{\alpha+e_i,\gamma-e_i}\|_{\Ld^2(M^{-{1/2}})}^2\Big)\\
+\max_{(\alpha',\gamma')<(\alpha,\gamma)}\int_0^te^{2\lambda' s}\|\nabla_vJ_s^{\alpha',\gamma'}\|_{\Ld^2(M^{-{1/2}})}^2\,\ddr s.
\end{multline*}
A direct induction then yields for all $\alpha,\gamma\ge0$, $\lambda\in[0,\lambda_1)$, and $t\ge0$,
\begin{multline*}
e^{2\lambda t}\|J_t^{\alpha,\gamma}\|_{\Ld^2(M^{-{1/2}})}^2
+\int_0^t e^{2\lambda s}\Big(\|\nabla_vJ_s^{\alpha,\gamma}\|_{\Ld^2(M^{-{1/2}})}^2+\|(\nabla_v+\beta v)J_s^{\alpha,\gamma}\|_{\Ld^2(M^{-{1/2}})}^2\Big)\,\ddr s\\
\,\lesssim_{W,\beta,\lambda,\alpha,\gamma,a}\,
\|\nabla_x^\alpha\nabla_v^\gamma(\pi_M^\bot h)\|_{\Ld^2(M^{-{1/2}})}^2.
\end{multline*}
Recalling $J_t^{\alpha,\gamma}=\nabla_x^\alpha\nabla_v^\gamma\pi_M^\bot e^{tR_M^*}h$ and $\pi_M^\bot h=h-M\int_\Xd h$, this proves the claim~\eqref{eq:res00k} (up to a redefinition of $\lambda_1$). 
\end{proof}

Starting from the above exponential decay for the Fokker--Planck semigroup, we can conclude the proof of Lemma~\ref{lem:estim-Vts-Hk} by means of a perturbation argument.

\begin{proof}[Proof of Lemma~\ref{lem:estim-Vts-Hk}]
Let $\lambda_0,\lambda_1$ be as in Theorem~\ref{thm:ergodic}(i) and in Lemma~\ref{lem:relax-LM0-FP}, respectively.
We split the proof into three steps.

\medskip
\step1 Perturbative semigroup estimate: proof that for all $k\ge0$, $g_s\in C^\infty_c(\Xd)$, and $t\ge s\ge0$,
\begin{equation}\label{eq:lala}
\|V_{t,s}\pi_M^\bot g_s\|_{H^{-k}(M^{-1/2})}
\,\le\,C_{k}e^{-(\lambda_1-\kappa^2C_{k})(t-s)}\|g_s\|_{H^{-k}(M^{-1/2})},
\end{equation}
for some constant $C_{k}$ only depending on $d,\beta,k,a$, $\|W\|_{W^{k+1,\infty}(\R^d)}$.

\medskip\noindent
By Duhamel's formula,
\[V_{t,s}\pi_M^\bot g_s\,=\,e^{(t-s)R_M}\pi_M^\bot g_s+\kappa\int_s^te^{(t-u)R_M}\big((\nabla W\ast(\mu_u-M))\cdot\nabla_v (V_{u,s} \pi_M^\bot g_s)\big)\,\ddr u,\]
and thus, for all $h_t\in C^\infty_c(\Xd)$, integrating by parts and using $\pi_M^\bot e^{tR_M}=e^{tR_M}\pi_M^\bot$,
\begin{multline*}
\int_\Xd  (V_{t,s}\pi_M^\bot g_s) h_t M^{-1}
\,=\,\int_\Xd \big( e^{(t-s)R_M^*}\pi_M^\bot h_t\big)g_sM^{-1}\\
-\kappa\int_s^t\Big(\int_\Xd \big((\nabla_v+\beta v)e^{(t-u)R_M^*} h_t\big)\cdot(\nabla W\ast(\mu_u-M)) (V_{u,s} \pi_M^\bot g_s)M^{-1}\Big)\,\ddr u.
\end{multline*}
Applying the exponential decay estimate~\eqref{eq:res00k} of Lemma~\ref{lem:relax-LM0-FP}, and taking the supremum over~$h_t$ in~$H^k(M^{-{1/2}})$, we deduce for all $k\ge0$ and $t\ge s\ge0$,
\begin{multline}
\label{eq:temp_apriori_34}
\|V_{t,s}\pi_M^\bot g_s\|_{H^{-k}(M^{-1/2})}
\,\lesssim_{W,\beta,k,a}\,e^{-\lambda_1(t-s)}\|g_s\|_{H^{-k}(M^{-1/2})}\\
+\kappa\Big(\int_s^te^{-2\lambda_1(t-u)}\|V_{u,s}\pi_M^\bot g_s\|_{H^{-k}(M^{-1/2})}^2\|\nabla W\ast(\mu_u-M)\|_{W^{k,\infty}(\R^d)}^2\ddr u\Big)^\frac12.
\end{multline}
Simply bounding $\|\nabla W\ast(\mu_u-M)\|_{W^{k,\infty}(\R^d)}\le 2 \|W\|_{W^{k+1,\infty}(\R^d)}$, this yields the claim~\eqref{eq:lala} by Gr\"onwall's inequality.

\medskip 
\step2 Improved estimate through control of moments: proof that for all $k\ge0$, $g_s\in C^\infty_c(\Xd)$, and $t\ge s\ge0$,
\begin{equation}\label{eq:moment_estim_34}
\|V_{t,s}\pi_M^\bot g_s\|_{H^{-k}(M^{-1/2})}
\,\le\,C_{k}e^{-\lambda_1(t-s)} \|g_s\|_{H^{-k}(M^{-1/2})} \Big(1+e^{-\lambda_0s}Q(\mu_\circ) \Big)^{\kappa^2C_{k}},
\end{equation}
for some constant $C_{k}$ only depending on $d,\beta,k,a$, $\|W\|_{W^{k+2,\infty}(\R^d)}$.

\medskip\noindent
Using Theorem~\ref{thm:ergodic}(i), we can estimate
\begin{equation*}
\|\nabla W\ast(\mu_t-M)\|_{W^{k,\infty}(\R^d)}
\,\le\,\|W\|_{W^{k+2,\infty}(\R^d)}\mathcal W_2(\mu_t,M)
\,\lesssim_{W,\beta,a}\,e^{-\lambda_0t}\mathcal W_2(\mu_\circ,M)
\,\lesssim_\beta\,e^{-\lambda_0t}Q(\mu_\circ),
\end{equation*}
and thus, combining this with the trivial bound $\|\nabla W\ast(\mu_t-M)\|_{W^{k,\infty}(\R^d)}\le 2 \|W\|_{W^{k+1,\infty}(\R^d)}$, we get by interpolation, for any $\delta\in[0,1]$,
\begin{equation}\label{eq:csq_thm31}
\|\nabla W\ast(\mu_t-M)\|_{W^{k,\infty}(\R^d)}
\,\lesssim_{W,\beta,k,a}\,\big(e^{-\lambda_0t}Q(\mu_\circ)\big)^\delta,
\end{equation}
where the multiplicative constant only depends on $d,\beta,a$, $\|W\|_{W^{k+2,\infty}(\R^d)}$.
Rather using this in the bound~\eqref{eq:temp_apriori_34} of Step~1, we get by Gr\"onwall's inequality, for any $\delta\in[0,1]$,
\begin{equation*}
\|V_{t,s}\pi_M^\bot g_s\|_{H^{-k}(M^{-1/2})}
\,\le\,C_{k}e^{-\lambda_1(t-s)}\|g_s\|_{H^{-k}(M^{-1/2})}\exp\Big(\kappa^2C_{k} \frac{(e^{-\lambda_0s}Q(\mu_\circ))^{2\delta}}{2\delta}\Big),
\end{equation*}
for some constant $C_{k}$ only depending on $d,\beta,k,a$, $\|W\|_{W^{k+2,\infty}(\R^d)}$. Choosing $2\delta = \log(e + e^{-\lambda_0 s} Q(\mu_\circ))^{-1}$, which indeed belongs to $(0,1)$, this entails
\begin{multline*}
\|V_{t,s}\pi_M^\bot g_s\|_{H^{-k}(M^{-1/2})}
\,\le\,C_{k}e^{-\lambda_1(t-s)}\|g_s\|_{H^{-k}(M^{-1/2})}\exp\Big(\kappa^2C_{k} \log\big(e+e^{-\lambda_0s}Q(\mu_\circ)\big)\Big)\\
\,=\,C_{k}e^{-\lambda_1(t-s)}\|g_s\|_{H^{-k}(M^{-1/2})}\Big(e+e^{-\lambda_0s}Q(\mu_\circ)\Big)^{\kappa^2C_{k}},
\end{multline*}
which proves the claim~\eqref{eq:moment_estim_34}.

\medskip
\step3 Conclusion.\\
First note that the results of Steps~1 and~2 can be unified as follows: for all $k\ge0$, $g_s\in C^\infty_c(\Xd)$, and $t\ge s\ge0$, we have
\begin{equation}\label{eq:estim-Vpibot-final}
\|V_{t,s}\pi_M^\bot g_s\|_{H^{-k}(M^{-1/2})}
\,\le\,C_{k}e^{-\lambda_1(t-s)}\|g_s\|_{H^{-k}(M^{-1/2})}\Big(1+e^{-\lambda_0s}\big(e^{\lambda_0t}\wedge Q(\mu_\circ) \big)\Big)^{\kappa^2C_{k}},
\end{equation}
for some constant $C_{k}$ only depending on $d,\beta,k,a$, $\|W\|_{W^{k+2}(\R^d)}$.
It remains to replace $V_{t,s}\pi_M^\bot$ by~$\pi_M^\bot V_{t,s}$ in this estimate.
For that purpose, let us decompose
\begin{equation}\label{eq:decomposition-Vts0}
\pi_M^\bot V_{t,s} g_s\,=\,V_{t,s}\pi_M^\bot g_s+\Big(\int_\Xd g_s\Big)r_{t,s},\qquad r_{t,s}\,:=\,\pi_M^\bot V_{t,s} M.
\end{equation}
By definition, $r_{t,s}$ satisfies
\begin{eqnarray*}
\partial_t r_{t,s}&=&R_M V_{t,s} M+\kappa(\nabla W\ast(\mu_t-M))\cdot\nabla_vV_{t,s} M\\
&=&R_M r_{t,s}+\kappa(\nabla W\ast(\mu_t-M))\cdot\nabla_vr_{t,s}+\kappa(\nabla W\ast(\mu_t-M))\cdot\nabla_vM,
\end{eqnarray*}
with $r_{t,s}|_{t=s}=0$. By Duhamel's formula, this yields
\begin{eqnarray*}
r_{t,s}
\,=\,\kappa\int_s^tV_{t,u}\big(\nabla W\ast(\mu_u-M)\cdot\nabla_vM\big)\,\ddr u.
\end{eqnarray*}
Now applying~\eqref{eq:estim-Vpibot-final} together with~\eqref{eq:csq_thm31}, we get for all $k\ge0$, $t\ge s\ge0$, and $\delta\in[0,1]$,
\begin{eqnarray*}
\lefteqn{\|r_{t,s}\|_{H^{-k}(M^{-1/2})}}\\
&\lesssim_{W,\beta,k,a}&\kappa \int_s^te^{-\lambda_1(t-u)}\|\nabla W\ast(\mu_u-M)\|_{W^{k,\infty}(\R^d)}\Big(1+e^{-\lambda_0u}\big(e^{\lambda_0t}\wedge Q(\mu_\circ) \big)\Big)^{\kappa^2C_{k}} \ddr u\\
&\lesssim_{W,\beta,k,a}&\kappa Q(\mu_\circ)^\delta e^{-\frac12\delta(\lambda_0\wedge\lambda_1)t}\Big(1+e^{-\lambda_0s}\big(e^{\lambda_0t}\wedge Q(\mu_\circ) \big)\Big)^{\kappa^2C_{k}}.
\end{eqnarray*}
Combined with~\eqref{eq:estim-Vpibot-final} and~\eqref{eq:decomposition-Vts0}, this yields the conclusion up to redefining $\lambda_1$.
\end{proof}

\subsection{Proof of Lemma~\ref{lem:estim-Wts} on $W^{-k,q}(\langle z\rangle^{p})$}\label{sec:pr-lem:estim-Wts}
This section is devoted to the proof of~\eqref{eq:estim-Wts-zp}.
Instead of~$\langle z\rangle^{p}$, we shall consider deformed weights of the form $\omega^p$ in terms of
\begin{equation}\label{eq:defin-omega}
\om(x,v) \,:=\,1+\beta\Big(\tfrac12|v|^2+ a|x|^2+\eta x\cdot v\Big),
\end{equation}
where the parameter $0<\eta \ll 1$ will be properly chosen later on. We naturally restrict to $\eta\le\tfrac12\sqrt{a}$, which ensures
\[\omega(x,v)\simeq_{\beta,a}\langle z\rangle^2.\]
Note that those weights differ from the choice used in~\cite{Mischler_2016} and are critical for the improved result we establish in this work.
We define the weighted negative Sobolev spaces $W^{-k,q}(\omega^p)$ exactly as the spaces $W^{-k,q}(\langle z\rangle^{p})$ in~\eqref{eq:def-W-kqzp}, simply replacing the weight $\langle z\rangle^{p}$ by $\omega^p$ in the definition.
Comparing~$\omega$ and~$\langle z\rangle^2$, the definition of dual norms easily ensures for all $h\in C^\infty_c(\Xd)$,
\begin{equation}\label{eq:comparison-omega-zp}
\|h\|_{W^{-k,q}(\langle z\rangle^{2p})}\,\simeq_{\beta,k,p,a}\,\|h\|_{W^{-k,q}(\omega^p)}.
\end{equation}
For a densely-defined operator $X$ on $\Ld^{q}(\omega^p):=W^{0,q}(\omega^p)$, we denote by $X^{*,p}$ its adjoint on $\Ld^{q'}(\Xd)$ with respect to the weighted duality product $(g,h)\mapsto\int_\Xd gh\omega^p$: more precisely, $X^{*,p}$ stands for the closed operator on $\Ld^{q'}(\Xd)$ defined by the relation
\[\int_\Xd g(Xh)\omega^p\,=\,\int_\Xd h(X^{*,p}g)\omega^p.\]
In particular, for $\mu\in\Pc(\Xd)$, we consider the weighted adjoint $B_\mu^{*,p}$ of $B_\mu$, which takes the explicit form
\begin{multline}\label{eq:def-Bmustarp}
B_\mu^{*,p} h\,=\,
\tfrac12\triangle_v h
+v \cdot \nabla_xh
-\Big(\tfrac12\beta v+\nabla A+\kappa\nabla W\ast\mu-\omega^{-p} \nabla_v\omega^p\Big)\cdot \nabla_v h\\
+\Big(\tfrac12\omega^{-p} \triangle_v\omega^p
+\omega^{-p}v \cdot \nabla_x\omega^p
-\omega^{-p}\big(\tfrac12\beta v+\nabla A+\kappa\nabla W\ast\mu\big)\cdot \nabla_v\omega^p-\Lambda \chi_R\Big) h.
\end{multline}
By the equivalence of norms~\eqref{eq:comparison-omega-zp} and by the definition of dual norms, it suffices to prove that there is some $0<\eta\le \frac12 \sqrt a$ and some $\lambda_2>0$ (only depending on $d,\beta,a$) such that the following result holds:
given $1<q\le2$ and $0<p\le1$ with $pq'\gg_{\beta,a}1$ large enough (only depending on $d,\beta,a$),
choosing $\Lambda,R$ large enough (only depending on $d,\beta,p,a$, $\|W\|_{W^{1,\infty}(\R^d)}$), if $g\in C([0,t];\Ld^{q'}(\Xd))$ satisfies the backward Cauchy problem
\begin{equation}\label{eq:dual-Bmup}
\left\{\begin{array}{l}
\partial_sg_s=-B_{\mu_s}^{*,p}g_s,\quad\text{for $0\le s\le t$},\\
g_s|_{s=t}=g_t,
\end{array}\right.
\end{equation}
for some $t\ge0$ and $g_t\in C^\infty_c(\Xd)$,
then we have for all $k\ge0$ and $0\le s\le t$,
\begin{equation}\label{eq:estim-todo-bmu-expdec}
\|g_s\|_{W^{k,q'}(\Xd)}\,\lesssim_{W,\beta,k,p,q,a}\,e^{-\lambda_2(t-s)}\|g_t\|_{W^{k,q'}(\Xd)}.
\end{equation}
We split the proof into two steps, starting with the case $k=0$ before treating all $k\ge0$ by induction.

\medskip
\step1 Proof of~\eqref{eq:estim-todo-bmu-expdec} for $k=0$.\\
By definition of $B_\mu^{*,p}$ and by integration by parts, we find
\begin{multline*}
\partial_s\|g_s\|_{\Ld^{q'}(\Xd)}^{q'}
\,=\,-q'\int_\Xd |g_s|^{q'-2}g_sB_{\mu_s}^{*,p}g_s\\
\,=\,
\tfrac12q'(q'-1)\int_\Xd |g_s|^{q'-2}|\nabla_vg_s|^2
-\int_\Xd |g_s|^{q'}\Big(
q'\omega^{-p}v \cdot \nabla_x\omega^p
-q'\omega^{-p}\big(\tfrac12\beta v+\nabla A+\kappa\nabla W\ast\mu\big)\cdot \nabla_v\omega^p\\
-q'\Lambda \chi_R
+\tfrac12q'\omega^{-p} \triangle_v\omega^p
+\tfrac{\beta d}2
-\Div_v(\omega^{-p} \nabla_v\omega^p)\Big).
\end{multline*}
Now inserting the form of the weight $\omega$ and explicitly computing its derivatives, we get after straightforward simplifications,
\begin{multline*}
\partial_s\|g_s\|_{\Ld^{q'}(\Xd)}^{q'}
\,\ge\,
\tfrac12q'(q'-1)\int_\Xd |g_s|^{q'-2}|\nabla_vg_s|^2\\
+\int_\Xd |g_s|^{q'}\Big(
pq'\beta\big(\tfrac{\beta}2-\eta(1+\tfrac1{32a}\beta^2)\big)\omega^{-1}|v|^2
+2a\eta\beta pq'\omega^{-1}|x|^2
-\tfrac{\beta d}2
+q'\big(\Lambda \chi_R
-C_{W,\beta,a}\omega^{-\frac12}\big)
\Big).
\end{multline*}
We show that we can choose our parameters in such a way that the last bracket be bounded below by a positive constant, which is the key to the desired exponential decay.
More precisely, choosing
\[\eta:=\min\big\{\tfrac14\beta(1+\tfrac1{32a}\beta^2)^{-1},\tfrac12\sqrt a\big\},\qquad
4\lambda_2:=\min\{\tfrac12\beta,2\eta\},\]
we get
\begin{multline*}
\partial_s\|g_s\|_{\Ld^{q'}(\Xd)}^{q'}
\,\ge\,
\tfrac12q'(q'-1)\int_\Xd |g_s|^{q'-2}|\nabla_vg_s|^2\\
+\int_\Xd |g_s|^{q'}\Big(
4 pq'\beta\lambda_2\omega^{-1}(\tfrac12|v|^2+a|x|^2)
-\tfrac{\beta d}2
+q'\big(\Lambda \chi_R
-C_{W,\beta,a}\omega^{-\frac12}\big)
\Big).
\end{multline*}
As the choice $\eta\le\frac12\sqrt a$ ensures
$\tfrac12|v|^2+a|x|^2\ge \tfrac1{2\beta}(\omega-1)$, this actually means
\begin{equation*}
\partial_s\|g_s\|_{\Ld^{q'}(\Xd)}^{q'}
\,\ge\,
\tfrac12q'(q'-1)\int_\Xd |g_s|^{q'-2}|\nabla_vg_s|^2
+\int_\Xd |g_s|^{q'}\Big(
2pq'\lambda_2
-\tfrac{\beta d}2
+q'\big(\Lambda \chi_R
-C_{W,\beta,a}\omega^{-\frac12}\big)
\Big).
\end{equation*}
Now, recalling the definition of the cut-off function $\chi_R$, we note that we can choose $\Lambda,R>0$ large enough (only depending on~$d,W,\beta,p,a$) such that
\[\Lambda\chi_R-C_{W,\beta,a}\omega^{-\frac12}\,\ge\,-\tfrac12p\lambda_2.\]
Provided that $pq'\ge2\beta d\lambda_2^{-1}$,
we then obtain
\begin{equation}\label{eq:energy-est-Bstarmup}
\partial_s\|g_s\|_{\Ld^{q'}(\Xd)}^{q'}
\,\ge\,
\tfrac12q'(q'-1)\int_\Xd |g_s|^{q'-2}|\nabla_vg_s|^2
+pq'\lambda_2\|g_s\|_{\Ld^{q'}(\Xd)}^{q'},
\end{equation}
hence, by Gr\"onwall's inequality,
\begin{equation}\label{eq:estim-todo-bmu-expdec-0}
\|g_s\|_{\Ld^{q'}(\Xd)}
\,\le\,
e^{-p\lambda_2(t-s)}\|g_t\|_{\Ld^{q'}(\Xd)}.
\end{equation}
that is,~\eqref{eq:estim-todo-bmu-expdec} for $k=0$.

\medskip
\step2 Proof of~\eqref{eq:estim-todo-bmu-expdec} for all $k\ge0$.\\
For multi-indices $\alpha,\gamma\in\N^d$, we set $J_s^{\alpha,\gamma}:=\nabla^\alpha_x\nabla^\gamma_vg_s$.
Differentiating equation~\eqref{eq:dual-Bmup}, we get
\[\left\{\begin{array}{l}
\partial_sJ_s^{\alpha,\gamma}=-B_{\mu_s}^{*,p}J_s^{\alpha,\gamma}-r_s^{\alpha,\gamma},\quad\text{for $0\le s\le t$},\\
J_s^{\alpha,\gamma}|_{s=t}=\nabla_x^\alpha\nabla_v^\gamma g_t,
\end{array}\right.\]
where the remainder is given by
\[r_s^{\alpha,\gamma}\,:=\,[\nabla^\alpha_x\nabla^\gamma_v,B_{\mu_s}^{*,p}]g_s.\]
Repeating the proof of~\eqref{eq:energy-est-Bstarmup}, for the choice of $\eta,\lambda_2,\Lambda,R$ in Step~1, we get
\begin{equation}\label{eq:estim-Jalphgam-energy}
\partial_s\|J_s^{\alpha,\gamma}\|_{\Ld^{q'}(\Xd)}^{q'}\,\ge\,\tfrac12q'(q'-1)\int_\Xd|J_s^{\alpha,\gamma}|^{q'-2}|\nabla_vJ_s^{\alpha,\gamma}|^2+pq'\lambda_2\|J_s^{\alpha,\gamma}\|_{\Ld^{q'}(\Xd)}^{q'}-q'\int_{\Xd}|J_s^{\alpha,\gamma}|^{q'-2}J_s^{\alpha,\gamma}r_s^{\alpha,\gamma},
\end{equation}
and it remains to analyze the last contribution.
By definition of $B_\mu^{*,p}$, cf.~\eqref{eq:def-Bmustarp}, we can compute
\begin{eqnarray}
r_s^{\alpha,\gamma}
&=&
\sum_{i:e_i\le\gamma}\binom{\gamma}{e_i}J^{\alpha+e_i,\gamma-e_i}_s\nonumber\\
&-&\sum_{(\alpha',\gamma')<(\alpha,\gamma)}\binom{\alpha}{\alpha'}\binom{\gamma}{\gamma'}\,\Div_v\Big[J_s^{\alpha',\gamma'}\,\nabla_x^{\alpha-\alpha'}\nabla_v^{\gamma-\gamma'}\Big(\tfrac12\beta v+\nabla A+\kappa\nabla W\ast\mu-\omega^{-p} \nabla_v\omega^p\Big)\Big]\nonumber\\
&+&\sum_{(\alpha',\gamma')<(\alpha,\gamma)}\binom{\alpha}{\alpha'}\binom{\gamma}{\gamma'}J_s^{\alpha',\gamma'}\,\nabla_x^{\alpha-\alpha'}\nabla_v^{\gamma-\gamma'}
\Big(\tfrac12\omega^{-p} \triangle_v\omega^p
+\omega^{-p}v \cdot \nabla_x\omega^p
-\Div_v(\omega^{-p} \nabla_v\omega^p)\nonumber\\
&&\hspace{4cm}-\omega^{-p}\big(\tfrac12\beta v+\nabla A+\kappa\nabla W\ast\mu\big)\cdot \nabla_v\omega^p+\tfrac{\beta d}2-\Lambda \chi_R\Big).
\label{eq:comput-ralphagamma}
\end{eqnarray}
Integrating by parts, this allows us to estimate
\begin{multline*}
\int_{\Xd}|J_s^{\alpha,\gamma}|^{q'-2}J_s^{\alpha,\gamma}r_s^{\alpha,\gamma}
\,\lesssim_{W,\beta,\alpha,\gamma,a}\,
\|J_s^{\alpha,\gamma}\|_{\Ld^{q'}(\Xd)}^{q'-1}\Big(\max_{i:e_i\le\gamma}\|J_s^{\alpha+e_i,\gamma-e_i}\|_{\Ld^{q'}(\Xd)}+\max_{(\alpha',\gamma')<(\alpha,\gamma)}\|J_s^{\alpha',\gamma'}\|_{\Ld^{q'}(\Xd)}\Big)\\
+q'\max_{(\alpha',\gamma')<(\alpha,\gamma)}\int_\Xd|J_s^{\alpha,\gamma}|^{q'-2}|J_s^{\alpha',\gamma'}||\nabla_vJ_s^{\alpha,\gamma}|.
\end{multline*}
Inserting this estimate into~\eqref{eq:estim-Jalphgam-energy} and appealing to Young's inequality to absorb $\nabla_vJ^{\alpha,\gamma}$ into the dissipation term, we are led to
\begin{multline*}
\partial_s\|J_s^{\alpha,\gamma}\|_{\Ld^{q'}(\Xd)}^{q'}\,\ge\,
pq'\lambda_2\|J_s^{\alpha,\gamma}\|_{\Ld^{q'}(\Xd)}^{q'}
-(q')^2C_{W,\beta,\alpha,\gamma,a}\|J_s^{\alpha,\gamma}\|_{\Ld^{q'}(\Xd)}^{q'-2}\Big(\max_{(\alpha',\gamma')<(\alpha,\gamma)}\|J_s^{\alpha',\gamma'}\|_{\Ld^{q'}(\Xd)}^2\Big)\\
-q'C_{W,\beta,\alpha,\gamma,a}\|J_s^{\alpha,\gamma}\|_{\Ld^{q'}(\Xd)}^{q'-1}\Big(\max_{i:e_i\le\gamma}\|J_s^{\alpha+e_i,\gamma-e_i}\|_{\Ld^{q'}(\Xd)}
+\max_{(\alpha',\gamma')<(\alpha,\gamma)}\|J_s^{\alpha',\gamma'}\|_{\Ld^{q'}(\Xd)}\Big).
\end{multline*}
Further appealing to Young's inequality, we get for all $\lambda<\lambda_2$,
\begin{multline*}
\partial_s\|J_s^{\alpha,\gamma}\|_{\Ld^{q'}(\Xd)}^{q'}\,\ge\,
pq'\lambda\|J_s^{\alpha,\gamma}\|_{\Ld^{q'}(\Xd)}^{q'}\\
-C_{W,\beta,\lambda,\alpha,\gamma,p,q,a}\Big(\max_{i:e_i\le\gamma}\|J_s^{\alpha+e_i,\gamma-e_i}\|_{\Ld^{q'}(\Xd)}
^{q'}+\max_{(\alpha',\gamma')<(\alpha,\gamma)}\|J_s^{\alpha',\gamma'}\|_{\Ld^{q'}(\Xd)}^{q'}\Big),
\end{multline*}
and thus, by Gr\"onwall's inequality,
\begin{multline*}
e^{pq'\lambda(t-s)}\|J_s^{\alpha,\gamma}\|_{\Ld^{q'}(\Xd)}^{q'}\,\lesssim_{W,\beta,\lambda,\alpha,\gamma,p,q,a}\,\|\nabla_x^\alpha\nabla_v^\gamma g_t\|_{\Ld^{q'}(\Xd)}^{q'}\\
+\int_s^te^{pq'\lambda(t-u)}\Big(\max_{i:e_i\le\gamma}\|J_u^{\alpha+e_i,\gamma-e_i}\|_{\Ld^{q'}(\Xd)}
^{q'}+\max_{(\alpha',\gamma')<(\alpha,\gamma)}\|J_u^{\alpha',\gamma'}\|_{\Ld^{q'}(\Xd)}^{q'}\Big)\ddr u.
\end{multline*}
Iterating this inequality and starting from the result~\eqref{eq:estim-todo-bmu-expdec-0} of Step~1 for $J^{0,0}_s=g_s$, the conclusion follows.
\qed

\subsection{Proof of Lemma~\ref{lem:estim-Wts} on $H^{-k}(M^{-1/2})$}\label{sec:pr-lem:estim-Wts-2}
This section is devoted to the proof of~\eqref{eq:estim-Wts-M}.
Taking inspiration from the work of Mischler and Mouhot~\cite[Section~4.2]{Mischler_2016}, we consider deformed weights of the form $M^{-1}\zeta$ with the factor $\zeta$ given by
\begin{equation}\label{eq:def-zetaweight}
\zeta(x,v)\,:=\,1+\tfrac12\Big(\frac{x\cdot v}{1+\frac{\eta}{2}|x|^2+\frac1{2\eta}|v|^2}\Big)
\end{equation}
where the parameter $\eta>0$ will be properly chosen later on. Note that for any $\eta>0$ we have
\[\tfrac12\,\le\,\zeta\,\le\,\tfrac32.\]
In these terms, we define the weighted negative Sobolev spaces~$H^{-k}(\sqrt{M^{-1}\zeta})$ exactly as~$H^{-k}(M^{-1/2})$ in~\eqref{eq:def-H-kM}, simply replacing the weight $M^{-1}$ by $M^{-1}\zeta$ in the definition. Comparing~$M^{-1}\zeta$ to~$M^{-1}$, the definition of dual norms easily yields the equivalence, for all $h\in C^\infty_c(\Xd)$,
\begin{equation}\label{eq:comparison-M-zeta}
\|h\|_{H^{-k}(M^{-1/2})}\,\simeq_{k,\eta}\,\|h\|_{H^{-k}(\sqrt{M^{-1}\zeta})}.
\end{equation}
For a densely-defined operator $X$ on $\Ld^2(\sqrt{M^{-1}\zeta})$, we denote by $X^{*,\zeta}$ its adjoint on $\Ld^2(\sqrt{M^{-1}\zeta})$ with respect to the weighted duality product $(g,h)\mapsto \int_\Xd ghM^{-1}\zeta$: more precisely, $X^{*,\zeta}$ stands for the closed operator on $\Ld^2(\sqrt{M^{-1}\zeta})$ defined by the relation
\[\int_\Xd g(Xh)M^{-1}\zeta\,=\,\int_\Xd h(X^{*,\zeta}h)M^{-1}\zeta.\]
By the equivalence of norms~\eqref{eq:comparison-M-zeta} and by definition of dual norms, it suffices to prove that there is some $0<\eta\le1$ and $\lambda_2>0$ (only depending on $d,\beta,a$, and $\|W\|_{W^{1,\infty}(\R^d)}$) such that the following result holds: choosing $\Lambda,R$ large enough (only depending on $d,\beta,a$, and $\|W\|_{W^{1,\infty}(\R^d)}$), if $g\in C([0,t];\Ld^2(M^{-1/2}))$ satisfies the backward Cauchy problem
\begin{equation}\label{eq:dual-Bzeta}
\left\{\begin{array}{l}
\partial_sg_s=-B_{\mu_s}^{*,\zeta}g_s,\quad\text{for $0\le s\le t$},\\
g_s|_{s=t}=g_t,
\end{array}\right.
\end{equation}
for some $t\ge0$ and $g_t\in C^\infty_c(\Xd)$,
then we have for all $\lambda\in[0,\lambda_2)$, $k\ge0$, and $0\le s\le t$,
\begin{equation}\label{eq:estim-todo-bzeta-expdec}
\|g_s\|_{H^k(M^{-{1/2}})}\,\lesssim_{W,\beta,\lambda,k,a}\,e^{-\lambda(t-s)}\|g_t\|_{H^k(M^{-{1/2}})}.
\end{equation}
We split the proof into two steps, starting with the case $k=0$ before treating all $k\ge0$ by induction.

\medskip
\step1 Proof of~\eqref{eq:estim-todo-bzeta-expdec} for $k=0$.\\
By definition of $B_\mu$ and by integration by parts, we find
\begin{multline}\label{eq:estim-energy-gM-1zeta}
\partial_s\|g_s\|_{\Ld^2(\sqrt{M^{-1}\zeta})}^2
\,=\,-2\int_\Xd g_s(B_{\mu_s}^{*,\zeta}g_s)M^{-1}\zeta
\,=\,-2\int_\Xd g_s(B_{\mu_s}g_s)M^{-1}\zeta\\
\,=\,\int_\Xd|\nabla_v(\sqrt{M^{-1}\zeta} g_s)|^2
+\int_\Xd|g_s|^2M^{-1}\Big(
\tfrac14\zeta|\beta v|^2
+(\nabla A+\kappa\nabla W\ast\mu_s)\cdot\nabla_v\zeta
-v\cdot\nabla_x\zeta\\
+2\Lambda\chi_R\zeta
-\tfrac{\beta d}2\zeta
+\kappa\beta\zeta v\cdot(\nabla W\ast(\mu_s-M))
-|\nabla_v\sqrt\zeta|^2
\Big).
\end{multline}
We show that we can choose parameters in such a way that the last bracket be bounded below by a positive constant, which is the key to the desired exponential decay.
By definition of $\zeta$, cf.~\eqref{eq:def-zetaweight}, and by Young's inequality, we find
\begin{eqnarray*}
\lefteqn{\tfrac14\zeta|\beta v|^2
+\nabla A\cdot\nabla_v\zeta
-v\cdot\nabla_x\zeta}\\
&=&
\tfrac12|v|^2\bigg(\tfrac12\zeta\beta^2
-\frac{1}{1+\frac\eta2|x|^2+\frac1{2\eta}|v|^2}\bigg)
+a\frac{|x|^2}{1+\frac\eta2|x|^2+\frac1{2\eta}|v|^2}-(a\eta^{-1}\!-\tfrac12\eta)\frac{(x\cdot v)^2}{(1+\frac\eta2|x|^2+\frac1{2\eta}|v|^2)^2}\\
&\ge&
\tfrac12|v|^2\bigg(\tfrac14\beta^2
-\frac{1+a\eta^{-2}}{1+\frac\eta2|x|^2+\frac1{2\eta}|v|^2}\bigg)
+\tfrac12a\frac{|x|^2}{1+\frac\eta2|x|^2+\frac1{2\eta}|v|^2}\\
&\ge&
\tfrac12|v|^2\bigg(\tfrac14\beta^2
-\frac{1+2a\eta^{-2}}{1+\frac\eta2|x|^2+\frac1{2\eta}|v|^2}\bigg)
+a\eta^{-1}\bigg(1
-\frac{1}{1+\frac\eta2|x|^2+\frac1{2\eta}|v|^2}\bigg).
\end{eqnarray*}
Inserting this into~\eqref{eq:estim-energy-gM-1zeta}, and further using $|\nabla_v\zeta|\lesssim\eta^{-1}\langle z\rangle^{-1}$, we obtain
\begin{multline*}
\partial_s\|g_s\|_{\Ld^2(\sqrt{M^{-1}\zeta})}^2
\,\ge\,\int_\Xd|\nabla_v(\sqrt{M^{-1}\zeta} g_s)|^2\\
+\int_\Xd|g_s|^2M^{-1}\Big(
a\eta^{-1}-C_{W,\beta,a}
+\Lambda\chi_R
+\tfrac1{10}|\beta v|^2\big(1-\eta^{-3}\langle z\rangle^{-2}C_{W,\beta,a}\big)
-C_{W,\beta,a}\eta^{-2}\langle z\rangle^{-1}
\Big).
\end{multline*}
Noting that the dissipation term can be bounded below as
\begin{eqnarray*}
|\nabla_v(\sqrt{M^{-1}\zeta}g)|^2
&\ge&\tfrac12|\zeta|^2|\nabla_v(\sqrt{M^{-1}}g)|^2-2|\nabla_v\zeta|^2|g|^2M^{-1}\\
&\ge&\tfrac18|(\nabla_v+\tfrac\beta2v)g|^2M^{-1}-C\eta^{-2}\langle z\rangle^{-2}|g|^2M^{-1},
\end{eqnarray*}
the above becomes
\begin{multline*}
\partial_s\|g_s\|_{\Ld^2(\sqrt{M^{-1}\zeta})}^2
\,\ge\,\tfrac18\|(\nabla_v+\tfrac{\beta}2v) g_s\|_{\Ld^2(M^{-1/2})}^2\\
+\int_\Xd|g_s|^2M^{-1}\Big(
a\eta^{-1}-C_{W,\beta,a}
+\Lambda\chi_R
+\tfrac1{10}|\beta v|^2\big(1-\eta^{-3}\langle z\rangle^{-2}C_{W,\beta,a}\big)
-C_{W,\beta,a}\eta^{-2}\langle z\rangle^{-1}
\Big).
\end{multline*}
Now let us choose
$\eta:=\tfrac12aC_{W,\beta,a}^{-1}$,
and note that, by definition of the cut-off function $\chi_R$, we may then choose $\Lambda,R>0$ large enough (only depending on $d,W,\beta,a$) such that
\[\Lambda\chi_R
+\tfrac1{10}|\beta v|^2\big(1-\eta^{-3}\langle z\rangle^{-2}C_{W,\beta,a}\big)
-C_{W,\beta,a}\eta^{-2}\langle z\rangle^{-1}\,\ge\,
\tfrac1{20}|\beta v|^2
-\tfrac14a\eta^{-1}.\]
Further setting $\lambda_2:=\tfrac1{12}a\eta^{-1}$, this choice leads us to
\begin{equation}\label{eq:estim-todo-bzeta-expdec+dissip}
\partial_s\|g_s\|_{\Ld^2(\sqrt{M^{-1}\zeta})}^2
\,\ge\,\tfrac18\|(\nabla_v+\tfrac\beta2v)g_s\|_{\Ld^2(M^{-1/2})}^2
+\tfrac1{20}\|\beta vg_s\|_{\Ld^2(M^{-1/2})}^2
+2\lambda_2\|g_s\|_{\Ld^2(\sqrt{M^{-1}\zeta})}^2.
\end{equation}
In particular, by Gr\"onwall's inequality,
\begin{equation}\label{eq:estim-todo-bzeta-expdec-k=0}
\|g_s\|_{\Ld^2(M^{-1/2})}
\,\lesssim\,e^{-\lambda_2(t-s)}\|g_t\|_{\Ld^2(M^{-1/2})},
\end{equation}
that is, \eqref{eq:estim-todo-bzeta-expdec} for $k=0$.

\medskip
\step2 Proof of~\eqref{eq:estim-todo-bzeta-expdec} for all $k$.\\
For multi-indices $\alpha,\gamma\in\N^d$, we set $J_s^{\alpha,\gamma}:=\nabla_x^\alpha\nabla_v^\gamma g_s$. Differentiating equation~\eqref{eq:dual-Bzeta}, we get
\[\left\{\begin{array}{l}
\partial_sJ^{\alpha,\gamma}_s=-B_{\mu_s}^{*,\zeta}J^{\alpha,\gamma}_s-r_s^{\alpha,\gamma},\quad\text{for $0\le s\le t$},\\[1mm]
J^{\alpha,\gamma}_s|_{s=t}=\nabla_x^\alpha\nabla_v^\gamma g_t,
\end{array}\right.\]
where the remainder is given by
\[r_s^{\alpha,\gamma}\,:=\,[\nabla_x^\alpha\nabla_v^\gamma,B_{\mu_s}^{*,\zeta}]g_s.\]
Repeating the proof of~\eqref{eq:estim-todo-bzeta-expdec+dissip}, for the choice of $\eta,\lambda_2,\Lambda,R$ in Step~1, we get
\begin{multline}\label{eq:estim-Jsalphgam-L2M-1}
\partial_s\|J_s^{\alpha,\gamma}\|_{\Ld^2(\sqrt{M^{-1}\zeta})}^2
\,\ge\,
\tfrac18\|(\nabla_v+\tfrac\beta2v)J_s^{\alpha,\gamma}\|_{\Ld^2(M^{-1/2})}^2
+\tfrac1{20}\|\beta vJ_s^{\alpha,\gamma}\|_{\Ld^2(M^{-1/2})}^2
+2\lambda_2\|J_s^{\alpha,\gamma}\|_{\Ld^2(\sqrt{M^{-1}\zeta})}^2\\
-2\int_\Xd J_s^{\alpha,\gamma}r_s^{\alpha,\gamma}M^{-1}\zeta.
\end{multline}
and it remains to analyze the last contribution.
By definition of $B_\mu$, the weighted adjoint $B_\mu^{*,\zeta}$ takes the explicit form
\begin{multline*}
B_\mu^{*,\zeta}h\,=\,
\tfrac12\triangle_vh
+v\cdot\nabla_xh
+\Big(\tfrac{\beta}2v-\nabla A-\kappa\nabla W\ast\mu+\tfrac1{\zeta}\nabla_v\zeta\Big)\cdot\nabla_vh\\
+\Big(
\tfrac{\beta d}2
+\tfrac1{2\zeta}\triangle_v\zeta
-\kappa\beta v\cdot(\nabla W\ast(\mu-M))\\
+\tfrac1{\zeta}v\cdot\nabla_x\zeta
+\tfrac1{\zeta}\big(\tfrac\beta2v-\nabla A-\kappa\nabla W\ast\mu\big)\cdot\nabla_v\zeta
-\Lambda\chi_R
\Big)h,
\end{multline*}
and we may then compute
\begin{multline*}
r_s^{\alpha,\gamma}\,=\,
[\nabla_x^\alpha\nabla_v^\gamma,B_{\mu_s}^{*,\zeta}]g_s
\,=\,
\sum_{i:e_i\le\gamma}\binom{\gamma}{e_i}J_s^{\alpha+e_i,\gamma-e_i}\\
+\sum_{(\alpha',\gamma')<(\alpha,\gamma)}\binom{\alpha}{\alpha'}\binom{\gamma}{\gamma'}\nabla_x^{\alpha-\alpha'}\nabla_v^{\gamma-\gamma'}
\Big(\tfrac{\beta}2v-\nabla A-\kappa\nabla W\ast\mu+\tfrac1{\zeta}\nabla_v\zeta\Big)\cdot\nabla_vJ^{\alpha',\gamma'}_s\\
+\sum_{(\alpha',\gamma')<(\alpha,\gamma)}\binom{\alpha}{\alpha'}\binom{\gamma}{\gamma'}J^{\alpha',\gamma'}_s\,\nabla_x^{\alpha-\alpha'}\nabla_v^{\gamma-\gamma'}
\Big(
\tfrac{\beta d}2
+\tfrac1{2\zeta}\triangle_v\zeta
-\kappa\beta v\cdot(\nabla W\ast(\mu-M))\\
+\tfrac1{\zeta}v\cdot\nabla_x\zeta
+\tfrac1{\zeta}\big(\tfrac\beta2v-\nabla A-\kappa\nabla W\ast\mu\big)\cdot\nabla_v\zeta
-\Lambda\chi_R
\Big),
\end{multline*}
from which we easily estimate
\begin{multline*}
\int_\Xd J_s^{\alpha,\gamma}r_s^{\alpha,\gamma}M^{-1}\zeta
\,\lesssim_{W,\beta,\alpha,\gamma,a}\,
\Big(\|J_s^{\alpha,\gamma}\|_{\Ld^2(\sqrt{M^{-1}\zeta})}+\|\nabla_vJ_s^{\alpha,\gamma}\|_{\Ld^2(M^{-1/2})}+\|\beta vJ_s^{\alpha,\gamma}\|_{\Ld^2(M^{-1/2})}\Big)\\
\times\Big(\max_{i:e_i\le\gamma}\|J_s^{\alpha+e_i,\gamma-e_i}\|_{\Ld^2(M^{-1/2})}
+\max_{(\alpha',\gamma')<(\alpha,\gamma)}\|J_s^{\alpha',\gamma'}\|_{\Ld^2(M^{-1/2})}\Big).
\end{multline*}
Inserting this into~\eqref{eq:estim-Jsalphgam-L2M-1}, and appealing to Young's inequality to absorb $J^{\alpha,\gamma}$, $\nabla_vJ^{\alpha,\gamma}$, and $\beta vJ^{\alpha,\gamma}$ into the dissipation terms, we deduce for all $\lambda<\lambda_2$,
\begin{multline*}
\partial_s\|J_s^{\alpha,\gamma}\|_{\Ld^2(\sqrt{M^{-1}\zeta})}^2
\,\ge\,
2\lambda\|J_s^{\alpha,\gamma}\|_{\Ld^2(\sqrt{M^{-1}\zeta})}^2\\
-C_{W,\beta,\lambda,\alpha,\gamma,a}\Big(\max_{i:e_i\le\gamma}\|J_s^{\alpha+e_i,\gamma-e_i}\|_{\Ld^2(M^{-1/2})}^2
+\max_{(\alpha',\gamma')<(\alpha,\gamma)}\|J_s^{\alpha',\gamma'}\|_{\Ld^2(M^{-1/2})}^2\Big),
\end{multline*}
and thus, by Gr\"onwall's inequality,
\begin{multline*}
e^{2\lambda(t-s)}\|J_s^{\alpha,\gamma}\|_{\Ld^2(M^{-1/2})}^2
\,\lesssim_{W,\beta,\lambda,\alpha,\gamma,a}\,
\|\nabla^\alpha_x\nabla_v^\gamma g_t\|_{\Ld^2(M^{-1/2})}^2\\
+\int_s^te^{2\lambda(t-u)}\Big(\max_{i:e_i\le\gamma}\|J_u^{\alpha+e_i,\gamma-e_i}\|_{\Ld^2(M^{-1/2})}^2
+\max_{(\alpha',\gamma')<(\alpha,\gamma)}\|J_u^{\alpha',\gamma'}\|_{\Ld^2(M^{-1/2})}^2\Big)\ddr u.
\end{multline*}
Iterating this inequality, and starting from the result~\eqref{eq:estim-todo-bzeta-expdec-k=0} of Step~1 for $J^{0,0}_s=g_s$, the conclusion follows.
\qed

\subsection{Proof of Lemma~\ref{lem:estim-reg}}\label{sec:pr-lem:estim-reg}
In this section, we appeal again to duality but we shall use a slightly different notation than in Section~\ref{sec:pr-lem:estim-Wts}:
for a densely-defined operator $X$ on $\Ld^q(\langle z\rangle^p)$ we now denote by $X^{*,p}$ its adjoint on $\Ld^{q'}(\Xd)$ with respect to the weighted duality product $(g,h)\mapsto\int_\Xd gh\langle z\rangle^p$. In other words, we use the same notation as in Section~\ref{sec:pr-lem:estim-Wts} for $X^{*,p}$, but now with the weight $\omega$ replaced by $\langle z\rangle$.
We consider in particular the weighted adjoints $\{W_{t,s}^{*,p}\}_{t\ge s\ge0}$ of the fundamental operators $\{W_{t,s}\}_{t\ge s\ge0}$, and we note that for all $t\ge0$ and $g_t\in C^\infty_c(\Xd)$ the flow $g_s:=W_{t,s}^{*,p}g_t$ is such that $g \in C([0,t];L^{q'}(\Xd))$ satisfies the backward Cauchy problem
\begin{equation}\label{eq:flow-Bmu*p}
\left\{\begin{array}{l}
\partial_sg_s=-B_{\mu_s}^{*,p}g_s,\quad\text{for $0\le s\le t$},\\
g_s|_{s=t}=g_t,
\end{array}\right.
\end{equation}
where $B_\mu^{*,p}$ is the weighted adjoint of $B_\mu$. This operator $B_\mu^{*,p}$ takes the explicit form~\eqref{eq:def-Bmustarp} with $\omega$ now replaced by~$\langle z\rangle$.
The proof of Lemma~\ref{lem:estim-reg} ultimately relies on the following result.

\begin{lem}\label{lem:one_step_gain}
For all $k\ge0$, $0\le p\le1$, $0\le t-s\le 1$, and $g_t\in C^\infty_c(\Xd)$, we have
\begin{equation}\label{eq:regularization_W}
\big\| W^{*,p}_{t,s} g_t \big\|_{H^{k+1}(\Xd)} \,\lesssim_{W,\beta,k,a}\, (t-s)^{-\frac32} \|g_t\|_{H^k(\Xd)},
\end{equation}
where the constant only depends on $d,\beta,k,a$, $\|W\|_{W^{k+2,\infty}(\R^d)}$.
\end{lem}

We postpone the proof of this result for a moment and start by showing that Lemma~\ref{lem:estim-reg} follows as a straightforward consequence.

\begin{proof}[Proof of Lemma~\ref{lem:estim-reg}]
We start by applying the interpolation argument of~\cite[Lemma 2.4]{Mischler_2016}: thanks to the exponential decay estimates of Lemma~\ref{lem:estim-Wts}, it suffices to find some $\theta\ge0$ (only depending on $d$) such that for all $1<q\le2$, $k\ge0$, $0<p\le1$, $0\le t-s\le 1$, and $h_s\in C^\infty_c(\Xd)$ we have
\begin{equation*}
\|\calA W_{t,s}h_s\|_{H^{-k}(M^{-1/2})}\,\lesssim_{W,\beta,k,p,q,a}\, (t-s)^{-\theta}\|h_s\|_{W^{-k,q}(\langle z\rangle^{p})}.
\end{equation*}
In order to prove this, we argue by duality:
more precisely, recalling $A=\Lambda\chi_R$, it suffices to find some $\theta\ge0$ such that for all $1<q\le2$, $k\ge0$, $0<p\le1$, $0\le t-s\le 1$, and $g_t\in C^\infty_c(\Xd)$,
\begin{equation}\label{eq:red-estim-Wts-*mp-0-redred}
\|W_{t,s}^{*,p}(\chi_R M^{-1}\langle z\rangle^{-p}g_t)\|_{W^{k,q'}(\Xd)}\,\lesssim_{W,\beta,k,p,q,a}\, (t-s)^{-\theta}\|g_t\|_{H^k(M^{-{1/2}})}.
\end{equation}
By the Sobolev inequality with $2\le q'<\infty$, the left-hand side can be estimated as follows,
\begin{equation*}
\|W_{t,s}^{*,p}(\chi_R M^{-1}\langle z\rangle^{-p}g_t)\|_{W^{k,q'}(\Xd)}\,\lesssim_{k,q}\,\|W_{t,s}^{*,p}(\chi_R M^{-1}\langle z\rangle^{-p}g_t)\|_{H^{k+d}(\Xd)},
\end{equation*}
and the desired bound~\eqref{eq:red-estim-Wts-*mp-0-redred} then follows with $\theta=\frac{3d}2$ by iterating the result of Lemma~\ref{lem:one_step_gain}.
\end{proof}

The rest of this section is devoted to the proof of Lemma~\ref{lem:one_step_gain}. For $k=0$, this is in fact a standard consequence of the theory of hypoellipticity as in~\cite{Herau_2003,Villani_2009,Mischler_2016}. For $k>0$, we argue by induction, further using parabolic estimates similarly as in Section~\ref{sec:pr-lem:estim-Wts}.

\begin{proof}[Proof of Lemma \ref{lem:one_step_gain}]
Given $t\ge0$ and $g_t\in C^\infty_c(\Xd)$, let $g_s:=W_{t,s}^{*,p}g_t$ be the solution of the backward Cauchy problem~\eqref{eq:flow-Bmu*p}, and recall that $B_\mu^{*,p}$ takes the explicit form~\eqref{eq:def-Bmustarp} with $\omega$ replaced by $\langle z\rangle$,
\begin{equation*}
B_\mu^{*,p} h\,=\,
\tfrac12\triangle_v h
+v \cdot \nabla_xh
+A^0_{\mu,p}h-A^1_{\mu,p}\cdot \nabla_v h,
\end{equation*}
in terms of
\begin{eqnarray*}
A^0_{\mu,p}&:=&\tfrac12\langle z\rangle^{-p} \triangle_v\langle z\rangle^p
+\langle z\rangle^{-p}v \cdot \nabla_x\langle z\rangle^p
-\langle z\rangle^{-p}\big(\tfrac12\beta v+\nabla A+\kappa\nabla W\ast\mu\big)\cdot \nabla_v\langle z\rangle^p-\Lambda \chi_R,\\
A^1_{\mu,p}&:=&\tfrac12\beta v+\nabla A+\kappa\nabla W\ast\mu-\langle z\rangle^{-p} \nabla_v\langle z\rangle^p.
\end{eqnarray*}
We split the proof into two steps. 

\medskip
\step1 Case $k=0$: proof that for all $0\le t-s\le1$ we have
\begin{equation}\label{eq:estim-L2H1}
\|\nabla_xg_s\|_{\Ld^2(\Xd)}\,\lesssim_{W,\beta,a}\,(t-s)^{-\frac32}\|g_s\|_{\Ld^2(\Xd)}.
\end{equation}
Integrating by parts, we can compute
\[\partial_s\int_\Xd|g_s|^2
\,=\,-2\int_\Xd g_s B_{\mu_s}^{*,p}g_s
\,=\,\int_\Xd |\nabla_vg_s|^2
-\int_\Xd |g_s|^2\Big(2 A^0_{\mu_s,p}+\Div_v(A^1_{\mu_s,p})\Big),\]
and thus, by definition of $A^0_{\mu,p},A^1_{\mu,p}$,
\[\partial_s\int_\Xd|g_s|^2
\,\ge\,\int_\Xd |\nabla_vg_s|^2
-C_{W,\beta,a}\int_\Xd |g_s|^2.\]
Similarly, we can easily estimate
\begin{eqnarray*}
\partial_s\int_\Xd|\nabla_xg_s|^2&\ge&
\int |\nabla_{xv}g|^2
-C_{W,\beta,a}\int_\Xd \Big(|\nabla_xg_s|^2+|g_s|^2+ |\nabla_xg_s||\nabla_vg_s|\Big),\\
\partial_s\int_\Xd|\nabla_vg_s|^2&\ge&\int_\Xd |\nabla_v^2g_s|^2
-2\int_\Xd \nabla_vg_s\cdot\nabla_xg_s
-C_{W,\beta,a}\int_\Xd \Big(|\nabla_vg_s|^2+|g_s|^2\Big),\\
-\partial_s\int_\Xd\nabla_xg_s\cdot\nabla_vg_s&\ge&\tfrac12\int_\Xd|\nabla_xg_s|^2-\int_\Xd|\nabla_{xv}g_s||\nabla_v^2g_s|
-C_{W,\beta,a}\int_\Xd\Big(|\nabla_vg_s|^2
+|g_s|^2\Big).
\end{eqnarray*}
Let us consider the functional
\begin{equation}\label{eq:def-Fsgs}
F_s(g)\,:=\,a_0\int_\Xd|g|^2+a_1(t-s)\int_\Xd|\nabla_vg|^2+a_2(t-s)^3\int_\Xd|\nabla_xg|^2-2a_3(t-s)^2\int_\Xd\nabla_xg\cdot\nabla_vg,
\end{equation}
where the constants $a_0,a_1,a_2,a_3>0$ will be suitably chosen in a moment.
In these terms, using Young's inequality,
the above inequalities lead us to deduce for all $0\le t-s\le1$ and $0<\e,\delta\le1$,
\begin{multline*}
\partial_sF_s(g_s)\,\ge\,
\Big(a_0-C_{W,\beta,a}(\delta^{-1}a_1+a_2+a_3)\Big)\int_\Xd|\nabla_vg_s|^2
+\Big(\tfrac12a_3-\delta a_1-C_{W,\beta,a}a_2\Big)(t-s)^2\int_\Xd|\nabla_xg_s|^2\\
+\big(a_1-\e^{-1}a_3\big)(t-s)\int_\Xd|\nabla_v^2g_s|^2
+\big(a_2-\e a_3\big)(t-s)^3\int_\Xd|\nabla_{xv}g_s|^2\\
-C_{W,\beta,a}\big(a_0+a_1+a_2+a_3\big)\int_\Xd|g_s|^2.
\end{multline*}
Choosing for instance $a_0=2C_{W,\beta,a}(\delta^{-1}a_1+a_2+a_3)$, $a_1=4a_3^2$, $a_2=1$, $a_3=4C_{W,\beta,a}$, $\e=(2a_3)^{-1}$, and $\delta=(32a_3)^{-1}$, we obtain
\begin{eqnarray}
\partial_sF_s(g_s)
&\ge&
\tfrac12a_0\int_\Xd|\nabla_vg_s|^2
+\tfrac18a_3(t-s)^2\int_\Xd|\nabla_xg_s|^2\nonumber\\
&&+\tfrac12a_1(t-s)\int_\Xd|\nabla_v^2g_s|^2
+\tfrac12(t-s)^3\int_\Xd|\nabla_{xv}g_s|^2
-2a_0C_{W,\beta,a}\int_\Xd|g_s|^2\nonumber\\
&\ge&-2a_0C_{W,\beta,a}\int_\Xd|g_s|^2.\label{eq:estim-Fsgs-lower}
\end{eqnarray}
By definition of $F_s$, as the choice of $a_1,a_2,a_3$ satisfies $2a_3=\sqrt{a_1a_2}$, we have
\begin{equation}\label{eq:estim-lower-Fsfctal}
F_s(g)\,\ge\,a_0\|g\|_{\Ld^2(\Xd)}^2
+\tfrac12a_1(t-s)\|\nabla_vg\|_{\Ld^2(\Xd)}^2
+\tfrac12(t-s)^3\|\nabla_xg\|_{\Ld^2(\Xd)}^2,
\end{equation}
so that the above estimate~\eqref{eq:estim-Fsgs-lower} entails
\[\partial_sF_s(g_s)
\,\gtrsim_{W,\beta,a}\,-F_s(g_s).\]
By Gr\"onwall's inequality with $F_t(g)=a_0\|g\|_{\Ld^2(\Xd)}^2$, this yields for all $0\le t-s\le1$,
\[F_s(g_s)\,\lesssim_{W,\beta,a}\,\|g_t\|_{\Ld^2(\Xd)},\]
and the claim~\eqref{eq:estim-L2H1} then follows from~\eqref{eq:estim-lower-Fsfctal}.

\medskip
\step2 Conclusion.\\
Given multi-indices $\alpha,\gamma\in\N^d$, we set for abbreviation $J_s^{\alpha,\gamma}:=\nabla_x^\alpha\nabla_v^\gamma g_s$, which satisfies
\[\left\{\begin{array}{l}
\partial_sJ_s^{\alpha,\gamma}=-B_{\mu_s}^{*,p}J_s^{\alpha,\gamma}-r_s^{\alpha,\gamma},\quad\text{for $0\le s\le t$},\\
J_s^{\alpha,\gamma}|_{s=t}=\nabla_x^\alpha\nabla_v^\gamma g_t,
\end{array}\right.\]
where the remainder term is given by
\[r_s^{\alpha,\gamma}\,:=\,[\nabla_x^\alpha\nabla_v^\gamma,B_{\mu_s}^{*,p}]g_s.\]
Repeating the proof of~\eqref{eq:estim-Fsgs-lower}, for $F_s$ defined in~\eqref{eq:def-Fsgs} with the same choice of constants $a_0,a_1,a_2,a_3$ as in Step~1, we get for all $0\le t-s\le1$,
\begin{multline*}
\partial_sF_s(J_s^{\alpha,\gamma})
\,\ge\,
\tfrac12a_0\int_\Xd|\nabla_vJ_s^{\alpha,\gamma}|^2
+\tfrac18a_3(t-s)^2\int_\Xd|\nabla_xJ_s^{\alpha,\gamma}|^2\\
+\tfrac12a_1(t-s)\int_\Xd|\nabla_v^2J_s^{\alpha,\gamma}|^2
+\tfrac12(t-s)^3\int_\Xd|\nabla_{xv}J_s^{\alpha,\gamma}|^2
-2a_0C_{W,\beta,a}\int_\Xd|J_s^{\alpha,\gamma}|^2\\
-2a_0\int_\Xd J_s^{\alpha,\gamma}r_s^{\alpha,\gamma}
-2a_1(t-s)\int_\Xd\nabla_vJ_s^{\alpha,\gamma}\cdot\nabla_vr_s^{\alpha,\gamma}
-a_2(t-s)^3\int_\Xd\nabla_xJ_s^{\alpha,\gamma}\cdot\nabla_xr_s^{\alpha,\gamma}\\
+2a_3(t-s)^2\int_\Xd\nabla_xJ_s^{\alpha,\gamma}\cdot\nabla_vr_s^{\alpha,\gamma}
+2a_3(t-s)^2\int_\Xd\nabla_vJ_s^{\alpha,\gamma}\cdot\nabla_xr_s^{\alpha,\gamma}.
\end{multline*}
Recalling that the remainder term $r_s^{\alpha,\gamma}$ can be written as in~\eqref{eq:comput-ralphagamma} with $\omega$ replaced by $\langle z\rangle$,
integrating by parts, and using Young's inequality to absorb all factors involving $J_s^{\alpha,\gamma}$ into the dissipation terms, we deduce for all~\mbox{$0\le t-s\le1$},
\begin{multline*}
\partial_sF_s(J_s^{\alpha,\gamma})
\,\gtrsim_{W,\beta,\alpha,\gamma,a}\,
-\|J_s^{\alpha,\gamma}\|_{\Ld^2(\Xd)}^2\\
-\max_{(\alpha',\gamma')<(\alpha,\gamma)}\Big(\|J_s^{\alpha',\gamma'}\|_{\Ld^2(\Xd)}^2
+(t-s)\|\nabla_vJ_s^{\alpha',\gamma'}\|_{\Ld^2(\Xd)}^2
+(t-s)^3\|\nabla_xJ_s^{\alpha',\gamma'}\|_{\Ld^2(\Xd)}^2\bigg)\\
-\max_{i:e_i\le\gamma}\Big(\|J_s^{\alpha+e_i,\gamma-e_i}\|_{\Ld^2(\Xd)}^2
+(t-s)\|\nabla_vJ_s^{\alpha+e_i,\gamma-e_i}\|_{\Ld^2(\Xd)}^2
+(t-s)^3\|\nabla_xJ_s^{\alpha+e_i,\gamma-e_i}\|_{\Ld^2(\Xd)}^2\bigg).
\end{multline*}
By~\eqref{eq:estim-lower-Fsfctal}, this entails
\begin{equation*}
\partial_sF_s(J_s^{\alpha,\gamma})
\,\gtrsim_{W,\beta,\alpha,\gamma,a}\,
-F_s(J_s^{\alpha,\gamma})
-\max_{(\alpha',\gamma')<(\alpha,\gamma)} F_s(J_s^{\alpha',\gamma'})
-\max_{i:e_i\le\gamma}F_s(J_s^{\alpha+e_i,\gamma-e_i}),
\end{equation*}
and thus, by Gr\"onwall's inequality with $F_t(g)=a_0\|g\|_{\Ld^2(\Xd)}^2$, we deduce for all $0\le t-s\le1$,
\begin{equation*}
F_s(J_s^{\alpha,\gamma})
\,\lesssim_{W,\beta,\alpha,\gamma,a}\,
\|\nabla^\alpha_x\nabla_v^\gamma g_t\|_{\Ld^2(\Xd)}
+\int_s^t\Big(\max_{(\alpha',\gamma')<(\alpha,\gamma)} F_u(J_u^{\alpha',\gamma'})
+\max_{i:e_i\le\gamma}F_u(J_u^{\alpha+e_i,\gamma-e_i})\Big)\,\ddr u.
\end{equation*}
By a direct iteration, this proves for all $0\le t-s\le1$,
\begin{equation*}
F_s(J_s^{\alpha,\gamma})
\,\lesssim_{W,\beta,\alpha,\gamma,a}\,\|g_t\|_{H^{|\alpha|+|\gamma|}(\Xd)},
\end{equation*}
and the conclusion~\eqref{eq:regularization_W} then follows from~\eqref{eq:estim-lower-Fsfctal}.
\end{proof}

%%%%%%%%%%%%%%%%%%%%%%%%%%%%%%%%%%%%%%%%%%%%
%%%%%%%%%%%%%%%%%%%%%%%%%%%%%%%%%%%%%%%%%%%%
%%%%%%%%%%%%%%%%%%%%%%%%%%%%%%%%%%%%%%%%%%%%

\section*{Acknowledgements}
AB would like to thank St\'ephane Mischler for enlightening discussions which allowed to refine the comparison with~\cite{Mischler_2012, Mischler_2013}. 
AB acknowledges financial support from the European Union's Horizon 2020 research and innovation programme under the Marie Sk\l{}odowska-Curie Grant Agreement n$^\circ$101034324, and from the AAP ``Accueil'' from Universit\'e Lyon 1 Claude Bernard. 
MD acknowledges financial support from the F.R.S.-FNRS, as well as from the European Union (ERC, PASTIS, Grant Agreement n$^\circ$101075879). Views and opinions expressed are however those of the authors only and do not necessarily reflect those of the European Union or the European Research Council Executive Agency. Neither the European Union nor the granting authority can be held responsible for them.

\appendix
\section{Proof of preliminary results}\label{sec:app-pr}
In this appendix, for convenience, we include proofs of several preliminary results stated in Section~\ref{sec:prelim}. Most of these results are well known but cannot be found as such in the literature.

\subsection{Proof of Lemma~\ref{lem:mom-cum}}
We follow~\cite[Proposition 2.2]{Nourdin_2010}, extending it to the present multivariate setting.
Let
\[M_{X_1,\ldots, X_m}(t_1,\ldots,t_m)\,:=\,\E\big[e^{\sum_{j=1}^mt_jX_j}\big]\]
be the multivariate moment generating function of $X_1, \dots, X_m$.  
We can write
\begin{eqnarray*}
\E[X_1 \ldots X_m]
&=& \frac{\ddr^m}{\ddr t_1 \dots \ddr t_m} M_{X_1, \ldots, X_m}(t_1, \ldots, t_m)\bigg|_{t_1 = \ldots = t_m = 0} \\
&=& \frac{\ddr^{m-1}}{\ddr t_2 \ldots\ddr t_{m}} \bigg( \Big(\frac{\ddr}{\ddr t_1} \log M_{X_1, \dots, X_m}(t_1, \ldots,t_m) \Big) M_{X_1, \ldots, X_m}(t_1,\ldots,t_n) \bigg) \bigg|_{t_1 = \ldots = t_m = 0},
\end{eqnarray*}
and thus, by the Leibniz rule,
\begin{multline*}
\E[X_1 \ldots X_m]
\,=\, \sum_{J\subset\llbracket 2,m\rrbracket} \bigg(\frac{\ddr^{\sharp J+1}}{\ddr t_1 \ddr t_{J}} \log M_{X_1 \ldots X_m}(t_1, \ldots, t_m)\bigg)\bigg|_{t_1 = \ldots = t_m = 0} \\
\times\bigg(\frac{\ddr^{m-1-\sharp J}}{\ddr t_{\llbracket2,m\rrbracket\setminus J}} M_{X_1 \ldots X_m}(t_1, \ldots, t_m) \bigg) \bigg|_{t_1 = \ldots = t_m = 0}, 
\end{multline*}
where $\ddr t_J$ stands for $\ddr t_{j_1}\ldots \ddr t_{j_s}$ if $J=\{j_1,\ldots,j_s\}$.
By definition of cumulants and of the moment generating function, this yields the conclusion.\qed

\subsection{Proof of Lemma~\ref{lem:cumtocorrel}}
\label{subsec:appendixLemma26}
We start from the relation between cumulants and moments, cf.~\eqref{eq:cumfrommom-inv0}:
using similar notation as in~\eqref{eq:def-cumGm}, the following general formula holds for all $m\ge1$, for any bounded random variable $X$,
\begin{equation}\label{eq:cumfrommom-inv}
\kappa^m[X]\,=\,\sum_{\pi\vdash \llbracket m \rrbracket}(-1)^{\sharp\pi-1}(\sharp\pi-1)!\prod_{A\in\pi}\E[X^{\sharp A}].
\end{equation}
We apply this to a linear functional of the empirical measure: given $\phi\in C^\infty_c(\Xd)$,
\[\kappa^m\bigg[\int_{\Xd}\phi\,\ddr\mu^N_t\bigg]\,=\,\sum_{\pi\vdash\llbracket m\rrbracket}(-1)^{\sharp\pi-1}(\sharp\pi-1)!\prod_{A\in \pi}\expec{\Big(\int_\Xd\phi\,\ddr\mu^N_t\Big)^{\sharp A}}.\]
In the right-hand side, moments of the empirical measure $\mu^N=\frac1N\sum_{i=1}^N\delta_{Z^{i,N}}$ can be computed as follows,
\begin{eqnarray*}
\expecM{\Big(\int_\Xd\phi\,\ddr\mu^N_t\Big)^n}
&=&\frac1{N^n}\sum_{i_1,\ldots,i_n=1}^N\expecM{\prod_{\ell=1}^n\phi(Z^{i_\ell,N}_t)}\\
&=&\frac1{N^n}\sum_{\pi\vdash\llbracket n\rrbracket} N(N-1)\ldots(N-\sharp\pi+1)\int_{\Xd^{\sharp\pi}}\Big(\bigotimes_{A\in\pi}\phi^{\sharp A}\Big)F^{\sharp \pi,N}_t,
\end{eqnarray*}
while marginals of $F_N$ can be expressed in terms of correlations via the cluster expansion~\eqref{eq:cluster-exp0},
\[F^{n,N}_t(z_{\llbracket n\rrbracket})\,=\,\sum_{\pi\vdash\llbracket n\rrbracket}\prod_{A\in\pi}G^{\sharp A,N}_t(z_A).\]
Combining those different identities, after straightforward simplifications, we obtain the following expression for cumulants of the empirical measure in terms of correlation functions,
\begin{equation}\label{eq:cum-from-correl}
\kappa^{m}\bigg[\int_{\Xd}\phi\,\ddr\mu^N_t\bigg]\,=\,\sum_{\pi\vdash\llbracket m\rrbracket}N^{\sharp\pi-m}\sum_{\rho\vdash\pi}K_N(\rho)\int_{\Xd^{\sharp\pi}}\Big(\bigotimes_{B\in\pi}\phi^{\sharp B}\Big)\Big(\bigotimes_{D\in\rho}G^{\sharp D, N}_t(z_D)\Big)\,\ddr z_\pi,
\end{equation}
where the coefficients are given by
\[K_N(\rho)\,:=\,\sum_{\sigma\vdash\rho}(-1)^{\sharp \sigma-1}(\sharp\sigma-1)!\Big(\prod_{C\in\sigma}(1-\tfrac1N)\ldots\big(1-\tfrac{(\sum_{D\in C}\sharp D)-1}{N}\big)\Big).\]
Isolating $\int_{\Xd^{m}}\phi^{\otimes(m)}G^{m,N}_t$ in the right-hand side of~\eqref{eq:cum-from-correl} (this term is obtained for the choice $\pi=\{\{1\},\ldots,\{m\}\}$ and $\rho=\{\pi\}$), and noting that $|K_N(\rho)|\le C_mN^{1-\sharp\rho}$, the conclusion follows.
\qed

\subsection{Proof of Proposition~\ref{prop:control_Glauber}}
We have shown in~\cite{MD-21} that cumulants can be expressed as polynomials of Glauber derivatives.
Let us first check how this can be extended to the multivariate case, that is, to joint cumulants of families of random variables.
For that purpose, we first introduce some notation and introduce a suitable notion of so-called Stein kernels~$\{\Gamma_n\}_n$ generalizing the one in~\cite{MD-21}. For all $n \ge 1$, given bounded $\sigma((Z_\circ^{j,N})_{j})$-measurable random variables $X_1, \ldots, X_n$, we define for all~$j\in\llbracket N \rrbracket$,
\[\delta_j^n(X_1, \dots, X_n) \,:=\, \E_\circ^{j\prime }\bigg[ \prod_{i=1}^n (X_i - (X_i)^{j\prime}) \bigg],\]
where for all $i,j$ the random variable $(X_i)^{j\prime}$ is obtained from $X_i$ by replacing the underlying variable~$Z^{j,N}_\circ$ by an i.i.d.\@ copy, and where $\E_\circ^{j\prime }$ stands for expectation with respect to this i.i.d.\@ copy.
Note in particular that~$\delta_j^1(X_1)=D_j^\circ X_1$, while $\delta_j^n(X_1,\ldots,X_n)$ should be compared to $\prod_{i=1}^n(D_j^\circ X_i)$.
In these terms, we now define the Stein kernels
\begin{gather*}
\Gamma_0(X_1) \,:=\,\Gamma_0^0(X_1)\,:=\, X_1,\\
\Gamma_1(X_1,X_2) \,:=\,\Gamma_1^1(X_1,X_2) \,:=\, \sum_{j = 1}^N (D_j^\circ X_2)  \calL_\circ^{-1} (D_j^\circ X_1)\,=\, \sum_{j = 1}^N \delta^1_j(X_2)  \calL_\circ^{-1}(D_j^\circ X_1),
\end{gather*}
and iteratively, for all $n \ge 1$, $m\ge0$, and $\sharp J=m$,
\begin{multline*}
\Gamma_n^{n+m}(X_1,\ldots,X_{n+1},X_J)\,:=\,\sum_{j=1}^N\Big(\delta_j^{m+1}(X_{n+1},X_J)\calL_\circ^{-1} D_j\Gamma_{n-1}^{n-1}(X_1,\ldots,X_n)\\[-2mm]
-\tfrac1{m+2}\mathds1_{n>1}\Gamma_{n-1}^{n+m}(X_1,\ldots,X_{n+1},X_J)\Big),
\end{multline*}
where we let $X_J = (X_{j_1}, \ldots, X_{j_s})$ for $J=\{j_1,\ldots,j_s\}$,
and we then set
\[\Gamma_n(X_1,\ldots,X_{n+1})\, :=\, \Gamma^n_n(X_1,\ldots,X_{n+1}).\]
Note that $\Gamma_n(X_1,\ldots,X_{n+1})$ is not symmetric in its arguments $X_1,\ldots,X_{n+1}$ (we could choose to consider instead its symmetrization, but it does not matter here).
In these terms, we can now state the following representation formula for cumulants.

\begin{lem}
\label{lem:cumulant_Y_0}
For all $n\ge0$ and all bounded $\sigma((Z_\circ^{j,N})_{j})$-measurable random variables $X_1,\ldots, X_{n+1}$, we have
\begin{equation*}
\kappa^{n+1}_\circ[X_1,\dots,X_{n+1}] \,=\, \E_\circ \big[\Gamma_n(X_1, \dots, X_{n+1})\big].
\end{equation*}
\end{lem}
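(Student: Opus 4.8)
The plan is to prove the identity by induction on $n$, matching the recursive definition of the Stein kernels $\Gamma_n$ against a recursion for the joint cumulants $\kappa^{n+1}_\circ$. The base case $n=0$ is the identity $\kappa^1_\circ[X_1]=\E_\circ[X_1]=\E_\circ[\Gamma_0(X_1)]$, which is immediate. For the case $n=1$, one must check $\kappa^2_\circ[X_1,X_2]=\Cov_\circ[X_1,X_2]=\E_\circ[\Gamma_1(X_1,X_2)]$, and this is precisely the Helffer--Sj\"ostrand covariance representation~\eqref{eq:HS-rep} from Lemma~\ref{lem:L0-prop}(iii), noting $\delta^1_j(X_2)=D^\circ_jX_2$.

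For the inductive step, I would first establish a \emph{recursion for joint cumulants} on the product space $(\Omega_\circ,\Pm_\circ)$ that plays the role of the Leibniz-rule identity~\eqref{eq:esp_product} at the level of cumulants. Concretely, differentiating $\log\E_\circ[e^{\sum t_iX_i}]$ and using the product structure of $\Pm_\circ$, one expresses $\kappa^{n+1}_\circ[X_1,\dots,X_{n+1}]$ in terms of $\sum_{j=1}^N$ of a telescoped quantity built from the Glauber derivatives $D^\circ_j$ acting on lower-order cumulants, together with the ``$\delta^m_j$'' tensors that encode the nonlinear coupling of several factors at a single site $j$. The key structural facts to exploit are: $D^\circ_j$ is a self-adjoint idempotent, the $D^\circ_j$ commute, $\calL_\circ=\sum_j D^\circ_j$ with $\calL_\circ^{-1}$ well-defined on mean-zero random variables, and for an i.i.d.\ product measure the action of the site-$j$ operations on distinct sites decouples. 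The combinatorial bookkeeping should reproduce exactly the two terms in the definition of $\Gamma_n^{n+m}$: the ``main'' term $\delta^{m+1}_j(X_{n+1},X_J)\,\calL_\circ^{-1}D_j\Gamma_{n-1}^{n-1}(\dots)$ that peels off one factor into a new $\calL_\circ^{-1}$-block, and the ``correction'' term $-\tfrac1{m+2}\mathds1_{n>1}\Gamma_{n-1}^{n+m}(\dots)$ that adjusts for overcounting of the distinguished index (compare the binomial weight $\binom{m-1}{j-1}$ in Lemma~\ref{lem:mom-cum}). I would phrase the induction hypothesis not just for $\Gamma_n=\Gamma_n^n$ but for the full family $\Gamma_n^{n+m}$ with an auxiliary $m$-tuple $X_J$ frozen, since the recursion references $\Gamma_{n-1}^{n+m}$ with a larger second superscript; thus the correct inductive statement is a formula for $\E_\circ[\Gamma_n^{n+m}(X_1,\dots,X_{n+1},X_J)]$ as a joint cumulant-type expression in $X_1,\dots,X_{n+1}$ and the block $X_J$, specializing to the stated result when $m=0$.

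The main obstacle I expect is the combinatorial matching in the inductive step: verifying that the two-term recursion for $\Gamma_n^{n+m}$ reproduces, after taking $\E_\circ$, exactly the partition-sum identity defining cumulants, with all signs and multiplicities correct. This is where one must be careful about (i) which index is ``distinguished'' at each stage (the role of $X_1$ versus $X_{n+1}$ in the non-symmetric definition), (ii) the normalization factor $\tfrac1{m+2}$ and the cutoff $\mathds1_{n>1}$, and (iii) the passage between the $\delta^k_j$ tensors and genuine products $\prod(D^\circ_jX_i)$ — these differ by lower-order terms that must be absorbed correctly. I would handle this by first doing $n=2$ and $n=3$ explicitly to fix the pattern, then formalizing via the generating-function/Leibniz argument of Lemma~\ref{lem:mom-cum} applied site-by-site; the cited reference~\cite[Section~4]{MD-21} already carries out the single-variable version, so the new content is purely the multivariate (joint-cumulant) extension, for which the bookkeeping is the same but with vector-valued arguments.
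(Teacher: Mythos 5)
There is a genuine gap: your inductive step is asserted, not proven, and the mechanism you propose for it is not the one that actually produces the Glauber structure. You claim that differentiating $\log\E_\circ[e^{\sum_i t_iX_i}]$ ``and using the product structure of $\Pm_\circ$'' yields a recursion expressing $\kappa^{n+1}_\circ$ through $\sum_j$ of Glauber derivatives of lower-order cumulants together with the $\delta^m_j$ tensors. But the operators $D^\circ_j$ and especially $\calL_\circ^{-1}$ do not appear from manipulations of the cumulant generating function; they enter only through the Helffer--Sj\"ostrand formula \eqref{eq:HS-rep} applied to covariances of genuine random variables. The statement ``the combinatorial bookkeeping should reproduce exactly the two terms in the definition of $\Gamma_n^{n+m}$'' is precisely the content of the lemma, and your proposal defers it rather than establishing it; working out $n=2,3$ by hand does not substitute for the general argument, and the recursion you would need to verify (with the distinguished index, the weight $\tfrac1{m+2}$, and the cutoff $\mathds1_{n>1}$) is exactly what remains to be shown.

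The route that closes this gap — and is the one the paper takes — is to work at the level of \emph{moments} rather than of the cumulant generating function: peel off $X_1$ via $\E_\circ[X_1\prod_{i\ge2}X_i]=\E_\circ[X_1]\,\E_\circ[\prod_{i\ge2}X_i]+\Cov_\circ[X_1,\prod_{i\ge2}X_i]$, apply \eqref{eq:HS-rep} to the covariance, and then use the product-difference decomposition \eqref{eq:decomp-prod-diff} to convert $D^\circ_j\big(\prod_{i\ge2}X_i\big)$ into the kernels $\delta^{\sharp J}_j(X_J)$ multiplied by the untouched factors — this is where the ``nonlinear coupling at site $j$'' you mention is resolved exactly, not up to lower-order terms. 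Iterating this procedure (HS applied to the terms containing $\Gamma_1$, $\Gamma_2$, \ldots) generates precisely the extended family $\Gamma_n^{n+m}$, and the resulting expansion of $\E_\circ[X_1\cdots X_m]$ is then identified term by term with the recursion \eqref{eq:esp_product} of Lemma~\ref{lem:mom-cum}, which uniquely characterizes the joint cumulants $\kappa[X_1,X_J]$. Your plan correctly identifies several of these ingredients (HS for the base case, the need to induct over the full family $\Gamma_n^{n+m}$, the comparison with Lemma~\ref{lem:mom-cum}), but without the moment-level iteration and the exact identity \eqref{eq:decomp-prod-diff} the inductive step does not go through as written.
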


\begin{proof} 
We omit the subscript `$\circ$'
for notational simplicity.
By the Hellfer-Sj\"ostrand representation formula~\eqref{eq:HS-rep}, we can write
\begin{eqnarray}
\E \bigg[\prod_{i=1}^m X_i \bigg]
&=&\E[X_1]\, \E \bigg[\prod_{i=2}^m X_i \bigg] + \Cov \bigg[X_1,\prod_{i=2}^m X_i \bigg]\nonumber \\
&=&\E[X_1]\, \E \bigg[\prod_{i=2}^m X_i \bigg]+ \sum_{j=1}^N \E\bigg[ D_j\bigg(\prod_{i=2}^m X_i\bigg)\calL_\circ^{-1} D_jX_1 \bigg].\label{eq:proof_gamma_1}
\end{eqnarray}
Now note that the following formula is easily obtained by induction for differences of products: for all $(a_i)_{2 \le i \le m},(b_i)_{2 \le i \le m}\subset\R$,
\begin{align*}
\prod_{i=2}^m a_i - \prod_{i=2}^m b_i \,=\,  \sum_{\substack{J \subset \llbracket2,m\rrbracket \\ J \ne \varnothing}} (-1)^{\sharp J + 1} \bigg(\prod_{i \not \in J} a_i \bigg)\bigg(\prod_{i  \in J} (a_i- b_i) \bigg),
\end{align*}
and this obviously implies
\begin{equation}\label{eq:decomp-prod-diff}
D_j\bigg(\prod_{i=2}^m X_i\bigg)
\,=\, \E^{j\prime} \bigg[\prod_{i=2}^m X_i - \prod_{i=2}^m (X_i)^{j\prime}\bigg] 
\,=\, \sum_{\substack{J \subset \llbracket2,m\rrbracket \\ J \ne \varnothing}} (-1)^{\sharp J + 1} \Big(\prod_{ i\in\llbracket2,m\rrbracket\setminus J} X_i \Big)\,\delta_j^{\sharp J}(X_J).
\end{equation}
Inserting this into~\eqref{eq:proof_gamma_1},
separating the contributions of singletons in the sum,
and recognizing the definition of~$\Gamma_0,\Gamma_1$,
we get
\begin{eqnarray}
\E \bigg[\prod_{i=1}^m X_i \bigg]
&=& \E[\Gamma_0(X_1)]\,\E\bigg[\prod_{i=2}^m X_i\bigg]
+\sum_{\substack{J \subset \llbracket 2,m\rrbracket \\ J \ne \varnothing}} (-1)^{\sharp J + 1} \sum_{j=1}^N \E\bigg[ \Big(\prod_{i \in \llbracket2,m\rrbracket\setminus J} X_k \Big)\,\delta_j^{\sharp J}(X_J) \calL_\circ^{-1} D_jX_1 \bigg] \nonumber \\
&=&\E[\Gamma_0(X_1)]\, \E\bigg[\prod_{i=2}^m X_i\bigg]
+ \sum_{\ell \in \llbracket 2,m\rrbracket}  \E\bigg[ \Big(\prod_{i\in\llbracket 2,m\rrbracket\setminus \{\ell\}} X_i \Big) \Gamma_1(X_1,X_\ell) \bigg]\nonumber\\
&&
+\sum_{\ell\in \llbracket 2,m\rrbracket}\sum_{\substack{J \subset \llbracket 2,m\rrbracket\setminus\{\ell\} \\ \sharp J \ge 1}} \tfrac{(-1)^{\sharp J}}{\sharp J+1} \sum_{j=1}^N \E\bigg[\Big(\prod_{i \in\llbracket2,m\rrbracket\setminus (\{\ell\}\cup J)} X_i \Big)\Gamma_1^{\sharp J+1}(X_1,X_\ell,X_J)\bigg].\label{eq:temp_gamma}
\end{eqnarray}
Using again the Hellfer--Sj\"ostrand representation formula~\eqref{eq:HS-rep} to handle the second right-hand side term, we can decompose for all $\ell\in \llbracket 2,m\rrbracket$,
\begin{multline*}
\E\bigg[ \Big(\prod_{i\in\llbracket 2,m\rrbracket\setminus \{\ell\}} X_i \Big) \Gamma_1(X_1,X_\ell) \bigg]
\,=\, \E[\Gamma_1(X_1,X_\ell)]\, \E\bigg[ \prod_{i\in\llbracket 2,m\rrbracket\setminus\{\ell\}} X_i\bigg]\\
+ \sum_{j=1}^N \E\bigg[ D_j\Big(\prod_{i\in\llbracket 2,m\rrbracket\setminus\{\ell\}} X_i \Big) \calL_\circ^{-1} D_j\Gamma_1(X_1,X_\ell) \bigg],
\end{multline*}
and thus, appealing again to~\eqref{eq:decomp-prod-diff} to reformulate the last term,
\begin{multline*}
\E\bigg[ \Big(\prod_{i\in\llbracket 2,m\rrbracket\setminus\{\ell\}} X_i \Big) \Gamma_1(X_1,X_\ell) \bigg]
\,=\, \E[\Gamma_1(X_1,X_\ell)]\, \E\bigg[ \prod_{i\in\llbracket 2,m\rrbracket\setminus \{\ell\}} X_i\bigg]\\
+\sum_{\substack{ J \subset \llbracket 2,m\rrbracket \setminus \{\ell\} \\ J \ne \varnothing}} (-1)^{\sharp J+1} \sum_{j=1}^N \E \bigg[ \Big(\prod_{i\in\llbracket 2,m\rrbracket\setminus(\{\ell\}\cup J)} X_i \Big) \delta^{\sharp J}_j(X_{J}) \calL_\circ^{-1} D_j\Gamma_1(X_1,X_\ell) \Big]. 
\end{multline*}
Inserting this into~\eqref{eq:temp_gamma} and recognizing the definition of $\Gamma_2$,
we find
\begin{multline*}
\E\bigg[\prod_{i=1}^m X_i\bigg]
\,=\,  \E[\Gamma_0(X_1)]\, \E\bigg[\prod_{i=2}^m X_i\bigg] + \sum_{\ell\in\llbracket 2,m\rrbracket } \E [\Gamma_1(X_1,X_\ell) ] \,\E\bigg[\prod_{i\in\llbracket 2,m\rrbracket\setminus \{\ell\}} X_i \bigg] \\
+\sum_{\ell,\ell'\in\llbracket 2,m\rrbracket}\E \bigg[ \Big(\prod_{i\in\llbracket 2,m\rrbracket\setminus\{\ell,\ell'\}} X_i \Big)\Gamma_2(X_1,X_\ell,X_{\ell'})\bigg]\\
+ \sum_{\ell,\ell'\in\llbracket 2,m\rrbracket}\sum_{\substack{J \subset [2,m]\setminus\{\ell,\ell'\}\\ J\ne\varnothing}} \tfrac{(-1)^{\sharp J}}{\sharp J+1}\E \bigg[ \Big(\prod_{i\in\llbracket 2,m\rrbracket\setminus(\{\ell,\ell'\}\cup J)} X_i \Big)\Gamma_2^{\sharp J+2}(X_1,X_\ell,X_{\ell'},X_J)\bigg],
\end{multline*}
and the claim follows by iteration and a direct comparison with the formula~\eqref{eq:esp_product}.
\end{proof}

We can now turn to the proof of Proposition~\ref{prop:control_Glauber}: the above representation formula for cumulants implies that cumulants can indeed be controlled as desired in terms of higher-order Glauber derivatives.

\begin{proof}[Proof of Proposition~\ref{prop:control_Glauber}]
First note that Jensen's inequality yields for all $m \ge 0$ and $r \ge 1$,
\begin{equation*}
\| \delta^m_j(X_1, \ldots, X_m) \|_{\Ld^r(\Omega_\circ)}
\,=\, \E_\circ \bigg[ \Big|\E^{j\prime}_\circ\Big[\prod_{i=1}^m (X_i - (X_i)^{j\prime}) \Big]\Big|^r \bigg]^{\frac1{r}}
\,\le\, \E_\circ\E^{j\prime}_\circ \bigg[\prod_{i=1}^m |X_i - (X_i)^{j\prime}|^r\bigg]^{\frac1{r}},
\end{equation*}
and thus, decomposing $X_i - (X_i)^{j\prime}=D_j^\circ X_i -(D_j^\circ X_i)^{j\prime}$ and using H\"older's inequality
\begin{equation*}
\| \delta^m_j(X_1, \ldots, X_m) \|_{\Ld^r(\Omega_\circ)}
\,\le\, \prod_{i=1}^m \E_\circ \E_\circ^{j'} \Big[ \big|D^\circ_j X_i - (D^\circ_j X_i)^{j'} \big|^{rm} \Big]^{\frac1{rm}}
\,\le\, 2^m\prod_{i=1}^m\|D_j^\circ X_i\|_{\Ld^{rm}(\Omega_\circ)}.
\end{equation*}
By induction, using this estimate along with~\eqref{eq:ineq_T}, we find for all $m,n,r$, for all bounded $\sigma((Z_\circ^{j,N})_j)$-measurable random variables $X_1,\ldots,X_{n+m+1}$,
\begin{multline*}
\|\Gamma^{m+n}_{n}(X_1, \ldots, X_{m+n+1})\|_{\Ld^r(\Omega)}\\
\,\lesssim_{m,n,r}\, \sum_{k=0}^{n-1} N^{k+1} \sum_{\substack {a_1, \dots, a_{m+n+1} \ge 1 \\ \sum_j a_j = m+n+k+1}} \prod_{j=1}^{m+n+1}\|(D^\circ)^{a_j} X_j\|_{\ell^\infty_{\ne}\big( \Ld^{\frac{r}{a_j}(m+n+k+1)}(\Omega_\circ)\big)}.
\end{multline*}
Combined with the representation formula of Lemma~\ref{lem:cumulant_Y_0}, this yields the conclusion.
\end{proof}

\subsection{Proof of Proposition~\ref{prop:2ndP}}
By homogeneity, it suffices to consider a bounded random variable~$Y$ with
\[\E_\circ[Y]\,=\,0,\qquad \sigma_Y^2\,=\,\Var_\circ[Y]\,=\,1.\]
Given $g\in C^1_b(\R)$, we define its Stein transform $S_g$ as the solution of Stein's equation
\begin{equation}\label{eq:Stein-tsf}
S_g'(x)- xS_g(x)=g(x)-\E_\Nc[g(\Nc)].
\end{equation}
As shown in~\cite{Stein-86}, the latter can be computed as
\[S_g(x)\,=\,-\int_0^1\frac1{2\sqrt t}\,\E_\Nc\Big[g'\big(\sqrt t \,  x+ \sqrt{1-t} \, \Nc\big)\Big]\,\ddr t,\]
Using this formula and a Gaussian integration by parts, we easily obtain the following bound,
\begin{equation}\label{eq:Stein-bound}
\|S_g'\|_{W^{1,\infty}(\R)}\,\lesssim\,\|g''\|_{\Ld^\infty(\R)}.
\end{equation}
Evaluating equation~\eqref{eq:Stein-tsf} at $Y$ and taking the expectation, we find
\[\E_\circ[g(Y)]-\E_\Nc[g(\Nc)]\,=\,\E_\circ\big[S_g'(Y)-YS_g(Y)\big].\]
Now appealing to the Helffer--Sj\"ostrand representation formula of Lemma~\ref{lem:L0-prop}(iii) for the covariance $\E_\circ[YS_g(Y)]=\Cov_\circ[Y,S_g(Y)]$, this yields
\begin{equation}\label{eq:HS-use-stein}
\E_\circ[g(Y)]-\E_\Nc[g(\Nc)]\,=\,\E_\circ\bigg[S_g'(Y)-\sum_{j=1}^N(D^\circ_jS_g(Y))\Lc_\circ^{-1}(D^\circ_jY)\bigg].
\end{equation}
A Taylor expansion gives for all $p\ge1$,
\[\big\|D^\circ_jS_g(Y)-S_g'(Y)D^\circ_jY\big\|_{\Ld^p(\Omega_\circ)}\,\le\,\|S_g''\|_{\Ld^\infty(\R)}\|D^\circ_jY\|_{\Ld^{2p}(\Omega_\circ)}^2.\]
Using this to replace $D^{\circ}_j S_g(Y)$ in~\eqref{eq:HS-use-stein}, using H\"older's inequality with $p=\frac32$ to bound the error, using the boundedness of $\Lc_\circ^{-1}$ in $\Ld^3(\Omega_\circ)$, cf.~Lemma~\ref{lem:L0-prop}(ii), and recalling the bound~\eqref{eq:Stein-bound} on the Stein transform, we are led to
\[\E_\circ[g(Y)]-\E_\Nc[g(\Nc)]\,\lesssim\,\|g''\|_{\Ld^\infty(\R)}\bigg(\E_\circ\bigg[\Big|1-\sum_{j=1}^N(D^\circ_jY)\Lc_\circ^{-1}(D^\circ_jY)\Big|\bigg]+\sum_{j=1}^N\E_\circ[|D^\circ_jY|^3]\bigg).\]
Now recalling the Helffer--Sj\"ostrand representation formula of Lemma~\ref{lem:L0-prop}(iii) in form of
\[1\,=\,\Var_\circ[Y]\,=\,\E_\circ\Big[\sum_{j=1}^N(D^\circ_jY)\Lc_\circ^{-1}(D^\circ_jY)\Big],\]
we deduce by the Cauchy--Schwarz inequality,
\[\E_\circ[g(Y)]-\E_\Nc[g(\Nc)]\,\lesssim\,\|g''\|_{\Ld^\infty(\R)}\bigg(\Var_\circ\Big[\sum_{j=1}^N(D^\circ_jY)\Lc_\circ^{-1}(D^\circ_jY)\Big]^\frac12+\sum_{j=1}^N\E_\circ[|D^\circ_jY|^3]\bigg).\]
Taking the supremum over $g\in C^2_b(\R)$, the conclusion follows in the second-order Zolotarev distance~$\ddr_2$. The proof in the $1$-Wasserstein distance can be obtained in the same way by noting that on top of~\eqref{eq:Stein-bound} the Stein transform also satisfies $\|S_g'\|_{W^{1,\infty}(\R)}\lesssim\|g'\|_{\Ld^\infty(\R)}$, cf.~\cite{Stein-86}. The proof in Kolmogorov distance is more delicate and we refer to~\cite[Theorem~4.2]{LRP-15}.
\qed

\subsection{Proof of Proposition~\ref{lem:concentration_Glauber}}
We appeal to the following degraded version of a log-Sobolev inequality as obtained in~\cite[Proposition~2.4]{DG20}: for all random variables $Y\in\Ld^2(\Omega_\circ)$, we have
\[\operatorname{Ent}_\circ[Y^2]\,\le\,2\sum_{j=1}^N\E_\circ\Big[\sup_j\,(Y-Y^j)^2\Big],\]
where we recall that $Y^j$ stands for the random variable obtained from $Y$ by replacing the underlying variable $Z_\circ^{j,N}$ by an i.i.d.\@ copy,
and where $\sup_j$ stands for the essential supremum with respect to this i.i.d.\@ copy.
Applying this inequality to $Y=e^{\frac12 X}$, using the bound
\[|e^{\frac12 X}-e^{\frac12 X^j}|\,\le\,\tfrac12|X-X^j|\int_0^1e^{\frac12 (X-t(X-X^j))}\,\ddr t\,\le\,\tfrac12 e^{\frac12 X}|X-X^j|\,e^{\frac12 |X-X^j|},\]
we find
\[\operatorname{Ent}_\circ[e^{ X}]\,\le\,\tfrac12\sum_{j=1}^N\E_\circ\Big[e^{ X}\sup_j\,(X-X^j)^2e^{|X-X^j|}\Big]
\,\le\,\tfrac{N}2M_X^2e^{ M_X}\E_\circ[e^{ X}],\]
in terms of
\[M_X\,:=\,\sup_{1\le j\le N}\supess|X-X^j|\,\le\,2\sup_{1\le j\le N}\supess|D_j^\circ X|\,\le\,L.\]
We are now in position to appeal to Herbst's argument in the form of~\cite[Proposition~2.9 and Corollary~2.12]{Ledoux-99} and the conclusion follows.
\qed

\subsection{Proof of Lemma~\ref{lem:unif-mom-est}}
We focus on the Langevin setting for shortness.
In addition, we only prove the estimates for~$\mu_t^N$, that is, \eqref{eq:estim-mom-Brownian}, \eqref{eq:estim-mom-part-exp1}, and~\eqref{eq:estim-mom-part-exp}, while the corresponding estimates for the mean-field dynamics are skipped: they can be obtained similarly, letting $N=1$ and replacing sums of interaction forces by mean-field force in the stochastic estimates below.
We split the proof into three steps.

\medskip
\step1 Proof of~\eqref{eq:estim-mom-Brownian}.\\
In the spirit of~\cite{Bolley_2010}, we consider the random process
\[G_{t}^N\,:=\,\tfrac1N\sum_{i=1}^N\Big(1+a|X_{t}^{i,N}|^2+|V_{t}^{i,N}|^2+\eta X_{t}^{i,N}\cdot V_{t}^{i,N}\Big),\]
for some $\eta\in(0,2\sqrt{a})$ to be suitable chosen.
Note that this range of $\eta$ ensures
\begin{equation}\label{eq:GtN-equiv}
G_t^N\,\simeq_{a,\eta}\,Q(\mu^N_t)^2.
\end{equation}
By It\^o's formula, the particle dynamics \eqref{eq:Langevin} yields
\begin{multline}\label{eq:mom-ito}
\ddr G_t^N\,=\,-\tfrac1N\sum_{i=1}^N\Big(-1 + a\eta |X_t^{i,N}|^2 +(\beta-\eta)|V_t^{i,N}|^2 +\tfrac{\eta\beta}2 X_t^{i,N}\cdot V_t^{i,N}\Big) \ddr t\\
-\tfrac\kappa{N^2}\sum_{i,j=1}^N(2V_t^{i,N}+\eta X_t^{i,N})\cdot \nabla W(X_t^{i,N}-X_t^{j,N}) \, \ddr t
+\tfrac1N\sum_{i=1}^N(2V_t^{i,N}+\eta X_t^{i,N})\cdot \ddr B_t^i.
\end{multline}
From this equation
and It\^o's formula, we then find for all $k\ge1$,
\begin{multline*}
\partial_t\E_B[(G_t^N)^k]\,\le\,-\tfrac kN\sum_{i=1}^N\E_B\bigg[(G_t^N)^{k-1}\Big(1+\tfrac14a\eta |X_t^{i,N}|^2+\big(\tfrac\beta2-\eta(1+\tfrac{\beta^2}{8a})\big)|V_t^{i,N}|^2\Big)\bigg]\\
+k\big(3 + (\tfrac{\eta}{a}+\tfrac2\beta)\|\nabla W\|_{\Ld^\infty(\R^d)}^2\big)\E_B[(G_t^N)^{k-1}]\\
+\tfrac{k(k-1)}{2N^2}\sum_{i=1}^N\E_B\big[(G_t^N)^{k-2}|2V_t^{i,N}+\eta X_t^{i,N}|^2\big].
\end{multline*}
Provided that $0<\eta\ll_{\beta,a}1$ is small enough (only depending on $\beta,a$), there exist $\lambda,C>0$ (only depending on~$d,W,\beta,a$) such that we get for all $k\ge1$,
\begin{equation*}
\partial_t\E_B[(G_t^N)^k]\,\le\,-\lambda
k\E_B[(G_t^N)^{k}]+Ck(1+\tfrac kN)\E_B[(G_t^N)^{k-1}],
\end{equation*}
and thus, by Gr\"onwall's inequality,
\begin{equation*}
\E_B[(G_t^N)^k]\,\le\,e^{-\lambda kt}\E_B[(G_0^N)^k]+Ck(1+\tfrac kN)\int_0^te^{-\lambda k(t-s)}\E_B[(G_s^N)^{k-1}] \, \ddr s.
\end{equation*}
A direct induction then yields for all $k\ge1$,
\begin{equation*}
\E_B[(G_t^N)^k]\,\le\,\sum_{j=0}^ke^{-\lambda jt}\,\tfrac{k!}{j!}(\tfrac1\lambda C)^{k-j}(1+\tfrac kN)^{k-j}\,\E_B[(G_0^N)^j],
\end{equation*}
and~\eqref{eq:estim-mom-Brownian} follows.

\medskip
\step2 Proof of~\eqref{eq:estim-mom-part-exp1}.\\
From~\eqref{eq:mom-ito} and It\^o's formula, arguing similarly as in Step~1, provided that $0<\eta\ll_{\beta,a}1$ is small enough (only depending on $\beta,a$), there exist $\lambda,C>0$ (only depending on~$d,W,\beta,a$) such that we have for all $t,\alpha,L\ge0$ and $0\le\delta\le1$,
\begin{multline}\label{eq:Lapl-tsf-GtN}
\partial_t\E_B\big[e^{\alpha(G_t^N+L)^\delta}\big]
\,\le\,
-2\lambda\delta\alpha\,\E_B\big[(G_t^N+L)^{\delta}e^{\alpha(G_t^N+L)^\delta}\big]\\
+C(L+1)\delta\alpha\,\E_B\big[(G_t^N+L)^{\delta-1}e^{\alpha(G_t^N+L)^\delta}\big]
+\tfrac{C}N\delta^2\alpha^2\,\E_B\big[(G_t^N+L)^{2\delta-1}e^{\alpha(G_t^N+L)^\delta}\big].
\end{multline}
For $\delta<1$, choosing $L=(\frac{C\alpha}{\lambda N}+1)^{\frac1{1-\delta}}$,
we get $\frac CN\delta^2\alpha^2(G_t^N+L)^{2\delta-1}\le\lambda\delta\alpha(G_t^N+L)^\delta$, and thus
\begin{eqnarray*}
\partial_t\E_B\big[e^{\alpha(G_t^N+L)^\delta}\big]
&\le&
-\lambda\delta\alpha\,\E_B\big[(G_t^N+L)^{\delta}e^{\alpha(G_t^N+L)^\delta}\big]
+C(L+1)\delta\alpha\,\E_B\big[(G_t^N+L)^{\delta-1}e^{\alpha(G_t^N+L)^\delta}\big]\\
&\le&
-\lambda\delta\alpha\,\E_B\big[(G_t^N+L)^{\delta}e^{\alpha(G_t^N+L)^\delta}\big]
+C_\delta\alpha(\tfrac\alpha N+1)^{\frac\delta{1-\delta}}\,\E_B\big[e^{\alpha(G_t^N+L)^\delta}\big].
\end{eqnarray*}
This amounts to the following differential inequality for the Laplace transform $F_\delta(t,\alpha):=\E_B[e^{\alpha (G_t^N+L)^\delta}]$: for all $t,\alpha\ge0$,
\[\partial_tF_\delta(t,\alpha)+\lambda\delta\alpha\partial_\alpha F_\delta(t,\alpha)\,\le\, C_\delta\alpha(\tfrac\alpha N+1)^{\frac\delta{1-\delta}} F_\delta(t,\alpha),\]
which can be rewritten as
\[\partial_t[F_\delta(t,e^{\lambda\delta t}\alpha)]
\,\le\,C_\delta(e^{\lambda\delta t}\alpha)(\tfrac{e^{\lambda\delta t}\alpha} N+1)^{\frac\delta{1-\delta}}F_\delta(t,e^{\lambda\delta t}\alpha).\]
By integration, this yields for all $t,\alpha\ge0$,
\[F_\delta(t,\alpha)
\,\le\,\exp\Big(C_\delta \alpha(\tfrac{\alpha} N+1)^{\frac\delta{1-\delta}}\Big)F_\delta(0,e^{-\lambda\delta t}\alpha),\]
and~\eqref{eq:estim-mom-part-exp1} follows after recalling $G^N_t\simeq_{\beta,a} Q(\mu^N_t)^{2}$.

\medskip
\step3 Proof of~\eqref{eq:estim-mom-part-exp}.\\
Starting point is~\eqref{eq:Lapl-tsf-GtN} with $\delta=1$ and $L=0$: provided that $0<\eta\ll_{\beta,a}1$ is small enough (only depending on~$\beta,a$), there exist $\lambda,C>0$ (only depending on~$d,W,\beta,a$) such that we have for all $t,\alpha>0$,
\begin{equation*}
\partial_t\E[e^{\alpha G_t^N}]\,\le\,
-2\lambda\alpha\E[G_t^Ne^{\alpha G_t^N}]
+C\alpha\E[e^{\alpha G_t^N}]+\tfrac{C}N\alpha^2\E[G_t^Ne^{\alpha G_t^N}].
\end{equation*}
Hence, setting $\alpha_0 :=\frac{\lambda}{C}$, we get for $\alpha\le N\alpha_0$,
\begin{equation*}
\partial_t\E[e^{\alpha G_t^N}]\,\le\,
-\lambda\alpha\E[G_t^Ne^{\alpha G_t^N}]
+C\alpha\E[e^{\alpha G_t^N}].
\end{equation*}
This amounts to the following differential inequality for the Laplace transform $F(t,\alpha):=\E[e^{\alpha G_t^N}]$: for all $t\ge0$ and $0\le\alpha\le N\alpha_0$,
\[\partial_tF(t,\alpha)+\lambda\alpha\partial_\alpha F(t,\alpha)\,\le\, C\alpha F(t,\alpha),\]
which can be rewritten as follows, for all $t\ge0$ and $0\le e^{\lambda t}\alpha\le N\alpha_0$,
\[\partial_t[F(t,e^{\lambda t}\alpha)]\le Ce^{\lambda t}\alpha F(t,e^{\lambda t}\alpha).\]
This yields $F(t,\alpha)\le e^{\alpha C/\lambda}F(0,e^{-\lambda t}\alpha)$ for $t\ge0$ and $0\le \alpha\le N\alpha_0$,
and~\eqref{eq:estim-mom-part-exp} follows.
\qed

\bibliographystyle{plain}
\bibliography{biblio_Chaos}

\end{document}